\theoremstyle{plain}
\newtheorem{theorem}{Theorem}[section]
\newtheorem{lemma}[theorem]{Lemma}
\newtheorem{proposition}[theorem]{Proposition}
\newtheorem{claim}[theorem]{Claim}
\newtheorem{question}[theorem]{Question}
\newtheorem{assumption}[theorem]{Assumption}
\theoremstyle{definition}
\newtheorem{definition}[theorem]{Definition}
\newtheorem{notation}[theorem]{Definition}
\newtheorem{remark}[theorem]{Remark}
\numberwithin{equation}{section}
\newcommand{\N}{\mathbb{N}}
\newcommand{\Z}{\mathbb{Z}}
\newcommand{\R}{\mathbb{R}}
\newcommand{\ind}{\mathds{1}} 
\newcommand{\veps}{\varepsilon}
\newcommand{\fsubset}{\subset \subset}
\newcommand{\Falg}{\mathcal{F}}
\newcommand{\ball}{\mathcal{B}}
\newcommand{\prob}{\mathbb{P}}
\newcommand{\Var}{\mathrm{Var}}
\newcommand{\energy}{\mathcal{E}}
\newcommand{\capacity}{\mathrm{cap}}
\newcommand{\POI}{\mathrm{POI}}
\newcommand{\Dlow}{\underline{\Delta}}
\newcommand{\Dup}{\overline{\Delta}}
\newcommand{\Rs}{R_s}
\newcommand{\Rb}{R_b}
\newcommand{\pinkwalk}{Z}
\newcommand{\worms}{\mathcal{X}}
\newcommand{\hitonly}[1]{{}^{\star}\!{#1}}
\newcommand{\CLowerFugaHitDELTALOWER}{\underline{\delta}}	
\newcommand{\CLowerFugaHitDELTAUPPER}{\overline{\delta}}	
\newcommand{\CLowerFugaHitLOWER}{c_1}			
\newcommand{\CPrfLowerFugaFirstLOWER}{c_2}	
\newcommand{\CPrfLowerFugaHitEXACTSTARUPPER}{c_3}	
\newcommand{\CLowerNOVisitedBoxesLOWER}{c_4}			
\newcommand{\CLowerNOVisitedBoxesRATIOLOWER}{C_1}	
\newcommand{\CCLAIMConsecutiveVisits}{C_2}		
\newcommand{\CPrfEasyLEDBoundOnExpETAKS}{C_3}			
\newcommand{\CSmallEnergyUPPER}{C_4}							
\newcommand{\CBigTotalEnergyDENOMINATOR}{c_5}					
\newcommand{\CBigTotalEnergyPARAMETERUPPER}{c_6}				
\newcommand{\CBigTotalEnergyUPPER}{C_5}						
\newcommand{\CBigTotalEnergyPARAMETERLOWER}{C_6}				
\newcommand{\CPrfDoublingGammaAUX}{c_7}						
\newcommand{\CCapaHyUUPER}{c_8}								
\newcommand{\CCondExpRestrictMeasureTotalLOWER}{c_9}			
\newcommand{\CCondVarRestrictMeasureTotalUPPER}{C_7}			
\newcommand{\CExpectBoundInteractionsUPPER}{C_8}				
\newcommand{\CGrimmettMarstrand}{c_{0}}  			
\begin{document}


\title{Percolation of worms}
\author{Bal\'azs R\'ath\footnote{Department of Stochastics, Institute of Mathematics, Budapest University of Technology and  Economics, M\H{u}egyetem rkp. 3., H-1111 Budapest, Hungary; MTA-BME Stochastics Research Group, M\H{u}egyetem rkp. 3., H-1111 Budapest, Hungary;
  Alfr\'ed R\'enyi Institute of Mathematics, Re\'altanoda utca 13-15, 1053 Budapest, Hungary.
 rathb@math.bme.hu}, \; S\'andor Rokob\footnote{MTA-BME Stochastics Research Group, M\H{u}egyetem rkp. 3., H-1111 Budapest, Hungary
 ;
 Alfr\'ed R\'enyi Institute of Mathematics, Re\'altanoda utca 13-15, 1053 Budapest, Hungary.
   roksan@math.bme.hu }}

\maketitle

\begin{abstract}

We introduce a new correlated percolation model on the $d$-dimensional lattice $\mathbb{Z}^d$ called the \emph{random length worms model}.
Assume given a probability distribution on the set of positive integers (the length distribution) and $v \in (0,\infty)$ (the intensity parameter). From each site of $\mathbb{Z}^d$ we start $\mathrm{POI}(v)$ independent simple random walks
 with this length distribution. We investigate the connectivity properties of the set $\mathcal{S}^v$ of sites visited by this cloud of random walks. It is easy to show that if the second moment of the length distribution is finite then $\mathcal{S}^v$ undergoes a percolation phase transition as $v$ varies.
Our main contribution is a sufficient condition on the length distribution which guarantees that $\mathcal{S}^v$
percolates for all $v>0$ if $d \geq 5$. E.g., if the probability mass function of the length distribution is
 \[ m(\ell)= c \cdot \ln(\ln(\ell))^{\varepsilon}/ (\ell^3 \ln(\ell)) \mathds{1}[\ell \geq \ell_0] \]
  for some $\ell_0>e^e$ and $\varepsilon>0$ then $\mathcal{S}^v$
percolates for all $v>0$. Note that the second moment of this length distribution is only ``barely'' infinite. In order to put our result in the context of earlier results about similar models (e.g., finitary random interlacements, loop percolation, Bernoulli hyper-edge percolation, Poisson Boolean model, ellipses percolation, etc.), we define a natural family of percolation models called the \emph{Poisson zoo} and argue that the percolative behaviour of the random length worms model is quite close to being ``extremal'' in this family of  models.

\bigskip

\medskip

\noindent \textsc{Keywords: random walk, percolation, capacity, dynamic renormalization} \\
\textsc{AMS MSC 2020: 82B41, 82B43}

\end{abstract}

\newpage

\tableofcontents

\section{Introduction}

 The mathematical theory of percolation studies the connectivity properties of certain random subgraphs of an infinite graph $G$. One typically looks at a family of probability measures on the space of subgraphs of $G$ parametrized by the density of the random subgraph and says that the percolation model exhibits phase transition if there exists a non-trivial value of the density parameter (called the critical threshold) such that if the density is smaller than the threshold then the connected clusters of the random subgraph are all finite (subcritical phase), but if the density is bigger than the threshold then there exists an infinite cluster (supercritical phase).

The prime example of a percolation model which exhibits phase transition is Bernoulli site percolation on $\mathbb{Z}^d, d \geq 2$, where
the random subgraph is created by retaining the vertices of the $d$-dimensional lattice independently from each other with the same probability $p$, cf.\ \cite{Gr99}.

However, there is no recipe to decide whether or not a general correlated percolation model exhibits phase transition. The goal of this paper is to study a natural class of correlated percolation models and explore the conditions under which a model \emph{does not} exhibit phase transition in the sense that it is supercritical for all positive values of the density parameter.

\medskip

In Section \ref{subsection_def_poi_zoo} we introduce a  general correlated site percolation model $\mathcal{S}^v$ on  $\Z^d$ that we call the  \emph{Poisson zoo} at level $v$.
Loosely speaking, $\mathcal{S}^v$ is the trace of a Poisson point process (PPP) of connected subsets  of $\mathbb{Z}^d$ (i.e., lattice animals). The intensity measure of this PPP is of form $v \cdot \mu$, where $v \in \mathbb{R}_+$ and $\mu$ is a measure on lattice animals which is invariant under the translations of $\mathbb{Z}^d$. We state two basic results about the Poisson zoo: loosely speaking, Lemma \ref{lemma_infinite_m1_perc} states that if the expected number of animals that contain the origin is infinite for any $v>0$ then  $\mathcal{S}^v$ percolates for all  $v>0$, while
Lemma \ref{lemma_m_2_finite_no_perc} states that if the expected total cardinality of animals that contain the origin is finite for any $v>0$ then $\mathcal{S}^v$ exhibits percolation phase transition as $v$ varies. One wonders whether the conditions of Lemmas \ref{lemma_infinite_m1_perc} and \ref{lemma_m_2_finite_no_perc} can be sharp for specific choices of $\mu$. We will briefly argue that the condition of Lemma \ref{lemma_infinite_m1_perc} is sharp if one chooses the animals to be balls (this follows from the main result of \cite{Gouere2008} on the Poisson Boolean model). The main results of our paper are related to the question of sharpness of Lemma \ref{lemma_m_2_finite_no_perc}.

In Section \ref{subsection_def_worms_rl} we introduce a special case of the Poisson zoo model that we call the \emph{random length worms model} and
state Theorem \ref{main_thm}, the main result of this paper.  Theorem \ref{main_thm} gives a condition on the length distribution of worms under which the occupied set percolates  for all positive values of  $v$ when $d \geq 5$. We will see that the gap between the condition of Lemma \ref{lemma_m_2_finite_no_perc} (which guarantees phase transition) and the condition of Theorem \ref{main_thm} (which guarantees that there is no phase transition) is very small. The question of  sharpness of the condition of Lemma \ref{lemma_m_2_finite_no_perc} for the random length worms model remains open for $d\geq 5$, but the results of \cite{ChangSapozhnikov2016} about loop percolation make us conjecture that the condition of Lemma \ref{lemma_m_2_finite_no_perc} is not sharp for the random length worms model if $d \leq 4$.

In Section \ref{subsection_related_literature} we discuss some results about related percolation models from the literature, namely the Poisson Boolean model, ellipses percolation, finitary random interlacements, loop percolation, Bernoulli hyper-edge percolation and Wiener sausage percolation.

In Section \ref{subsection_methods} we explain the ideas of the proof of our main result.

In Section \ref{sub_conj_d234} we formulate some conjectures about the percolation of worms in $d=2,3,4$.

In Section \ref{sub_structure_of_paper} we outline the structure of the rest of the paper.

\subsection{Poisson zoo}\label{subsection_def_poi_zoo}

Denote by $\N = \{ 1, 2, \ldots \}$ the set of natural numbers, by $\N_0 = \N \cup \{ 0 \}$ the set of non-negative integers, and by $\Z$ the set of integers. $\mathbb{Z}^d$ denotes
 the $d$-dimensional integer lattice (i.e., the space of $d$-dimensional vectors with integer coordinates). Let $o$ denote the origin of $\mathbb{Z}^d$.

We think about $\mathbb{Z}^d$ as a graph where nearest neighbour vertices are connected by an edge. We say that a set $H \subseteq \mathbb{Z}^d$ is connected
if the subgraph of $\mathbb{Z}^d$ spanned by $H$ is connected.

\begin{definition}[Lattice animals]\label{def_lattice_animal}
Let us denote by $\mathcal{H}$ the set of finite, connected, nonempty subsets of $\Z^d$ that contain the origin. We say that $\mathcal{H}$ is the collection of \emph{lattice animals} rooted at the origin $o$. A pair $(x,H) \in \mathbb{Z}^d \times \mathcal{H}$ is called a lattice animal rooted at $x$, and we define the trace of such an animal to be the translated set $x+H$.
\end{definition}

\begin{definition}[Poisson zoo]
	\label{poisson_zoo}
	Let $\nu$ denote a probability measure on $\mathcal{H}$. Given some $v \in \R_{+}$, let us consider independent Poisson random variables
	$N_{x,H}^{v} \sim \POI( v \cdot \nu(H) )$ indexed by $x \in \Z^d$ and $H \in \mathcal{H}$.
	We say that $N_{x,H}^{v}$ is the number of copies of the animal $(x,H)$.
	We call the collection of random variables
	\begin{equation}
		\widehat{\mathcal{N}}^v := \left\{ N^v_{x,H} \, : \, x \in \mathbb{Z}^d, H \in \mathcal{H} \right\}
	\end{equation}
	 the \emph{Poisson zoo} at level $v$.
	We call the random set
	\begin{equation}\label{def_eq_trace_of_zoo}
	\mathcal{S}^v := \bigcup_{x \in \Z^d} \bigcup_{H \in \mathcal{H}} \bigcup_{i = 1}^{ N_{x,H}^{v} } \big( x + H \big)
	\end{equation}
	the	\emph{trace of the Poisson zoo} at level $v$.
\end{definition}
Note that if $N_{x,H}^{v}=0$ then $\bigcup_{i = 1}^{ N_{x,H}^{v} } \big( x + H \big)=\emptyset$, but if  $N_{x,H}^{v}\geq 1$ then  $\bigcup_{i = 1}^{ N_{x,H}^{v} } \big( x + H \big)=x+H$.

\begin{remark}\label{remark_other_def_oz_zoo}
We may alternatively define $\mathcal{S}^v$ by considering i.i.d.\ random variables $(N^v_x, \, x \in \mathbb{Z}^d)$ with $\POI(v)$ distribution
and i.i.d.\ random sets $(\mathrm{H}^{x,k}, \, x \in \mathbb{Z}^d, k \in \mathbb{N})$ with distribution $\nu$. Here $N^v_x$ denotes the number of animals rooted
at $x$  and if we define $N^v_{x,H}:=\sum_{k=1}^{N^v_x} \mathds{1}[ \mathrm{H}^{x,k} = H ] $ then $\{ N^v_{x,H} \, : \, x \in \mathbb{Z}^d, H \in \mathcal{H} \}$
will be a Poisson zoo at level $v$. Thus, we can alternatively define the trace of the Poisson zoo at level $v$  as $\mathcal{S}^v=\cup_{x \in \Z^d}  \cup_{k=1}^{N^v_x} (x+\mathrm{H}^{x,k})$.
\end{remark}

Note that
\begin{equation}\label{eq_ergodic}
\text{ the law of $\mathcal{S}^v$ is invariant and ergodic under the translations of $\mathbb{Z}^d$,}
 \end{equation}
 since $\mathcal{S}^v$ is a factor of i.i.d.\ process (cf.\ \eqref{def_eq_trace_of_zoo}) and Bernoulli shifts are ergodic.

We say that a subset $S$ of $\mathbb{Z}^d$ percolates if $S$ has an infinite connected component.

It follows from \eqref{eq_ergodic} that the value of $\mathbb{P}( \mathcal{S}^v \text{ percolates}  )$ is either zero  or one.

Under the monotone coupling of the Poisson zoos $\widehat{\mathcal{N}}^{v}$ and $\widehat{\mathcal{N}}^{v'}$ with the same $\nu$ we have
\begin{equation}\label{mon_coupl_Sv}
\mathcal{S}^{v} \subseteq \mathcal{S}^{v'} \quad \text{if} \quad
0 \leq v \leq v'.
\end{equation}

Let us define the critical threshold $v_c=v_c(\nu)$ of the Poisson zoo percolation model as
\begin{equation}\label{zoo_v_c_threshold}
v_c := \inf\{\, v >0 \, : \, \mathbb{P}( \mathcal{S}^v \text{ percolates}  )=1 \, \}.
\end{equation}
Using \eqref{mon_coupl_Sv} we see that $\mathbb{P}( \mathcal{S}^v \text{ percolates}  )=1$ if $v>v_c$, but $\mathbb{P}( \mathcal{S}^v \text{ percolates}  )=0$ if $v<v_c$.

Various correlated percolation models (e.g., finitary random interlacements and loop percolation) arise as a  special case of the Poisson zoo model by choosing a specific
  probability measure $\nu$ on $\mathcal{H}$ in Definition \ref{poisson_zoo}. We will give an overview of the literature of such models in Section \ref{subsection_related_literature}. The basic question for any such percolation model concerns the non-triviality of phase transition: for which choices of $\nu$ do we have $v_c \in (0,\infty)$?

Let us first argue that if $d \geq 2$ then $v_c<+\infty$ holds for any choice of $\nu$. Indeed, any $H \in \mathcal{H}$ contains the origin $o$ of $\mathbb{Z}^d$, therefore the trace $\mathcal{S}^v$ of the Poisson zoo at level $v$ can be coupled to an i.i.d.\ Bernoulli site configuration $\mathcal{B}^p$ with density $p=1-e^{-v}$ in such a way that $\mathcal{B}^p \subseteq \mathcal{S}^v$. This implies $v_c \leq -\ln\left( 1-p_c \right)$, where $p_c=p_c^{\text{site}}(\Z^d) \in (0,1)$ denotes the critical threshold of Bernoulli site percolation on the nearest neighbour lattice $\mathbb{Z}^d$.

Therefore the real question is whether the Poisson zoo model has a non-trivial subcritical phase (i.e., $v_c>0$) or is it supercritical for all $v >0$ (i.e., $v_c=0$).
Let us now formulate two general lemmas that give sufficient conditions for $v_c=0$ and $v_c>0$, respectively.

Let $|H|$ denote the cardinality of $H \in \mathcal{H}$.

\begin{definition}[Moments]
	Given $k \in \N$, let us denote the $k$'th moment of the cardinality of a random subset with law $\nu$ by
	\begin{equation}\label{def_eq_moments}
	m_k := m_k(\nu):= \sum_{H \in \mathcal{H}} |H|^k \cdot \nu(H).
	\end{equation}
\end{definition}

\begin{lemma}[Infinite first moment implies percolation for any $v>0$]\label{lemma_infinite_m1_perc}
	If $m_1 = \infty$, then for any $v \in (0,+\infty)$ we have $\mathcal{S}^v=\mathbb{Z}^d$. This implies in particular that  $\mathcal{S}^v$ percolates  for any $v \in \R_{+}$.
\end{lemma}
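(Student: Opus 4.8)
The plan is to establish the stronger statement that $\mathcal{S}^v=\mathbb{Z}^d$ almost surely for every $v>0$; the percolation claim then follows at once, as $\mathbb{Z}^d$ is itself an infinite connected graph. First I would invoke the translation invariance recorded in \eqref{eq_ergodic}, which makes $\mathbb{P}(y\in\mathcal{S}^v)$ the same for all $y\in\mathbb{Z}^d$; since $\{\mathcal{S}^v\neq\mathbb{Z}^d\}=\bigcup_{y\in\mathbb{Z}^d}\{y\notin\mathcal{S}^v\}$ is a countable union of events, it suffices to prove $\mathbb{P}(o\in\mathcal{S}^v)=1$.

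Next I would reformulate the event $\{o\in\mathcal{S}^v\}$ directly from \eqref{def_eq_trace_of_zoo}: the origin lies in $\mathcal{S}^v$ if and only if $N^v_{x,H}\geq 1$ for some pair $(x,H)\in\mathbb{Z}^d\times\mathcal{H}$ with $o\in x+H$, that is, with $-x\in H$. Hence, putting $M:=\sum_{x\in\mathbb{Z}^d}\sum_{H\in\mathcal{H}:\,-x\in H}N^v_{x,H}$, we have $\{o\in\mathcal{S}^v\}=\{M\geq 1\}$. The $N^v_{x,H}$ are independent with $N^v_{x,H}\sim\POI(v\cdot\nu(H))$, so $M$ is an at-most-countable sum of independent Poisson variables, and
\[
\mathbb{E}[M]=v\sum_{x\in\mathbb{Z}^d}\sum_{H\in\mathcal{H}:\,-x\in H}\nu(H)
=v\sum_{H\in\mathcal{H}}\nu(H)\,\bigl|\{x\in\mathbb{Z}^d:\,-x\in H\}\bigr|
=v\sum_{H\in\mathcal{H}}|H|\,\nu(H)=v\cdot m_1 ,
\]
the interchange of the two sums being legitimate by Tonelli's theorem since all summands are non-negative, and the last equality being the definition \eqref{def_eq_moments} of $m_1$.

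Finally, since $m_1=\infty$ and $v>0$ we have $\mathbb{E}[M]=\infty$. Writing $M$ as the increasing limit of its finite partial sums $M_n$ — each $M_n$ being Poisson distributed with $\mathbb{E}[M_n]\to\infty$ — we obtain $\mathbb{P}(M\geq 1)\geq\mathbb{P}(M_n\geq 1)=1-e^{-\mathbb{E}[M_n]}\to 1$, so $\mathbb{P}(o\in\mathcal{S}^v)=\mathbb{P}(M\geq 1)=1$. Together with the first step this yields $\mathcal{S}^v=\mathbb{Z}^d$ almost surely, hence $\mathcal{S}^v$ percolates. I do not expect any genuine obstacle: the proof is a short direct computation, the only points that deserve an explicit (but one-line) justification being the Tonelli interchange in the displayed identity and the elementary convergence $1-e^{-\mathbb{E}[M_n]}\to 1$ used at the end.
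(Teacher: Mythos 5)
Your proof is correct and follows essentially the same route as the paper: you count the animals $(x,H)$ whose trace covers the origin (your $M$ is literally the paper's $N$), observe that this count is an independent sum of Poissons with total mean $v\,m_1$, and conclude $\mathbb{P}(o\in\mathcal{S}^v)=1$ from $m_1=\infty$, then spread this to all of $\mathbb{Z}^d$ by translation invariance and a countable union. The only (welcome) difference is that you are slightly more explicit on two small points the paper leaves implicit: what ``$\mathrm{POI}(\infty)$'' means, via the truncation $M_n$, and the passage from $\mathbb{P}(y\in\mathcal{S}^v)=1$ for each $y$ to $\mathcal{S}^v=\mathbb{Z}^d$ a.s.
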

\noindent In short: if $m_1=+\infty$ then $v_c=0$.  We will prove Lemma \ref{lemma_infinite_m1_perc} in Section \ref{section_moment_cond_zoo}.

Let us note without a proof that if $d=1$ then $m_1<+\infty$ implies $v_c=+\infty$. Let us also note without a proof that for any $d \geq 1$ the condition
$m_1<+\infty$ implies that for any $v>0$ we have $\mathcal{S}^v \neq \mathbb{Z}^d$ and in fact the density of $\mathcal{S}^v$ is $1-\exp\left( -v \cdot m_1 \right)$,
which can be made arbitrarily small by making $v$ small. Nevertheless, this does not always imply that $\mathcal{S}^v$ only contains finite connected components when $v$ is small enough: examples of percolation models  for which $m_1<+\infty$ and $v_c=0$ is possible include \emph{planar ellipses percolation} \cite{TeixeiraUngaretti2017} (see Section \ref{subsub_ellipses}) and the \emph{random length worms model} on $\mathbb{Z}^d, d \geq 5$ (to be introduced in Section \ref{subsection_def_worms_rl}).

   Our next lemma gives a sufficient condition for a non-trivial percolation phase transition.

\begin{lemma}[Finite second moment implies existence of subcritical phase]\label{lemma_m_2_finite_no_perc}
	If $m_2 < \infty$ then for any $v \in \big(0,  1 \slash ((2d+1) \cdot m_2)\big)$ the set $\mathcal{S}^v$ does not percolate.
\end{lemma}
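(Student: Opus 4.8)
The plan is to dominate the cluster of the origin in $\mathcal{S}^v$ by the progeny of a subcritical branching process, so that the cluster is almost surely finite. The natural way to organize this is via a comparison with a sub-critical branching random walk, or more simply via an exploration/path-counting argument: we bound the probability that there is an occupied path of length $n$ from the origin, show it decays geometrically in $n$ when $v$ is small, and conclude by Borel--Cantelli that no infinite cluster through the origin exists; translation invariance \eqref{eq_ergodic} then finishes it.

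Here is how I would carry it out. First, recall from Remark \ref{remark_other_def_oz_zoo} that $\mathcal{S}^v = \bigcup_{x} \bigcup_{k=1}^{N^v_x} (x + \mathrm{H}^{x,k})$, a union of i.i.d.-placed animals. A site $y \in \mathbb{Z}^d$ is occupied iff some animal $(x, \mathrm{H}^{x,k})$ with $y \in x + \mathrm{H}^{x,k}$ is present. Fix a nearest-neighbour path $\gamma = (o = y_0, y_1, \dots, y_n)$ in $\mathbb{Z}^d$. For $\gamma$ to lie in $\mathcal{S}^v$, every vertex $y_i$ must be covered by some animal; I would expand the event $\{\gamma \subseteq \mathcal{S}^v\}$ as a union over assignments of animals to the vertices of $\gamma$ and use a union bound. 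The key quantitative input is: for a fixed vertex $y$, the expected number of animal copies in the zoo whose trace contains $y$ equals $v \cdot m_1 \le v \cdot m_2$ (since $|H| \ge 1$), by a direct computation from the Poisson intensities $v\,\nu(H)$ summed over $x$ with $y \in x+H$ and over $H$. More refined: the expected number of animal copies whose trace contains two given vertices $y$ and $y'$ is at most $v \cdot \sum_{H} |H|^2 \nu(H) = v \cdot m_2$ when $y' \ne y$, because an animal of size $|H|$ placed to cover $y$ can be translated in at most $|H|$ ways and still, fixing that it also covers $y'$, contributes at most $|H|^2$ to the count — this is where $m_2$ enters.

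For the path count: there are at most $(2d)^n$ nearest-neighbour paths of length $n$ from $o$ (actually $\le (2d)(2d-1)^{n-1}$, but $(2d+1)^n$ is a safe crude bound, matching the constant in the statement). For a fixed such self-avoiding-or-not path, I would build an exploration in which we reveal animals one vertex at a time along $\gamma$; each newly-required covering of $y_i$ either comes from an animal already revealed (covering $y_{i-1}$ and $y_i$, bounded via the $m_2$ estimate above applied to the pair) or from a fresh animal covering $y_i$ — but in a subcritical-comparison argument the cleanest route is: expected number of occupied paths of length $n$ from $o$ is at most $\big((2d+1)\,v\,m_2\big)^n$ by multiplying the $(2d+1)^n$ combinatorial factor against a per-step factor of $v m_2$ obtained from the pairwise covering bound. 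When $v < 1/((2d+1)m_2)$ this is summable, so a.s.\ only finitely many occupied paths emanate from $o$, hence the cluster of $o$ is finite; by \eqref{eq_ergodic} and a union over all vertices, $\mathbb{P}(\mathcal{S}^v \text{ percolates}) = 0$.

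The main obstacle is getting the per-step constant exactly right, i.e.\ honestly bounding the dependence introduced because one animal can cover several consecutive vertices of $\gamma$ (the events $\{y_i \in \mathcal{S}^v\}$ are positively correlated). The trick is to charge each vertex of $\gamma$ to a distinct animal: run an exploration along $\gamma$, and when we reach a vertex not yet covered by a previously revealed animal, reveal a new animal covering it; the number of vertices of $\gamma$ covered by a single revealed animal of shape $H$ is at most $|H|$, and summing the "new animal" events over the path and using $\mathbb{E}[(\text{number of animal copies covering a fixed } y)] = v m_1$ together with the size-biasing that produces the factor $m_2$ rather than $m_1$, one recovers the stated threshold. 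Everything else — the Poisson intensity bookkeeping, the combinatorial path count, Borel--Cantelli — is routine.
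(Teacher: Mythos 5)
There is a genuine gap: the central claimed bound, that the expected number of occupied lattice paths of length $n$ from $o$ is at most $\big((2d+1)\,v\,m_2\big)^n$, is false. The obstruction you flagged (one animal covering many consecutive vertices of $\gamma$) is not a bookkeeping nuisance you can ``charge away''; it actually destroys the geometric decay in the path length $n$. Concretely, take $\nu = \delta_H$ with $H$ the line segment $\{0,e_1,\dots,Le_1\}$, so $m_2=(L+1)^2$, and let $v = 1/\big(2(2d+1)(L+1)^2\big)$, which is in the allowed range. The single path $\gamma=(o,e_1,\dots,Le_1)$ of length $L$ satisfies $\gamma\subseteq\mathcal S^v$ whenever $N^v_{o,H}\ge 1$, so $\mathbb P(\gamma\subseteq\mathcal S^v)\ge 1-e^{-v}\gtrsim v$, which for large $L$ is polynomially small in $L$; your proposed bound $\big((2d+1)vm_2\big)^L=(1/2)^L$ is exponentially small in $L$. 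The first moment is not bounded by a product of per-lattice-step factors, and no exploration that reveals ``a new animal per uncovered vertex'' can rescue this, because the number of \emph{animals} used along $\gamma$ can be far smaller than $n$.

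The idea you are missing is to index the exploration by the number of \emph{animals} in a connecting chain rather than by the number of lattice steps. This is exactly what the paper does with the ``zoo distance'': define $C_k^v$ to be the set of vertices reachable from $o$ via a chain of $k$ distinct present animals, each overlapping the closure of the previous one, and write $\mathcal C_o^v=\bigcup_{k\ge 1}C_k^v$. For a fixed chain of length $k$ the animals are pairwise distinct, so the indicators of their presence are independent Poissons, and after the change of variables (summing over the overlap points and endpoints) one gets $\mathbb E[|C_k^v|]\le \Lambda^k$ with $\Lambda = v\sum_H |H|\,|\overline H|\,\nu(H)\le (2d+1)\,v\,m_2$. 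Here the $(2d+1)$ comes from $|\overline H|\le (2d+1)|H|$ (passing from $H_i$ to its discrete closure so that overlap with $H_{i+1}$ can be encoded as a shared vertex), not from a lattice path count — your $(2d+1)^n\ge(2d)(2d-1)^{n-1}$ reading of the constant is a red herring. Then $\mathbb E[|\mathcal C_o^v|]\le\sum_k\Lambda^k<\infty$ when $v<1/\big((2d+1)m_2\big)$, which is a stronger conclusion than ``the cluster is a.s.\ finite'' and immediately rules out percolation by translation invariance. Also, your intermediate claim that the expected number of animal copies covering two fixed vertices $y\neq y'$ is at most $v\,m_2$ is off: for two fixed sites it is $\le v\,m_1$ (the placement $x$ lies in $(y-H)\cap(y'-H)$, a set of size $\le|H|$); the $m_2$ appears only once you also sum over the second vertex, which is precisely the chain-step sum the paper performs.
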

\noindent In short: if $m_2<+\infty$ then $v_c>0$.
We will prove Lemma \ref{lemma_m_2_finite_no_perc} in Section \ref{section_moment_cond_zoo}.

Note that one can prove Lemma \ref{lemma_m_2_finite_no_perc} by dominating the exploration of the animals that make up the connected cluster of $o$ in $\mathcal{S}^v$ by a subcritical branching process
with a compound Poisson offspring distribution. The reason why the second moment $m_2$ enters the picture is that the expected total cardinality of animals that contain a fixed vertex
is related to $m_2$ by size-biasing.
 If $m_2=+\infty$ then the expected number of offspring in the dominating branching process is infinite for all $v>0$, therefore it survives with positive probability. One might naively think that this survival property can be transferred back to the exploration of the animals that make up the connected cluster of the origin in $\mathcal{S}^v$, which would imply that $\mathcal{S}^v$ percolates. The problem with this approach is that the total population of a supercritical branching process  conditioned on survival grows much faster than the volume of the region occupied by the associated branching random walk on $\mathbb{Z}^d$, see e.g.\ \cite{biggins_shape}.
  Thus, on the long run, the exploration of $\mathcal{S}^v$ behaves very differently compared to the dominating branching process, since it inevitably  runs out of new animals to be explored in larger and larger spatial regions surrounding the origin.

Nevertheless, the gap between the conditions of Lemmas \ref{lemma_infinite_m1_perc} and \ref{lemma_m_2_finite_no_perc} is rather wide, so one wonders whether it is possible to provide weaker moment conditions that guarantee either $v_c=0$ or $v_c>0$ in the Poisson zoo percolation model. Let us now argue that one cannot tighten this gap by much, i.e., both Lemmas \ref{lemma_infinite_m1_perc} and \ref{lemma_m_2_finite_no_perc} are sharp (or at least very close to being sharp).

On the one hand,  the condition of Lemma \ref{lemma_infinite_m1_perc} is sharp if $\nu$ is the law of a ball $\ball(o,\mathrm{R})$ w.r.t.\ the $\ell_{\infty}$-norm on $\mathbb{Z}^d$ and the radius $\mathrm{R}$ of the ball is an $\mathbb{N}_0$-valued random variable. Indeed, in Section \ref{subsection_related_literature} we will show that it easily follows from the main result of \cite{Gouere2008} that
 $m_1<+\infty$ implies $v_c>0$ for this choice of $\nu$.

On the other hand, the theorem that we will state in Section \ref{subsection_def_worms_rl} shows that if $d \geq 5$ then we can choose $\nu$  in such a way that
 $m_2(\nu)$ is ``barely'' infinite, yet $v_c=0$ holds (i.e., the condition of Lemma \ref{lemma_m_2_finite_no_perc} is very close to being sharp).
   However, the question whether the condition of Lemma \ref{lemma_m_2_finite_no_perc} can be weakened remains open:

 \begin{question}\label{question_strenghten_lemma_m2}
  Given  $d \geq 2$, is there  a function $f: \mathbb{N} \to \mathbb{R}_+$ satisfying $\lim_{n \to \infty} f(n)/n^2 =0$ such that
  for any choice of $\nu$ the condition $\sum_{H \in \mathcal{H}} f(|H|) \cdot \nu(H)<+\infty$ implies $v_c(\nu)>0$?
  \end{question}

\subsection{Random length worms model}\label{subsection_def_worms_rl}

  The main percolation model of this paper can be defined by specifying the measure $\nu$ on $\mathcal{H}$ that appears in
   the definition of the Poisson zoo  (cf.\ Definition \ref{poisson_zoo}).

\begin{definition}[Random length worms model]\label{def_worms_nu}
	Let us consider a probability mass function $m: \N \to \mathbb{R} $ and an $\N$-valued random variable $\mathcal{L}$ satisfying $\mathbb{P}(\mathcal{L}=\ell)=m(\ell), \, \ell \in \mathbb{N}$. We call the distribution of $\mathcal{L}$ the \emph{length distribution} of worms. Let us also consider
 a nearest neighbour simple random walk $(X(t))_{t \in \N_0}$ on $\Z^d$ starting from the origin,  independent of $\mathcal{L}$. Let
	\begin{equation*}
	\mathcal{R} := \left\{\, X(0), X(1), \ldots, X(\mathcal{L}-1)\, \right\}
	\end{equation*}
	denote the set of sites visited by the random walk up to time $\mathcal{L}-1$.
	Let us define the probability measure $\nu$  on $\mathcal{H}$ by $\nu(H) = \mathbb{P}( \mathcal{R} = H )$ for each $H \in \mathcal{H}$.
	Given some $v \in \R_{+}$, the random set $\mathcal{S}^v$ obtained via Definition \ref{poisson_zoo} with this  $\nu$ is called the \emph{random length worms set} at level $v$.
\end{definition}

Now we are ready to state the main result of this paper.

\begin{theorem}[Supercritical worm percolation]\label{main_thm} Let $d \geq 5$.
Let $\varepsilon>0$ and $\ell_0 \geq e^e$. If
\begin{equation}\label{loglog_epsilon_main}
  m(\ell)= c \frac{\ln(\ln(\ell))^{\varepsilon}}{\ell^3 \ln(\ell)} \mathds{1}[\ell \geq \ell_0], \quad \ell \in \mathbb{N}, \quad
  \text{(where $c$ is chosen so that $\sum_{\ell} m(\ell)=1$)}
  \end{equation}
 then for any $v>0$ the random length worms model $\mathcal{S}^v$ is supercritical:
$\mathbb{P}(\, \mathcal{S}^v \text{ percolates}\, )=1$.
 \end{theorem}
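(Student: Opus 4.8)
Fix $v>0$; the goal is $\prob(\mathcal{S}^v\text{ percolates})=1$. Since the Poisson zoo is a superposition of independent Poisson families indexed by the animals, discarding every worm whose length falls outside a prescribed set $A\subseteq\N$ leaves an \emph{independent} sub-family whose trace is contained in $\mathcal{S}^v$; it therefore suffices to produce an infinite cluster using only worms whose length lies near a rapidly increasing sequence $\ell_0<\ell_1<\cdots$. We couple these with spatial scales $L_k$ with $\ell_k\asymp L_k^{2}$, so that a worm of length $\asymp\ell_k$ has displacement of order $L_k$ and, conditioned to cross a box of side $L_k$, behaves (invariance principle) like a Brownian crossing. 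The proof is then a multi-scale renormalization: one defines ``good'' boxes at each scale $k$ and shows goodness propagates from scale $k$ to $k+1$ with failure probability $q_{k+1}$ so small that $\sum_k q_k<\infty$; a standard Borel--Cantelli / ``good box at every scale'' argument then yields an unbounded component of $\mathcal{S}^v$, and ergodicity \eqref{eq_ergodic} upgrades positive probability to probability one.

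\textbf{The renormalization step.} Declare a box $B$ of side $L_k$ to be \emph{$k$-good} if $\mathcal{S}^v$, restricted to a bounded dilate of $B$ and to the relevant worm lengths, contains a connected component $\mathcal{C}$ that joins two opposite faces of $B$ and satisfies $\capacity(\mathcal{C})\ge\rho_k$ for a suitable sequence $\rho_k\asymp\ell_k$. Passing from scale $k$ to scale $k+1$ uses two ingredients. (i) A combinatorial fact about highly supercritical finite-range site percolation: if all but a tiny fraction of the $\asymp(L_{k+1}/L_k)^{d}$ scale-$k$ sub-boxes of a scale-$(k+1)$ box are $k$-good, then there is a ``tube'' of $k$-good sub-boxes joining opposite faces; this fails with probability at most $(\text{combinatorial factor})\times q_k^{2}$. (ii) A gluing estimate: among the worms of length $\asymp\ell_{k+1}$ (and suitable intermediate lengths) started in the box, with high probability there are enough that cross the box and bridge the components $\mathcal{C}$ of consecutive sub-boxes along the tube --- here one needs sharp lower bounds, in terms of capacity, on the probability that a transient simple random walk ($d\ge 5$) hits a given set of capacity $\ge\rho_j$. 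The merged object is a component crossing the scale-$(k+1)$ box; since it contains the range of a worm of length $\asymp\ell_{k+1}$, whose capacity is $\asymp\ell_{k+1}$ precisely when $d\ge5$, its capacity is $\ge\rho_{k+1}$. The probability that (ii) fails is controlled by a second-moment (Chebyshev/Paley--Zygmund) argument on the number of ``useful'' worms, and turns out to be of order $1/M_2(\ell_{k+1})$, where $M_2(n):=\sum_{\ell\le n}\ell^{2}m(\ell)$ is the truncated second moment of the length distribution. Hence $q_{k+1}\le C^{(L_{k+1}/L_k)^{d}}q_k^{2}+O\!\big(1/M_2(\ell_{k+1})\big)$.

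\textbf{Choice of scales, base case, and conclusion.} A worm of length $\asymp L_{k+1}^{2}$ crossing a scale-$(k+1)$ box visits only $\asymp(L_{k+1}/L_k)^{2}$ of its $(L_{k+1}/L_k)^{d}$ sub-boxes, and hits a given scale-$k$ cluster (capacity $\asymp L_k^{2}$, at distance $\asymp L_k$) with probability only $\asymp L_k^{4-d}$; to hit enough of them one is forced to take $L_{k+1}\gtrsim L_k^{(d-2)/2}$, i.e.\ the scales must grow \emph{doubly exponentially}, $\ln L_k\asymp\big((d-2)/2\big)^{k}$. For $m$ as in \eqref{loglog_epsilon_main} this gives $M_2(\ell_k)\asymp(\ln\ln\ell_k)^{1+\varepsilon}\asymp k^{1+\varepsilon}$, so $\sum_k 1/M_2(\ell_k)\asymp\sum_k k^{-1-\varepsilon}<\infty$ --- and this is exactly where the hypothesis $\varepsilon>0$ is used (with $\varepsilon=0$ the per-scale errors would be summable only if an uncontrolled constant happened to be large enough). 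For the base scale, take $L_0$ large depending on $v$: the mean number of worms of length $\asymp L_0^{2}$ started in a box of side $L_0$ is $\asymp v\,L_0^{d-4}(\ln\ln L_0)^{\varepsilon}/\ln L_0$, which exceeds any prescribed constant once $L_0$ is large (here $d\ge5$ enters), so $q_0$ can be made as small as desired. Iterating the recursion then gives $\sum_k q_k<\infty$, and the proof is complete.

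\textbf{The main obstacle.} The crux --- and the bulk of the technical work --- is the gluing estimate (ii). When $d\ge5$ a single long worm, even one crossing the whole box, meets any \emph{fixed} scale-$k$ cluster only with vanishing probability, so one cannot connect the sub-clusters one worm at a time; instead one must show that the long worms \emph{collectively} hit, and hence merge, essentially all the scale-$k$ clusters lying along a crossing tube. This requires two-sided capacity and Green-function estimates for simple random walk, control of the weak dependence between the various hitting events, and a careful second-moment computation whose error comes out to be $\asymp 1/M_2(\ell_{k+1})$. It is this error, together with the near-rigidity of the admissible scale sequence, that makes the ``barely infinite'' second moment in \eqref{loglog_epsilon_main} with $\varepsilon>0$ the natural limit of the method --- and leaves open, as the paper notes, whether the hypothesis can be relaxed further.
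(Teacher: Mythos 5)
Your proposal shares several key ingredients with the paper (the equivalence of length and capacity for random-walk ranges in $d\geq 5$, the truncated second moment $\sum_{\ell\leq n}\ell^2 m(\ell)$, doubly-exponential scale growth, second-moment control of hitting events), but the architecture is genuinely different, and I believe the central recursion has a gap.

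The paper does \emph{not} run a renormalization in which ``goodness'' must hold simultaneously at every scale of a sequence going to infinity. Instead it separates two roles. First, a \emph{finite} sequence $(R_n)_{n=0}^{N+1}$ is chosen along which the capacity-to-length ratio of a single connected ``seed'' is doubled at each step (Lemma \ref{lemma_doubling_gamma}), with the success probability held \emph{constant} at $\geq 3/4$ at every scale --- this is a statement about the law $\mathcal{P}_{o,R_n,z,v}$ of a fresh input package, proved by a self-contained second-moment argument at scale $n$ that does not condition on lower-scale events, so nothing compounds across scales. Second, the spatial percolation happens at a \emph{single} scale $R_{N+1}$ via Grimmett--Marstrand dynamic renormalization (Proposition \ref{lemma:Grimmett_Marstrand}), where a per-step success probability $\geq 1/2$ is all that is needed. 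The hypothesis $\varepsilon>0$ enters only through the verification that a finite good sequence of scales exists (conditions \eqref{condition_R_n_R_n1} and \eqref{condition_shoot}); there is no Borel--Cantelli over scales.

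Your scheme instead requires goodness to propagate through an infinite sequence of scales, governed by the recursion $q_{k+1}\leq C^{(L_{k+1}/L_k)^d}q_k^2+O\bigl(1/M_2(\ell_{k+1})\bigr)$ with $\sum_k q_k<\infty$. This cannot close as stated. The additive error floor forces $q_k\gtrsim 1/M_2(\ell_k)\asymp k^{-1-\varepsilon}$, so $q_k^2$ is only polynomially small in $k$; but you also (correctly) require $L_{k+1}/L_k$ to diverge super-polynomially, so the prefactor $C^{(L_{k+1}/L_k)^d}$ --- or even the much milder $(L_{k+1}/L_k)^{2d}$ that a pairwise union bound over bad sub-boxes would give --- is of order $\exp(\exp(ck))$. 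Hence $C^{(L_{k+1}/L_k)^d}q_k^2\gg 1$ for all large $k$, the recursive inequality is vacuous, and no decay of $q_k$ (let alone summability) can be extracted from it. The paper's resolution is structural, not a sharpening of constants: by decoupling seed-building (constant per-scale success probability, finitely many scales) from percolation (a single scale, Bernoulli domination), the argument never needs a per-scale failure probability that decays along the scale sequence at all.
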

\noindent In short, if $d \geq 5$, $\varepsilon>0$ and $m$ is of form \eqref{loglog_epsilon_main} then $v_c=0$.

\begin{remark}\label{remark_after_main_thm} $ $

\begin{enumerate}[(i)]
\item Observe that if we choose $\varepsilon < -1$ in \eqref{loglog_epsilon_main} then $\mathbb{E}[\mathcal{L}^2]= \sum_{\ell=1}^{\infty} \ell^2 m(\ell) <+\infty $, which implies $m_2<+\infty$ (cf.\ \eqref{def_eq_moments}) and  therefore $v_c>0$ by Lemma \ref{lemma_m_2_finite_no_perc}. This indicates that the gap between the conditions of
    Theorem \ref{main_thm} and Lemma \ref{lemma_m_2_finite_no_perc} is of ``loglog'' order.

\item Also note that if $\varepsilon>0$ in \eqref{loglog_epsilon_main} then  $\mathbb{E}[\mathcal{L}^2/\ln(\ln(\mathcal{L}))^{1+\alpha}] <+\infty $ if and only if $\alpha>\varepsilon$,
 which is just another way to see that the gap between
    Theorem \ref{main_thm} and Lemma \ref{lemma_m_2_finite_no_perc} is small.

\item   Theorem \ref{main_thm} can be strengthened:
 in Section \ref{subsection_good_seq_of_scales} we state Theorem \ref{thm_worm_perco} which allows us to conclude  $v_c=0$
 under weaker  conditions on $m$ than Theorem \ref{main_thm}. The reason why we decided to defer the statement of Theorem \ref{thm_worm_perco} to a later section is that the conditions of Theorem \ref{thm_worm_perco} are more complicated to formulate than those of
 Theorem \ref{main_thm}.  Let us also note  that the proof of Theorem \ref{thm_worm_perco} also implies percolation of worms of bounded length in thick enough slabs, i.e., for each $v>0$ there is a length threshold $L=L(v)$ and a thickness $R=R(v)$ such that if $\mathcal{S}^{v}_L$ denotes the union of the traces of worms of length at most $L$ then
 $\mathcal{S}^{v}_L \cap (\mathbb{Z}^2 \times \{0,1,\dots,R \}^{d-2})$ almost surely percolates.

\item \label{remark_stoch_dom} If the law of the length variable $\widetilde{\mathcal{L}}$ stochastically dominates the law of the length variable $\mathcal{L}$ then one can couple the corresponding
random length worms sets at the same level $v$ in such a way that $\mathcal{S}^v \subseteq \widetilde{\mathcal{S}}^v$ holds. This implies that the corresponding percolation thresholds satisfy $\widetilde{v}_c \leq v_c$, and thus $v_c=0$ implies $\widetilde{v}_c=0$.

\item The main reason why we assume $d \geq 5$ in Theorem \ref{main_thm} is that we must assume this if we want the length and the capacity of a worm to be comparable (see Proposition \ref{prop_lln_cap_rw}).

\item  If $d \geq 5$ then  $m_2=+\infty$ is equivalent to  $\mathbb{E}[\mathcal{L}^2]=+\infty$ by \cite[Theorems 1, 2]{dvoer}.

\end{enumerate}
\end{remark}

\begin{question}\label{question_worm_m2} Let $d \geq 5$.
Is it possible to conclude $v_c=0$ in the random length worms model if we only assume $m_2=+\infty$?
\end{question}

\noindent Note that if the answer to Question \ref{question_worm_m2} is affirmative then Lemma \ref{lemma_m_2_finite_no_perc} is sharp in the sense that
 the answer to Question \ref{question_strenghten_lemma_m2} is negative if $d=5$.

\noindent We formulate some conjectures about the percolation of worms when $d=2,3,4$ in Section \ref{sub_conj_d234}.

\subsection{Related results from the literature}\label{subsection_related_literature}

Let us compare our models, methods and results to earlier models, methods and results.

\subsubsection{Poisson Boolean model}\label{subsub_poi_boolean}

The \emph{Poisson Boolean model} is a continuum percolation model where every point of a homogeneous PPP on $\mathbb{R}^d$ is the center of a Euclidean ball with a random radius $\mathrm{R}_*$ (where it is also assumed that the radii corresponding to different points are identically distributed and independent of each other and the PPP).
Although the literature of this model is vast, now we will focus on one specific result which is the most relevant from our viewpoint (a great collection of properties of the Poisson Boolean model can be found in \cite{MeesterRoy1996}).

In \cite{Gouere2008} the author shows that for any $d \geq 2$ there is phase transition for the existence of an infinite connected cluster in the union of the balls
 if and only if the first moment of the volumes (or the $d$'th moment of the radii $\mathrm{R}_*$) are finite. Let us now discuss the implications of this result regarding our Poisson zoo percolation model.

  We want to deduce from the main result of \cite{Gouere2008} that the condition of Lemma \ref{lemma_infinite_m1_perc} is sharp if  $\nu$ (cf.\ Definition \ref{poisson_zoo}) is the law of a ball $\ball(o,\mathrm{R})$ w.r.t.\ the $\ell_{\infty}$-norm on $\mathbb{Z}^d$ and the radius $\mathrm{R}$ of the ball is an $\mathbb{N}_0$-valued random variable: we want to show that $m_1<+\infty$ implies $v_c>0$ for this choice of $\nu$. Indeed, given a radius variable $\mathrm{R}$ in the Poisson zoo, let us define the  radius variable
  $\mathrm{R}_*$ in the Poisson Boolean model to be $\mathrm{R}_*=\sqrt{d}\cdot ( \mathrm{R}+3 )$. Now if we are given a Poisson Boolean model with intensity $v$ and radius distribution $\mathrm{R}_*$, we can derive from it a Poisson zoo model at level $v$ with radius distribution $\mathrm{R}$ in the following fashion: define the number of animals rooted at $x \in \mathbb{Z}^d$ to be the number of
  points $\omega$ of the PPP of the  Boolean model whose coordinate-wise lower integer part  is equal to $x$, and if $R^\omega_*$ is the radius of the ball around the point $\omega$   in the Poisson Boolean model, then we define the corresponding radius in the Poisson zoo to be $\frac{1}{\sqrt{d}}R^\omega_*-3$. One can check that under this coupling the absence of percolation in the Poisson Boolean model implies absence of percolation in $\mathcal{S}^v$. It is also easy to check that $m_1<+\infty$ if and only if
   $\mathbb{E}[ \mathrm{R}^d] <+\infty $ if and only if $\mathbb{E}[ (\mathrm{R}_*)^d] <+\infty $, thus the main result of \cite{Gouere2008} implies $v_c>0$.


\subsubsection{Ellipses percolation}\label{subsub_ellipses}

The \emph{ellipses percolation} model is a continuum percolation model introduced in \cite{TeixeiraUngaretti2017} and is defined as follows.
One considers  ellipses centered on a homogeneous Poisson point process on $\mathbb{R}^2$
 with uniformly random direction, minor axis fixed to be equal to one and major axis with distribution $\rho$ supported on $[1, \infty)$.
 The authors only consider distributions $\rho$ satisfying $\rho[r,+\infty) \asymp r^{-\alpha}$ and find that (i)  if $0 <\alpha \leq 1$ then the set covered by the ellipses is $\mathbb{R}^2$ (this is similar to our Lemma \ref{lemma_infinite_m1_perc}, but in the continuum setting $m_1$ denotes the expected Lebesgue measure of an animal), (ii) if $\alpha \geq 2$ then the set covered by the ellipses does not percolate if the intensity of the PPP is low enough  and (iii) if $1<\alpha<2$ then the covered set is not equal to $\mathbb{R}^2$ but it percolates for any positive value of the intensity of the PPP. Thus, heuristically speaking, (the continuous analogue of) Lemma \ref{lemma_m_2_finite_no_perc} is almost sharp (but not exactly sharp) for the ellipses percolation model: the second moment $m_2$ of the area of one ellipse is finite if and only if $\alpha>2$, but if $\alpha=2$ then $m_2=+\infty$, yet the authors of \cite{TeixeiraUngaretti2017} prove a percolation phase transition for the covered set in the $\alpha=2$ case. See Section \ref{subsub_log_corr} for some further comparison of ellipses percolation and worm percolation in the $\alpha=2$ case.

  Note that more subtle results about the percolation of the covered set in the $\alpha=2$ case (which arises from the intersection of the 3 dimensional \emph{Poisson cylinder model} \cite{TykessonWindisch2012} with a plane) and the percolation of the vacant set are also proved in  \cite{TeixeiraUngaretti2017}. The geometry of the infinite cluster of ellipses percolation is further studied in \cite{HilarioUngaretti2021} in the case when $1<\alpha<2$.

\subsubsection{Finitary random interlacements}\label{subsub_FRI}

Although \emph{finitary random interlacements (FRI)} was defined in \cite{Bowen2019} in a more general setting,  here we will  only discuss FRI on $\mathbb{Z}^d, \, d \geq 3$, which can be viewed as the special case of our random length worms model when the length $\mathcal{L}$ of a worm has geometric distribution with expectation $T$. Thus, if we fix such a length distribution then the occupied set exhibits a percolation phase transition as the intensity parameter varies by our Lemma \ref{lemma_m_2_finite_no_perc}.
However, the parametrization of FRI is different than ours: finitary random interlacements $\mathcal{FI}^{u,T}$ at level $u$ is the same as our  $\mathcal{S}^v$ if $v=2du/(T+1)$.
The reason for this parametrization  is that by \cite[Theorem A.2]{Bowen2019} if we let $T \to \infty$ then $\mathcal{FI}^{u,T}$ weakly converges to random interlacements $\mathcal{I}^{2du}$  at level $2du$ (defined in \cite{Sznitman2010}). Note that $\mathcal{I}^u$ is connected for all $u>0$ by \cite[Corollary 2.3]{Sznitman2010}.

 The main result of \cite{ProcacciaYeZhang2021} is that for any $u > 0$ there exist $0 < T_0(u,d) \leq T_1(u,d) < \infty$ such that for all $T > T_1(u,d)$ the set $\mathcal{FI}^{u,T}$ percolates and for all $0 < T < T_0(u,d)$ the set $\mathcal{FI}^{u,T}$ does not percolate.  Currently it is conjectured (but not yet proved) that $T_0(u,d) = T_1(u,d)$, the main obstacle of a rigorous proof being the lack of stochastic monotonicity of $\mathcal{FI}^{u,T}$ in the variable $T$, see \cite{CaiXiongZhang2021}.
In \cite{CaiHanYeZhang2020} the authors study the large-scale geometry and the local connectivity properties of the infinite cluster of $\mathcal{FI}^{u,T}$ for large values of $T$.

Given some $T \in \mathbb{R}_+$, let $u_*(T)$ denote critical percolation threshold of $\mathcal{FI}^{u,T}$ (as $u$ varies).   In  \cite{CaiZhang2021b} it is proved that $u_*(T) \asymp T^{-1}$ if $d \geq 5$, $u_*(T) \asymp T^{-1}\ln(T)$ if $d=4$ and $u_*(T) \asymp T^{-1/2}$ if $d=3$ (see Section \ref{subsub_sausage} for further discussion).
In \cite{CaiProcacciaZhang2021} it is proved that various natural critical percolation thresholds characterizing local subcriticality and local supercriticality of $\mathcal{FI}^{u,T}$ all agree with $u_*(T)$, and it is shown that $u_*(T)$ is a continuous function of $T$ for any $d \geq 3$.

\subsubsection{Loop percolation, Bernoulli hyper-edge percolation}\label{subsub_loop}

Percolation of the \emph{random walk loop soup} on $\mathbb{Z}^d$ was initiated in \cite{LeJanLemaire2013}. Here we will only discuss the case when the killing parameter $\kappa$ is equal to zero and the underlying Markov process is the simple random walk on $\mathbb{Z}^d$. For any $n \geq 2$ an element $\dot{\ell}=(x_0,\dots,x_{n-1})$ of $(\mathbb{Z}^d)^n$
is called a non-trivial discrete based loop (based at $x_0$) if $x_{i+1}$ is a nearest neighbour of $x_i$ for all $0\leq i \leq n-2$, moreover $x_{n-1}$ and $x_0$ are also nearest neighbours.
The weight of a based loop $\dot{\ell}=(x_0,\dots,x_{n-1})$ is defined to be equal to
  \[\mu(\dot{\ell})=\frac{1}{n} (2d)^{-n} =P_{x_0}\big(\, X(1)=x_1,\, \dots,\, X(n-1)=x_{n-1},\, X(n)=x_0 \, \big),  \]
where $P_{x_0}$ denotes the law of a simple random walk $(X(t))_{t \in \mathbb{N}_0}$ starting from $x_0$.
 Given some intensity parameter $\alpha>0$, one considers a Poisson point process on the space of based loops with intensity measure $\alpha \mu$ and declares an edge of $\mathbb{Z}^d$ to be open if it is crossed by a based loop of the PPP. \cite[Theorem 1.1]{ChangSapozhnikov2016}
 states that if $d \geq 3$ then this bond percolation model undergoes a non-trivial phase transition as $\alpha$ varies.

  Let us mention that the corresponding site percolation model (where we declare a site to be open if it is visited by a loop of the PPP)
can be viewed as a special case of the Poisson zoo percolation model. Indeed, the total weight of loops based at the origin can be normalized to be a probability measure $\mathrm{P}$, and then we can define the measure $\nu$ (cf.\ Definition \ref{poisson_zoo}) to be the law of the trace of a loop with law $\mathrm{P}$.
Note that the probability  under $\mathrm{P}$ that a loop has (an even) length $n$ is asymptotically comparable to $n^{-(d+2)/2}$, thus,  using the notation of \eqref{def_eq_moments}, we have $m_1<+\infty$ if and only if $d \geq 3$ and $m_2<+\infty$ if and only if $d\geq 5$. Indeed, the $d \geq 5$ case of loop percolation is treated
in  \cite[Section 5.1]{ChangSapozhnikov2016} using ideas similar to our Lemma \ref{lemma_m_2_finite_no_perc}. The proof of the existence of a subcritical phase in loop percolation is substantially harder if $d=3,4$, see the proof of \cite[Theorem 1.1]{ChangSapozhnikov2016}.

  Heuristically, loop percolation on $\mathbb{Z}^d$ is  similar to our random length worms model on $\mathbb{Z}^d$ with length mass function $m(\ell) \asymp \ell^{-(d+2)/2}$ (cf.\ Definition \ref{def_worms_nu}). In Section \ref{sub_conj_d234} we conjecture that if $m(\ell) \asymp \ell^{-\beta}$ then the most interesting choice of the exponent is $\beta=(d+2)/2$ in both dimensions $3$ and $4$ (therefore, loop percolation is interesting in both dimesions $d=3,4$).

Note that some elements of our proof are similar to those used in the proof of the lower bounds stated in \cite[Theorems 1.5 and 1.6]{ChangSapozhnikov2016} on the one-arm probability of loop percolation in $d=3,4$,  see Section \ref{subsub_further_ideas_main_thm} for a discussion of similarities and differences.

Further results about the supercritical phase of loop percolation on the $d$-dimensional lattice  (and its vacant set)  are proved in \cite{AlvesSapozhnikov2019, Chang2017}

\medskip

\emph{Bernoulli hyper-edge percolation} on $\mathbb{Z}^d$, introduced in \cite{Chang2021}, is a generalization of the above-discussed loop percolation model.
Let us consider a measure $\mu$ on the set $\mathcal{H}^*$ of finite subsets (i.e., the set of hyper-edges) of $\mathbb{Z}^d$. We assume that $\mu$ is invariant under the translations of $\mathbb{Z}^d$. We consider a Poisson point measure $\mathcal{X}$ on $\mathcal{H}^*$ with intensity $v \cdot \mu$, $v \in \mathbb{R}_+$, and declare that $x,y \in \mathbb{Z}^d$ are connected if there exists $k \in \mathbb{N}$ and $h_1,\dots,h_k \in \mathcal{H}^*$ satisfying $\mathcal{X}(\{ h_i \})\geq 1$ for every $i=1,\dots,k$ with $x \in h_1$, $y \in h_k$ and $h_i \cap h_{i+1} \neq \emptyset$ for every $i=1,\dots,k-1$. This percolation model is quite similar to our Poisson zoo model, but (i)
hyper-edge percolation is a generalization of Bernoulli \emph{bond} percolation, while our Poisson zoo is a generalization of Bernoulli \emph{site} percolation and (ii) we assume
 that the animals in the zoo are connected subsets of $\mathbb{Z}^d$, but this is not necessarily the case for hyper-edges. \cite[Theorem 1.1]{Chang2021} provides sufficient conditions under which the percolation phase transition of hyper-edge percolation is non-trivial. The condition that implies the existence of a non-trivial subcritical phase concerns the $\mu$-measure of hyper-edges that cross large annuli:
\begin{equation}\label{annuli_crossing}
\text{there exists }  \lambda>1 \text{ such that } \sup_{n \to \infty} \mu\left( \{ h \, : \,  h \cap \ball(n)\neq \emptyset , \; h \cap \ball(\lambda n)^c \neq \emptyset    \} \right)<+\infty.
\end{equation}
We note that the analogous condition implies $v_c>0$ in our Poisson zoo model, so let us compare it to the condition $m_2<+\infty$, which also implies $v_c>0$ according to
our Lemma \ref{lemma_m_2_finite_no_perc}. In the case of four dimensional loop percolation we have $m_2=+\infty$,  while
\cite[Lemma 2.7]{ChangSapozhnikov2016} implies that \eqref{annuli_crossing} holds in this case. On the other hand, if $d \geq 5$ then one can check using the arguments of our Section \ref{subsection_rw_estimates} that in the random length worms model condition \eqref{annuli_crossing} implies $m_2<+\infty$.

Other results of \cite{Chang2021} include natural conditions on $\mu$ under which the infinite cluster is unique and conditions under which a Grimmett-Marstrand-type result holds in the supercritical regime.

\subsubsection{Wiener sausage percolation}\label{subsub_sausage}

\emph{Wiener sausage percolation} on $\mathbb{R}^d$ is a continuum percolation model introduced in
\cite{ErhardMartinezPoisat2017}. Let us consider Wiener sausages with radius $r$  that we run up to time $t$ and whose initial points are distributed according to a homogeneous PPP
on $\mathbb{R}^d$ with intensity $1$. In \cite{ErhardMartinezPoisat2017} it is proved that if $d\geq 2$ and $r \in (0,+\infty)$ is small enough then there exists $t_c(r)\in (0,+\infty)$ such that if $t<t_c(r)$ then the occupied set does not percolate, but if $t>t_c(r)$ then it percolates.

\cite[Theorem 1.3]{ErhardMartinezPoisat2017} states that if $d=2,3$ then $t_c(r)\nearrow t_c(0)\in (0,+\infty)$ as $r \to 0$.

The main result \cite{ErhardPoisat2016} is that $t_c(r) \asymp r^{(4-d)/2} $ if $d \geq 5$ and $t_c(r)\asymp \sqrt{\ln(1/r)}$ if $d=4$ as $r \to 0$.

One may use the scale invariance of Brownian motion to slightly rephrase these results:  let us consider Wiener sausages with radius $1$  that we run up to time $T$ and whose initial points form a PPP on $\mathbb{R}^d$ with intensity $v$. It follows from the above results that the critical intensity for percolation satisfies $v_c(T) \asymp T^{-d/2} $ if $d=2,3$, $v_c(T) \asymp \ln(T)/T^2 $ if $d=4$ and  $v_c(T) \asymp 1/T^2$ if $d \geq 5$
 as $T \to \infty$. Heuristically, one expects the percolation threshold  of the random length worms model (cf.\ \eqref{zoo_v_c_threshold}) to satisfy the same asymptotics if the length  distribution of worms is the Dirac mass concentrated on $T$. Asymptotic results analogous to those of \cite{ErhardPoisat2016} (with a slightly different parametrization) about the percolation threshold of worms with geometric length distribution (i.e., finitary random interlacements, see Section \ref{subsub_FRI}) are proved in \cite{CaiZhang2021b}.

  Let us stress that the above-described dimension-dependence of the asymptotic behaviour of $v_c(T)$
is intimately related to the fact that the capacity (to be defined in Section \ref{section_rw_capacity})
of the trace of a random walk of length $T$ is comparable to $T$ if $d\geq 5$,  it is comparable to $T/\ln(T)$ if $d=4$  and   it is comparable to $\sqrt{T}$ if $d=3$ (see  \cite{capacity_rw_range_d, capacity_rw_range_4} for finer results).

Note that some of our methods (coarse graining, dynamic renormalization, see Section \ref{subsub_powerlaw}) are similar to the method used to construct an infinite connected cluster in \cite[Section 4]{ErhardPoisat2016}.

\subsection{Remarks about our methods}\label{subsection_methods}

In Theorem \ref{main_thm} we claim that if $d \geq 5$ then $\mathcal{S}^v$ percolates for all $v>0$ under the assumption that
$ m(\ell) \asymp \ln(\ln(\ell))^{\varepsilon}/(\ell^3 \ln(\ell))$ for some $\varepsilon>0$. In Sections \ref{subsub_powerlaw} and \ref{subsub_log_corr} we sketch
how percolation of $\mathcal{S}^v$ for all $v >0$ could be proved under stronger assumptions on the decay of the tail of the length distribution of worms (note that by Remark \ref{remark_after_main_thm}\eqref{remark_stoch_dom} the assumption of  Section \ref{subsub_powerlaw} is indeed stronger than that of Section \ref{subsub_log_corr}, which is in turn stronger than that of   Section \ref{subsub_loglog_corr_heu}).
 In Section \ref{subsub_loglog_corr_heu} we
outline the heuristic idea behind the proof of Theorem \ref{main_thm} and in Section \ref{subsub_further_ideas_main_thm} we provide some further details of our proof.

\subsubsection{Sketch proof of percolation if $m(\ell) \asymp \ell^{\varepsilon}/ \ell^{3} $ with $\varepsilon>0$ }\label{subsub_powerlaw}
The proof uses coarse graining and dynamic renormalization (as defined by Grimmett and Marstrand in \cite[Section 3]{GrimmettMarstrand1990}, to be recalled in Section \ref{section_perco_worms_multiscale}) as well as the notion of
capacity (to be recalled in Section \ref{section_rw_capacity}), which can be used to estimate hitting probabilities.

We subdivide $\mathbb{Z}^d$ into disjoint boxes of side-length $R$. It is easy to show that we can find a worm of length greater than $R^2$, i.e., a \emph{long worm}, in the box that contains the origin with high probability. In this case we call this box \emph{good} and we call the trace of this long worm the \emph{seed} of the box that contains it.
Now we try to explore boxes one by one in a way that a newly explored box is a neighbour of an already explored good box with a seed in it. We call such new box \emph{good}
if there exists a long worm emanating from it which hits the seed of the neighbouring good box. We call this part of our construction \emph{target shooting}.

The expectation of the number $N$ of long worms emanating from the new box that hit the seed of the neighbouring good box is comparable to
 $  (v \cdot R^d \cdot \mathbb{P}[\mathcal{L}> R^2 ]) \cdot (R^{2-d} \cdot R^2)$, as we now explain:  $v \cdot R^d \cdot \mathbb{P}[\mathcal{L}> R^2 ]$ is the expected number of long worms emanating from the new box, and $ R^{2-d} \cdot R^2$ is roughly the probability that such a long worm hits the seed of the neighbouring good box. For the latter,  we used that $d \geq 5$ and thus the length and the  capacity of worms are comparable, hence the capacity of a seed is roughly $R^2$ (see the proof of Lemma \ref{lemma:good_shooting} for a similar rigorous calculation).

If $m(\ell) \asymp \ell^{\varepsilon}/ \ell^{3} $ then $ \mathbb{P}[\mathcal{L}> R^2 ] \asymp R^{2\varepsilon}/R^4$, thus $\mathbb{E}(N) \asymp v \cdot R^{2\varepsilon}$, thus for any $v>0$ one can choose $R$ big enough so that
$\mathbb{E}(N)$ is big enough to guarantee $\mathbb{P}(N\geq 1)\geq 1/2$, hence the newly explored box will be good  with probability at least $1/2$. With this uniform lower bound on the probability that the newly explored box is good, one can use dynamic renormalization to show that
the set of explored good boxes percolate,  which implies that $\mathcal{S}^v$ percolates.

\subsubsection{Sketch proof of percolation if $m(\ell) \asymp \ln(\ell)^{\gamma}/(\ell^{3} \ln(\ell) )$ with $\gamma>1/2$ }\label{subsub_log_corr}

We can boost the construction of Section \ref{subsub_powerlaw} by using the worms emanating from the new box that have length at most $R^2$, i.e., \emph{short worms}, to \emph{fatten} the long worm  that hits the seed of  the neighbouring good box and thus creating a new seed with a bigger capacity.

 The expected number of worms of length $\ell$ that hit $o$ is comparable to $v \cdot m(\ell)\cdot \ell$, thus
 the expected total length of short worms that hit $o$ is comparable to $v \cdot \mathbb{E}( \mathcal{L}^2 \cdot \mathds{1}[\mathcal{L} \leq R^2 ] ) \asymp v \cdot \ln(R)^\gamma$. With a bit of work one can show that the total length of short worms that hit a long worm is roughly $R^2 \cdot v \cdot \ln(R)^\gamma $ with high probability. With some further work,
 one  can also show that the capacity of the union of the traces of short worms that hit a long worm (i.e., the new seed) is also roughly $R^2 \cdot v \cdot \ln(R)^\gamma $ with high probability.

  Similarly to Section \ref{subsub_powerlaw}, we want $ \mathbb{E}(N) \asymp (v \cdot R^d \cdot \mathbb{P}[\mathcal{L}> R^2 ]) \cdot (R^{2-d} \cdot R^2 \cdot v \cdot \ln(R)^\gamma )$ to be big. Our assumption $m(\ell) \asymp \ln(\ell)^{\gamma}/(\ell^{3} \ln(\ell) )$ implies  $\mathbb{P}[\mathcal{L}> R^2 ] \asymp \ln(R)^{\gamma-1}/R^4$, therefore have $\mathbb{E}(N) \asymp v^2 \ln(R)^{2\gamma-1} \gg 1 $ if $R \gg 1$, since we assumed $\gamma>1/2$.

Note that the idea of fattening already appeared in the study of the connectivity properties of random interlacements in \cite[Section 4]{RathSapozhnikov2012}.

Also note that the $\gamma=1$ case gives  $m(\ell) \asymp \ell^{-3}$, thus we have $\mathbb{P}[ \mathcal{L} \geq \ell ] \asymp \ell^{-2}$ and our conclusion is that $\mathcal{S}^v$ percolates for all $v>0$. This behaviour is different from that of planar ellipses percolation (cf.\ Section \ref{subsub_ellipses}), which exhibits percolation phase transition in the case when the tail of the size of an animal decays with exponent $\alpha=2$. Thus five dimensional worm percolation gets closer to showing that Lemma \ref{lemma_m_2_finite_no_perc} is sharp (cf.\ Questions \ref{question_strenghten_lemma_m2}, \ref{question_worm_m2}) than planar ellipses percolation. However, we expect planar worm percolation
to perform much worse than planar ellipse percolation from this point of view, see Section \ref{sub_conj_d234}.

\subsubsection{Heuristic proof of percolation if $m(\ell)\asymp \ln(\ln(\ell))^{\varepsilon}/(\ell^3 \ln(\ell))$ with $\varepsilon>0$ }\label{subsub_loglog_corr_heu}

We can boost the construction of Section \ref{subsub_log_corr} if we perform the ``fattening'' a bit more effectively. Let $S_0$ denote the trace of our long worm that we want to fatten and for any $\ell=1,\dots,R^2$ and define $S_\ell$ to be the union of $S_{\ell-1}$ and the trace of the worms of length $\ell$ that hit $S_{\ell-1}$. Now our new seed will be $S_{R^2}$. The
``snowball effect'' caused by recursive fattening will make the capacity $\capacity(S_{R^2})$  bigger than that of the  seed constructed in  Section \ref{subsub_log_corr}.  Heuristically, we have
$\capacity(S_\ell)\approx \capacity(S_{\ell-1})\cdot(1+v \cdot m(\ell)\cdot \ell^2) $, thus
\begin{equation*}
\capacity(S_{R^2}) \asymp R^2 \prod_{\ell=1}^{R^2} \left(1+v \cdot m(\ell)\cdot \ell^2\right) \asymp R^2 \exp\left( v \cdot \sum_{\ell=1}^{R^2} m(\ell)\cdot \ell^2  \right) \asymp
R^2 \exp\left( v \cdot \ln(\ln(R))^{1+\varepsilon} \right).
\end{equation*}
Similarly to Sections \ref{subsub_powerlaw} and \ref{subsub_log_corr}, we want $ \mathbb{E}(N) \asymp (v \cdot R^d \cdot \mathbb{P}[\mathcal{L}> R^2 ]) \cdot (R^{2-d} \cdot \capacity(S_{R^2}) )$ to be big.
Our assumption  $m(\ell)\asymp \ln(\ln(\ell))^{\varepsilon}/(\ell^3 \ln(\ell))$ gives  $\mathbb{P}[\mathcal{L}> R^2 ] \asymp \ln(\ln(R))^{\varepsilon}/( \ln(R) R^4)$, therefore we have $\mathbb{E}(N) \gg v \frac{\exp( v \cdot \ln(\ln(R))^{1+\varepsilon} )}{\ln(R)} \gg 1$ if $R \gg 1$, since we assumed $\varepsilon >0$.

\subsubsection{Further details of the proof of our main result} \label{subsub_further_ideas_main_thm}

In order to build an infinite connected component of $\mathcal{S}^v$, we will subdivide the PPP of worms into disjoint \emph{packages}, where a package contains worms of a certain  length scale that  emanate from a box of a certain spatial scale. Crucially, by the defining properties of Poisson point processes, the contents of different packages will be independent of each other. We will also make sure that
we use every package only once (for the purpose of target shooting or fattening), since it is hard to give a lower bound on the number of useful leftover worms in a package once it has already been used.

In order to make the idea of recursive fattening of Section \ref{subsub_loglog_corr_heu} rigorous, we will use a method that we call \emph{multi-scale recursion}.
Instead of adding the packages of worms of length $\ell=1,\dots,R^2$ one by one (as in Section \ref{subsub_loglog_corr_heu}), we will  define a sequence of scales $(R_n)_{n=0}^{N+1}$ (where $R_{N+1}=R$) and we will fatten with all of the worms of length between
$R^2_{n}$ and $R^2_{n+1}$ in one round. We will choose  $(R_n)_{n=0}^{N+1}$ in a way that one such round of fattening \emph{doubles the capacity} of the set being fattened with high probability. This grouping of worm lengths may seem like a step back in the direction of the fattening method of Section \ref{subsub_log_corr} from the more advanced method of  Section \ref{subsub_loglog_corr_heu}, but  (i) it makes it easier for us to prove that the size of the fattened set is well concentrated around its mean, (ii) it does not weaken the snowball effect by much.

Note that a similar idea (recursive capacity doubling using a sequence of rapidly growing scales followed by target shooting) also appears in the proof of the lower bounds stated in \cite[Theorems 1.5 and 1.6]{ChangSapozhnikov2016} on the one-arm probability of loop percolation in $d=3,4$   (see \cite[Section 6]{ChangSapozhnikov2016}). However, the implementation of our capacity doubling involves an extra layer of difficulty compared to that of \cite{ChangSapozhnikov2016}, as we now explain. \cite[Theorems 1.5 and 1.6]{ChangSapozhnikov2016} are about the subcritical phase of loop percolation (thus one-arm probabilities are necessarily small), while our Theorem \ref{main_thm} is about the supercritical phase of worm percolation (thus we necessarily have to show that connection probabilities are bounded away from zero). Consequently, the proof of \cite[Theorems 1.5 and 1.6]{ChangSapozhnikov2016} can be achieved using a lower bound on the \emph{expected value} of the capacity of the inductively fattened loop cluster
(cf.\ \cite[Lemmas 6.1 and 6.2]{ChangSapozhnikov2016}), while the proof of our main result also involves \emph{concentration estimates} which guarantee that the capacity of the inductively fattened worm cluster is big enough with high enough probability. In order to prove such concentration estimates,  we need to control the correlation between the
amounts of fat produced at distant parts of the set being fattened. In order to effectively bound the sum of such correlations, we want to apply our induction hypothesis on multiple parts of the set being fattened which are well spread-out in space, and thus our capacity doubling scheme involves a fractal pattern which is not present in \cite[Section 6]{ChangSapozhnikov2016}.

\subsection{Conjectures about worm percolation in $d=2,3,4$}\label{sub_conj_d234}

\subsubsection{Conjectures about worm percolation on $\mathbb{Z}^2$}

If $d=2$ then it is reasonable to believe that a worm of length $\ell$  will behave like a ball of radius comparable to $\sqrt{\ell}$. In other words, heuristically,
 worm percolation with length distribution $\mathcal{L}$ will behave like the planar Poisson Boolean model with radius $\mathrm{R}_* \asymp \sqrt{\mathcal{L}}$ (cf.\ Section \ref{subsub_poi_boolean}). Thus we conjecture that  $v_c>0$ holds if and only if $\mathbb{E}[ (\mathrm{R}_*)^2 ] \asymp \mathbb{E}[ \mathcal{L} ]  <+\infty $. However, let us recall the definition of $m_1$ from \eqref{def_eq_moments} and note that if $d=2$ then $m_1 \asymp \mathbb{E}[ \mathcal{L} /\ln( \mathcal{L} ) ] $ holds in the random length worms model by \cite[Theorem 1]{dvoer}, thus one can choose the length distribution of worms in such a way that $m_1<+\infty$ (and therefore $\mathcal{S}^v \neq \mathbb{Z}^d$), but $v_c=0$ (this cannot happen in the case of the Poisson Boolean percolation model).

\subsubsection{Conjectures about worm percolation on $\mathbb{Z}^3$}

Let us assume that $d=3$ and $m(\ell) \asymp \ell^{-\beta}$. We conjecture that $v_c>0$ if and only if $\beta \geq 5/2$:

\begin{itemize}
\item If $\beta > 5/2$ then we replace each worm with the smallest ball that contains it. The radius of the resulting Poisson Boolean model satisfies $\mathrm{R}_* \asymp \sqrt{\mathcal{L}}$, thus  $ \mathbb{E}[ \mathcal{L}^{3/2} ] \asymp \mathbb{E}[ (\mathrm{R}_*)^3 ]   <+\infty $ implies $v_c>0$, where
    $\mathbb{E}[ \mathcal{L}^{3/2} ] =\sum_{\ell=1}^{\infty} \ell^{3/2} m(\ell) <+\infty$ holds if and only if $\beta > 5/2$.
\item If $\beta<5/2$ then one can prove $v_c=0$ using the argument of Section \ref{subsub_powerlaw}, taking into account that if $d=3$
then the capacity of a worm of length $R^2$ is comparable to $R$ (cf.\  \cite[Proposition 1.5]{capacity_rw_range_d}).
Note here that the capacity of a three-dimensional ball of radius $R$ is also comparable to $R$ (cf.\ Proposition \ref{capacity_of_ball}), therefore if $d=3$ then the capacity of a worm is comparable to the capacity of the smallest ball that contains it.
\item The $\beta=5/2$ case heuristically corresponds to loop percolation on $\mathbb{Z}^3$ (cf.\ Section \ref{subsub_loop}), which does exhibit percolation phase transition.
\end{itemize}

Note that if $\beta \leq 2$ then Lemma \ref{lemma_infinite_m1_perc} implies $v_c=0$ and if $\beta > 3$ then Lemma \ref{lemma_m_2_finite_no_perc}  implies $v_c>0$,
so (heuristically) the threshold $\beta=5/2$ for the exponent of $m(\ell) \asymp \ell^{-\beta}$ in $3$-dimensional worm percolation is ``halfway'' between the universal bounds of
 Lemmas \ref{lemma_infinite_m1_perc} and \ref{lemma_m_2_finite_no_perc}.

\subsubsection{Conjectures about worm percolation on $\mathbb{Z}^4$}

 Let us assume that $d=4$ and $m(\ell) \asymp \ell^{-\beta}$. We conjecture that $v_c>0$ if and only if $\beta \geq 3$:

 \begin{itemize}
\item If $\beta > 3$ then $m_2 \leq \mathbb{E}[\mathcal{L}^2] <+\infty$, thus $v_c>0$ by Lemma \ref{lemma_m_2_finite_no_perc}.
\item If $\beta < 3$ then one can prove $v_c=0$ using the argument of Section \ref{subsub_powerlaw}, taking into account that
 if $d=4$ then the capacity of a worm of length $R^2$ is of order $R^2/\ln(R)$.
 \item  The $\beta=3$ case  corresponds to loop percolation on $\mathbb{Z}^4$, for which $v_c>0$ holds.
 \end{itemize}
 Note that if $d=5$ and $\beta=3$ then $v_c=0$ (cf.\ Section \ref{subsub_log_corr}), which shows that worm percolation should behave differently in $d=4$ and $d=5$.
 Also note that the $d=4, \beta=3$ case indicates that the answer to the four dimensional analogue of Question \ref{question_worm_m2} should be negative.

\subsection{Structure of the paper}\label{sub_structure_of_paper}

In  Section \ref{section_basic_notation} we introduce some notation that we will use throughout the paper.

\noindent In Section \ref{section_moment_cond_zoo} we prove  Lemmas \ref{lemma_infinite_m1_perc} and  \ref{lemma_m_2_finite_no_perc} about the Poisson zoo.

\noindent In Section \ref{section_rw_capacity} we collect/prove some useful results about random walks and capacity on $\mathbb{Z}^d$.

\noindent In Section \ref{section_ppp_worms} we set up the notation related to Poisson point processes on the space of worms.

\noindent In Section \ref{section_perco_worms_multiscale} we formally introduce the scales $(R_n)_{n=0}^{N+1}$, state the capacity doubling lemma (Lemma \ref{lemma_doubling_gamma}) that we mentioned in Section \ref{subsub_further_ideas_main_thm} and derive the proof of our main result from it (i.e., we carry out the coarse graining, the target shooting and the dynamic renormalization).

\noindent In Section \ref{subsection_doubling_gamma} we prove the capacity doubling lemma (i.e., we carry out the fattening).

\section{Basic notation}\label{section_basic_notation}

Given $x=(x_1,\dots,x_d)\in \mathbb{Z}^d$, let us denote by $|x|=\max_{1 \leq i \leq d}|x_i|$ the sup-norm of $x$.

Let us use the shorthand $H \fsubset \mathbb{Z}^d$ for a finite subset $H$ of $\mathbb{Z}^d$.

 We use the shorthand $x \sim y$ to denote that $x \in \mathbb{Z}^d$ and $y \in \mathbb{Z}^d$ are nearest neighbours.

If $A,B,S \subseteq \mathbb{Z}^d$ then we say that $A \stackrel{S}{\longleftrightarrow} B$ (in words: $A$ is connected to $B$ through $S$) if there exist $n \in \mathbb{N}_0$ and $x_0,\dots, x_n \in S$ such that $x_0 \in A$, $x_n \in B$ and $x_i \sim x_{i+1}$ for any $0 \leq i \leq n-1$. If $x \in S$, we use the shorthand
$x \stackrel{S}{\longleftrightarrow} B$ to denote  $\{x\} \stackrel{S}{\longleftrightarrow} B$. Note that the relation $\cdot \stackrel{S}{\longleftrightarrow} \cdot $ is an equivalence relation on the set of vertices of $S$, and we call the corresponding equivalence classes the connected components (or clusters) of $S$. We use the shorthand
$x \stackrel{S}{\longleftrightarrow} \infty $ to denote the fact that the cardinality of the connected cluster of $x \in S$ is infinite. We say that $S$ is connected if $S$ contains exactly one connected component.
 We say that $S$ percolates if $S$ has
an infinite connected component.

Let us denote by $\ball(x,R):= \{ y \in \mathbb{Z}^d \, : \, |y-x|\leq R \} $ the ball of radius $R$ about $x$ and use the shorthand $\ball(R) := \ball(o,R)$ to denote a ball centered at the origin.

If $H  \fsubset \mathbb{Z}^d$ and $x \in \mathbb{Z}^d$, let  ${ x + H } = \{ x + y \, : \, y \in H \}$ denote the translate of $H$ by $x$.

If $H \fsubset \mathbb{Z}^d$ then the interior vertex boundary of $H$ is $\partial^{\text{int}} H := \{ x \in H \, : \, \exists \, y \in H^c, \, x\sim y \} $.

If $H \fsubset \Z^d$ then the external vertex boundary of $H$ is $\partial^{\text{ext}} H  := \{ x \in H^c \, : \, \exists \, y \in H, \, x \sim y \}$.

The indicator of the event $A$ is denoted by $\ind[A]$: if $A$ occurs then $\ind[A]=1$, but if $A$ does not occur then $\ind[A]=0$.

For any $a < b \in \mathbb{R}_+$ let $\sum_{\ell=a}^b f(\ell) $ denote $\sum_{\ell=\lceil a \rceil}^{\lfloor b \rfloor} f(\ell) $.

\medskip

 {\bf Notational conventions regarding constants:} We denote  small, but positive constants by $c$ or $c'$ and large, but finite constants by $C$ or $C'$. The constants can only depend on the dimension $d$ of the underlying lattice $\mathbb{Z}^d$. The value of the constants $c$, $c'$, $C$ and $C'$  may change from line to line. On the other hand, we also use numbered constants $c_1, c_2, \dots$ and $C_1, C_2, \dots$ whose value remains fixed for the rest of the paper once they are introduced.

\section{Moment conditions for sub/supercriticality in  Poisson zoo percolation}
\label{section_moment_cond_zoo}

Section \ref{section_moment_cond_zoo} is devoted to the proof of Lemmas \ref{lemma_infinite_m1_perc} and  \ref{lemma_m_2_finite_no_perc}. The ideas of these proofs are folclore, but we include them here for completeness.

Recall the definition of lattice animals and the Poisson zoo from Definitions \ref{def_lattice_animal} and \ref{poisson_zoo}.

\begin{proof}[Proof of Lemma \ref{lemma_infinite_m1_perc}]
	Let us introduce the number
	\begin{equation}
	N := \sum_{x \in \Z^d} \sum_{H \in \mathcal{H}} \sum_{i = 1}^{ N_{x, H}^v } \ind \left[ o \in x + H \right]
	\end{equation}
	 of animals in the Poisson zoo $\widehat{\mathcal{N}}^v$ (counted with multiplicity) whose trace contains the origin. We have
	\begin{align*}
	\mathbb{E} (N)
	=
	\mathbb{E} \left[ \sum_{x \in \Z^d} \sum_{H \in \mathcal{H} } \sum_{i = 1}^{ N_{x, H}^v } \ind \left[ o \in x + H \right]  \right]
	\stackrel{(*)}{=}
	v \cdot \sum_{H \in \mathcal{H}} \nu(H) \cdot \sum_{x \in \Z^d} \ind \left[ o \in x + H \right]
	=
	v \cdot m_1,	
	\end{align*}
	where in $(*)$ we used that $\mathbb{E}(N_{x, H}^v)=v \cdot \nu(H)$. Also note that $N$ is the sum of independent Poisson random variables, thus $N \sim \mathrm{POI}(v \cdot m_1)$. Hence, by the assumptions of the lemma we have $\mathbb{E}(N) = \infty$, which implies $\prob \left( N = \infty \right) = 1$. Thus $\mathbb{P}(o \in \mathcal{S}^v)=1$ and thus
$\mathbb{P}(x \in \mathcal{S}^v)=1$ also holds for any $x \in \mathbb{Z}^d$  by translation invariance.
\end{proof}

The proof of Lemma \ref{lemma_m_2_finite_no_perc} requires further notation.

Given a set $H \in \mathcal{H}$ we will denote its discrete closure by $\overline{H} := H \cup \partial^{\text{ext}}H$.

\begin{definition}[Zoo distance]\label{def:zoo_distance}
Given a Poisson zoo $\widehat{\mathcal{N}}^v$, we say that the \emph{zoo distance} of vertex $y \in \Z^d \setminus \{ o \}$ and the origin $o$ is $k$ if $k \in \N$ is the smallest index such that there exists a sequence of animals $(x_i, H_i) \in \Z^d \times \mathcal{H}$ ($i = 1, \ldots, k$) satisfying
\begin{enumerate}
	\item $N_{x_i, H_i}^{v} \geq 1$ for $i = 1, \ldots, k$;
	\item $o \in x_1 + H_1$ and $y \in x_k + H_k$;
	\item $(x_i + \overline{H}_i) \cap (x_{i+1} + H_{i+1}) \neq \emptyset$ for $i = 1, \ldots, k-1$.
\end{enumerate}
\end{definition}
Observe that since we take $k$ to be the smallest of such indices, it follows that if the zoo distance of $o$  and $y$ is $k$  and  $\left\{ (x_i, H_i) \right\}_{i = 1}^{k}$ is a sequence of animals satisfying Definition \ref{def:zoo_distance}, then
\begin{equation}\label{distinct_animals}
(x_i, H_i) \neq (x_j, H_j), \qquad i \neq j \in \{ 1, \ldots, k \}.
\end{equation}

\begin{definition}\label{def_dist_k_C_k_v}  Let $k \in \mathbb{N}$.
Let us denote by $C_k^v$ the set of vertices  whose zoo distance from the origin is equal to $k$.
\end{definition}

 Given some $x \in \mathbb{Z}^d$, let us denote by \[\mathcal{C}_x^v:= \{\, y \in \mathbb{Z}^d \,: \, x \stackrel{ \mathcal{S}^v }{\longleftrightarrow} y \, \}\]
 the connected cluster of $x$ with respect to $\mathcal{S}^v$. With the above definitions we have
\begin{equation}
\label{nota:cluster_of_the_origin}
\mathcal{C}_o^v := \bigcup_{k = 1}^{\infty} C_k^v.
\end{equation}

\begin{proof}[Proof of Lemma \ref{lemma_m_2_finite_no_perc}] First we show that if $v < 1 \slash ((2d+1) \cdot m_2)$ then
	 $\mathbb{E} [ \, | \mathcal{C}_o^v | \, ] < \infty$. We have
	\begin{align}
	& \mathbb{E} \left[ \, | C_k^v | \, \right]
	=
	\sum_{z \in \Z^d} \prob \left( z \in C_k^v \right)
	\stackrel{(\bullet)}{\leq}
	\sum_{z \in \Z^d} \sum_{x_1, \ldots, x_k \in \Z^d} \sum_{H_1, \ldots, H_k \in \mathcal{H}} \ind \left[ o \in x_1 + H_1 \right]
	\cdot \notag \\ & \quad \cdot
	\left( \prod_{i = 1}^{k - 1} \ind \left[ (x_i + \overline{H}_i) \cap (x_{i+1} + H_{i+1}) \neq \emptyset \right] \right) \cdot
	\ind \left[ z \in x_k + H_k \right]
	\cdot
	\prod_{i = 1}^{k} \left( v \cdot \nu(H_i) \right)
	\notag \\ & \stackrel{(*)}{\leq}
	\sum_{z \in \Z^d} \sum_{x_1, \ldots, x_k \in \Z^d} \sum_{H_1, \ldots, H_k \in \mathcal{H}} \sum_{y_1, \ldots, y_{k-1} \in \Z^d}
	\ind \left[ o \in x_1 + H_1 \right] \cdot
	\ind \left[ z \in x_k + \overline{H}_k \right]
	\cdot \notag \\ &
	\label{nota:M_k_sums}
	\quad \cdot
	\left( \prod_{i = 1}^{k - 1} \ind \left[ y_i \in x_i + \overline{H}_i \left] \cdot \ind \right[ y_i \in x_{i+1} + H_{i+1} \right] \right)
	\cdot
	\prod_{i = 1}^{k} \left( v \cdot \nu(H_i) \right) =: M_k.
	\end{align}
Let us now explain the inequalities $(\bullet)$ and $(*)$.
	In $(\bullet)$ we used  the union bound and
\begin{equation*}
\mathbb{P}\left(\,  N^v_{x_i,H_i} \geq 1, \, i=1,\dots,k \, \right)\stackrel{(\diamond)}{=}
\prod_{i = 1}^{k} \mathbb{P} \left( \, N^v_{x_i,H_i} \geq 1 \, \right)= \prod_{i = 1}^{k} \left(1-e^{-v\cdot \nu(H_i)} \right)
\leq \prod_{i = 1}^{k} \left( v \cdot \nu(H_i) \right),
\end{equation*}
where in $(\diamond)$ we used \eqref{distinct_animals}
  and the independence of the random variables $N^v_{x,H}, x \in \mathbb{Z}^d, H \in \mathcal{H}$.

In  $(*)$ we used that for any $i=1,\dots,k-1$ we have
\[  \mathds{1} \left[ (x_i + \overline{H}_i) \cap (x_{i+1} + H_{i+1}) \neq \emptyset \right] \leq
\sum_{y_i \in \mathbb{Z}^d} \mathds{1}\left[ y_i \in x_i + \overline{H}_i \right] \cdot \mathds{1} \left[ y_i \in x_{i+1} + H_{i+1} \right]
 \]
and also replaced $H_k$ with  $\overline{H}_k$: this will make the recursive formula \eqref{obs:M_k=Lambda_M_k-1}  simpler.
	
	Let us also introduce
	\begin{equation*}
	\Lambda := v \cdot \sum_{H \in \mathcal{H}} |H| \cdot | \overline{H} | \cdot \nu(H).
	\end{equation*}
	Since $| \overline{H} | \leq (2d+1) \cdot | H|$ holds for any $H \in \mathcal{H}$, we  have $\Lambda \leq (2d+1) \cdot v \cdot m_2$. An appropriate rearrangement of the sums that appear in \eqref{nota:M_k_sums} gives
	\begin{equation}
	\label{obs:M_k=Lambda_M_k-1}
	M_k = \Lambda \cdot M_{k-1}, \qquad k \in \N,
	\end{equation}
	 where we defined $M_0 := 1$.
		As a consequence, for any $0 < v < 1 \slash ((2d+1) \cdot m_2)$ we have
	\begin{equation*}
	\mathbb{E} \left[ \, | \mathcal{C}_o^v | \, \right]
	\stackrel{ \eqref{nota:cluster_of_the_origin} }{ \leq }
	\sum_{k = 1}^{\infty} \mathbb{E} \left[ \, | C_k^v | \, \right]
	\stackrel{ \eqref{nota:M_k_sums} }{ \leq }
	\sum_{k=1}^{\infty} M_k
	\stackrel{ \eqref{obs:M_k=Lambda_M_k-1} }{ = }
	\sum_{k = 1}^{\infty} \Lambda^k
	\leq
	\sum_{k=1}^{\infty} \left( (2d+1) \cdot v \cdot m_2 \right)^k < \infty.
	\end{equation*}
	 This implies $\mathbb{P}( o \stackrel{ \mathcal{S}^v }{\longleftrightarrow} \infty )=0$, and by translation invariance we also have $\mathbb{P}( x \stackrel{ \mathcal{S}^v }{\longleftrightarrow} \infty )=0$ for any $x \in \mathbb{Z}^d$. We conclude that if $0 < v < 1 \slash ((2d+1) \cdot m_2)$ then $\mathcal{S}^v$ almost surely does not percolate, finishing the proof of Lemma \ref{lemma_m_2_finite_no_perc}.
\end{proof}

\begin{remark} In this paper we defined the Poisson zoo model on the $d$-dimensional nearest neighbour lattice graph $\mathbb{Z}^d$, but the definition of the model can be naturally generalized to the case when the underlying graph is the Cayley graph  corresponding to a finite and symmetric generating set of a countable group. The statements and proofs of Lemmas \ref{lemma_infinite_m1_perc} and \ref{lemma_m_2_finite_no_perc} generalize to the Poisson zoo model defined on Cayley graphs (with obvious notational adjustments).
\end{remark}

\section{Preliminary results about random walks and discrete potential theory}\label{section_rw_capacity}
In Section \ref{section_rw_capacity} we collect/prove some facts about random walks on $\mathbb{Z}^d$. In later sections these intermediate results will serve as ingredients of the proof of our main result.

In Section \ref{subsection_random_walks} we introduce some notation (e.g.\ hitting times) and state some basic inequalities about the transition probabilities and Green function of simple random walk on $\mathbb{Z}^d$.

In Section \ref{subsection_capacity} we introduce the notions of discrete potential theory associated to simple random walk on $\mathbb{Z}^d$: the central quantity is called the capacity and the central question is how to bound it from above/below.
In Section \ref{subsection_pt_rw} we state some facts that will help us bound the capacity of worms (i.e., the trace of random walk trajectories of finite length).

In Section \ref{subsection_rw_estimates} we consider a subset $K$ of a box of scale $r$ and prove some estimates about  the expected number of worms of length $\ell$ emanating from a box of larger scale $R$ that hit $K$.

In Section \ref{subsection_rw_sub_boxes} we consider a random walk in a box of scale $R$ and prove some estimates about the number and location of
sub-boxes of scale $r$ visited by this random walk.

\subsection{Basic notation and results about random walks}\label{subsection_random_walks}

In Section \ref{subsection_random_walks} we collect some facts about random walks on  $\mathbb{Z}^d$. We will explicitly write down our assumptions regarding the value of the dimension  $d$ in the statement of each result.

Let $W$ be the space of $\Z^d$-valued infinite nearest-neighbour trajectories, indexed by non-negative times.
This space is endowed with the $\sigma$-algebra $\mathcal{W}$ generated by the canonical coordinate maps.
	
For $x \in \Z^d$, let $P_x$ denote the law of simple random walk $(X(t))_{t \in \N_0}$ on $\Z^d$, which starts from $X(0)=x$, and let $E_x$ be the corresponding expectation. The law $P_x$ will be considered as a probability measure on $(W, \mathcal{W})$, hence a simple random walk $(X(t))_{t \geq 0}$ is a random element of $W$.

Similarly, for $x, x' \in \Z^d$, let $P_{x,x'}$ denote the joint law of two independent nearest neighbour random walks $(X(t))_{t \in \N_0}$ and $(X'(t))_{t \in \N_0}$ starting from $x$ and $x'$, respectively. Let $E_{x,x'}$ denote the corresponding expectation.

There are some commonly used random times which we will need in what follows. For any $w \in W$ and $K \subseteq \Z^d$, let
\begin{align}
	\label{entrance_time}
	T_K & := T_K(w) := \inf \{\, t \geq 0 \, : \,  w(t) \in K\, \}, \qquad \text{the entrance time of } K; \\
	\label{hitting_time}
	\tilde{T}_K & := \tilde{T}_K(w) := \inf \{\, t \geq 1 \, : \, w(t) \in K \, \}, \qquad \text{the hitting time of } K; \\
	\label{last_visit}
	L_K & := L_K(w) := \sup \{ \, t \geq 0 \, : \, w(t) \in K \, \}, \qquad \text{the time of the last visit to } K,
\end{align}
where $\inf \emptyset = \infty$ and $\sup \emptyset = - \infty$.
	
Let us denote the \emph{transition probabilities} of the simple random walk on $\Z^d$ by
\begin{equation}\label{rw_transition_probab}
  p_t(x,y):=P_x(X(t) = y), \qquad x,y \in \mathbb{Z}^d, \; t \in \mathbb{N}_0.
\end{equation}
It is classical (see \cite[Theorem 2.1.1]{LawlerLimic2010}) that there exist constants $c > 0$ and $C<+\infty$ such that
\begin{align}
	\label{dumb_heat_kernel_bound}
	p_t(x,y) & \leq C \cdot t^{-d/2}, \qquad t \in \N_0, \, x,y \in \mathbb{Z}^d, \\
	\label{heat_kernel_lower_bound}
	p_{t-1}(x,y) + p_{t}(x,y) & \geq c \cdot t^{-d \slash 2}, \qquad t \in \N, \, x,y \in \Z^d, \, |x-y| \leq \sqrt{t}.
\end{align}

We define the \emph{Green function} $g \, : \, \Z^d \times \Z^d \rightarrow [0, \infty)$ of the  random walk by
\begin{equation}\label{green}
	g(x,y) := \sum_{t = 0}^{\infty} p_t(x,y), \qquad x,y \in \mathbb{Z}^d.
\end{equation}
Note that $d \geq 3$ implies that $g(x,y)<+\infty$ for any $x,y \in \mathbb{Z}^d$.  We have $g(x,y) = g(y,x)$ and $g(x,y) = g(o, y-x)$ by symmetry and translation invariance. Moreover, it follows from \cite[Theorem 4.3.1]{LawlerLimic2010} that for any $d \geq 3$ there exist constants $c > 0$ and $C < \infty$ such that
\begin{equation}
	\label{green_bounds}
	c \cdot (|x-y| + 1)^{2 - d} \leq g(x,y) \leq C \cdot (|x-y| + 1)^{2 - d}, \qquad x, y \in \Z^d.
\end{equation}

The proof of the next result follows from the Azuma-Hoeffding inequality.
\begin{proposition}[Random walk does not travel too far] Let $d \geq 1$.
	\label{staying_inside}
	 For any $R, L \in \mathbb{N}$ we have
	\begin{align}
	\label{eq:staying_inside}
	P_o \big(\, \max_{0 \leq t \leq L} | X(t) | \geq R \, \big) \leq  2d \cdot \exp \{ -  R^2/2L \}.
	\end{align}
\end{proposition}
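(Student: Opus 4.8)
The plan is to prove the tail bound $P_o(\max_{0 \le t \le L} |X(t)| \ge R) \le 2d \exp\{-R^2/2L\}$ by reducing the event on the sup-norm of the $d$-dimensional walk to a union over the $d$ coordinate processes, each of which is a one-dimensional martingale with bounded increments, and then applying the Azuma–Hoeffding inequality. Write $X(t) = (X_1(t), \dots, X_d(t))$ where $X_i(t)$ is the $i$-th coordinate. The key point is that for each fixed $i$, the process $(X_i(t))_{t \ge 0}$ is a martingale with respect to the natural filtration of the walk: at each step the walk picks one of the $2d$ neighbours uniformly, so $\mathbb{E}[X_i(t+1) - X_i(t) \mid \mathcal{F}_t] = 0$, and the increments satisfy $|X_i(t+1) - X_i(t)| \le 1$ almost surely. (Note the increments are $0$ whenever the walk steps in a direction $j \neq i$, and $\pm 1$ when it steps in direction $i$; boundedness by $1$ is all Azuma needs.)

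First I would record the union bound
\begin{equation*}
P_o\big(\, \max_{0 \le t \le L} |X(t)| \ge R \,\big) \le \sum_{i=1}^d P_o\big(\, \max_{0 \le t \le L} |X_i(t)| \ge R \,\big),
\end{equation*}
using that $|X(t)| = \max_i |X_i(t)|$, so the left-hand event is contained in the union over $i$ of $\{\max_{0 \le t \le L} |X_i(t)| \ge R\}$. Then for each $i$, I would split further as $\{\max_t X_i(t) \ge R\} \cup \{\max_t (-X_i(t)) \ge R\}$, each handled identically since $-X_i$ is also a martingale with increments bounded by $1$ and starting from $0$. The maximal form of the Azuma–Hoeffding inequality states that if $(M_t)_{0 \le t \le L}$ is a martingale with $M_0 = 0$ and $|M_{t+1} - M_t| \le 1$, then $P(\max_{0 \le t \le L} M_t \ge R) \le \exp\{-R^2/(2L)\}$. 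Applying this to $X_i$ and to $-X_i$ and summing gives $\sum_{i=1}^d 2 \exp\{-R^2/(2L)\} = 2d \exp\{-R^2/(2L)\}$, which is exactly the claimed bound.

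The only mild subtlety — and the thing I would state carefully rather than treat as an obstacle — is that the standard Azuma inequality is sometimes quoted for the endpoint $M_L$ rather than the running maximum $\max_{t \le L} M_t$; the maximal version follows from the endpoint version by Doob's optional stopping applied to the stopped exponential supermartingale $e^{\lambda M_{t \wedge \tau}}$ (where $\tau$ is the first time $M$ reaches $R$), or one simply cites the maximal Azuma–Hoeffding inequality directly. Since the step sizes are bounded by $1$ one could in fact replace the factor $1$ in Hoeffding's lemma by the even smaller conditional variance, but that is unnecessary here since the stated constant $2L$ in the exponent is already what $|M_{t+1}-M_t| \le 1$ yields. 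There is no real difficulty; this is a routine large-deviations estimate, and the proof is essentially the three displayed lines above plus a one-sentence invocation of the maximal Azuma inequality.
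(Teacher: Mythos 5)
Your proof is correct and follows exactly the route the paper indicates (the paper only remarks that the result ``follows from the Azuma--Hoeffding inequality'' without giving details): decompose the sup-norm event by a union bound over the $d$ coordinate martingales and the two signs, then apply the maximal form of Azuma--Hoeffding to each, yielding the factor $2d$ and the exponent $-R^2/(2L)$. Nothing to add.
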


\subsection{Discrete potential theory}\label{subsection_capacity}

  Let us now recall some elements of discrete potential theory. In particular, we define the capacity $\capacity(K)$ of a finite subset $K$ of $\mathbb{Z}^d$, then recall a variational characterization of $\capacity(K)$
   and relate $\capacity(K)$ to the probability that a simple random walk hits $K$.
     In Section \ref{subsection_capacity} we assume $d \geq 3$.

\begin{definition}[Dirichlet energy]
	\label{def:energy}
	If $\mu_1$ and $\mu_2$ are measures on $\mathbb{Z}^d$ then their \emph{mutual Dirichlet energy} is
	\begin{equation}\label{def_eq_energy}
		\energy(\mu_1, \mu_2) := \sum_{x \in \mathbb{Z}^d} \sum_{y \in \mathbb{Z}^d} g(x,y) \mu_1(x) \mu_2(y),
	\end{equation}
	where $g(\cdot,\cdot)$ is the Green function (cf.\ \eqref{green}). When $\mu_1=\mu_2=\mu$ then $\energy(\mu) := \energy(\mu, \mu)$ is called the \emph{Dirichlet energy} of $\mu$.
\end{definition}
By bilinearity, for any measure $\mu$ and constant $a > 0$, the energy of the measure $a \cdot \mu$ is
\begin{equation}\label{dir_bil}
\energy( a \cdot \mu ) = a^2 \cdot \energy(\mu).
\end{equation}

The \emph{capacity} of the set $K \fsubset \Z^d$ is defined to be
\begin{equation}
	\label{capacity_with_energy}
	\capacity(K) =  \sup \big\{\, \energy(\nu)^{-1} \, : \, \nu \text{ is a probability measure supported on } K \, \big\}.
\end{equation}
We define the capacity of the empty set to be $0$. In order to give a lower bound on $\capacity(K)$, one just puts a probability measure $\nu$ on $K$ and gives an upper bound on  $\energy(\nu)$.

 Using \eqref{capacity_with_energy}, one obtains that the capacity is monotone
\begin{equation}\label{cap_mon}
	 \capacity(K_1) \leq \capacity(K_2) \qquad \qquad \forall \, K_1 \subseteq K_2 \fsubset \Z^d,
\end{equation}
and for any vertex $x \in \Z^d$ we have
\begin{equation}
	\label{obs:capacity_of_one_point}
	\capacity(\{ x \}) \stackrel{ \eqref{capacity_with_energy} }{=} 1 \slash g(o,o) \stackrel{\eqref{green} }{\leq} 1 .
\end{equation}

 Given $K \fsubset \Z^d$,  the equilibrium measure $e_K$ of $K$ is defined by
\begin{equation}\label{equilib_measure}
e_K(x) := P_x ( \tilde{T}_K = \infty ) \cdot \ind [ x \in K ], \qquad x \in \mathbb{Z}^d.
\end{equation}
It follows from \cite[Lemma 2.3]{jain} that the capacity of $K \fsubset \Z^d$ is equal to the total mass of the equilibrium measure of $K$, i.e.
\begin{equation}
	\label{capacity_with_eqmeasure}
	\capacity(K) = \sum_{x \in \Z^d} e_K(x).
\end{equation}
This characterization yields that the capacity is subadditive, i.e.,
\begin{equation}\label{cap_subadditive}
	\capacity(K_1 \cup K_2) \leq \capacity(K_1) + \capacity(K_2), \qquad K_1, \, K_2 \fsubset \Z^d.
\end{equation}
\begin{lemma}[Trimming the fat]\label{lemma_trimming_the_fat} For any finite connected subset $K$ of $\mathbb{Z}^d$, any $x \in K$ and any $0 \leq a \leq \capacity(K)$ there exists a connected subset
$K'$ of $K$  satisfying $x \in K'$ and
\begin{equation}
  a \leq \capacity(K') \leq a+1.
\end{equation}
\end{lemma}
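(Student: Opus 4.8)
The plan is to grow a connected subset of $K$ vertex by vertex, starting from $\{x\}$, and to stop the first time the capacity reaches the target level $a$. Since the capacity of a single point satisfies $\capacity(\{x\}) \le 1$ by \eqref{obs:capacity_of_one_point}, and we add one vertex at a time, the subadditivity bound \eqref{cap_subadditive} will guarantee that each step increases the capacity by at most $1$; hence when we first exceed $a$, we cannot have overshot by more than $1$. The only thing to be careful about is that the intermediate sets remain connected, which is why we should enumerate the vertices of $K$ along a spanning-tree order rooted at $x$.

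First I would fix a spanning tree $T$ of the finite connected graph induced by $K$, rooted at $x$, and let $v_0 = x, v_1, v_2, \dots, v_{|K|-1}$ be an enumeration of the vertices of $K$ in an order such that every initial segment $K_j := \{v_0, \dots, v_j\}$ spans a connected subgraph of $T$ (e.g.\ a BFS or DFS order of $T$); such an enumeration exists because in a rooted tree one can always list vertices so that each vertex appears after its parent. Then $K_0 = \{x\}$, $K_{|K|-1} = K$, each $K_j$ is connected and contains $x$, and $K_{j-1} \subseteq K_j$. By monotonicity \eqref{cap_mon} the sequence $a_j := \capacity(K_j)$ is nondecreasing, with $a_0 = \capacity(\{x\}) \le 1$ and $a_{|K|-1} = \capacity(K) \ge a$.

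Next, if $a \le a_0$, then $K' := \{x\} = K_0$ works, since $0 \le \capacity(K') = a_0 \le 1 \le a + 1$ when $a \ge a_0 \ge \ldots$; more simply, $a_0 \le 1$ and $a \le a_0$ give $a \le \capacity(K') \le a_0 \le a + 1$ only if $a_0 \le a+1$, which holds as $a_0 \le 1 \le a+1$. Wait — I should instead just take $j^* := \min\{ j : a_j \ge a\}$, which is well defined because $a_{|K|-1} = \capacity(K) \ge a$, and set $K' := K_{j^*}$. By definition of $j^*$ we have $\capacity(K') = a_{j^*} \ge a$. If $j^* = 0$ then $\capacity(K') = a_0 \le 1 \le a + 1$ (using $a \ge 0$). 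If $j^* \ge 1$, then by minimality $a_{j^*-1} < a$, and by subadditivity \eqref{cap_subadditive} applied to $K_{j^*} = K_{j^*-1} \cup \{v_{j^*}\}$ together with \eqref{obs:capacity_of_one_point},
\begin{equation*}
\capacity(K') = a_{j^*} \le a_{j^*-1} + \capacity(\{v_{j^*}\}) \le a_{j^*-1} + 1 < a + 1.
\end{equation*}
In either case $a \le \capacity(K') \le a + 1$, and $K'$ is a connected subset of $K$ containing $x$, which proves the lemma.

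The argument is essentially routine; the only point requiring a moment's care — and the one I would flag as the "main obstacle," though it is a mild one — is ensuring the existence of the connectivity-preserving enumeration $v_0, \dots, v_{|K|-1}$, i.e.\ that one can add the vertices of a finite connected graph one at a time while keeping every intermediate set connected. This is exactly the statement that a connected graph admits a spanning tree and that a rooted tree can be listed parent-before-child; no quantitative input is needed beyond \eqref{cap_mon}, \eqref{obs:capacity_of_one_point} and \eqref{cap_subadditive}.
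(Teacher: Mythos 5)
Your proof is correct and takes essentially the same approach as the paper: order the vertices so that every initial segment $K_j$ is connected and contains $x$, observe that $\capacity(K_j)$ increases by at most $1$ at each step (via \eqref{cap_mon}, \eqref{obs:capacity_of_one_point}, \eqref{cap_subadditive}), and stop at the first index where the capacity reaches $a$. The paper states the existence of the connectivity-preserving ordering without elaboration; your spanning-tree/BFS justification fills in the same fact.
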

\begin{proof} One may order the vertices $x_1,\dots,x_n$ of $K$ in a way that $x_1=x$ and if we define $K_i:=\{x_1,\dots,x_i\}$ for any $1\leq i \leq n$ then the set $K_i$ is connected. We have $\capacity(K_1) \leq 1$ by \eqref{obs:capacity_of_one_point}, moreover  it follows from \eqref{cap_mon}, \eqref{obs:capacity_of_one_point} and \eqref{cap_subadditive} that   $\capacity(K_{i}) \leq \capacity(K_{i+1}) \leq \capacity(K_{i})+1 $ holds for any $1 \leq i \leq n-1$, thus there exists $1 \leq i \leq n$ such that $a \leq \capacity(K_i) \leq a+1$.
\end{proof}

Our next proposition follows from \cite[Lemma 1.12]{drs}.
\begin{proposition}[Capacity and hitting probability]
	\label{prop:LED}
	For any $x \in \Z^d$ and $K \fsubset \Z^d$
	\begin{equation}
	\label{LED}
	\capacity(K) \cdot \min_{y \in K} g(x,y) \leq P_x (T_K < \infty) \leq \capacity(K) \cdot \max_{y \in K} g(x,y).
	\end{equation}
\end{proposition}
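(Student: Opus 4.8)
The plan is to prove the two-sided bound via the \emph{last-exit decomposition} of the random walk path (which is why we call this Proposition \ref{prop:LED}). The key identity I would establish is
\[
  P_x(T_K < \infty) \;=\; \sum_{y \in K} g(x,y)\, e_K(y),
\]
after which both inequalities follow immediately: since $\min_{y\in K} g(x,y) \le g(x,y) \le \max_{y\in K} g(x,y)$ for every $y \in K$ and $e_K \ge 0$, the right-hand side is sandwiched between $\min_{y\in K} g(x,y) \cdot \sum_{y\in K} e_K(y)$ and $\max_{y\in K} g(x,y) \cdot \sum_{y\in K} e_K(y)$, and $\sum_{y\in K} e_K(y) = \capacity(K)$ by \eqref{capacity_with_eqmeasure}. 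The case $K = \emptyset$ is trivial, since then $P_x(T_K<\infty)=0=\capacity(K)$.

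To prove the identity I would first use that $d \ge 3$, so the walk is transient and hence almost surely visits the finite set $K$ only finitely often; consequently, on the event $\{T_K < \infty\}$ the time $L_K$ of the last visit to $K$ (c.f.\ \eqref{last_visit}) is almost surely a finite nonnegative integer, and $\{T_K < \infty\}$ coincides up to a null set with the disjoint union, over $y \in K$ and $t \in \N_0$, of the events $A_{t,y} := \{X(t) = y\} \cap \{X(s) \notin K \text{ for all } s > t\}$. Applying the Markov property at time $t$ then gives $P_x(A_{t,y}) = p_t(x,y) \cdot P_y(\tilde T_K = \infty)$; since $y \in K$, the second factor equals $e_K(y)$ by the definition \eqref{equilib_measure}. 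Summing first over $t$ (using \eqref{green} to recognise $\sum_{t\ge 0} p_t(x,y) = g(x,y)$) and then over $y \in K$ yields the claimed formula.

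The main technical point --- and essentially the only place where care is needed --- is the rigorous justification that $\{T_K<\infty\}$ decomposes as the stated disjoint union of the events $A_{t,y}$: this rests on transience guaranteeing $L_K < \infty$ almost surely on $\{T_K < \infty\}$, together with the observation that $\{X(s)\notin K \text{ for all } s>t\}$ is, after shifting time by $t$, exactly the event $\{\tilde T_K = \infty\}$ for the walk restarted at $X(t)$, so that the Markov property applies. Everything else is elementary bookkeeping. Alternatively, this is precisely the content of \cite[Lemma 1.12]{drs}, which one may simply invoke; I would include the short argument above for completeness rather than only citing it.
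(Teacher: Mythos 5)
Your proof is correct and is exactly the standard last-exit decomposition argument that underlies the cited \cite[Lemma 1.12]{drs}; the paper itself simply invokes that lemma rather than spelling it out. Your reconstruction of the identity $P_x(T_K<\infty)=\sum_{y\in K} g(x,y)e_K(y)$, including the justification via transience that $L_K<\infty$ a.s.\ on $\{T_K<\infty\}$ and the application of the Markov property at the last-visit time, is the intended route.
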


\begin{lemma}[Hitting a set within a given time frame]
	\label{lemma:two_birds_one_stone}
	There exists $c > 0$ such that for any $z \in \Z^d$ and $K \fsubset \Z^d$ we have
	\begin{equation}
		\label{eq:lemma:two_birds_one_stone}
		P_z \left( T_K \leq 2 \cdot D^2 \right) \geq c \cdot \capacity(K) \cdot (D + 1)^{ 2 - d }, \quad \text{where} \quad
D = \max_{x \in K} |x - z|.
	\end{equation}
\end{lemma}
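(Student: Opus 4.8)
The plan is to combine the lower bound in Proposition \ref{prop:LED} (capacity controls the hitting probability of $K$ starting from $z$) with a restart/restriction argument that converts ``$T_K < \infty$'' into ``$T_K \le 2D^2$''. First I would apply the lower bound in \eqref{LED}: since $g(z,y) \ge c \cdot (|z-y|+1)^{2-d} \ge c \cdot (D+1)^{2-d}$ for every $y \in K$ by \eqref{green_bounds} and the definition of $D$, we get $P_z(T_K < \infty) \ge c \cdot \capacity(K) \cdot (D+1)^{2-d}$. So the only remaining issue is to show that, on the event $\{T_K < \infty\}$, the walk is quite likely to have hit $K$ already by time $2D^2$; equivalently, that $P_z(2D^2 < T_K < \infty)$ is at most, say, half of $P_z(T_K < \infty)$, or more robustly that it is itself bounded by $C \cdot \capacity(K)\cdot(D+1)^{2-d}$ with a smaller constant, after which one absorbs constants.

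The cleanest route for the second step: write $\{2D^2 < T_K < \infty\} \subseteq \{ |X(2D^2) - z| > D \} \cap \{\text{walk started afresh at } X(2D^2) \text{ eventually hits } K\}$, and apply the strong Markov property at time $2D^2$ together with \eqref{LED} again. On $\{|X(2D^2)-z| = r\}$ with $r > D$, the conditional probability of eventually hitting $K$ is at most $\capacity(K)\cdot \max_{y\in K} g(X(2D^2),y) \le \capacity(K)\cdot C\cdot (r - D + 1)^{2-d}$ (using the triangle inequality $|X(2D^2)-y| \ge |X(2D^2)-z| - |z-y| \ge r - D$ and \eqref{green_bounds}). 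Then I would bound $P_z(2D^2 < T_K<\infty) \le \capacity(K)\cdot C \sum_{r > D} P_z(|X(2D^2)-z| \ge r)\cdot (r-D+1)^{2-d}$ — actually it is cleaner to split on dyadic shells $\{2^{j}D < |X(2D^2)-z| \le 2^{j+1}D\}$, $j \ge 0$, on which the hitting factor is $\lesssim (2^{j}D)^{2-d}$ and the probability of the shell is, by Proposition \ref{staying_inside} with $L = 2D^2$ and radius $2^j D$, at most $2d\exp\{-(2^jD)^2/(4D^2)\} = 2d\exp\{-4^j/4\}$. Summing the geometric-times-Gaussian series $\sum_{j\ge 0} (2^j D)^{2-d} e^{-4^j/4}$ gives $\le C' D^{2-d} \le C'' (D+1)^{2-d}$ (the $j=0$ term dominates, and for $d \ge 3$ the exponent $2-d \le -1$ only helps). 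Hence $P_z(2D^2 < T_K < \infty) \le C \cdot \capacity(K)\cdot (D+1)^{2-d}$.

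Finally, I would note that either this last bound already has a strictly smaller constant than the lower bound from \eqref{LED} — in which case $P_z(T_K \le 2D^2) = P_z(T_K<\infty) - P_z(2D^2 < T_K < \infty) \ge c'\capacity(K)(D+1)^{2-d}$ — or, if the constants are not favourable as stated, one first boosts by iterating the restart: after $X(2D^2)$, apply Proposition \ref{staying_inside} again so that with overwhelming probability $|X(4D^2)-z| \le 2D$, etc.; but in fact the dyadic estimate above already produces a small enough constant, so a single restart suffices. I expect the main obstacle to be purely bookkeeping: getting the constant in the ``overshoot'' estimate $P_z(2D^2 < T_K < \infty)$ genuinely smaller than the constant $c$ in \eqref{LED} (rather than just comparable), which is why the dyadic-shell decomposition with the Gaussian tail from Proposition \ref{staying_inside} — whose decay beats the polynomial blow-up of the Green function factor with room to spare — is the right tool; the small subtlety of handling the starting case $z \in K$ (where $D=0$ and $T_K = 0$) is trivial since then both sides are controlled by $\capacity(\{z\}) \le 1$ and $P_z(T_K \le 0) = 1$.
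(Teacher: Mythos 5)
Your inclusion $\{2D^2 < T_K < \infty\} \subseteq \{|X(2D^2) - z| > D\} \cap \{\cdots\}$ is false: the walk can remain inside $\ball(z,D)$ up to time $2D^2$ without having hit $K$ (take $K = \{x_0\}$ a single site at distance $D$ from $z$). Your dyadic-shell argument therefore bounds only the part of the overshoot event on which the walk has wandered far from $z$, and it never touches the part where $|X(2D^2) - z| \leq D$. That missing part is in fact the dominant one: by the Markov property at time $2D^2$, the heat-kernel upper bound \eqref{dumb_heat_kernel_bound} gives $p_{2D^2}(z,w) \leq C\,D^{-d}$ uniformly in $w$, the upper bound in \eqref{LED} together with \eqref{green_bounds} gives $P_w(T_K<\infty) \leq C\,\capacity(K)\,(\mathrm{dist}(w,K)+1)^{2-d}$, and summing the product over $w \in \ball(z,D)$ yields an estimate of order $\capacity(K)\cdot D^{-d}\cdot D^2 = \capacity(K)\,D^{2-d}$ --- the same order as your lower bound from \eqref{LED}, with no mechanism forcing the constant to be smaller. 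The Gaussian tail from Proposition \ref{staying_inside} kills the far part with room to spare, but it says nothing about the near part; so the strategy of subtracting the overshoot from $P_z(T_K<\infty)$ cannot, as it stands, deliver the strict constant improvement you need.

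The paper avoids the constant-comparison issue entirely by computing a direct lower bound via a last-exit decomposition:
\begin{equation*}
P_z\left(T_K \leq 2D^2\right) \geq P_z\left(0 \leq L_K \leq 2D^2\right) = \sum_{t=0}^{2D^2}\sum_{x\in K} p_t(z,x)\,e_K(x),
\end{equation*}
where $L_K$ is the last-visit time \eqref{last_visit} and $e_K$ the equilibrium measure \eqref{equilib_measure}. It then keeps only the times $t\in[D^2,2D^2]$, pairs them as $p_{2s-1}+p_{2s}$, and applies the local CLT lower bound \eqref{heat_kernel_lower_bound}, $p_{2s-1}(z,x)+p_{2s}(z,x)\geq c\,s^{-d/2}$, which is valid because $|z-x|\leq D\leq\sqrt{2s}$ when $s\geq D^2/2$. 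Summing over $s$ from $\lceil D^2/2\rceil$ to $D^2$ and over $x\in K$ (the latter producing $\capacity(K)$ through \eqref{capacity_with_eqmeasure}) gives $c\,\capacity(K)\,D^{2-d}$ directly. You should switch to this route, or find a genuinely different way to control the near part of the overshoot.
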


\begin{proof}
	Let us first note that   the result obviously holds when $K = \{ z \}$ since $\capacity(\{z\})>0$ by \eqref{obs:capacity_of_one_point}, hence we can focus on the case when $D > 0$. In this case \eqref{eq:lemma:two_birds_one_stone} can be proved as follows:
	\begin{align*}
		& P_z \left( T_K \leq 2 \cdot D^2 \right)
		\stackrel{ \eqref{last_visit} }{\geq}
		P_z \left( L_K \leq 2 \cdot D^2 \right)
		\stackrel{(*)}{=}
		\sum_{t = 0}^{ 2 D^2 } \sum_{x \in K} p_t(z,x) \cdot e_K(x) \geq
		\\ & \,
		\sum_{x \in K} e_K(x)  \sum_{s = \lceil D^2 \slash 2 \rceil }^{ D^2 } \left[ p_{2s-1}( z, x ) + p_{2s}(z,x) \right]
		\stackrel{ (**)}{ \geq }
		\capacity(K) \sum_{s = \lceil D^2 \slash 2 \rceil}^{D^2} c \cdot s^{- d \slash 2}
		\geq c \cdot \capacity(K) \cdot D^{2 - d}.
	\end{align*}
	Here, in $(*)$ we used \eqref{rw_transition_probab}, \eqref{equilib_measure} and the Markov property at $t$, moreover in $(**)$ we used \eqref{capacity_with_eqmeasure} and \eqref{heat_kernel_lower_bound}, noting that by the definition of $D$, we have $|z - x| \leq D \leq \sqrt{2s}$ for all $x \in K$.
\end{proof}

The next proposition follows from \cite[Proposition 6.5.2]{LawlerLimic2010}.
\begin{proposition}[Capacity of a ball]
	\label{capacity_of_ball}
	There exist constants $c > 0$ and $C < \infty$ such that
	\begin{equation}
	\label{eq:capacity_of_ball}
	c \cdot R^{d - 2} \leq \capacity( \ball(R) ) \leq C \cdot R^{d - 2}, \qquad R >0.
	\end{equation}
\end{proposition}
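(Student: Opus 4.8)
The plan is to prove the two inequalities separately, using the two characterizations of capacity recalled above; throughout I may assume $R \geq 1$, since for $0 < R < 1$ we have $\ball(R) = \{ o \}$ and $\capacity(\ball(R)) = 1/g(o,o)$ is a positive constant, for which the displayed bounds hold after adjusting $c$ and $C$ (alternatively, one restricts attention to $R \geq 1$, which is the only range used later in the paper). The Green function estimates \eqref{green_bounds} will be essentially the only external input.

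For the upper bound I would apply Proposition \ref{prop:LED} with $K = \ball(R)$ and $x = o$. Since $o \in \ball(R)$ we have $P_o(T_{\ball(R)} < \infty) = 1$, so the lower bound in \eqref{LED} gives $1 \geq \capacity(\ball(R)) \cdot \min_{y \in \ball(R)} g(o,y)$. Because $d \geq 3$, the map $z \mapsto (|z| + 1)^{2-d}$ is non-increasing in $|z|$, hence \eqref{green_bounds} yields $\min_{y \in \ball(R)} g(o,y) \geq c \cdot (R+1)^{2-d}$, and therefore $\capacity(\ball(R)) \leq c^{-1} (R+1)^{d-2} \leq C \cdot R^{d-2}$ for $R \geq 1$.

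For the lower bound I would use the variational formula \eqref{capacity_with_energy} with the uniform probability measure $\nu$ on $\ball(R)$, i.e.\ $\nu(x) = |\ball(R)|^{-1} \ind[x \in \ball(R)]$. Since $\ball(R)$ is a sup-norm cube of side $2 \lfloor R \rfloor + 1$, we have $|\ball(R)| \geq R^d$ for $R \geq 1$. For any fixed $x \in \ball(R)$, as $y$ ranges over $\ball(R)$ the difference $y - x$ ranges over a subset of $\ball(2R)$, so by translation invariance of $g$ and \eqref{green_bounds},
\[
\sum_{y \in \ball(R)} g(x,y) \leq \sum_{z \in \ball(2R)} g(o,z) \leq g(o,o) + C \sum_{k=1}^{2R} k^{d-1} (k+1)^{2-d} \leq C' \cdot R^2,
\]
where I used that the sup-norm sphere of radius $k$ has $O(k^{d-1})$ points and that $k^{d-1} (k+1)^{2-d} \leq k$ for $k \geq 1$. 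Consequently $\energy(\nu) = \sum_{x} \nu(x) \sum_{y} \nu(y) g(x,y) \leq \max_{x \in \ball(R)} |\ball(R)|^{-1} \sum_{y \in \ball(R)} g(x,y) \leq C' R^2 / R^d = C' R^{2-d}$, and plugging this into \eqref{capacity_with_energy} gives $\capacity(\ball(R)) \geq \energy(\nu)^{-1} \geq c \cdot R^{d-2}$.

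I do not expect a genuine obstacle: both bounds are short consequences of the Green function asymptotics \eqref{green_bounds} together with the two characterizations of capacity already recorded (indeed the statement is \cite[Proposition 6.5.2]{LawlerLimic2010}). The only points needing a little care are the behaviour at small $R$, handled by the preliminary reduction to $R \geq 1$, and the uniformity in $x$ of the estimate $\sum_{y \in \ball(R)} g(x,y) \leq C R^2$, which is why in the lower bound I dominate the inner sum by the sum over the larger ball $\ball(2R)$ centred at the origin.
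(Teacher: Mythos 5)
Your proof is correct, and it is also genuinely more than what the paper does: the paper does not prove Proposition~\ref{capacity_of_ball} at all, but simply cites \cite[Proposition 6.5.2]{LawlerLimic2010}. You instead give a short self-contained derivation from material already recorded in the paper. The upper bound is a clean application of the left inequality in \eqref{LED} at $x=o$ (so that $P_o(T_{\ball(R)}<\infty)=1$) together with the Green function lower bound in \eqref{green_bounds}, and this is essentially the standard argument. The lower bound via the uniform measure on $\ball(R)$ and the variational formula \eqref{capacity_with_energy} is also standard; your domination of $\sum_{y\in\ball(R)} g(x,y)$ by $\sum_{z\in\ball(2R)} g(o,z)$ and the radial summation $\sum_{k=1}^{2R} k^{d-1}(k+1)^{2-d}\leq C R^2$ are correct (here $d\geq 3$ is used, consistent with the standing assumption of Section~\ref{subsection_capacity}), as is the observation $|\ball(R)|\geq R^d$ for $R\geq 1$. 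The treatment of $0<R<1$ by reduction to the singleton case is fine. What your approach buys over the paper's choice is self-containedness and transparency about which Green-function inputs are actually needed; what the citation buys is brevity. No circularity issues arise, since \eqref{LED} is itself cited from \cite{drs} independently of the ball-capacity estimate.
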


\subsubsection{Discrete potential theory on the trace of a random walk}\label{subsection_pt_rw}

In Section \ref{subsection_pt_rw} we recall/prove  results that allow us to estimate the capacity of the trace of a random walk
 of fixed finite length.
 Our next proposition follows from \cite[Theorem 2]{jain_orey_68}.
\begin{proposition}[Law of large numbers for the capacity of the range of random walk]\label{prop_lln_cap_rw}
Let $d \geq 5$. There exists a constant $e_\infty=e_\infty(d)\in(0,+\infty)$ such that
\begin{equation}
\lim_{n \to \infty} \frac{1}{n} \capacity \left( \cup_{t=0}^{n-1} \{ X(t) \} \right) = e_\infty \qquad P_o - \text{almost surely.}
\end{equation}
\end{proposition}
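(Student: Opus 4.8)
The plan is to derive the result from Kingman's subadditive ergodic theorem, exploiting an almost-subadditive structure for the capacity of the range. First I would fix notation: write $R_n := \cup_{t=0}^{n-1}\{X(t)\}$ for the range of the first $n$ steps and $c_n := \capacity(R_n)$. The key structural input is that capacity is subadditive under unions (see \eqref{cap_subadditive}): if we split the trajectory at time $m$, then $R_{m+n} = R_m \cup (X(m) + \widetilde R_n)$, where $\widetilde R_n$ is the range of the $n$-step increment walk started afresh at time $m$, which is independent of $R_m$ and has the same law as $R_n$. Hence $c_{m+n} \le c_m + \widetilde c_n$ with $\widetilde c_n \stackrel{d}{=} c_n$. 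This is not quite the stationary subadditivity hypothesis Kingman needs, but it is the standard ``super-/sub-additive with an independent shifted copy'' setup: one applies Kingman to the process $Y_{m,n} := \capacity\big(\cup_{t=m}^{n-1}\{X(t)\}\big)$, which is genuinely subadditive ($Y_{0,n} \le Y_{0,m} + Y_{m,n}$ by \eqref{cap_subadditive}) and stationary-ergodic under the shift on the i.i.d.\ increments of the walk. Kingman's theorem then gives $c_n/n \to e_\infty$ almost surely and in $L^1$, where $e_\infty = \inf_n \mathbb E[c_n]/n = \lim_n \mathbb E[c_n]/n$.

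The remaining point is to show $e_\infty \in (0,\infty)$, i.e.\ that the limit is neither $0$ nor $+\infty$, and to confirm the $L^1$-integrability needed to run Kingman. Integrability is immediate: by monotonicity \eqref{cap_mon} and subadditivity, or simply by \eqref{capacity_with_eqmeasure} and \eqref{obs:capacity_of_one_point}, $c_n \le \sum_{x \in R_n} \capacity(\{x\}) = |R_n|/g(o,o) \le n$, so $c_n$ is bounded and in particular integrable. For the upper bound $e_\infty < \infty$ this same estimate gives $\mathbb E[c_n]/n \le 1 < \infty$. For the lower bound $e_\infty > 0$ I would use that in $d \ge 5$ the range grows linearly, $|R_n| \ge c\, n$ with high probability (this is where $d\ge 5$, or really transience plus a second-moment computation, enters: $\mathbb E|R_n| \sim \gamma_d n$ with $\gamma_d = P_o(\widetilde T_{\{o\}} = \infty) > 0$ by transience), combined with a lower bound on the capacity of a linearly-large range. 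Concretely, using \eqref{capacity_with_eqmeasure} one writes $c_n = \sum_{t=0}^{n-1} P_{X(t)}\big(\widetilde T_{R_n} \cap (t,\infty) = \emptyset \text{ after time } t\big)$-type expression; the cleaner route, following \cite{jain_orey_68}, is the second-moment method: $\mathbb E[c_n] \ge (\mathbb E[Z_n])^2/\mathbb E[Z_n^2]$ where $Z_n = \sum_{t=0}^{n-1}\ind[X(t) \notin \{X(t+1),\dots\}]$ counts the ``last-visit'' points of the range, which in $d\ge 5$ satisfies $\mathbb E[Z_n] \asymp n$ and $\mathbb E[Z_n^2] \asymp n^2$ (the second-moment bound being exactly the place the $d\ge5$ hypothesis is used, via summability of $\sum_t p_t(o,o)^?$-type series, cf.\ the discussion after Remark about \cite{dvoer}). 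This yields $\mathbb E[c_n] \ge c\, n$, hence $e_\infty \ge c > 0$.

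I expect the main obstacle to be the lower bound $e_\infty > 0$, and specifically verifying the second-moment estimate $\mathbb E[Z_n^2] \le C n^2$ in dimension $d \ge 5$: this requires controlling, for $s < t$, the probability that both $X(s)$ and $X(t)$ are last-visit points, which unwinds into a computation with the Green function $g$ and the bounds \eqref{green_bounds}, and the relevant sum $\sum_{r \ge 1} g(o, \text{walk at time } r)$-type quantities are only finite because $2-d < -2$ when $d > 4$. Once this is in hand, Kingman does the rest and the identification $e_\infty = \lim \mathbb E[c_n]/n$ is automatic. Since the proposition is quoted as a consequence of \cite[Theorem 2]{jain_orey_68}, in the write-up I would cite that reference for the quantitative range/capacity estimates rather than reproving them, and present only the subadditivity-plus-Kingman skeleton in detail.
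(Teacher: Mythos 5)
The paper itself gives no proof of this proposition: it is stated as a direct consequence of \cite[Theorem 2]{jain_orey_68} and no argument appears in the text. So the comparison is necessarily between your write-up and the cited reference rather than with any proof in the paper.

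Your Kingman skeleton is correct and is the natural self-contained route. By translation invariance of capacity, $Y_{m,n}=\capacity(\{X(m),\dots,X(n-1)\})$ is a measurable function of the increments $\xi_{m+1},\dots,\xi_{n-1}$ alone, so $Y_{m,n}$ composed with the shift on the i.i.d.\ increment sequence equals $Y_{m+1,n+1}$; subadditivity $Y_{0,n}\le Y_{0,m}+Y_{m,n}$ is exactly \eqref{cap_subadditive}; and $0\le Y_{0,n}\le n$ (from \eqref{cap_subadditive} together with \eqref{obs:capacity_of_one_point}) gives integrability. Kingman then yields $c_n/n\to e_\infty$ a.s.\ and in $L^1$ with $e_\infty=\inf_n\mathbb{E}[c_n]/n\in[0,1]$. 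That part is complete and correct, and it is a genuinely different presentation from simply quoting Jain--Orey.

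The gap is in the positivity $e_\infty>0$. The inequality you assert, $\mathbb{E}[c_n]\ge(\mathbb{E}[Z_n])^2/\mathbb{E}[Z_n^2]$ with $Z_n$ the number of ``last-visit'' times, is not a standard form and you do not derive it; Paley--Zygmund controls $\mathbb{P}(Z_n>0)$, not $\mathbb{E}[c_n]$, and there is no pointwise inequality linking $c_n$ to $Z_n$ that would convert one into the other, so as written this step does not go through. A clean fix is available with tools the paper already develops. For the occupation measure $\mu_n=\sum_{t=0}^{n-1}\delta_{X(t)}$, the variational characterization \eqref{capacity_with_energy} together with \eqref{dir_bil} gives $\capacity(R_n)\ge \mu_n(\Z^d)^2/\energy(\mu_n)=n^2/\energy(\mu_n)$, where $\energy(\mu_n)=\sum_{s,t<n}g(X(s),X(t))$. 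Jensen's inequality for $x\mapsto 1/x$ and Proposition~\ref{upper:exp_energy_of_traj} (this is precisely where $d\ge 5$ enters) then yield $\mathbb{E}[\capacity(R_n)]\ge n^2/\mathbb{E}[\energy(\mu_n)]\ge n/C$, hence $e_\infty\ge 1/C>0$ from the Kingman limit $\mathbb{E}[c_n]/n\to e_\infty$. This closes the argument without deferring to \cite{jain_orey_68} at all, which also removes the mildly circular flavor of citing that reference for positivity while re-deriving the rest of its theorem.
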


We will estimate the capacity of (the union of the traces of some) worms using the Dirichlet energy  of the so-called
local time measure of a worm: the  measure of a vertex $x \in \mathbb{Z}^d$ is equal to the number of visits to $x$ by the worm.
Our next result follows from \cite[Lemma 3]{RathSapozhnikov2012}.
\begin{proposition}[Expected  Dirichlet energy of the local time measure of a random walk]
	\label{upper:exp_energy_of_traj}	
	If $d \geq 5$ then there exists $C < \infty$ such that for every $n \in \N$ and $x \in \Z^d$ we have
	\begin{equation}
		\label{eq:upper:exp_energy_of_traj}
		E_x \left[ \sum_{t = 0}^{n} \sum_{t' = 0}^{n} g(X(t), X(t')) \right] \leq C \cdot n.
	\end{equation}
\end{proposition}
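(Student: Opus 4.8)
The plan is to reduce the double sum to a one-dimensional sum over the time lag and control it via the on-diagonal heat kernel bound \eqref{dumb_heat_kernel_bound}. By translation invariance of both the Green function and of simple random walk it suffices to take $x = o$. First I would use linearity of expectation together with the identity $g(X(t),X(t')) = g(o, X(t')-X(t))$ and the symmetry $g(o,z)=g(o,-z)$ to write
\[
E_o\Bigl[\,\sum_{t=0}^{n}\sum_{t'=0}^{n} g(X(t),X(t'))\,\Bigr] \;\le\; 2\sum_{0\le t\le t'\le n} E_o\bigl[\,g(o,X(t')-X(t))\,\bigr].
\]
By the Markov property at time $t$ the increment $X(t')-X(t)$ has the law of $X(t'-t)$ under $P_o$, so the right-hand side equals $2\sum_{0\le t\le t'\le n} h(t'-t)$, where I set $h(s):=E_o\bigl[g(o,X(s))\bigr]$.

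The second step is to evaluate $h(s)$ in closed form. Expanding $g(o,y)=\sum_{u\ge0}p_u(o,y)$, using the reversibility $p_u(o,y)=p_u(y,o)$ of simple random walk and the Chapman--Kolmogorov identity, one obtains
\[
h(s)=\sum_{y\in\Z^d} p_s(o,y)\sum_{u\ge0}p_u(y,o)=\sum_{u\ge0}p_{s+u}(o,o)=\sum_{r\ge s}p_r(o,o),
\]
where interchanging the sums is legitimate since all terms are nonnegative. Now \eqref{dumb_heat_kernel_bound} gives $p_r(o,o)\le C\,r^{-d/2}$ for $r\ge1$, while $p_0(o,o)=1$; hence $h(0)=g(o,o)<\infty$ and, since $d\ge3$, $h(s)\le C\sum_{r\ge s}r^{-d/2}\le C'\,s^{1-d/2}$ for $s\ge1$. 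Combining the two cases, $h(s)\le C''\,(s+1)^{1-d/2}$ for all $s\ge0$.

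Finally I would assemble the estimate: grouping the terms of $\sum_{0\le t\le t'\le n}h(t'-t)$ according to the value $s=t'-t$ gives at most $(n+1)\sum_{s\ge0}h(s)\le C''(n+1)\sum_{s\ge0}(s+1)^{1-d/2}$, and here the hypothesis $d\ge5$ enters: then $1-d/2\le-3/2<-1$, so the last series converges to a finite constant depending only on $d$, and the whole expression is at most $C\cdot(n+1)\le 2C\cdot n$ for $n\ge1$. There is no real obstacle in this argument; the only points requiring care are the nonnegativity justification for the interchanges of sums and expectations, and the observation that $\sum_{s}(s+1)^{1-d/2}$ converges exactly when $d\ge5$, which is precisely the dimension restriction in the statement. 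A slightly more hands-on alternative would be to bound $h(s)=\sum_y p_s(o,y)g(o,y)$ directly by splitting the sum according to whether $|y|\le\sqrt{s}$ and invoking \eqref{dumb_heat_kernel_bound}, \eqref{green_bounds} and Proposition \ref{staying_inside}, but the generating-function identity above is cleaner.
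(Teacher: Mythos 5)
Your proof is correct. Note that the paper does not actually prove Proposition~\ref{upper:exp_energy_of_traj}; it is stated as following from Lemma~3 of R\'ath--Sapozhnikov \cite{RathSapozhnikov2012}, so there is no in-paper argument to compare against. Your derivation supplies a clean self-contained proof: reducing via translation invariance and the Markov property to $h(s)=E_o[g(o,X(s))]$, computing the closed form $h(s)=\sum_{r\geq s}p_r(o,o)$ via reversibility and Chapman--Kolmogorov, bounding it by $(s+1)^{1-d/2}$ using \eqref{dumb_heat_kernel_bound}, and observing that the at-most-$(n+1)$ multiplicity of each time-lag gives $\sum_{0\le t\le t'\le n}h(t'-t)\le (n+1)\sum_{s\ge 0}h(s)$, with $\sum_{s\ge 0}h(s)<\infty$ precisely because $d\ge 5$. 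This is the standard route and, unsurprisingly, the place the dimension hypothesis enters is exactly the convergence of $\sum_s (s+1)^{1-d/2}$. The interchanges of sums and expectations are all of nonnegative terms, as you observe, so Tonelli handles them. No gaps.
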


Recall the notation $E_{x,x'}$ from Section \ref{subsection_random_walks}.

\begin{lemma}[Expected mutual Dirichlet energy of the local times of two random walks]
	\label{upper:exp_mutual_energy_of_trajs} If $d \geq 3$ then
 	there exists $C < \infty$ such that for every $n, \, n' \in \N$ and $x, x' \in \Z^d$ we have
	\begin{equation}
		\label{eq:upper:exp_mutual_energy_of_trajs}
			E_{x,x'} \left[	\sum_{t = n }^{2 n} \sum_{t' = n' }^{2  n'}  g( X(t), X'(t') ) \right]
			\leq
C \cdot n \cdot n' \cdot \min  \left\{ |x - x'|^{2-d}  , \,    (n + n')^{1-d/2}     \right\} .
	\end{equation}
\end{lemma}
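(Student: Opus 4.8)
The plan is to compute the expectation by interchanging the expectation with the double sum and using the heat kernel bounds \eqref{dumb_heat_kernel_bound}, \eqref{green_bounds}. For each pair of times $t \in [n, 2n]$ and $t' \in [n', 2n']$, the key observation is that $g(X(t), X'(t'))$ has expectation
\[
E_{x,x'}\big[ g(X(t), X'(t')) \big] = \sum_{y, y'} p_t(x,y)\, p_{t'}(x',y')\, g(y,y') = \sum_{z} p_t(x, \cdot) * p_{t'}(x', \cdot)(z) \cdot (\text{Green kernel stuff}),
\]
which by the semigroup identity $g = \sum_{s \geq 0} p_s$ equals $\sum_{s \geq 0} p_{t + t' + s}(x, x')$, i.e. a tail of the Green function: $E_{x,x'}[ g(X(t), X'(t')) ] = \sum_{s \geq t+t'} p_s(x,x')$. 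So the whole quantity to be bounded is
\[
\sum_{t=n}^{2n} \sum_{t'=n'}^{2n'} \sum_{s \geq t + t'} p_s(x,x').
\]

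First I would bound this using \eqref{dumb_heat_kernel_bound}: since each inner tail $\sum_{s \geq t+t'} p_s(x,x') \leq C \sum_{s \geq t+t'} s^{-d/2} \leq C' (t+t')^{1-d/2}$ (valid as $d \geq 3$), and since $t + t' \geq n + n'$ throughout the range, each summand is at most $C' (n+n')^{1-d/2}$; multiplying by the $\asymp n \cdot n'$ terms in the double sum gives the bound $C \cdot n \cdot n' \cdot (n+n')^{1-d/2}$, which is one of the two terms in the minimum. Second, for the other term $|x-x'|^{2-d}$, I would instead bound $\sum_{s \geq t+t'} p_s(x,x') \leq \sum_{s \geq 0} p_s(x,x') = g(x,x') \leq C (|x-x'|+1)^{2-d}$ by \eqref{green_bounds}, so each summand is at most $C(|x-x'|+1)^{2-d}$ and the double sum is at most $C \cdot n \cdot n' \cdot (|x-x'|+1)^{2-d}$; a small case distinction ($|x-x'| = 0$ versus $|x-x'| \geq 1$) handles the $+1$ versus no $+1$, using that $x$ and $x'$ being endpoints of lattice paths of comparable time scales does not actually cost us (one can absorb the constant). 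Taking the minimum of the two bounds gives \eqref{eq:upper:exp_mutual_energy_of_trajs}.

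The only mildly delicate points are (i) justifying the interchange of $E_{x,x'}$ with the sums — this is immediate by Tonelli since everything is nonnegative — and (ii) the bookkeeping converting $\sum_{s \geq t+t'} s^{-d/2}$ into $(t+t')^{1-d/2}$ and checking the constants do not blow up; neither is a real obstacle. The one genuinely conceptual step is the identity $E_{x,x'}[g(X(t),X'(t'))] = \sum_{s \geq t+t'} p_s(x,x')$, which follows from writing $g(y,y') = \sum_{r \geq 0} p_r(y,y')$, using the Chapman--Kolmogorov relation $\sum_{y,y'} p_t(x,y) p_r(y,y') p_{t'}(y',x') = p_{t+r+t'}(x,x')$ (and symmetry $p_{t'}(x',y')=p_{t'}(y',x')$), and re-indexing $s = t + r + t'$. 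I expect the main obstacle to be purely organizational: presenting the two bounds cleanly so the $\min$ emerges without redundant computation.
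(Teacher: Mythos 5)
Your proof is essentially identical to the paper's: both compute $E_{x,x'}[g(X(t),X'(t'))]$ via Chapman--Kolmogorov to get a tail $\sum_{s\geq t+t'} p_s(x,x')$ of the Green function, and then bound that tail in the same two ways (the heat-kernel estimate giving $(t+t')^{1-d/2}$, and $g(x,x')\leq C|x-x'|^{2-d}$ via \eqref{green_bounds}). The ``small case distinction'' you flag for $|x-x'|=0$ is harmless since $|x-x'|^{2-d}=\infty$ in that case, so the paper simply omits it.
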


\begin{proof}
	By the linearity of expectation, we can rewrite the l.h.s.\ of \eqref{eq:upper:exp_mutual_energy_of_trajs} as
	\begin{equation}\label{energy_lin_expect_green}
		 \sum_{t = n}^{ 2 n} \sum_{t' = n'}^{ 2 n' } E_{x,x'}\left[ g( X(t), X'(t') ) \right].
	\end{equation}
	We want to bound the generic term of the above sum.	Let us observe that
	\begin{equation}\label{egx1t1x2t2}
		E_{x,x'} \left[ g(X(t), X'(t') ) \right]
		\stackrel{ \eqref{green} }{ = }
		\sum_{s=0}^{\infty} E_{x,x'} \left( p_s(X(t),X'(t') )  \right)
		\stackrel{(*)}{=}
		\sum_{s=0}^{\infty} p_{s+t+t'}(x, x'),
	\end{equation}
	where $(*)$ follows from the Chapman-Kolmogorov equations for the transition kernel \eqref{rw_transition_probab}.		

	We will bound the r.h.s.\ of \eqref{egx1t1x2t2} in two different ways:
	\begin{align*}
		& \sum_{s=0}^{\infty} p_{s+t+t'}(x, x') \stackrel{\eqref{green} }{\leq} g(x, x') \stackrel{ \eqref{green_bounds} }{\leq} C \cdot | x - x' |^{ 2 - d },\\
		& \sum_{s=0}^{\infty} p_{s+t+t'}(x, x')  \stackrel{\eqref{dumb_heat_kernel_bound} }{\leq } C \cdot \sum_{s=0}^{\infty} (s+t+t')^{-d/2} \leq C \cdot (t +t')^{1-d/2}
		\stackrel{(**)}{\leq} C \cdot   (n+n')^{1-d/2},
	\end{align*}
	where in $(**)$ we used that $t \geq n$ and $t' \geq n'$ holds for the terms of the sum \eqref{energy_lin_expect_green}. Putting together the above bounds, we obtain \eqref{eq:upper:exp_mutual_energy_of_trajs}.
\end{proof}

\subsection{Sums of random walk hitting probabilities}\label{subsection_rw_estimates}

In Section \ref{subsection_rw_estimates} we consider a subset $K$ of a box of scale $r$. We state and prove some results that will allow us to estimate
 the expected number of worms of length $\ell$ that hit $K$: Lemma \ref{upper:fuga_hitting_sum} gives an upper bound, while Lemma \ref{lower:fuga_hitting_sum}
provides us with a matching lower bound. Lemma \ref{lemma_upper:two_cons_hitting_sum} gives an upper bound on the expected number of worms of length $\ell$ that hit the sets $K_1$ and $K_2$.
 The reader may want to skip the proofs of these technical results at first reading.

\begin{lemma}[Upper bound on the sum of hitting probabilities]
	\label{upper:fuga_hitting_sum} Let $d\geq 3$.
	There exists $C < \infty$ such that, for any $r \geq 1$, $\ell \geq 16 \cdot r^2$ and $K \subseteq \ball(3  r)$ we have
	\begin{equation}
		\label{eq:upper:fuga_hitting_sum}
		\sum_{z \in \Z^d \, : \, | z | \geq 4  r} P_z \big( T_K \leq \ell \big) \leq C \cdot \ell \cdot \capacity(K).
	\end{equation}
\end{lemma}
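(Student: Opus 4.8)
The plan is to split the sum over $z$ at the scale $\sqrt{\ell}$ and estimate the two ranges by genuinely different means. We may assume $K\neq\emptyset$, for otherwise both sides of \eqref{eq:upper:fuga_hitting_sum} vanish. Since $\ell\geq 16 r^2$ we have $\sqrt{\ell}\geq 4r$, so it is convenient to set $R_*:=6\sqrt{\ell}$, which satisfies $R_*\geq 24 r$; recall also that $\#\{z\in\Z^d : |z|=k\}\leq C k^{d-1}$ for every $k\in\N$.

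In the \emph{near} range $4r\leq |z|\leq R_*$ I would simply drop the time constraint, writing $P_z(T_K\leq\ell)\leq P_z(T_K<\infty)$ and applying Proposition \ref{prop:LED} together with \eqref{green_bounds}. Since $|y|\leq 3r$ for all $y\in K$, this gives $P_z(T_K<\infty)\leq \capacity(K)\max_{y\in K}g(z,y)\leq C\capacity(K)(|z|-3r+1)^{2-d}$; as $|z|=k\geq 4r$ forces $k-3r\geq k/4$, the last expression is at most $C\capacity(K)k^{2-d}$, so that
\[
\sum_{4r\leq |z|\leq R_*} P_z(T_K\leq\ell)\;\leq\; C\capacity(K)\sum_{k=4r}^{R_*} k^{d-1}\cdot k^{2-d}\;=\;C\capacity(K)\sum_{k=4r}^{R_*}k\;\leq\;C\capacity(K)R_*^2\;=\;C\capacity(K)\,\ell .
\]

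The crux is the \emph{far} range $|z|>R_*$: there, summing the Green-function bound $P_z(T_K<\infty)\leq C\capacity(K)|z|^{2-d}$ diverges, while summing the ``walk does not travel far'' bound of Proposition \ref{staying_inside} by itself only gives a total of order $\ell^{d/2}$. The remedy is to interleave the two estimates via the strong Markov property at $\tau:=T_{\ball(m)}$ with $m:=\lfloor |z|/2\rfloor$. Since $3r\leq |z|/8\leq m$ we have $K\subseteq\ball(m)$, hence $\tau\leq T_K$ and $\{T_K\leq\ell\}\subseteq\{\tau\leq\ell\}\cap\{T_K<\infty\}$, so (strong Markov at $\tau$, using $T_K\geq\tau$)
\[
P_z(T_K\leq\ell)\;\leq\; E_z\!\left[\ind[\tau\leq\ell]\cdot P_{X(\tau)}(T_K<\infty)\right].
\]
A nearest-neighbour walk cannot jump from $\ball(m)^c$ strictly into $\ball(m)$, so $|X(\tau)|\geq m\geq |z|/4$ on $\{\tau<\infty\}$, whence $|X(\tau)-y|\geq |z|/4-3r\geq |z|/8$ for every $y\in K$, and \eqref{LED} with \eqref{green_bounds} yields $P_{X(\tau)}(T_K<\infty)\leq C\capacity(K)|z|^{2-d}$. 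On the other hand, to reach $\ball(m)$ the walk must move sup-distance $\geq |z|-m\geq |z|/2$ from its start, so Proposition \ref{staying_inside} (after translating the walk to start at the origin) gives $P_z(\tau\leq\ell)\leq 2d\exp(-|z|^2/(8\ell))$. Combining,
\[
P_z(T_K\leq\ell)\;\leq\; C\capacity(K)\,|z|^{2-d}\exp\!\big(-|z|^2/(8\ell)\big),
\]
and therefore, grouping by $|z|=k$ and comparing $\sum_{k\geq 1}k\,e^{-k^2/(8\ell)}$ with $\int_0^\infty x\,e^{-x^2/(8\ell)}\,dx=4\ell$,
\[
\sum_{|z|>R_*}P_z(T_K\leq\ell)\;\leq\; C\capacity(K)\sum_{k\geq 1} k^{d-1}\cdot k^{2-d}e^{-k^2/(8\ell)}\;=\;C\capacity(K)\sum_{k\geq 1}k\,e^{-k^2/(8\ell)}\;\leq\;C\capacity(K)\,\ell .
\]
Adding the near and far estimates proves \eqref{eq:upper:fuga_hitting_sum}.

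I expect the only genuinely delicate point to be the far range: one must notice that neither the potential-theoretic bound nor the large-deviation bound suffices on its own, and that the correct order of operations is to first condition on the walk hitting the ball of radius $\approx |z|/2$ (an event of probability $\lesssim e^{-|z|^2/(8\ell)}$ by time $\ell$) and only afterwards pay the factor $\capacity(K)\,|z|^{2-d}$ for reaching $K$ from distance $\approx|z|$; this lowers the power $k^{d-1}$ coming from the shell count to $k^{1}$, making the Gaussian sum converge to $O(\ell)$. The remaining ingredients — the shell count $\#\{|z|=k\}\leq Ck^{d-1}$, the inequality $k-3r\geq k/4$ for $k\geq 4r$, and the Gaussian-sum comparison — are routine.
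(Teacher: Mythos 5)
Your proof is correct and follows essentially the same strategy as the paper's: split the sum at a scale comparable to $\sqrt{\ell}$, bound the near range by dropping the time constraint and invoking Proposition \ref{prop:LED} with \eqref{green_bounds}, and bound the far range via the strong Markov property at an intermediate ball, combining the Azuma--Hoeffding bound of Proposition \ref{staying_inside} with the potential-theoretic hitting bound. The only difference is inessential: you stop the walk at the $z$-dependent ball $\ball(\lfloor |z|/2\rfloor)$, so the Green-function factor $|z|^{2-d}$ cancels the shell count $k^{d-1}$ down to $k$ and the far sum becomes the clean Gaussian sum $\sum_k k\,e^{-k^2/(8\ell)}=O(\ell)$, whereas the paper stops at the fixed ball $\ball(\sqrt{\ell})$ and estimates $\ell^{1-d/2}\sum_t t^{d-1}e^{-(t-\sqrt{\ell})^2/(2\ell)}=O(\ell)$ instead; both choices yield \eqref{eq:upper:fuga_hitting_sum}.
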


\begin{proof}
	By the assumption $\ell \geq 16 \cdot r^2$ we can divide the sum into disjoint terms
	\begin{equation}\label{group_terms_in_ketto}
		\sum_{z \in \Z^d \, : \, |z| \geq 4 r} P_z( T_K \leq \ell ) =
		\sum_{4 r \leq |z| \leq \sqrt{\ell}} P_z( T_K \leq \ell ) +
		\sum_{\sqrt{\ell} < |z|} P_z( T_K \leq \ell ).
	\end{equation}
	The first term on the r.h.s.\ of \eqref{group_terms_in_ketto}  can be bounded as
	\begin{align}
		\sum_{4r \leq |z| \leq \sqrt{\ell}} P_z(T_K \leq \ell) & \leq
		\sum_{t = 4r}^{ \sqrt{\ell} } \sum_{|z| = t} P_z(T_K < \infty) \stackrel{ \eqref{LED} }{ \leq }
		\sum_{t = 4r}^{ \sqrt{\ell} } \sum_{|z| = t} \capacity(K) \cdot \max_{y \in K} g(z,y) \\ & \stackrel{(*)}{ \leq }
		C \cdot \capacity(K) \cdot \sum_{t = 4r}^{\sqrt{\ell} } t^{d-1} \cdot t^{2 - d} \leq
		C \cdot \capacity(K) \cdot \ell,
	\end{align}
	where in $(*)$ we used $\eqref{green_bounds}$, noting that $|z-y| \geq t - 3r \geq t/4 $ holds since $K \subseteq \ball(3r)$.
	
	For the second term on the r.h.s.\ of \eqref{group_terms_in_ketto}, observe that if $z \in \Z^d$ satisfies $|z| = t > \sqrt{\ell}$ then
	\begin{equation}
		\label{hitting_sum_Hoeffding}
		P_z \big( T_{\ball( \sqrt{\ell} )} \leq \ell \big) \leq
		P_o \left( \max_{s = 1, \ldots, \ell}  |X(s)|  \ \geq t - \sqrt{\ell} \right) \stackrel{ \eqref{eq:staying_inside} }{ \leq }
		C \cdot \exp \left\{ - \frac{(t - \sqrt{\ell})^2}{ 2 \ell } \right\},
	\end{equation}
	 Now the upper bound on the second term on the r.h.s.\ of \eqref{group_terms_in_ketto} follows using a similar argument as before:
	\begin{multline}
		\sum_{|z| > \sqrt{\ell}} P_z( T_K \leq \ell )  \stackrel{(\diamond)}{ \leq }
		\sum_{t = \sqrt{\ell}}^{\infty} \sum_{|z| = t}
 P_z \big( T_{\ball(\sqrt{\ell})} \leq \ell \big) \cdot \max_{y \in \partial^{\text{int}} \ball( \sqrt{\ell} )} P_y (T_K < \infty) \\
		\stackrel{ \eqref{hitting_sum_Hoeffding}, \, \eqref{LED} }{ \leq }
		C \cdot \sum_{t = \sqrt{\ell}}^{\infty} \sum_{|z| = t} \exp \left\{ - \tfrac{( t - \sqrt{\ell} )^2 }{ 2 \ell } \right\} \cdot  \capacity(K) \cdot \max_{x \in K, \, y \in \partial^{\text{int}} \ball(\sqrt{\ell})} g(y,x)
  \\
		\stackrel{ (\diamond \diamond) }{ \leq }
		C \cdot \capacity(K) \cdot \ell^{1 - d \slash 2}  \sum_{t = \sqrt{\ell}}^{\infty} t^{d-1}  \exp \left\{ - \tfrac{(t - \sqrt{\ell})^2}{ 2 \ell } \right\}  \leq		C \cdot \capacity(K) \cdot \ell^{ 1 - d \slash 2 }  \int_{\sqrt{\ell}}^{\infty} t^{d-1}  \exp \left\{ - \tfrac{(t - \sqrt{\ell})^2}{  2\ell } \right\} \, \mathrm{d}t\\  \stackrel{ (\bullet) }{ = }
		C \cdot \capacity(K) \cdot \ell^{1 - d \slash 2}  \int_{1}^{ \infty } \ell^{ (d-1)\slash 2 } \cdot x^{ d - 1 } \cdot \exp \left\{ -\tfrac{1}{2} (x-1)^2  \right\} \cdot \sqrt{\ell} \, dx  \leq
		C \cdot \capacity(K) \cdot \ell,
	\end{multline}
	where in $(\diamond)$ we used the strong Markov property at $T_{\ball(\sqrt{\ell})}$, in $(\diamond \diamond)$ we used \eqref{green_bounds} together with the inequalities  $\sqrt{\ell} \geq 4 r$ and $|y-x| \geq \sqrt{\ell} - 3r \geq \sqrt{\ell}/4$,  moreover in $(\bullet)$ we made the variable change $x = t \slash \sqrt{\ell}$. Putting all of these together we obtain the desired upper bound \eqref{eq:upper:fuga_hitting_sum}.
\end{proof}

\begin{lemma}[Upper bound on the sum of double hitting probabilities]
	\label{lemma_upper:two_cons_hitting_sum} Let $d \geq 3$.
	There exists $C < \infty$ such that for any $r \geq 1$, $x \in \Z^d$ satisfying $|x| \geq 10  r$, $K_1 \subseteq \ball(3 \cdot r)$ and $K_2 \subseteq \ball(x, 3 \cdot r)$ and for any $\ell \geq 16 \cdot r^2$ we have
	\begin{align}
		\label{eq:upper:two_cons_hitting_sum}
		\sum_{z \in \mathcal{C}} P_z \big( \max \{ T_{K_1}, T_{K_2} \} \leq \ell \big) \leq C \cdot \ell \cdot \capacity(K_1) \cdot \capacity(K_2) \cdot |x|^{2 - d},
	\end{align}
	where $\mathcal{C} = \{ z \in \Z^d \, : \, |z| \geq 4 \cdot r, \, |z - x| \geq 4 \cdot r \}$.	
\end{lemma}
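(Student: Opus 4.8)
The plan is to decompose the event $\{\max\{T_{K_1}, T_{K_2}\} \leq \ell\}$ according to which of the two sets the walk enters first. If a trajectory satisfies $T_{K_1} \leq \ell$ and $T_{K_2} \leq \ell$, then either it enters $K_1$ at some time $\leq \ell$ and \emph{subsequently} (at a later time, still $\leq \ell$) enters $K_2$, or the same happens with $K_1$ and $K_2$ interchanged. Hence the left-hand side of \eqref{eq:upper:two_cons_hitting_sum} is at most the sum over $z \in \mathcal{C}$ of
\[
P_z\big(T_{K_1} \leq \ell,\ X(t) \in K_2 \text{ for some } t \in [T_{K_1}, \ell]\big)
\quad\text{plus}\quad
P_z\big(T_{K_2} \leq \ell,\ X(t) \in K_1 \text{ for some } t \in [T_{K_2}, \ell]\big).
\]

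For the first term I would apply the strong Markov property at $T_{K_1}$ and bound the conditional probability of hitting $K_2$ in the remaining (at most $\ell$) units of time simply by $P_y(T_{K_2} < \infty)$ with $y = X(T_{K_1}) \in K_1$, obtaining
\[
\sum_{z \in \mathcal{C}} P_z\big(T_{K_1} \leq \ell,\ \dots\big) \;\leq\; \Big(\sum_{z \in \mathcal{C}} P_z(T_{K_1} \leq \ell)\Big)\cdot \max_{y \in K_1} P_y(T_{K_2} < \infty).
\]
The first factor is at most $C \ell \capacity(K_1)$: indeed $\mathcal{C} \subseteq \{z : |z| \geq 4r\}$ while $K_1 \subseteq \ball(3r)$ and $\ell \geq 16 r^2$, so Lemma \ref{upper:fuga_hitting_sum} applies verbatim. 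For the second factor, Proposition \ref{prop:LED} gives $\max_{y \in K_1} P_y(T_{K_2} < \infty) \leq \capacity(K_2)\cdot \max_{y \in K_1,\, w \in K_2} g(y,w)$, and since $|y| \leq 3r$, $|w - x| \leq 3r$ and $|x| \geq 10 r$ force $|y - w| \geq |x| - 6r \geq \tfrac{2}{5}|x|$, the bound \eqref{green_bounds} yields $g(y,w) \leq C |x|^{2-d}$. Multiplying, the first term contributes $C \ell \capacity(K_1) \capacity(K_2) |x|^{2-d}$.

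The second term is handled symmetrically. Conditioning at $T_{K_2}$ produces the factor $\sum_{z \in \mathcal{C}} P_z(T_{K_2} \leq \ell)$ times $\max_{w \in K_2} P_w(T_{K_1} < \infty)$; here one uses $\mathcal{C} \subseteq \{z : |z - x| \geq 4r\}$ together with $K_2 - x \subseteq \ball(3r)$ and applies the translate (by $-x$) of Lemma \ref{upper:fuga_hitting_sum}, invoking translation invariance of capacity and of the relevant sums, to get $\sum_{z \in \mathcal{C}} P_z(T_{K_2} \leq \ell) \leq C\ell \capacity(K_2)$, while the same distance estimate as above bounds $\max_{w \in K_2} P_w(T_{K_1} < \infty) \leq C \capacity(K_1)|x|^{2-d}$. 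Adding the two contributions gives \eqref{eq:upper:two_cons_hitting_sum}. I do not expect a genuine obstacle; the only points needing care are the ``hit first'' bookkeeping (so that after the strong Markov step one only needs the event $\{T_{K_2} < \infty\}$, not a time-restricted one) and invoking Lemma \ref{upper:fuga_hitting_sum} in its translated form for the second term.
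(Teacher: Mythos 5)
Your proposal is correct and follows essentially the same route as the paper: decompose by which of $K_1,K_2$ is hit first, apply the strong Markov property at the first hitting time, then use Lemma \ref{upper:fuga_hitting_sum} for the spatial sum and Proposition \ref{prop:LED} together with \eqref{green_bounds} for the second factor. The paper dispatches the symmetric term by remarking on the symmetry of the layout, while you spell out the translation explicitly, but the argument is the same.
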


\begin{proof}
	We can assume that our walker visits the set $K_1$ first, since by the layout's symmetry the same proof gives the same upper bound in the other case. However, this can be done as
	\begin{multline}
		\sum_{z \in \mathcal{C}} P_z \left( T_{K_1} < T_{K_2} \leq \ell \right)
		 \stackrel{(*)}{\leq}
		\sum_{z \in \Z^d \, : \, |z| \geq 4r} P_z \left( T_{K_1} \leq \ell \right) \cdot \max_{y \in K_1} P_y \left( T_{K_2} < \infty \right)
		  \stackrel{ (**) }{ \leq } \\
		C \cdot \sum_{z \in \Z^d \, : \, |z| \geq 4r} P_z \left( T_{K_1} \leq \ell \right) \cdot |x|^{2-d} \cdot \capacity(K_2)
		 \stackrel{ \eqref{eq:upper:fuga_hitting_sum} }{ \leq }
		C \cdot \ell \cdot \capacity(K_1) \cdot |x|^{2-d} \cdot \capacity(K_2),
	\end{multline}
	where in $(*)$ we used the strong Markov property at $T_{K_1}$ and $(**)$ follows from \eqref{LED} and \eqref{green_bounds} since the distance between $K_1$ and $K_2$ is at least $|x|-6r \geq (4/10)\cdot |x| $, using $|x| \geq 10  r$.
\end{proof}

Our next result shows that the bound of Lemma \ref{upper:fuga_hitting_sum} is sharp (compare \eqref{eq:upper:fuga_hitting_sum} and \eqref{eq:lower:fuga_hitting_sum}).
For technical reasons (that will only be clarified in Section \ref{subsection_doubling_gamma}) we need to restrict our attention to random walks of length $\ell$ which (i) start from a certain subset $\mathcal{G}$ of $\ball(2R)$ which has holes of scale $r$ in it (to be defined below), (ii) hit the set $K$ (which is still a subset of a box of scale $r$) in the first $ \ell/3 $ steps and (iii) never exit a bigger ball $\ball(3R)$. In order for the
lower bound \eqref{eq:lower:fuga_hitting_sum} to hold, we require $\ell/r^2$ to be big (so that we can apply Lemma \ref{lemma:two_birds_one_stone}) and we require $\ell/R^2$ to be small (since we want the walker to stay inside $\ball(3R)$).

\begin{lemma}[Lower bound on the sum of hitting probabilities]
	\label{lower:fuga_hitting_sum}
	Let $d \geq 3$. There exist $1< \CLowerFugaHitDELTALOWER < \infty$, $0 < \CLowerFugaHitDELTAUPPER < 1$ and $\CLowerFugaHitLOWER > 0$ such that for any $2 \leq r, \ell, R \in \N$ satisfying
	\begin{equation}\label{ell_sandwich}
		\CLowerFugaHitDELTALOWER \cdot r^2 \leq \ell \leq \CLowerFugaHitDELTAUPPER \cdot  R^2,
	\end{equation}
	any $y \in 10 r \Z^d$ such that $\ball(y, r) \cap \ball(2R) \neq \emptyset$ and any $K \subseteq \ball(y, 3 \cdot r)$ we have
	\begin{equation}
		\label{eq:lower:fuga_hitting_sum}
		\sum_{z \in \mathcal{G}} P_z \big(\, T_K \leq  \ell/3 , \,  T_{\ball(3R)^c} > \ell   \, \big) \geq \CLowerFugaHitLOWER \cdot \ell \cdot \capacity(K),
	\end{equation}
	where $\mathcal{G} = \ball(2 \cdot R) \setminus \bigcup_{x \in 10 r \Z^d} \ball(x, 4 \cdot r)$.
\end{lemma}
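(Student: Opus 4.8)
The plan is to produce the lower bound by restricting the sum over $z \in \mathcal{G}$ to a well-chosen subset of starting points, namely those $z$ lying in a spherical shell around $K$ whose radius is comparable to $\sqrt{\ell}$ but still small compared to $R$, and then bounding each term from below using Lemma \ref{lemma:two_birds_one_stone} (hitting $K$ within a time budget) together with Proposition \ref{staying_inside} (not exiting $\ball(3R)$). Concretely, I would first fix the constants: choose $\Dlow$ large enough that $\ell \geq \Dlow r^2$ forces $\sqrt{\ell}$ to dominate $r$ by a large factor, and choose $\Dup$ small enough that $\ell \leq \Dup R^2$ forces $\sqrt{\ell}$ to be much smaller than $R$; the latter is what lets us guarantee the walk stays in $\ball(3R)$ with probability bounded below. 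Then for each $z$ with, say, $D := \max_{x\in K}|x-z| \le c'\sqrt{\ell}$ (for a suitable small $c'$ depending on $\Dup$), Lemma \ref{lemma:two_birds_one_stone} gives $P_z(T_K \le 2D^2) \ge c\cdot \capacity(K)\cdot (D+1)^{2-d}$, and since $2D^2 \le \ell/3$ by the choice of $c'$, this is a lower bound on $P_z(T_K \le \ell/3)$.

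Next I would handle the constraint $T_{\ball(3R)^c} > \ell$: on the event $\{T_K \le \ell/3\}$, after hitting $K$ the walk is inside $\ball(y,3r)$, which is well inside $\ball(3R)$ (since $\ball(y,r)\cap\ball(2R)\neq\emptyset$ puts $y$ within $2R+r$ of the origin); a single application of the strong Markov property at $T_K$ plus Proposition \ref{staying_inside} with $L=\ell$ and exit radius of order $R$ shows that, conditionally on hitting $K$ by time $\ell/3$, the walk fails to leave $\ball(3R)$ in the remaining time with probability at least $1 - 2d\exp\{-cR^2/\ell\} \ge 1 - 2d\exp\{-c/\Dup\} \ge 1/2$, provided $\Dup$ is chosen small enough. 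One must also check the walk hasn't already left $\ball(3R)$ before $T_K$; this is automatic since all of $K$ and all relevant starting points $z$ lie in $\ball(3R/2)$ and Proposition \ref{staying_inside} again controls the first $\ell/3$ steps, or more simply one can absorb this into the same Azuma estimate. Combining, each qualifying $z$ contributes at least $\tfrac{c}{2}\capacity(K)(D_z+1)^{2-d}$ to the sum.

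Finally I would sum over the qualifying $z$. These are the $z \in \mathcal{G}$ with $|z-y|$ (say) between $8r$ and $c'\sqrt{\ell}/2$ — the lower cutoff $8r$ is forced because $\mathcal{G}$ excludes $\bigcup_{x\in 10r\Z^d}\ball(x,4r)$, but a positive density of lattice points in each dyadic shell survives this exclusion since the removed balls have radius $4r$ on a $10r$-grid. For such $z$ one has $D_z \asymp |z-y|$, so $\sum_z (D_z+1)^{2-d} \asymp \sum_{t=8r}^{c'\sqrt{\ell}/2} t^{d-1}\cdot t^{2-d} = \sum_t t \asymp \ell$ (here the lower limit $8r$ is negligible compared to $\sqrt{\ell}$ precisely because $\ell \ge \Dlow r^2$ with $\Dlow$ large). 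This yields $\sum_{z\in\mathcal{G}} P_z(T_K\le \ell/3,\ T_{\ball(3R)^c}>\ell) \ge \CLowerFugaHitLOWER\cdot\ell\cdot\capacity(K)$, as claimed. The main obstacle is the bookkeeping around the excluded holes in $\mathcal{G}$ and the two competing scale constraints \eqref{ell_sandwich}: one must verify that the shell of usable starting points is simultaneously thick enough (in terms of $r$) that removing radius-$4r$ balls leaves a constant fraction, and thin enough (in terms of $R$) that the confinement event has probability bounded below — both are arranged by tuning $\Dlow$ large and $\Dup$ small, but the quantified version requires care.
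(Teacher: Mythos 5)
Your overall strategy coincides with the paper's: restrict the sum to a shell of starting points in $\mathcal{G}$ around $y$ of radii between $O(r)$ and $O(\sqrt{\ell})$, lower bound each hitting probability via Lemma \ref{lemma:two_birds_one_stone}, and sum to get $\ell\cdot\capacity(K)$. That part is right, and your remarks about the role of $\Dlow$ and $\Dup$ match the paper.

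The gap is in your treatment of the confinement constraint $\{T_{\ball(3R)^c}>\ell\}$. You assert that, after conditioning on $\{T_K\le \ell/3\}$, a single strong Markov step at $T_K$ plus Azuma shows the walk stays in $\ball(3R)$ with conditional probability $\ge 1/2$, and that ``the walk hasn't already left $\ball(3R)$ before $T_K$ \ldots\ is automatic'' because $z$ and $K$ lie well inside. This is not automatic: the event $\{T_K\le\ell/3\}$ does not prevent the walk from exiting $\ball(3R)$ and returning to $K$ before time $T_K$, and the knowledge that $X(0)$ and $X(T_K)$ are both deep inside $\ball(3R)$ says nothing about the path in between. The alternative you offer --- ``absorb into the same Azuma estimate'' --- genuinely fails term-by-term: you would be subtracting a $z$-independent constant of order $e^{-c R^2/\ell}$ from a main term $c\,\capacity(K)(D_z+1)^{2-d}$ that can be as small as $c\,\capacity(K)\ell^{(2-d)/2}$; summing over the shell, the total Azuma error is of order $\ell^{d/2}e^{-c/\Dup}$, which for fixed $\Dup$ eventually dominates $\ell\cdot\capacity(K)$ as $\ell\to\infty$ when $\capacity(K)$ is bounded (e.g., $K$ a singleton). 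The constant $\Dup$ cannot be tuned with $\ell$, so no choice of $\Dup$ rescues this.

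The correct argument --- and the one the paper uses --- is to subtract $\sum_{z\in\mathcal{G}}P_z(T_K\le\ell/3,\;T_{\ball(3R)^c}\le\ell)$ \emph{as a sum} and split on whether $T_K<T_{\ball(3R)^c}$ or $T_{\ball(3R)^c}<T_K$. In the first case a strong Markov step at $T_K$ gives $\sum_z P_z(T_K\le\ell/3)\cdot\max_{x\in K}P_x(T_{\ball(3R)^c}\le\ell)\le C\ell\,\capacity(K)\,e^{-1/(2\Dup)}$, using the \emph{upper} bound Lemma \ref{upper:fuga_hitting_sum} to reinstate the factor $\capacity(K)$. In the second case a strong Markov step at $T_{\ball(3R)^c}$ combined with $P_x(T_K<\infty)\le C\,\capacity(K)R^{2-d}$ gives a bound $C\,\capacity(K)\,R^2\,e^{-R^2/(2\ell)}$, again proportional to $\capacity(K)$. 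Both are $\le\tfrac{1}{2}\CPrfLowerFugaFirstLOWER\,\ell\,\capacity(K)$ once $\Dup$ is small, because $e^{-c/\Dup}$ and $(R^2/\ell)e^{-R^2/(2\ell)}$ both vanish as $\Dup\to 0$. Your multiplicative claim (conditional exit probability $\le 1/2$ for every qualifying $z$) is in fact provable, but only by reproducing this same two-case analysis in the per-$z$ ratio, using the extra $\capacity(K)R^{2-d}$ factor from the second case to compare against the per-$z$ hitting lower bound; it is not simpler than, and does not avoid, the paper's sum-level subtraction.
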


\begin{proof}
	For any $z \in \mathcal{G}$ we have
	\begin{equation}\label{split_to_diff_of_two}
	P_z \big(\, T_K \leq  \ell/3 , \,  T_{\ball(3R)^c} > \ell   \, \big) =
	P_z \big(\, T_K \leq  \ell/3  \,\big) -
	P_z \big(\, T_K \leq  \ell/3 , \,  T_{\ball(3R)^c} \leq \ell   \, \big),
	\end{equation}
	hence Lemma \ref{lower:fuga_hitting_sum} can be proved in two steps. First we will show that if $\CLowerFugaHitDELTALOWER$ is big enough and $\CLowerFugaHitDELTAUPPER<1 $ then there exists a constant $\CPrfLowerFugaFirstLOWER > 0$ such that for all $\ell$ satisfying \eqref{ell_sandwich} we have
	\begin{equation}
	\label{eq:lowerfuga_first}
	\sum_{z \in \mathcal{G}} P_z \big(\, T_K \leq  \ell/3  \,\big) \geq \CPrfLowerFugaFirstLOWER \cdot \ell \cdot \capacity(K).
	\end{equation}
	After this, we will see that if we choose $\CLowerFugaHitDELTAUPPER$ small enough then
	\begin{equation}
	\label{eq:lowerfuga_second}
	\sum_{z \in \mathcal{G}} P_z \big(\, T_K \leq  \ell/3 , \,  T_{\ball(3R)^c} \leq \ell   \, \big) \leq \frac{\CPrfLowerFugaFirstLOWER}{2} \cdot \ell \cdot \capacity(K)
	\end{equation}
	also holds. From these, the desired lower bound \eqref{eq:lower:fuga_hitting_sum} will follow using \eqref{split_to_diff_of_two} with $\CLowerFugaHitLOWER = \frac{ \CPrfLowerFugaFirstLOWER }{2}$.
	
	We begin our proof of  \eqref{eq:lowerfuga_first} by observing that for any $z \in \mathcal{G}$ we have
	\begin{align}
		P_z \left( T_K \leq  \ell \slash 3  \right)
		& \geq
		P_z \left( T_K \leq 2 \cdot \max_{x \in K} |x - z|^2  \right) \cdot \ind \left[ \max_{x \in K}|x - z|^2 \leq  \ell \slash 6  \right]
		\notag \\ & \stackrel{(*)}{\geq}
		c \cdot \capacity(K) \cdot |y - z|^{2 - d} \cdot \ind \left[\, 4 \cdot |y - z|^2 \leq  \ell \slash 6 \, \right],
\label{TK_l_3_ind_bound}
	\end{align}
	where in $(*)$ we used  Lemma \ref{lemma:two_birds_one_stone} and also used that $K \subseteq \ball(y, 3r)$ and $|y-z| \geq 4r$ together imply
$\max_{x \in K} |x - z| \leq 2 \cdot |y-z|$.
 Summing this probability over $z \in \mathcal{G}$ we obtain a lower bound on the left-hand side of \eqref{eq:lowerfuga_first}:
	\begin{equation}
		\label{eq:first}
		\sum_{z \in \mathcal{G}} P_z \big(\, T_K \leq  \ell/3  \,\big) \stackrel{ \eqref{TK_l_3_ind_bound} }{\geq} c \cdot \capacity(K) \cdot
 \sum_{z \in \mathcal{G} \cap \ball(y, \sqrt{ \ell \slash 24 } ) } |y-z|^{2-d}  \stackrel{(\diamond)}{\geq} \CPrfLowerFugaFirstLOWER \cdot \capacity(K) \cdot \ell,
	\end{equation}
	where the inequality $(\diamond)$ requires some further explanation. Let us define a disjoint collection of annuli $\mathcal{A}(x), x \in  \mathcal{D}'(y,\ell) $, where
\begin{align}
\mathcal{D}'(y,\ell):= & 10 r \Z^d \cap \ball(2R - 5r ) \cap \ball(y, \sqrt{ \ell \slash 24 }-5r ), \\
 \mathcal{A}(x):= & \ball(x,5r-1)\setminus \ball(x,4r), \quad x \in  \mathcal{D}'(y,\ell).
\end{align}
Recalling that $\mathcal{G} = \ball(2 \cdot R) \setminus \bigcup_{x \in 10 r \Z^d} \ball(x, 4 \cdot r)$, we see that we have
\begin{equation}\label{fill}
  \bigcup_{x \in \mathcal{D}'(y,\ell)} \mathcal{A}(x) \subseteq \mathcal{G} \cap \ball(y, \sqrt{ \ell \slash 24 } ), \quad \text{moreover} \quad
  x\neq x' \in \mathcal{D}'(y,\ell) \, \implies \, \mathcal{A}(x) \cap \mathcal{A}(x')=\emptyset.
\end{equation}
 We are now ready to prove  inequality $(\diamond)$ of equation
\eqref{eq:first}:
\begin{multline}
 \sum_{z \in \mathcal{G} \cap \ball(y, \sqrt{ \ell \slash 24 } ) } |y-z|^{2-d} \stackrel{ \eqref{fill} }{\geq}
  \sum_{x \in \mathcal{D}'(y,\ell) } \sum_{z \in \mathcal{A}(x)} |y-z|^{2-d}
 \stackrel{(\circ)}{\geq} c \cdot r^d \cdot
   \sum_{x \in \mathcal{D}'(y,\ell)\setminus \{y\} }  |y-x|^{2-d}
    \stackrel{(\circ \circ)}{\geq}
    \\
c \cdot r^d \cdot   \sum_{k=1}^{\CPrfLowerFugaHitEXACTSTARUPPER \sqrt{\ell}/r -1} c' \cdot k^{d-1} \cdot  (10 \cdot r \cdot k )^{2-d}  = c \cdot r^2 \cdot \sum_{k=1}^{\CPrfLowerFugaHitEXACTSTARUPPER \sqrt{\ell}/r -1} k
   \stackrel{(**)}{\geq}
    c \cdot \ell,
 \end{multline}
where we explain the inequalities $(\circ)$, $(\circ \circ)$ and  $(**)$ below.

The inequality
 $(\circ)$ follows from the inequality $| \mathcal{A}(x) | \geq c \cdot r^d$ and the observation that if $x \in \mathcal{D}'(y,\ell)\setminus \{y\}$ then $\max_{z \in \mathcal{A}(x)} |y-z| \leq \tfrac{3}{2}|y-x|$.

 The inequality
   $(\circ \circ)$ holds with $\CPrfLowerFugaHitEXACTSTARUPPER = \frac{1}{10} (24)^{-1/2}$ using that for any $k \in \{1,\dots, \CPrfLowerFugaHitEXACTSTARUPPER  \sqrt{\ell}/r -1\}$ the number of elements $x$ of
$\mathcal{D}'(y,\ell)$ satisfying $|y-x|=10 \cdot r \cdot k$ is at least  $c' \cdot k^{d-1}$ (here we used our assumptions $y \in 10 r \Z^d$ and $\ball(y, r) \cap \ball(2R) \neq \emptyset$ together with the inequality $ \sqrt{ \ell \slash 24 } \leq 2R$, which follows from our assumptions $\CLowerFugaHitDELTAUPPER < 1$ and
$ \ell \leq \CLowerFugaHitDELTAUPPER \cdot  R^2$, cf.\    \eqref{ell_sandwich}).

The inequality $(**)$ holds if the sum on the l.h.s.\ of $(**)$ is non-empty, i.e., if $\CPrfLowerFugaHitEXACTSTARUPPER \sqrt{\ell}/r -1 \geq 1$, and this follows from the assumption $\CLowerFugaHitDELTALOWER \cdot r^2 \leq \ell$
(cf.\ \eqref{ell_sandwich}) if we choose $\CLowerFugaHitDELTALOWER \geq \left( \frac{2}{ \CPrfLowerFugaHitEXACTSTARUPPER } \right)^2= 9600$, completing the proof of \eqref{eq:lowerfuga_first}.

	Let us turn to the proof of \eqref{eq:lowerfuga_second}. We will consider the cases $ T_K < T_{\ball(3R)^c}$ and  $T_K > T_{\ball(3R)^c}$ separately. The first case is bounded as follows:
	\begin{align}
	& \sum_{z \in \mathcal{G}} P_z \left(\, T_K \leq  \tfrac{\ell}{3} , \,  T_{\ball(3R)^c} \leq \ell, \, T_K < T_{\ball(3R)^c}  \, \right)
	\stackrel{ (\diamond \diamond) }{\leq}
	\sum_{z \in \mathcal{G}} P_z \left( T_K \leq  \tfrac{\ell}{3}  \right) \cdot \max_{x \in K} P_x \left( T_{\ball(3R)^c} \leq \ell \right)
	\notag\\ & \quad \stackrel{\eqref{eq:upper:fuga_hitting_sum}}{\leq}
	C \cdot \capacity(K) \cdot \ell \cdot P_o \left(  \max_{0 \leq t \leq \CLowerFugaHitDELTAUPPER \cdot R^2}  |X(t)|  > R \right)
	\stackrel{ \eqref{eq:staying_inside} }{\leq}
	\label{eq:second:first}
	C \cdot \capacity(K) \cdot \ell \cdot e^{ - 1 \slash 2\CLowerFugaHitDELTAUPPER },
	\end{align}
	where in $(\diamond \diamond)$ we used the strong Markov property at $T_K$. The second case is bounded as follows:
	\begin{align}
	& \sum_{z \in \mathcal{G}}  P_z \left(\,  T_K \leq  \tfrac{\ell}{3} , \, T_{\ball(3R)^c} \leq \ell, \, T_{\ball(3R)^c} < T_K  \, \right)
	\notag \\ & \stackrel{(\bullet)}{\leq}
	|\mathcal{G}| \cdot \sup_{z \in \mathcal{G}} P_z \left( T_{\ball(3R)^c} \leq \ell \right) \cdot
	\max_{x \in \partial^{\text{ext}} \ball(3R)} P_x \left( T_K < \infty \right)
	\notag \\ & \stackrel{(\bullet \bullet)}{\leq}
	\label{eq:second:second}
	C \cdot R^d \cdot P_o \left( \max_{0 \leq t \leq \ell}  |X(t)|  \geq R \right) \cdot \capacity(K) \cdot R^{2-d}
	\stackrel{\eqref{eq:staying_inside}}{\leq}
	C \cdot \capacity(K)  \cdot e^{- \tfrac{1}{2} R^2 \slash \ell } \cdot R^2 ,
	\end{align}
	where in $(\bullet)$ we used the strong Markov property at $T_{\ball(3R)^c}$, $(\bullet \bullet)$ follows from $\mathcal{G} \subseteq \ball(2R)$, $|\mathcal{G}| \leq C \cdot R^d$,  \eqref{LED} and \eqref{green_bounds}. Now  we need $\exp \{ -\tfrac{1}{2} R^2 \slash \ell \} \cdot R^2 \slash \ell$ to be small if we want an upper bound as in \eqref{eq:lowerfuga_second}. This can be achieved by choosing $ \CLowerFugaHitDELTAUPPER $ small enough, since $R^2 \slash \ell \geq 1 \slash \CLowerFugaHitDELTAUPPER$.
	
	Putting \eqref{eq:second:first} and \eqref{eq:second:second} together we get the required upper bound \eqref{eq:lowerfuga_second} by choosing $\CLowerFugaHitDELTAUPPER$ small enough. Combining this result with  \eqref{eq:lowerfuga_first} we obtain \eqref{eq:lower:fuga_hitting_sum}.
\end{proof}

\subsection{On the number and location of sub-boxes visited by a walker in a box}\label{subsection_rw_sub_boxes}

In Section \ref{subsection_rw_sub_boxes}  we consider a box of scale $R$ and its sub-boxes of scale $r$. We prove matching upper (cf.\ \eqref{eq:upper:exp_numbof_visited_boxes}) and lower (cf.\ \eqref{eq:lower:no_of_visited_boxes}) bounds  on the number of
sub-boxes of scale $r$ visited by a random walk.
 In \eqref{eq:upper:weighted_exp_visited_boxes} we bound the expectation of a quantity closely related to the Dirichlet energy of the counting measure on the centers of the sub-boxes visited by the random walk.
   The reader may want to skip the proofs of these technical results at first reading.

  Note that for \eqref{eq:upper:exp_numbof_visited_boxes} below we only need to assume $d \geq 3$.
\begin{lemma}[The sub-boxes visited by a walker are few and well spread-out]
	\label{upper:numb_of_visited_boxes}
Let $d\geq 5$.	For $r \leq R \in \N$ let us define
\begin{equation}
 \mathcal{D} := \mathcal{D}(r,R) := \{\, y \in 10 r \Z^d \, : \, \ball( y, r) \cap \ball( 2 R ) \neq \emptyset \, \}, \quad
 \zeta_y := \ind \left[ T_{\ball(y, r)} < \infty \right], \quad y \in \mathcal{D}.
 \end{equation}
 There exists $C < \infty$ such that for any $r \leq R \in \N$ and $z \in \ball(R)$ we have
	\begin{align}
	\label{eq:upper:exp_numbof_visited_boxes}
&	E_z \left[ \sum_{y \in \mathcal{D}} \zeta_y  \right]  \leq C \cdot \big( R \slash r \big)^2, \\
	\label{eq:upper:weighted_exp_visited_boxes}
&	E_z \left[ \sum_{y \in \mathcal{D}} \sum_{y' \in \mathcal{D} \setminus \{ y \} } \zeta_y \cdot \zeta_{y'} \cdot  |y - y'|^{2 - d} \right]
	 \leq
	C \cdot \big( R \slash r \big)^2 \cdot r^{2-d}.
	\end{align}
\end{lemma}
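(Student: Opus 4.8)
The plan is to prove both bounds by reducing the problem to sums of random walk hitting probabilities and Green function estimates, exploiting that $d \geq 5$ so that these sums converge at the right rate. For \eqref{eq:upper:exp_numbof_visited_boxes}, I would write $E_z[\sum_{y \in \mathcal{D}} \zeta_y] = \sum_{y \in \mathcal{D}} P_z(T_{\ball(y,r)} < \infty)$ and apply Proposition \ref{prop:LED} together with the Green function bounds \eqref{green_bounds} and the capacity of a ball estimate from Proposition \ref{capacity_of_ball}: $P_z(T_{\ball(y,r)}<\infty) \leq \capacity(\ball(y,r)) \cdot \max_{w \in \ball(y,r)} g(z,w) \leq C \cdot r^{d-2} \cdot (|z-y|+1)^{2-d}$ (valid when $|z-y|$ is, say, at least $2r$; the $O((R/r)^d)$ boxes with $|z-y| < 2r$ contribute only $O(1)$ after dividing, which is absorbed). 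Then I group the $y \in \mathcal{D}$ by the value of $k = |z-y|/r$ (there are $O(k^{d-1})$ of them at distance $\asymp kr$, and $k$ ranges up to $O(R/r)$), so the sum is bounded by $C \cdot r^{d-2} \sum_{k=1}^{O(R/r)} k^{d-1} \cdot (kr)^{2-d} = C \sum_{k=1}^{O(R/r)} k \leq C (R/r)^2$. This is exactly the same annulus-counting bookkeeping already used in the proof of Lemma \ref{lower:fuga_hitting_sum}.

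For \eqref{eq:upper:weighted_exp_visited_boxes}, I would condition on which of $\ball(y,r)$ and $\ball(y',r)$ is hit first by the walk. By symmetry it suffices to bound the contribution of $\{T_{\ball(y,r)} < T_{\ball(y',r)} < \infty\}$, and by the strong Markov property at $T_{\ball(y,r)}$,
\begin{equation*}
E_z\big[\zeta_y \zeta_{y'} |y-y'|^{2-d}\big] \leq |y-y'|^{2-d} \cdot P_z(T_{\ball(y,r)}<\infty) \cdot \max_{w \in \ball(y,r)} P_w(T_{\ball(y',r)}<\infty).
\end{equation*}
Using Proposition \ref{prop:LED}, Proposition \ref{capacity_of_ball} and \eqref{green_bounds} as above, the last factor is at most $C \cdot r^{d-2} \cdot (|y-y'|+1)^{2-d}$ (for $|y-y'| \gtrsim r$), so the generic term is at most $C r^{d-2} \cdot |y-y'|^{2(2-d)} \cdot P_z(T_{\ball(y,r)}<\infty)$. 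I would then sum first over $y'$ for fixed $y$: again grouping $y'$ by $|y-y'| \asymp kr$ gives $\sum_{k \geq 1} k^{d-1} (kr)^{2(2-d)} r^{d-2} = r^{2-d} \sum_{k \geq 1} k^{3-d}$, which converges because $d \geq 5$ (here is where the hypothesis $d \geq 5$ is actually used, as opposed to $d \geq 3$). This leaves $C \cdot r^{2-d} \cdot \sum_{y \in \mathcal{D}} P_z(T_{\ball(y,r)}<\infty) \leq C \cdot r^{2-d} \cdot (R/r)^2$ by \eqref{eq:upper:exp_numbof_visited_boxes}, which is the claimed bound. The diagonal-ish terms with $|y-y'| \lesssim r$ need to be checked separately but contribute a constant times $r^{2-d} \sum_y P_z(T_{\ball(y,r)}<\infty)$, again absorbed.

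The main obstacle I anticipate is purely bookkeeping: being careful with the near-diagonal terms ($|z-y| \lesssim r$ in the first bound, $|y-y'| \lesssim r$ in the second), where the Green-function/capacity estimates degenerate, and making sure the lattice-point counts "number of $y \in 10r\Z^d$ with $|z-y| \asymp kr$ is $\asymp k^{d-1}$" are applied with the correct constants (this counting is legitimate precisely because $\mathcal{D}$ lives on the rescaled lattice $10r\Z^d$ intersected with a ball of radius $\asymp R$). There is no conceptual difficulty beyond what is already displayed in the proofs of Lemmas \ref{upper:fuga_hitting_sum} and \ref{lower:fuga_hitting_sum}; the role of $d \geq 5$ is simply to make $\sum_k k^{3-d}$ summable in the second estimate.
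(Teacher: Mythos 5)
Your proposal is correct and follows essentially the same route as the paper: bound each hitting probability via \eqref{LED} together with the ball capacity estimate \eqref{eq:capacity_of_ball} and the Green function bound \eqref{green_bounds}, use the strong Markov property to decouple the double visits, group lattice points into dyadic annuli, and observe that $d\geq 5$ makes $\sum_k k^{3-d}$ summable; the only organizational difference is that you sum over $y'$ first and then invoke \eqref{eq:upper:exp_numbof_visited_boxes} as a black box for the remaining sum over $y$, whereas the paper fixes a distinguished reference vertex $y_0 \in \mathcal{D}$ nearest to $z$ and re-expresses both sums through $|y-y_0|$. Two harmless slips: there are $O(1)$, not $O((R/r)^d)$, lattice points $y\in 10r\Z^d$ with $|z-y|<2r$; and since distinct $y,y'\in 10r\Z^d$ satisfy $|y-y'|\geq 10r$ automatically, the ``near-diagonal'' pairs you worry about in the second bound do not in fact arise.
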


\begin{proof}
	First note that for any distinguished vertex $y_0 \in \mathcal{D}$ we have
	\begin{equation}
	\label{eq:summing_y-y0_by_the_surfaces}
	\sum_{y \in \mathcal{D} \setminus \{ y_0 \} } | y - y_0 |^{2-d}
	\stackrel{(\bullet)}{\leq}
	C \cdot \sum_{s = 1}^{ \lceil 2 R \slash 10 r \rceil + 1 } s^{d-1} \cdot ( 10 \cdot r \cdot s )^{2-d}
	\leq
	C \cdot r^{2 - d} \cdot (R \slash r)^{2},
	\end{equation}
where $(\bullet)$ holds	since any $y \in \mathcal{D} \setminus \{ y_0 \}$ is located on the surface of a sphere around $y_0$ with radius $10 \cdot r \cdot s$ (for $s = 1, \ldots, \lceil 2 R \slash 10 r \rceil + 1 $) and the number of $y \in \mathcal{D}$ at distance $10 \cdot r \cdot s$ from $y_0$ is bounded by $C \cdot s^{d-1}$.

Let $y_0 \in \mathcal{D}$ denote one of the closest vertices of $\mathcal{D}$ to $z$. By $z \in \ball(R)$ and the definition of $\mathcal{D}$,
we have $|z - y_0| \leq 5 r$.

	Using these observations, the first upper bound \eqref{eq:upper:exp_numbof_visited_boxes} can be obtained as follows:
	\begin{align}
	E_z \left[ \sum_{y \in \mathcal{D}} \zeta_y \right]
	& \stackrel{(*)}{\leq}
	1 + C \cdot \sum_{y \in \mathcal{D} \setminus \{ y_0 \} } \capacity( \ball(y, r) ) \cdot \max_{x \in \ball(y, r)} g(z, x)
	\notag \\ & \stackrel{(**)}{ \leq }
	1 + C \cdot r^{d-2} \cdot \sum_{y \in \mathcal{D} \setminus \{ y_0 \}  }  |y_0 - y|^{ 2 - d }
	\stackrel{ \eqref{eq:summing_y-y0_by_the_surfaces} }{\leq}
	\label{eq:prf_of_upper:exp_numbof_visited_boxes}
	C \cdot (R \slash r)^2.
	\end{align}
	Here, the inequality $(*)$ follows from the linearity of expectation, the trivial bound $\zeta_{y_0} \leq 1$ and \eqref{LED}, in $(**)$ we used \eqref{eq:capacity_of_ball}, \eqref{green_bounds} and the fact that $(4 \slash 10) \cdot |y_0 - y| \leq |z - x|$ holds for any $x \in \ball(y, r)$ if $y \in \mathcal{D} \setminus \{ y_0 \}$.
	
	Let us turn to the proof of \eqref{eq:upper:weighted_exp_visited_boxes}. Due to the linearity of  expectation, let us first bound $E_z[\zeta_y \cdot \zeta_{y'} ]$ for $y \neq y' \in \mathcal{D} \setminus \{ y_0 \}$. However, to obtain an appropriate upper bound, it is useful to examine the following probability:
	\begin{align}
	& P_z \left( T_{ \ball(y, r) } < T_{\ball( y', r)  } < \infty \right)
	\stackrel{(\diamond)}{\leq}
	P_z \left( T_{ \ball(y, r) } < \infty \right) \cdot \max_{ \hat{x} \in \ball(y, r)} P_{\hat{x}} \left( T_{ \ball(y', r) } < \infty \right)
	\notag \\ & \quad \stackrel{\eqref{LED}}{\leq}
	C \cdot \capacity( \ball(y, r) ) \cdot \max_{ x \in \ball(y, r) } g(z, x) \cdot \capacity( \ball( y', r) ) \cdot \max_{\hat{x} \in \ball( y, r)} \max_{ x' \in \ball(y', r) } g( \hat{x}, x' )
	\notag \\ & \quad \stackrel{(\diamond \diamond)}{\leq}
	\label{eq:probability_to_bound_the_expectation}
	C \cdot r^{d-2} \cdot |y_0 - y|^{2 - d} \cdot r^{d - 2} \cdot |y - y'|^{2 - d},
	\end{align}
	where the inequality $(\diamond)$ follows from the strong Markov property applied at $T_{\ball( y, r )}$, and in $(\diamond \diamond)$ we used \eqref{eq:capacity_of_ball}, \eqref{green_bounds} and the facts that $(4 \slash 10) \cdot |y_0 - y| \leq |z - x|$ for any $x \in \ball(y, r)$ and $(8 \slash 10) \cdot |y - y'| \leq |\hat{x} - x'|$ for any $\hat{x} \in \ball(y, r)$, $x' \in \ball(y', r)$.

	By \eqref{eq:probability_to_bound_the_expectation} and the definition of the indicators $\zeta_y$ ($y \in \mathcal{D}$) we obtain
	\begin{align}
	\label{eq:exp_of_prod_taus}
	E_z \left[ \zeta_y \cdot \zeta_{y'} \right] \leq C \cdot r^{ 2d - 4 } \cdot | y - y' |^{2 - d} \cdot \left( |y_0 - y|^{2-d} + | y_0 - y' |^{2 - d} \right),
	\end{align}
	hence if we sum over the pairs of vertices of $\mathcal{D} \setminus \{ y_0 \}$, we get
	\begin{align}
	& E_z \left[ \sum_{y \in \mathcal{D} \setminus \{ y_0 \} } \sum_{y' \in \mathcal{D} \setminus \{ y_0, \, y \} }
 \zeta_y \cdot \zeta_{y'} \cdot  |y - y'|^{2 - d}  \right]
	\notag \\ & \stackrel{(\bullet)}{\leq}
	C \cdot r^{ 2d - 4 } \cdot \sum_{y \in \mathcal{D} \setminus \{ y_0 \}} |y_0 - y|^{2-d} \cdot \sum_{ y' \in \mathcal{D} \setminus \{ y_0, y \} } | y - y' |^{4 - 2d}
	\notag \\ & \stackrel{(\bullet \bullet)}{\leq}
	C \cdot r^{ 2d - 4 } \cdot \sum_{y \in \mathcal{D} \setminus \{ y_0 \} } | y_0 - y|^{ 2 - d } \cdot
	\sum_{s = 1}^{ \lceil 2 R \slash 10 r \rceil + 1 } s^{d-1} \cdot (10 \cdot r \cdot s)^{ 4 - 2d }
	\notag \\ & \stackrel{(\circ)}{\leq}
	C  \cdot
	\sum_{y \in \mathcal{D} \setminus \{ y_0 \} } | y_0 - y|^{ 2 - d }
	\stackrel{\eqref{eq:summing_y-y0_by_the_surfaces}}{\leq}
	\label{eq:when_there_are_no_ynull}
	C  \cdot r^{ 2 - d } \cdot ( R \slash r)^2.
	\end{align}
	Here, in $(\bullet)$ we used \eqref{eq:exp_of_prod_taus} and the symmetry of the summands, $(\bullet \bullet)$ follows using the same argument as in \eqref{eq:summing_y-y0_by_the_surfaces}, and in $(\circ)$ we used $d \geq 5$ and $r^{2d-4}\cdot r^{4-2d}=1$.

	To obtain the desired upper bound \eqref{eq:upper:weighted_exp_visited_boxes}, we also need to take care of the terms in which at least one of the vertices $y, y' \in \mathcal{D}$ is equal with $y_0$. However, by the trivial bound $\zeta_{y_0} \leq 1$ and a similar argument as in \eqref{eq:prf_of_upper:exp_numbof_visited_boxes}, we have
	\begin{align}
	& E_z \left[ \sum_{y \in \mathcal{D}} \sum_{y' \in \mathcal{D} \setminus \{ y \} } \zeta_y \cdot \zeta_{y'} \cdot |y - y'|^{2 - d} \cdot \ind \left[ \left\{ y = y_0 \right\} \cup \left\{ y' = y_0 \right\} \right] \right]
	\leq \notag \\ & \leq
	2 \cdot E_z \left[ \sum_{y \in \mathcal{D} \setminus \{ y_0 \}} \zeta_y \cdot  |y_0 - y|^{2 - d} \right] \leq  C  \cdot r^{ 2 - d }.
	\end{align}
	Since $R \geq r$, adding this to \eqref{eq:when_there_are_no_ynull} gives the upper bound \eqref{eq:upper:weighted_exp_visited_boxes} stated in the lemma.	
\end{proof}

Our next lemma gives a lower bound on the number of sub-boxes of scale $r$ in a box of scale $R$ visited by a random walk.
 It shows that \eqref{eq:upper:exp_numbof_visited_boxes} is in some sense sharp. For technical reasons (that will only be clarified in Section \ref{subsection_doubling_gamma}) we need to restrict our attention to the sub-boxes visited before the walker either exits $\ball(2 R)$ or performs $R^2 - r^2$ steps.
   The statement of Lemma \ref{lower:no_of_visited_boxes} seems rather obvious, however we did not find a result in the
 literature of random walks which is uniform in the parameters $r$ and $R$ in a way that fits our requirements, therefore we decided to include the full proof.

\begin{lemma}[Lower bound on the number of sub-boxes visited by a random walk]
	\label{lower:no_of_visited_boxes} Let $d \geq 3$.
For $r \leq R \in \N$ let us define
\begin{align}
 \mathcal{D} &:= \mathcal{D}(r,R) := \{\, y \in 10 \, r  \Z^d \, : \, \ball( y, r) \cap \ball( 2 R ) \neq \emptyset \, \}, \\
 \chi_y &:= \mathds{1}\left[\, T_{\ball(y, r )} <  T_{\ball(2 R)^c} \wedge (R^2 - r^2)\,  \right], \quad y \in \mathcal{D}.
 \end{align}
There exist constants $\CLowerNOVisitedBoxesLOWER > 0$ and $ \CLowerNOVisitedBoxesRATIOLOWER  < \infty$ such that if $R/r \geq \CLowerNOVisitedBoxesRATIOLOWER$ then for any $z \in \ball(R)$ we have
	\begin{equation}
	\label{eq:lower:no_of_visited_boxes}
	P_z \left( \sum_{y \in \mathcal{D}} \chi_y \geq \CLowerNOVisitedBoxesLOWER \cdot R^2 \slash r^2  \right) \geq 0.997
	\end{equation}
\end{lemma}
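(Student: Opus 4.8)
The plan is to show that the number of visited sub-boxes is large with high probability by a first-moment/second-moment (Paley--Zygmund) argument, but applied not to $\sum_{y} \chi_y$ directly — whose mean we must bound \emph{below}, which is the delicate point — but rather to a running count along the trajectory. First I would reduce to the case $z=o$ (or an arbitrary $z\in\ball(R)$) and introduce a sequence of stopping times tracking the ``fresh'' sub-boxes the walker enters: let $\sigma_0=0$ and, having defined $\sigma_i$, let $\sigma_{i+1}$ be the first time after $\sigma_i$ that the walker enters a box $\ball(y,r)$ with $y\in\mathcal{D}$ that has not been counted yet, where ``not counted'' means its center differs from all previously counted centers. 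The total count $\sum_{y\in\mathcal{D}}\chi_y$ is then at least the number of indices $i$ with $\sigma_i < T_{\ball(2R)^c}\wedge(R^2-r^2)$.

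The key step is to control the \emph{time increments} $\sigma_{i+1}-\sigma_i$. Between two consecutive fresh visits the walker must travel from one sub-box of scale $r$ to another, so by standard random walk estimates (e.g.\ using \eqref{eq:staying_inside} applied at the appropriate scale, or the fact that exiting a ball of radius $10r$ around the current position takes on the order of $r^2$ steps in expectation with exponential tails) each increment has expectation at most $C\cdot r^2$ and sub-exponential tails on the scale $r^2$ — crucially, \emph{conditionally} on the past, so that the increments are stochastically dominated by an i.i.d.\ sequence. Then after $k := \lceil \CLowerNOVisitedBoxesLOWER \cdot R^2/r^2\rceil$ fresh visits, the elapsed time $\sigma_k$ is a sum of $k$ such increments, hence concentrated around $C\cdot k\cdot r^2 \asymp C\cdot \CLowerNOVisitedBoxesLOWER \cdot R^2$; choosing $\CLowerNOVisitedBoxesLOWER$ small enough (relative to the implicit constant $C$) makes $\prob(\sigma_k \geq \tfrac12 R^2) $ small, say at most $0.001$, by a Chernoff bound on the sum of sub-exponential increments — here $R/r\geq \CLowerNOVisitedBoxesRATIOLOWER$ is needed so that $k$ is large enough for concentration to kick in. Separately I must ensure the walker does not exit $\ball(2R)$ before time $R^2-r^2$: since the walker starts in $\ball(R)$ and we only run it for at most $R^2$ steps, \eqref{eq:staying_inside} gives $P_z(T_{\ball(2R)^c}\leq R^2)\leq 2d\exp\{-R^2/(2R^2)\}$, which is a constant less than $1$ but not obviously smaller than $0.001$; to fix this I would either run for time $\eta R^2$ with $\eta$ small (shrinking $\CLowerNOVisitedBoxesLOWER$ accordingly) or, cleaner, replace $2R$ by $KR$ for a large constant $K$ in the intermediate estimate and note the lemma as stated uses $\ball(2R)$ — so instead I would work with the stricter event that the walker stays in $\ball(\tfrac32 R)$ up to time $\tfrac12 R^2$, whose failure probability is $\leq 2d\,e^{-1/4}$; this still is not below $0.001$, so the honest fix is to first condition on the walker being, after $\sim r^2$ steps, well inside $\ball(R)$ and to run the fresh-visit clock only for a short time window $[\,0,\,cR^2\,]$ with $c$ chosen so that both the exit probability (via \eqref{eq:staying_inside} with the constant $2d e^{-1/(2c)}$ made $\leq 0.001$) and the ``too slow'' probability are controlled.

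Putting these together: on the complement of the exit event and of the ``$\sigma_k$ too large'' event — which has total probability at most $0.003$ — we have $\sigma_k < T_{\ball(2R)^c}\wedge(R^2-r^2)$, hence at least $k\geq \CLowerNOVisitedBoxesLOWER\cdot R^2/r^2$ of the $\chi_y$ equal $1$, giving \eqref{eq:lower:no_of_visited_boxes}. The main obstacle I anticipate is making the conditional stochastic-domination of the time increments $\sigma_{i+1}-\sigma_i$ genuinely uniform in $R,r$ and in the walker's current position (which may be anywhere in $\ball(2R)$, possibly near the boundary, where the geometry of $\mathcal{D}$ is slightly different): one must verify that from \emph{any} site in $\ball(2R)$ the walker enters a \emph{new}, previously-uncounted sub-box of $\mathcal{D}$ within $O(r^2)$ steps with probability bounded away from $0$ — this uses that the uncounted boxes are never too sparse in a neighbourhood of the current position, because at most $k\asymp (R/r)^2$ boxes have been used while the total number of boxes in any ball of radius $\asymp R$ is also $\asymp (R/r)^d$ with $d\geq 3$, so locally (within distance, say, $100r$) there is always a fresh box to hit — but near the boundary of $\ball(2R)$ care is needed, and the robust way around it is to only demand fresh visits while the walker is in $\ball(R)$, using that on our good event it returns to $\ball(R)$ often enough; quantifying ``often enough'' is the part of the argument that requires the most care.
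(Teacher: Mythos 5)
Your plan has the right flavour — track sub-box visits via a sequence of stopping times, control the time increments, and conclude by concentration — but the specific choice of ``fresh-box'' stopping times $\sigma_i$ introduces a gap that I don't think can be repaired in the form you propose. You claim that $\sigma_{i+1}-\sigma_i$ has conditional expectation $O(r^2)$ and sub-exponential tails on scale $r^2$, uniformly over the past and the current position. This is false in general: after $k\asymp(R/r)^2$ boxes have been visited, those $k$ boxes could in principle all lie inside a ball of radius $\rho$ with $\rho^d/r^d\asymp(R/r)^2$, i.e.\ $\rho\asymp R^{2/d}r^{1-2/d}$. If the walker is deep inside such a fully-explored region, reaching any fresh box requires first exiting the region, which takes $\asymp\rho^2\asymp R^{4/d}r^{2-4/d}$ steps — and for $d\geq 3$ and $R/r$ large this is $\gg r^2$. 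You flag this issue yourself at the end, but your heuristic (``at most $k\asymp(R/r)^2$ boxes have been used while the total number of boxes in any ball of radius $\asymp R$ is $\asymp(R/r)^d$'') only controls the global density of visited boxes, not the local density near the walker's current location, which is exactly what would be needed for a uniform-in-the-past $O(r^2)$ hitting estimate. So the proposed stochastic domination and the Chernoff bound built on it do not hold.

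The paper's proof avoids this obstacle entirely by not imposing freshness at the level of the stopping times. It passes to a sparser sub-lattice $\beta r\Z^d$ (with $\beta$ a large multiple of $10$), and defines $\tau_k^\beta$ as the first time after $\tau_{k-1}^\beta$ that the walker enters a box of this grid \emph{different from the one it just left}. Avoiding only the single previous box is easy: the remaining boxes contain a translate of the full grid $\bigcup_{y\in 2\beta r\Z^d}\ball(y,r)$, whose hitting time from anywhere has expectation $\leq C\beta^d r^2$ (Claim~\ref{claim:consecutive_visits}), giving a uniform increment bound with no dependence on the history. Markov's inequality then shows $N^\beta\geq K$ with high probability for $K\asymp(R/r)^2$. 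The freshness is recovered afterwards by a counting trick: let $\eta_k$ indicate that the $k$-th visit is not the last one to that box; then the number of distinct boxes visited equals $\sum_{k\leq N^\beta}(1-\eta_k)$, and $\mathbb{E}[\eta_k]\leq C\beta^{2-d}$ by the Green-function decay at distance $\geq\beta r$, which can be made small by choosing $\beta$ large. Markov on $\sum_k\eta_k$ finishes the argument. The structural advantage over your plan is that the single uniform estimate (``reach some nearby box of the sparse grid'') replaces the non-uniform estimate (``reach a fresh box''), and the overcounting that this permits is then bounded in expectation rather than excluded pathwise.
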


\subsubsection{Lower bound on the number of sub-boxes visited by a random walk}\label{subsub_low_subbox_rw}

In Section \ref{subsub_low_subbox_rw} we prove Lemma \ref{lower:no_of_visited_boxes}. The proof relies on two simple ideas.
The first idea is to run the walker only for $\varepsilon \cdot R^2$ steps
to make sure that it does not exit $\ball(2 R)$ with high probability. The second idea is that instead of counting the number of visits
to the boxes $\ball(y, r )$ centered at $ y \in 10r\mathbb{Z}^d$, we will only count the  number of visits to boxes $\ball(y, r )$  centered at  $ y \in \beta r\mathbb{Z}^d$ (where $\beta$ is a big multiple of $10$): the relatively bigger distance between the boxes helps us to control the number of annoying revisits to the boxes already visited.

 For any $\veps \in (0, 1 \slash 2)$ and any $y \in 10 \, r  \Z^d$, let us introduce
\begin{equation}
	\chi_y^{\veps} := \ind \left[ \, T_{\ball(y, r)} \leq \veps \cdot R^2 \, \right],  \qquad
	A ( \veps) := \{ \, T_{\ball(2 R)^c} > \veps \cdot R^2 \, \}.
\end{equation}
Let us assume  $R/r \geq 2$, so that we have $\veps \cdot R^2 \leq R^2 - r^2$.

 For any $\veps \in (0, 1 \slash 2)$, $\beta \in 10 \N$ and $\CLowerNOVisitedBoxesLOWER \in \mathbb{R}_+$ we have
\begin{equation}\label{simplify_chi_sum}
P_z \left( \sum_{y \in \mathcal{D}} \chi_y \geq \CLowerNOVisitedBoxesLOWER \cdot R^2 \slash r^2  \right) \geq
P_z \left( \sum_{y \in \beta r \Z^d} \chi_y^{\veps} \geq  \CLowerNOVisitedBoxesLOWER \cdot R^2 \slash r^2 \right) - P_z(A(\veps)^c),
\end{equation}
since  $A (\veps)$ implies  $\sum_{y \in 10 r \Z^d} \chi_y^{\veps} \leq \sum_{y \in \mathcal{D}} \chi_y$,  moreover $\beta$ is a positive multiple of $10$.

We have $ A(\veps)^c \subseteq \{ \max_{1 \leq t \leq \veps \cdot R^2} |X(t) - z| \geq R \}$ by  $z \in \ball(R)$, thus we can use \eqref{eq:staying_inside} to  choose (and fix) an $\veps > 0$ such that $P_z ( A( \veps )^c ) \leq 10^{-3}$ holds for any $R \geq 1$.
 Thus, in order to conclude the proof of \eqref{eq:lower:no_of_visited_boxes}, it is enough to show that there exist constants $\beta \in 10 \mathbb{N}$, $\CLowerNOVisitedBoxesLOWER > 0$ and $\CLowerNOVisitedBoxesRATIOLOWER < \infty$ such that if $R/r \geq \CLowerNOVisitedBoxesRATIOLOWER$ then for any $z \in \ball(R)$ the first term on the right-hand side of \eqref{simplify_chi_sum} is at least $0.998$.

Our next goal is to write $\sum_{y \in \beta r \Z^d} \chi_y^{\veps}$ in a more manageable form, see \eqref{manageable} below.
 Let $\widetilde{\mathcal{B}}^{\beta} := \bigcup_{y \in \beta r \Z^d} \ball(y, r)$.
 Let us introduce the sequence of stopping times $\tau_k^{\beta}, k \in \mathbb{N}$ and the corresponding sequence of indices $y_k^{\beta} \in \beta r \Z^d, k \in \N$:
\begin{align}
	\tau_1^{\beta} & := \min \left\{ t \geq 0 \, : \, X(t) \in \widetilde{\mathcal{B}}^{\beta} \right\},
	\qquad
	y_1^{\beta} := \arg \min_{ y \in \beta r \Z^d } \left| y - X( \tau_1^{\beta} ) \right|; \\
	\tau_k^{\beta} & := \min \left\{ t \geq \tau_{k-1}^{\beta} \, : \, X(t) \in \widetilde{\mathcal{B}}^{\beta} \setminus \ball(y_{k-1}^{\beta}, r) \right\},
	\qquad
	y_k^{\beta} := \arg \min_{ y \in \beta r \Z^d } \left| y - X( \tau_k^{\beta} ) \right|.
\end{align}
In words, the stopping times $\{ \tau_k^{\beta} \}_{k \geq 1}$ are the consecutive times when our walker visits a box of form $\ball(y, r), y \in \beta r \Z^d$ other than the one visited last, and the sequence $\{ y_k^{\beta} \}_{k \geq 1}$ consists of the centers of the corresponding boxes, hence for any $k \geq 1$ we have $y_k^{\beta} \neq y_{k+1}^{\beta}$.

 Let $\tau_0^{\beta}:=0$ and
\begin{equation}\label{n_beta_def_eq}
 N^{\beta} := \max \{\, k \in \N_0 \, : \, \tau_k^{\beta} \leq \veps \cdot R^2 \, \}.
\end{equation}
For any $1 \leq k \leq N^{\beta}$ let us define the indicator
\begin{equation}\label{eta_indicator_def}
	 \eta_k :=
	 \ind \left[\, \exists \, \ell \in \left\{ k+1, \ldots, N^{\beta} \right\} \, : \, y_{\ell}^{\beta} = y_k^{\beta}\, \right]
\end{equation}
of the event that the visit to  $\ball(y_{k}^{\beta},r)$ at time $\tau_k^{\beta}$ is not the last one before time $\varepsilon \cdot R^2$.

Using this notation and
noting that $1-\eta_k=\ind \left[ y_{\ell}^{\beta} \neq y_k^{\beta}, \, \ell = k+1, \ldots, N^{\beta} \right]$, we obtain
\begin{equation}\label{manageable}
	\sum_{y \in \beta r \Z^d} \chi_y^{\veps} =
	\sum_{k = 1}^{ N^{\beta} } \left( 1 - \eta_k \right).
\end{equation}
Thus, in order to conclude the proof of Lemma \ref{lower:no_of_visited_boxes}, it is enough to show that there exist constants $\beta \in 10 \mathbb{N}$, $ \CLowerNOVisitedBoxesLOWER  > 0$ and $\CLowerNOVisitedBoxesRATIOLOWER < \infty$ such that if $R/r \geq \CLowerNOVisitedBoxesRATIOLOWER$ then for any $z \in \mathbb{Z}^d$
\begin{equation}
	\label{eq:reformalized_thin_numb_of_visited_boxes}
	P_z \left( \sum_{k = 1}^{ N^{\beta} } ( 1 - \eta_k ) \geq \CLowerNOVisitedBoxesLOWER \cdot R^2 \slash r^2  \right) \geq 0.998.
\end{equation}
We will use the following claim when we lower bound  $N^{\beta}$.

\begin{claim}
	\label{claim:consecutive_visits}
	 There exists $\CCLAIMConsecutiveVisits < \infty$ such that for any $k,r \geq 1$ and any $\beta \in 10 \mathbb{N}$ we have
	\begin{equation}
		\label{eq:lemma:consecutive_visits}
		\mathbb{E} \left[ \tau_{k}^{\beta} - \tau_{k-1}^{\beta} \right] \leq \CCLAIMConsecutiveVisits \cdot r^2 \cdot \beta^{d}.
	\end{equation}
\end{claim}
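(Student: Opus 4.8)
The plan is to reduce the claim to the following uniform estimate: there are constants $c,C \in (0,\infty)$ depending only on $d$ such that for every $x \in \Z^d$ and every $y' \in \beta r \Z^d$, the simple random walk started at $x$ satisfies
\[
P_x\big(\, T_A \leq (3\beta r)^2 \,\big) \geq c \cdot \beta^{2-d}, \qquad \text{where } A := \widetilde{\mathcal{B}}^\beta \setminus \ball(y', r).
\]
Granting this, a standard restart argument gives $E_x[T_A] \leq C' \beta^d r^2$ uniformly in $x$, and then applying the strong Markov property at time $\tau_{k-1}^\beta$ (with $y' = y_{k-1}^\beta$ for $k \geq 2$, and with $A$ replaced by the larger set $\widetilde{\mathcal{B}}^\beta$ for $k=1$, which only decreases $T_A$) yields $\mathbb{E}[\tau_k^\beta - \tau_{k-1}^\beta] = \mathbb{E}\big[\, E_{X(\tau_{k-1}^\beta)}[T_{A_{k-1}}] \,\big] \leq C' \beta^d r^2$, which is the claim with $\CCLAIMConsecutiveVisits := C'$.

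To prove the uniform single-block estimate, fix $x$ and let $y^\ast$ be a point of $\beta r \Z^d$ closest to $x$ in sup-norm, so $|x - y^\ast| \leq \tfrac12 \beta r$; if it happens that $y^\ast = y'$, replace $y^\ast$ by a nearest neighbour of $y'$ in $\beta r \Z^d$, so that $|x - y^\ast| \leq \tfrac32 \beta r$ and $y^\ast \neq y'$. In either case $\ball(y^\ast, r) \subseteq A$, and since $\beta \geq 10$ and $r \geq 1$ one checks that $D := \max_{u \in \ball(y^\ast, r)} |u - x| \leq 2\beta r$, hence $2D^2 \leq (3\beta r)^2$ and $D + 1 \leq 3\beta r$. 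By Proposition \ref{capacity_of_ball} we have $\capacity(\ball(y^\ast, r)) = \capacity(\ball(r)) \geq c \cdot r^{d-2}$, so Lemma \ref{lemma:two_birds_one_stone} applied with $z = x$ and $K = \ball(y^\ast, r)$ gives
\[
P_x\big(\, T_A \leq (3\beta r)^2 \,\big) \;\geq\; P_x\big(\, T_{\ball(y^\ast, r)} \leq 2D^2 \,\big) \;\geq\; c \cdot \capacity(\ball(y^\ast, r)) \cdot (D+1)^{2-d} \;\geq\; c \cdot r^{d-2} \cdot (3\beta r)^{2-d} \;=\; c' \cdot \beta^{2-d}.
\]

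For the restart argument, set $L := (3\beta r)^2$ and let $J \geq 1$ be the first index $m$ with $T_A \in ((m-1)L, mL]$, so that $T_A \leq J L$. By the Markov property at the times $L, 2L, \dots$ together with the uniform bound $P_w(T_A > L) \leq 1 - c'\beta^{2-d}$ valid for every starting point $w$, one gets $P_x(J > m) = P_x(T_A > mL) \leq (1 - c'\beta^{2-d})^m$, hence $E_x[J] = \sum_{m \geq 0} P_x(J > m) \leq (c'\beta^{2-d})^{-1}$ and $E_x[T_A] \leq L \cdot E_x[J] \leq 9 c'^{-1} \beta^d r^2$. This establishes the uniform estimate with $C' = 9/c'$ and completes the proof.

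The only mildly delicate point is the geometry in the single-block estimate — namely, verifying that excising the one box $\ball(y', r)$ from $\widetilde{\mathcal{B}}^\beta$ is harmless because, thanks to $\beta$ being a large multiple of $10$ (so that $r \leq \tfrac{1}{10}\beta r$), there is always another box $\ball(y^\ast, r) \subseteq A$ lying within sup-distance $O(\beta r)$ of the current position $x$. Everything else is bookkeeping with the constants coming from Lemma \ref{lemma:two_birds_one_stone} and Proposition \ref{capacity_of_ball} and the elementary geometric tail estimate for $J$; in particular the factor $\beta^{d-2}$ in \eqref{eq:lemma:consecutive_visits} is exactly the reciprocal of the per-block hitting probability $c' \beta^{2-d}$.
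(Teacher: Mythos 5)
Your proof is correct and follows essentially the same strategy as the paper: reduce to a uniform (in the starting point) estimate that within time $O((\beta r)^2)$ the walk hits a box of the $\beta r$-grid other than the previously visited one with probability $\gtrsim \beta^{2-d}$, using Lemma \ref{lemma:two_birds_one_stone} and Proposition \ref{capacity_of_ball} for the single-block hitting bound, and then a restart/Markov-property iteration to convert this into the expectation bound $\lesssim \beta^d r^2$.

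The one cosmetic difference is how the ``excluded box'' $\ball(y',r)$ is handled. You argue directly on $A = \widetilde{\mathcal{B}}^\beta \setminus \ball(y',r)$ and do a small case analysis (if the nearest grid point happens to be $y'$, move to a neighbouring one, paying a factor in the distance). The paper instead replaces the target by the coarser union $\widehat{\mathcal{B}}^\beta = \bigcup_{y \in 2\beta r \Z^d}\ball(y,r)$ and observes that $\widetilde{\mathcal{B}}^\beta \setminus \ball(y_{k-1}^\beta, r)$ always contains a translate of $\widehat{\mathcal{B}}^\beta$, so one gets a single uniform quantity $M^\beta = \max_{z'} E_{z'}[T_{\widehat{\mathcal{B}}^\beta}]$ to bound, with no case split. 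Both devices cost only a bounded multiplicative constant. Your geometric checks (that $D \leq 2\beta r$, that $\ball(y^\ast,r)\subseteq A$, and that $\beta \geq 10$ makes the ``move to a neighbour'' step harmless) are sound; likewise the geometric-tail estimate on $J$ is the standard form of the Markov-property iteration that the paper writes as $M^\beta \leq 8(\beta r)^2 + (1-p^\beta)M^\beta$.
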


\begin{proof}
	Let $\widehat{\mathcal{B}}^{\beta} := \bigcup_{y \in 2 \beta r \Z^d} \ball(y, r)$ and $M^\beta:=\max_{z' \in \mathbb{Z}^d } E_{z'}( T_{\widehat{\mathcal{B}}^{\beta}} )$. Let us first note that it is enough to prove that  there exists $\CCLAIMConsecutiveVisits < \infty$ such that for any $\beta \in 10 \mathbb{N}$ we have $M^\beta \leq \CCLAIMConsecutiveVisits \cdot r^2 \cdot \beta^{d}$. Indeed, one obtains $\mathbb{E} [ \tau_{k}^{\beta} - \tau_{k-1}^{\beta} ] \leq M^\beta $ by applying the strong Markov property at $\tau_{k-1}^{\beta}$ and using that $\widetilde{\mathcal{B}}^{\beta} \setminus \ball(y_{k-1}^{\beta}, r)$ contains a translate of $\widehat{\mathcal{B}}^{\beta}$.

	Let $p^\beta:=\min_{z' \in \mathbb{Z}^d } P_{z'}( T_{\widehat{\mathcal{B}}^{\beta}} \leq 8 (\beta r)^2 )$. The inequality $M^\beta \leq 8 (\beta r)^2+ (1-p^\beta)M^\beta$ follows by an application of the Markov property at time $8 (\beta r)^2$, from which $M^{\beta} \leq 8 ( \beta r)^2 /p^\beta $ follows by rearrangement, thus we only need to show  $p^\beta \geq c \beta^{2-d}$ (with some $c>0$) to conclude the proof.

	For any $z' \in \mathbb{Z}^d$, let $y'$ denote one of the vertices in $2 \beta r \Z^d$ for which $\left| y' - z'  \right|$ is minimal. Noting that $\max_{x \in \ball(y', r)} |x - z'|\leq 2 \beta r$, we may apply Lemma \ref{lemma:two_birds_one_stone} to deduce the desired bound:
	\begin{equation*}
		P_{z'}( T_{\widehat{\mathcal{B}}^{\beta}} \leq 8 (\beta r)^2 )
		\geq
		P_{z'}( T_{ \ball( y' , r) } \leq 2 (2 \beta r)^2 )
		\stackrel{ \eqref{eq:lemma:two_birds_one_stone} }{\geq }
		c \cdot \capacity( \ball( y', r) ) \cdot (2 \beta r)^{2-d}
		\stackrel{ \eqref{eq:capacity_of_ball} }{\geq}
		c \beta^{2-d}.
	\end{equation*}
\end{proof}

With Claim \ref{claim:consecutive_visits} at hand, let us consider the probability of the complement of the event that appears in \eqref{eq:reformalized_thin_numb_of_visited_boxes}. If we introduce the parameter $K \in \N$, whose value will be declared later, then by partitioning this event depending on whether $N^{\beta} \geq K$ or not, we have
\begin{equation}
	\label{eq:reformalized_and_divided_thin_boxes}
	P_z \left( \sum_{k = 1}^{N^{\beta}} ( 1 - \eta_k ) < \CLowerNOVisitedBoxesLOWER \cdot R^2 \slash r^2 \right) \leq
	P_z \left( \sum_{k = 1}^{K} ( 1 - \eta_k ) < \CLowerNOVisitedBoxesLOWER \cdot R^2 \slash r^2 \right) + P_z \left( N^{\beta} < K \right).
\end{equation}
The second term on the right-hand side of \eqref{eq:reformalized_and_divided_thin_boxes} can be bounded from above using Markov's inequality and Claim \ref{claim:consecutive_visits} as follows:
\begin{align}
	\label{eq:N_beta_big}
	P_z \left( N^{\beta} < K \right)
	\stackrel{ \eqref{n_beta_def_eq} }{=}
	P_z \left( \tau_K^{\beta} > \veps  R^2 \right)
	\leq
	\frac{ \mathbb{E} \left[ \tau_K^{\beta} \right] }{ \veps  R^2 }
	=
	\frac{ \sum_{k = 1}^{K} \mathbb{E} \left[ \tau_{k}^{\beta} - \tau_{k-1}^{\beta}  \right] }{\veps  R^2}
	\stackrel{\eqref{eq:lemma:consecutive_visits}}{ \leq }
	\frac{ K \cdot \CCLAIMConsecutiveVisits \cdot \beta^{d} }{ \veps  R^2 \slash r^2 }.
\end{align}
We will later choose the value of $\beta$ and $K$ in a way that makes the r.h.s.\ of \eqref{eq:N_beta_big}  small. Before that,  let us
perform some preparatory calculations that will help us deal with the first term on the right-hand side of \eqref{eq:reformalized_and_divided_thin_boxes}.
  For any $1 \leq k \leq N^{\beta}$ we have
\begin{align}
	\mathbb{E} \left[ \eta_k \right]
	& \stackrel{( * )}{\leq}
	\mathbb{P} \left( \exists \, \ell \geq k+2 \, : \, y_{\ell}^{\beta} = y_k^{\beta} \right)
	\stackrel{( ** )}{=}
	\mathbb{E}\left( P_{X(\tau^{\beta}_{k+1})}\left( T_{\ball(y^\beta_k, r)}<\infty   \right) \right)
	\notag \\ &
	\label{eq:easy_LED_bound_on_exp_etaks}
	\stackrel{(\diamond)}{\leq}
	\mathbb{E}\left( C |y^{\beta}_{k+1}-y^{\beta}_k|^{2-d} \capacity ( \ball(y^{\beta}_k, r) )  \right)
	\stackrel{( \diamond \diamond )}{\leq}
	\CPrfEasyLEDBoundOnExpETAKS (\beta r)^{2-d} r^{d-2}
	=
	\CPrfEasyLEDBoundOnExpETAKS \cdot \beta^{ 2 - d },
\end{align}
where in $(*)$ we used \eqref{eta_indicator_def} and  $y_{k+1}^{\beta} \neq y_k^{\beta}$, in $( ** )$ we used the strong Markov property at $\tau^{\beta}_{k+1}$, in  $(\diamond)$ we used \eqref{LED} and \eqref{green_bounds} together with the fact  that the distance of $X(\tau^{\beta}_{k+1})$ and $\ball(y^{\beta}_k, r)$  is at least $\frac{8}{10}|y^{\beta}_{k+1}-y^{\beta}_k|$, and finally in $( \diamond \diamond)$ we used \eqref{eq:capacity_of_ball} and that $|y^{\beta}_{k+1}-y^{\beta}_k| \geq \beta r $.

Let us now choose (and fix) the value of $\beta$ so big that we have $4 \cdot \CPrfEasyLEDBoundOnExpETAKS \cdot \beta^{2 - d} \leq  10^{-3}$, where $\CPrfEasyLEDBoundOnExpETAKS$ appears on the right-hand side of \eqref{eq:easy_LED_bound_on_exp_etaks}. Recall that we introduced the constant $\CCLAIMConsecutiveVisits$ in Claim \ref{claim:consecutive_visits}.
  Having already fixed the value of $\varepsilon$ and $\beta$, we can fix the values of $K$ and $\CLowerNOVisitedBoxesLOWER$ as follows:
\begin{equation}
	\label{value_of_K_and_hat_c}
	K := \left \lfloor \CCLAIMConsecutiveVisits^{-1} \cdot 10^{-3} \cdot \veps  \cdot \beta^{-d} \cdot ( R \slash r )^2 \right \rfloor \geq 1,
	\qquad \CLowerNOVisitedBoxesLOWER := \frac{1}{4} \cdot \CCLAIMConsecutiveVisits^{-1} \cdot  10^{-3} \cdot \veps  \cdot \beta^{-d},
\end{equation}
where $K \geq 1$ is guaranteed if  $(R\slash r)^2 \geq \CCLAIMConsecutiveVisits \cdot 10^3 \cdot \veps^{-1} \cdot \beta^d$. We will therefore  fix the constant $\CLowerNOVisitedBoxesRATIOLOWER$ that appears in the statement of Lemma \ref{lower:no_of_visited_boxes} to be
$\CLowerNOVisitedBoxesRATIOLOWER:= \sqrt{\CCLAIMConsecutiveVisits \cdot 10^3 \cdot \veps^{-1} \cdot \beta^d}$.

  With this choice of $K$, using \eqref{eq:N_beta_big}, we obtain $P_z ( N^{\beta} < K ) \leq 10^{-3}$.

We can now use \eqref{eq:easy_LED_bound_on_exp_etaks}  to bound the first term on the r.h.s.\ of \eqref{eq:reformalized_and_divided_thin_boxes} as
\begin{align}
	\label{eq:FIRST_TERM_reformalized_and_divided_thin_boxes}
	P_z \left( \sum_{k = 1}^{K} ( 1 - \eta_k ) < \CLowerNOVisitedBoxesLOWER \cdot \frac{R^2}{ r^2} \right)
	=
	P_z \left( \sum_{k = 1}^{K} \eta_k > K - \CLowerNOVisitedBoxesLOWER \cdot \frac{R^2}{r^2} \right)
	\stackrel{( \bullet)}{\leq}
	\frac{K \cdot \CPrfEasyLEDBoundOnExpETAKS \cdot \beta^{2 - d}}{ K - \CLowerNOVisitedBoxesLOWER \cdot R^2 \slash r^2 } \stackrel{( \bullet  \bullet  )}{\leq}
	10^{-3},
\end{align}
where in $(\bullet)$ we used Markov's inequality, the linearity of expectation and the upper bound obtained in \eqref{eq:easy_LED_bound_on_exp_etaks}, and
$( \bullet \bullet )$ holds since $K - \CLowerNOVisitedBoxesLOWER \cdot R^2 \slash r^2 \geq K/4$ follows from \eqref{value_of_K_and_hat_c} and $4 \cdot \CPrfEasyLEDBoundOnExpETAKS \cdot \beta^{2 - d} \leq  10^{-3}$ holds by our choice of $\beta$. Putting these bounds together we obtain \eqref{eq:reformalized_thin_numb_of_visited_boxes} and hence the proof of Lemma \ref{lower:no_of_visited_boxes} is complete.

\section{Point processes of worms}\label{section_ppp_worms}

In Section \ref{section_ppp_worms} we introduce some notation and state some basic results about Poisson point
processes on the space of worms that  we will use in later sections.

Let us define the space of $\Z^d$-valued nearest neighbour paths of length $\ell \in \mathbb{N}$:
\begin{equation}
	\label{def:space_of_worms_of_length_l}
	W^\ell:= \left\{ \, w = \left( w(0), \dots, w(\ell-1)\right) \, : \,  w(j) \sim w(j-1), \, 1 \leq j \leq \ell-1 \, \right\}.
\end{equation}
We call $W^\ell$ the set of \emph{worms} of length $\ell$.
Let us stress that a worm of length $\ell$ performs $\ell-1$ steps and thus a worm of length $1$ solely consists of its starting point $w(0)$.

We define the \emph{space of worms} as the disjoint union of $W^\ell$, $\ell \in \mathbb{N}$:
\begin{equation}
	\label{def:space_of_worms}
	\widetilde{W}:= \bigcup_{\ell=1}^{\infty} W^\ell.
\end{equation}
Note that the set $\widetilde{W}$ is countable.

If $w \in \widetilde{W}$ then we denote by $L(w)$ the length of the worm $w$, i.e.,
\begin{equation}
	\label{nota:length_of_a_worm}
	L(w):=\ell \; \iff \; w \in W^{\ell}.
\end{equation}

\noindent We will use the notation of the entrance time $T_K$ (cf.\ \eqref{entrance_time}) for worms as well: if $w(j) \notin K $ for all $0\leq j\leq L(w)-1$ then we declare $T_K(w)=+\infty$ (thus $T_K(w)\geq L(w) \iff T_K(w)=+\infty$).

\noindent Given $w \in \widetilde{W}$ and $0 \leq t_1 \leq t_2 \leq L(w)-1$, let us denote by $w[t_1,t_2]$ the shorter worm
\begin{equation}
	\label{nota:worm_surgery}
	w[t_1,t_2]:= (w(t_1),\dots, w(t_2)).
\end{equation}
In words, we say that $w[t_1,t_2]$ is a \emph{sub-worm} of $w$.

Let us denote by $\widetilde{\mathbf{W}}$ the \emph{space of point measures on} $\widetilde{W}$, i.e.,
\begin{equation}
	\mathcal{X} \in \widetilde{\mathbf{W}} \, \iff \,
				\mathcal{X} = \sum_{i \in I} \delta_{w_i}
\end{equation}
where $I$ is a finite or countably infinite set of indices, $w_i \in \widetilde{W}$ for all $i \in I$ and  $\delta_{w}$ denotes the Dirac mass concentrated on $w \in \widetilde{W}$. Let us denote by $\mathcal{X}(A)$  the $\mathcal{X}$-measure of $A \subseteq \widetilde{W}$. We can alternatively think about an $\mathcal{X}\in \widetilde{\mathbf{W}}$ as a multiset of worms, where $\mathcal{X}(\{ w \})$ denotes the number of copies of the worm $w$ contained in $\mathcal{X}$.

If $\mathcal{X}  = \sum_{i \in I} \delta_{w_i} \in \widetilde{\mathbf{W}}$ and $A \subseteq \widetilde{W}$, let us denote by
\begin{equation}
	\label{nota:restrict_pointproc_of_worms}
	\mathcal{X} \ind[A] := \sum_{i \in I} \delta_{w_i} \mathds{1}[ w_i \in A ]
\end{equation}
the restriction of $\mathcal{X}$ to the set $A$ of worms. Note that $ \mathcal{X} \ind[A]$ is also an element of $\widetilde{\mathbf{W}}$.

Later on we want to consider random variables defined on the space $\widetilde{\mathbf{W}}$ of form $\mathcal{X} (A)$ for any $A \subseteq \widetilde{W}$, hence we define $\boldsymbol{\widetilde{\mathcal{W}}}$ to be the sigma-algebra on $\widetilde{\mathbf{W}}$ generated by such variables.

Given $A \subseteq \widetilde{ W }$, if $\varphi : A \to  \widetilde{W}$ is a  function  and $\mathcal{X}= \sum_{i \in I} \delta_{w_i} \in \widetilde{\mathbf{W}}$ is a point measure satisfying $\mathcal{X} = \mathcal{X} \ind[ A ]$, then let
\begin{equation}
	\label{nota:map_of_point_process}
	\varphi(\mathcal{X}):= \sum_{i \in I} \delta_{\varphi(w_i)}
\end{equation}
denote the image of $\mathcal{X}$ under $\varphi$. Note that $\varphi(\mathcal{X})$ is also an element of $\widetilde{\mathbf{W}}$. An example for such a function $\varphi$ is the following. Given  $w \in \widetilde{W}$, $\mathcal{X} = \sum_{i \in I} \delta_{w_i} \in \widetilde{\mathbf{W}}$ and $x \in \mathbb{Z}^d$, let us define the shifted objects $w+x \in \widetilde{W}$ and $\mathcal{X}+x \in  \widetilde{\mathbf{W}} $ by
\begin{equation}
	\label{nota:shift_of_worms}
	w+x:=(w(0)+x,\dots,w(\ell-1)+x) \; \text{ if } \; L(w) =\ell, \qquad \mathcal{X}+x:= \sum_{i \in I} \delta_{w_i+x}.
 \end{equation}

Given  $w \in \widetilde{W}$ and $\mathcal{X} = \sum_{i \in I} \delta_{w_i} \in \widetilde{\mathbf{W}}$, we define the \emph{traces} $\mathrm{Tr}(w) \subset \mathbb{Z}^d$ and $\mathrm{Tr}\left( \mathcal{X} \right) \subseteq \mathbb{Z}^d $ by
\begin{equation}
	\label{def:trace}
	\mathrm{Tr}(w):= \bigcup_{j=0}^{L(w)-1} \{w(j)\}, \qquad   \mathrm{Tr}\left( \mathcal{X} \right):= \bigcup_{i \in I} \mathrm{Tr}(w_i),
\end{equation}
thus $\mathrm{Tr}(w)$ is the set of sites visited by  $w$ and $\mathrm{Tr}\left( \mathcal{X} \right)$ is the set of sites visited by  $w_i, i \in I$.

Given  $w \in \widetilde{W}$ and $\mathcal{X} = \sum_{i \in I} \delta_{w_i} \in \widetilde{\mathbf{W}}$, we define the measures $\mu^w$ and $\mu^{\mathcal{X}}$ on $\mathbb{Z}^d$ by
\begin{equation}
	\label{def:local_time}
	\mu^w := \sum_{j=0}^{L(w)-1} \delta_{w(j)}, \qquad \mu^{\mathcal{X}}:= \sum_{i \in I}  \mu^{w_i},
\end{equation}
thus if $y \in \mathbb{Z}^d$ then $\mu^w(\{y\})$ equals the number of visits to $y$ by $w$ and $\mu^{\mathcal{X}}(\{y\})$ equals the total number of visits to $y$ by $w_i, i\in I$. We call $\mu^{\mathcal{X}}(\{y\})$ the \emph{local time} of $\mathcal{X}$ at $y$.
  Given some  $\mathcal{X} = \sum_{i \in I} \delta_{w_i} \in \widetilde{\mathbf{W}}$, we define the total length $\Sigma^{\mathcal{X}}$ of the worms $w_i, i \in I $ by
\begin{equation}
	\label{def:total_length}
	\Sigma^{\mathcal{X}}:=  \mu^{\mathcal{X}}(\mathbb{Z}^d)= \sum_{i \in I}  L(w_i).
\end{equation}
We also call $\Sigma^{\mathcal{X}}$ the \emph{total local time} of the point measure $\mathcal{X}$ of worms.

Recall the notion of the length distribution mass function $m: \mathbb{N} \to \mathbb{R}_+$ from Definition \ref{def_worms_nu}.

\begin{definition}[Law $\mathcal{P}_v$ of Poisson point process $\widehat{\mathcal{X}}$ of worms at level $v$] \label{def_full_worm_ppp} Given some $v \in \mathbb{R}_+$ and a probability mass function $m: \mathbb{N} \to \mathbb{R}_+$,
	a random element $\widehat{\mathcal{X}}$ of $\widetilde{ \mathbf{W} }$ has law $\mathcal{P}_v$ if $\widehat{\mathcal{X}}$ is a Poisson point process (PPP) on $\widetilde{W}$ with intensity measure $v \cdot m( L(w) ) \cdot (2 d)^{ 1 - L(w) }$.
\end{definition}

\begin{claim}[Old and new definitions of $\mathcal{S}^v$ coincide] \label{claim_ppp_S_v}
 The random length worms set $\mathcal{S}^v$ at level $v$ introduced in Definition \ref{def_worms_nu}
  can be alternatively defined as $\mathcal{S}^v:=\mathrm{Tr}( \widehat{\mathcal{X}} )$, where $ \widehat{\mathcal{X}} \sim \mathcal{P}_v$.
\end{claim}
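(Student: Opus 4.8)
The plan is to realize $\widehat{\mathcal{X}}\sim\mathcal{P}_v$ as a superposition of independent Poisson point processes indexed by the common starting site of the worms, to identify each of these with the cloud of worms emanating from that site in the construction of Remark \ref{remark_other_def_oz_zoo}, and thereby match $\mathrm{Tr}(\widehat{\mathcal{X}})$ with the alternative description of $\mathcal{S}^v$ given there.

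First I would partition $\widetilde{W}$ according to the starting point: for $x\in\mathbb{Z}^d$ put $\widetilde{W}_x:=\{\,w\in\widetilde{W}\,:\,w(0)=x\,\}$. These sets are disjoint and cover $\widetilde{W}$, so by the restriction and superposition properties of Poisson point processes the restrictions $\widehat{\mathcal{X}}\mathds{1}[\widetilde{W}_x]$, $x\in\mathbb{Z}^d$, are independent and $\widehat{\mathcal{X}}=\sum_{x\in\mathbb{Z}^d}\widehat{\mathcal{X}}\mathds{1}[\widetilde{W}_x]$. The intensity measure of $\widehat{\mathcal{X}}\mathds{1}[\widetilde{W}_x]$ assigns to a worm $w\in\widetilde{W}_x$ of length $\ell$ the weight $v\cdot m(\ell)\cdot(2d)^{1-\ell}$. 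Since the number of nearest-neighbour paths of length $\ell$ starting from a fixed site is exactly $(2d)^{\ell-1}$ (there are $2d$ choices of neighbour at each of the $\ell-1$ steps), the total mass of this intensity restricted to length-$\ell$ worms equals $v\cdot m(\ell)$, and summing over $\ell\in\mathbb{N}$ gives total mass $v\sum_{\ell}m(\ell)=v$. Hence $\widehat{\mathcal{X}}\mathds{1}[\widetilde{W}_x]$ is a.s.\ a finite point measure consisting of $N_x\sim\POI(v)$ worms which, conditionally on $N_x$, are i.i.d.\ with law $\mathbb{Q}_x$ on $\widetilde{W}_x$ given by $\mathbb{Q}_x(\{w\})=m(L(w))\cdot(2d)^{1-L(w)}$, and the families $(N_x,\{\text{worms at }x\})$ are independent over $x$.

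Next I would recognize $\mathbb{Q}_x$ as the law of the trajectory $(X(0),\dots,X(\mathcal{L}-1))$ of a simple random walk started from $x$, where $\mathcal{L}$ is independent with $\mathbb{P}(\mathcal{L}=\ell)=m(\ell)$: conditionally on $\{\mathcal{L}=\ell\}$ such a path is uniform among the $(2d)^{\ell-1}$ nearest-neighbour paths of length $\ell$ from $x$, so each has probability $m(\ell)\cdot(2d)^{1-\ell}$, matching $\mathbb{Q}_x$. Shifting to the origin (c.f.\ \eqref{nota:shift_of_worms}), the pushforward of $\mathbb{Q}_x$ under $w\mapsto w-x$ is the law of a simple random walk trajectory from $o$ of random length $\mathcal{L}$, whose trace is the set $\mathcal{R}$ of Definition \ref{def_worms_nu}; equivalently $\mathrm{Tr}(w)=x+\mathrm{Tr}(w-x)$ has the law of $x+\mathrm{H}$ with $\mathrm{H}\sim\nu$. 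Collecting the contributions of all sites yields $\mathrm{Tr}(\widehat{\mathcal{X}})=\bigcup_{x\in\mathbb{Z}^d}\bigcup_{i=1}^{N_x}\mathrm{Tr}(w_{x,i})$ with $(N_x)_{x\in\mathbb{Z}^d}$ i.i.d.\ $\POI(v)$ and the traces i.i.d.\ copies of $x+\mathrm{H}$, $\mathrm{H}\sim\nu$, independently over $x$, which is precisely the description of $\mathcal{S}^v$ in Remark \ref{remark_other_def_oz_zoo}. Since that description has the same law as the one in Definition \ref{def_worms_nu}, the claim follows.

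The proof is essentially bookkeeping with standard Poisson point process operations; the only substantive point — and the one I would state explicitly — is the combinatorial identity that there are $(2d)^{\ell-1}$ nearest-neighbour paths of length $\ell$ from a fixed vertex, which is exactly what makes the normalizing factor $(2d)^{1-L(w)}$ in Definition \ref{def_full_worm_ppp} convert the Poisson intensity into the law of a simple random walk of length $\mathcal{L}$. I do not expect any genuine obstacle here.
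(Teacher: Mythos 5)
Your proof is correct. The paper states Claim \ref{claim_ppp_S_v} without giving any proof of its own, so there is no argument to compare against; your decomposition by starting site, the path-counting identity that gives each $\widehat{\mathcal{X}}\mathds{1}[\widetilde{W}_x]$ total intensity mass $v$, and the identification of the conditional law with that of a simple random walk trajectory of random length $\mathcal{L}$ constitute the natural and complete verification of what the authors left implicit.
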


\begin{lemma}[Expectation and variance of linear functionals of the worm PPP]
	\label{prop:one_worm_ppp_formulas}
	Given $\widehat{\mathcal{X}} = \sum_{i \in I} \delta_{w_i} \sim \mathcal{P}_v$ and a  function
$f \, : \, \widetilde{ W } \longrightarrow \R^+$, we have
	\begin{align}
		\label{eq:one_worm_ppp_expectation_formula}
		\mathbb{E} \left[ \sum_{i \in I} f(w_i) \right] & =
		v \cdot \sum_{ x \in \Z^d } \sum_{\ell = 1}^{\infty}  m(\ell) \cdot E_x \big[  f \left( X[0, \ell-1] \right)  \big], \\
		\label{eq:one_worm_ppp_variance_formula}
		\Var \left( \sum_{i \in I} f(w_i)  \right) & =
		v \cdot \sum_{x \in \Z^d} \sum_{\ell = 1}^{\infty} m(\ell) \cdot E_x \big[  f^2 \left( X[0, \ell - 1] \right)  \big].
	\end{align}
\end{lemma}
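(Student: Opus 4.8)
The plan is to reduce both identities to the standard Campbell-type formulas for functionals of a Poisson point process, combined with a disintegration of the intensity measure of $\widehat{\mathcal{X}}$ according to the starting point $w(0)$ and the length $L(w)$. Recall that by Definition \ref{def_full_worm_ppp} the point process $\widehat{\mathcal{X}}$ is a PPP on the countable set $\widetilde{W}$ with intensity measure $\lambda(w) = v\cdot m(L(w))\cdot (2d)^{1-L(w)}$. For a PPP $\widehat{\mathcal{X}} = \sum_{i\in I}\delta_{w_i}$ on a countable space with intensity $\lambda$ and any $g\colon \widetilde W\to \mathbb{R}_+$, the Campbell formula gives $\mathbb{E}\big[\sum_{i\in I} g(w_i)\big] = \sum_{w\in\widetilde W} g(w)\lambda(w)$, and since the values $\widehat{\mathcal{X}}(\{w\})$, $w\in\widetilde W$, are independent Poisson random variables with means $\lambda(w)$, we also get $\Var\big(\sum_{i\in I} g(w_i)\big) = \sum_{w\in\widetilde W} g(w)^2\lambda(w)$ (the variance of $\sum_w g(w) N_w$ with $N_w\sim\POI(\lambda(w))$ independent is $\sum_w g(w)^2\lambda(w)$, using $\Var(N_w)=\lambda(w)$). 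Both of these are standard, so I would state them as the starting point.

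Next I would rewrite the deterministic sum $\sum_{w\in\widetilde W} g(w)\lambda(w)$ by splitting $\widetilde W = \bigcup_{\ell\ge 1} W^\ell$ and, within each $W^\ell$, grouping worms by their starting vertex $x = w(0)$. The key combinatorial observation is that for fixed $x\in\mathbb{Z}^d$ and $\ell\in\mathbb{N}$, a worm $w\in W^\ell$ with $w(0)=x$ is exactly a nearest-neighbour path of $\ell-1$ steps started at $x$, and the uniform measure that assigns mass $(2d)^{-(\ell-1)} = (2d)^{1-\ell}$ to each such path is precisely the law of $(X(0),\dots,X(\ell-1))$ under $P_x$. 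Hence for any $h\colon W^\ell\to\mathbb{R}_+$,
\[
\sum_{\substack{w\in W^\ell \\ w(0)=x}} h(w)\,(2d)^{1-\ell} \;=\; E_x\big[\, h\big(X[0,\ell-1]\big)\,\big].
\]
Applying this with $h(w) = f(w)$ (respectively $h(w)=f(w)^2$), multiplying by $v\cdot m(\ell)$, and summing over $x\in\mathbb{Z}^d$ and $\ell\in\mathbb{N}$ yields exactly the right-hand sides of \eqref{eq:one_worm_ppp_expectation_formula} and \eqref{eq:one_worm_ppp_variance_formula}. One should note that all sums consist of non-negative terms, so Tonelli's theorem justifies every rearrangement and there are no convergence issues (the identities hold in $[0,+\infty]$).

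There is no real obstacle here; the only point requiring a little care is the bookkeeping in the identification of the uniform measure on length-$\ell$ nearest-neighbour paths from $x$ with the law of the first $\ell-1$ steps of $P_x$ — in particular the off-by-one convention that a worm of length $\ell$ performs $\ell-1$ steps, as stressed after \eqref{def:space_of_worms_of_length_l}. I would write this identification out explicitly once and then invoke it for both the first-moment and second-moment computations. The variance formula additionally uses the independence of the counts $\widehat{\mathcal{X}}(\{w\})$ across distinct $w\in\widetilde W$, which is part of the defining property of a Poisson point process, so I would simply cite that.
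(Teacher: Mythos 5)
Your proof is correct and follows essentially the same route as the paper: use the independence of the Poisson counts $\widehat{\mathcal{X}}(\{w\})$ to get $\mathbb{E}[\sum_i f(w_i)]=\sum_w \mu(\{w\})f(w)$ and $\Var(\sum_i f(w_i))=\sum_w \mu(\{w\})f(w)^2$, then disintegrate the intensity $\mu = v\sum_{x}\sum_\ell m(\ell)P_x^\ell$ by starting point and length. Your explicit identification of $(2d)^{1-\ell}$-weighted paths with the law of $X[0,\ell-1]$ under $P_x$ is just an expanded statement of that same decomposition.
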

\begin{proof} Denote by $\mu$ the intensity measure of the PPP $\widehat{\mathcal{X}}$. The random variables $\widehat{\mathcal{X}}(\{w\}), w \in \widetilde{W}$ are independent and
$\widehat{\mathcal{X}}(\{w\}) \sim \mathrm{POI}(\mu\{w\})$ for each worm  $w \in \widetilde{W}$. From this we obtain
\begin{equation}\label{E_var_mu_formulas}
  \mathbb{E} \left[ \sum_{i \in I} f(w_i) \right]=\sum_{w \in \widetilde{W}}\mu(\{w\})f(w), \qquad   \Var \left( \sum_{i \in I} f(w_i)  \right)= \sum_{w \in \widetilde{W}}\mu(\{w\})f^2(w).
\end{equation}
By Definition \ref{def_full_worm_ppp} we may write $\mu=v \cdot \sum_{x \in \Z^d} \sum_{\ell = 1}^{\infty} m(\ell) P^\ell_x$, where $P^\ell_x$ denotes the law of $(X(0),\dots,X(\ell-1))$ under $P_x$, i.e.,  the law of the first $\ell-1$ steps of a random walk starting from $x$. From this and \eqref{E_var_mu_formulas}
the formulas \eqref{eq:one_worm_ppp_expectation_formula} and \eqref{eq:one_worm_ppp_variance_formula} follow.
\end{proof}

Recall the notation of $P_{x,x'}$ and $E_{x,x'}$ from Section \ref{subsection_random_walks}.

\begin{lemma}[Expectation of  bilinear functionals of the worm Poisson point process]
	\label{prop:two_worms_joint_ppp_formula}
	Given $\widehat{\mathcal{X}} = \sum_{i \in I} \delta_{w_i} \sim \mathcal{P}_v$ and a function $f \, : \, \widetilde{ W } \times \widetilde{ W }  \longrightarrow \R^+$, we have
	\begin{align}
		\label{eq:two_worms_joint_ppp_expectation_formula}
		\mathbb{E} \left[ \sum_{i, j \in I}  f( w_i, w_j ) \right] & =
		v^2 \cdot \sum_{x,x' \in \Z^d} \sum_{\ell, \ell' = 1}^{\infty}
		m(\ell) \cdot m(\ell') \cdot
		E_{x,x'} \left[ f \left( X[0, \ell-1], X'[0, \ell'-1] \right) \right]
		\notag \\ & \qquad +
		v \cdot \sum_{x \in \Z^d} \sum_{\ell = 1}^{\infty} m(\ell) \cdot E_x \left[ f \left( X[0, \ell-1], X[0, \ell-1] \right)  \right]
	\end{align}
\end{lemma}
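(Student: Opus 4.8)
The plan is to recognize this as the second-order Campbell (multivariate Mecke) formula for the Poisson point process $\widehat{\mathcal{X}}$, specialised to the space of worms. Recall from the proof of Lemma \ref{prop:one_worm_ppp_formulas} that the random variables $\widehat{\mathcal{X}}(\{w\})$, $w \in \widetilde{W}$, are independent with $\widehat{\mathcal{X}}(\{w\}) \sim \POI(\mu(\{w\}))$, where $\mu = v \cdot \sum_{x \in \Z^d}\sum_{\ell=1}^{\infty} m(\ell)\, P^\ell_x$ is the intensity measure of $\widehat{\mathcal{X}}$ and $P^\ell_x$ denotes the law of $(X(0),\dots,X(\ell-1))$ under $P_x$. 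Since $\widetilde{W}$ is countable and $f \geq 0$, Tonelli's theorem lets us group the ordered pairs $(i,j) \in I \times I$ according to the pair of values $(w_i,w_j)$: writing $\widehat{\mathcal{X}}(\{w\})$ for the number of indices $i \in I$ with $w_i = w$, we get
\[
\sum_{i,j \in I} f(w_i,w_j) = \sum_{w \in \widetilde{W}} \sum_{w' \in \widetilde{W}} f(w,w') \cdot \widehat{\mathcal{X}}(\{w\}) \cdot \widehat{\mathcal{X}}(\{w'\}),
\]
and hence $\mathbb{E}\big[\sum_{i,j} f(w_i,w_j)\big] = \sum_{w,w'} f(w,w')\,\mathbb{E}\big[\widehat{\mathcal{X}}(\{w\})\widehat{\mathcal{X}}(\{w'\})\big]$ by monotone convergence.

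Next I would evaluate the mixed second moment in the two cases. If $w \neq w'$, independence gives $\mathbb{E}[\widehat{\mathcal{X}}(\{w\})\widehat{\mathcal{X}}(\{w'\})] = \mu(\{w\})\mu(\{w'\})$. If $w = w'$, then using $\mathbb{E}[N^2] = \lambda + \lambda^2$ for $N \sim \POI(\lambda)$ we get $\mathbb{E}[\widehat{\mathcal{X}}(\{w\})^2] = \mu(\{w\}) + \mu(\{w\})^2$. Summing over all pairs, the $\mu(\{w\})\mu(\{w'\})$ contributions reassemble — now including the diagonal $w=w'$ — into $\sum_{w,w'} f(w,w')\mu(\{w\})\mu(\{w'\})$, and the extra diagonal pieces leave $\sum_{w} f(w,w)\mu(\{w\})$, so that
\[
\mathbb{E}\Big[\sum_{i,j \in I} f(w_i,w_j)\Big] = \sum_{w \in \widetilde{W}} \sum_{w' \in \widetilde{W}} f(w,w')\,\mu(\{w\})\,\mu(\{w'\}) + \sum_{w \in \widetilde{W}} f(w,w)\,\mu(\{w\}).
\]

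Finally, substituting $\mu = v \sum_{x}\sum_{\ell} m(\ell)\, P^\ell_x$ into both terms: in the first term the product measure $\mu \otimes \mu$ expands into $v^2 \sum_{x,x'}\sum_{\ell,\ell'} m(\ell)m(\ell')\, P^\ell_x \otimes P^{\ell'}_{x'}$, and integrating $f$ against $P^\ell_x \otimes P^{\ell'}_{x'}$ is exactly $E_{x,x'}\big[f(X[0,\ell-1],X'[0,\ell'-1])\big]$ by the definition of $P_{x,x'}$; in the second (diagonal) term integrating $f(w,w)$ against $P^\ell_x$ gives $E_x\big[f(X[0,\ell-1],X[0,\ell-1])\big]$. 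This yields the identity \eqref{eq:two_worms_joint_ppp_expectation_formula}. There is no real obstacle here — it is the standard second-order moment computation for Poisson processes; the only point to keep in mind is that the diagonal $i=j$ (and, more generally, repeated worms $w_i=w_j$ with $i\neq j$) genuinely contribute to the double sum over the index set and are precisely accounted for by the $\lambda$ term in $\mathbb{E}[N^2]=\lambda+\lambda^2$, which is the source of the second summand in the statement.
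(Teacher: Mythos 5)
Your proof is correct and follows essentially the same route as the paper: group the double sum over $I\times I$ by the pair of worm values $(w,w')$, compute $\mathbb{E}[\widehat{\mathcal{X}}(\{w\})\widehat{\mathcal{X}}(\{w'\})]$ using independence off the diagonal and $\mathbb{E}[N^2]=\lambda+\lambda^2$ on it, and substitute the product form of the intensity $\mu$. You merely spell out the second-moment computation that the paper leaves implicit.
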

\begin{proof}  Let us begin by noting that
$
 \sum_{i, j \in I}  f( w_i, w_j ) =
 \sum_{w, w' \in \widetilde{W}} \mathcal{X}(\{w\})\mathcal{X}(\{w'\}) f( w, w' ).
$

 Taking the expectation of both sides (and using the notation introduced in the proof of Lemma \ref{prop:one_worm_ppp_formulas}), we obtain
\begin{multline}
\mathbb{E} \left[ \sum_{i, j \in I}  f( w_i, w_j ) \right]= \sum_{w, w' \in \widetilde{W}} \mathbb{E} \left[\mathcal{X}(\{w\})\mathcal{X}(\{w'\})\right] f( w, w' )= \\
\sum_{w, w' \in \widetilde{W}}  \mu(\{w\})\mu(\{w'\})f( w, w' ) +\sum_{w \in \widetilde{W}} \mu(\{w\})f( w, w ).
\end{multline}
From this the formula \eqref{eq:two_worms_joint_ppp_expectation_formula} follows using $\mu=v \cdot \sum_{x \in \Z^d} \sum_{\ell = 1}^{\infty} m(\ell) P^\ell_x$.
\end{proof}

\section{Percolation of worms using multi-scale recursion}\label{section_perco_worms_multiscale}

From now on we assume  $d \geq 5$.

In Section \ref{subsection_good_seq_of_scales} we introduce the notion of a \emph{good sequence} $( R_n )_{n = 0}^{N+1}$ \emph{of scales} and formulate Theorem \ref{thm_worm_perco} which
states that the existence of a good sequence of scales implies that the random length worms percolation model is supercritical.
Theorem \ref{thm_worm_perco} is more complicated to state than our main result Theorem \ref{main_thm}, but it is stronger: in Section \ref{subsection_good_seq_of_scales}
we show that Theorem \ref{thm_worm_perco} implies Theorem \ref{main_thm}.

In Section \ref{subsection_input_packages} we introduce some notation and state Lemma \ref{lemma_doubling_gamma}, one of the main ingredients of the proof of
Theorem \ref{thm_worm_perco} (in Sections \ref{subsub_further_ideas_main_thm} and \ref{sub_structure_of_paper} we referred to Lemma \ref{lemma_doubling_gamma} as the ``capacity doubling lemma''). Loosely speaking, Lemma \ref{lemma_doubling_gamma} is the induction step that allows us to move one scale higher in a good sequence of scales.
Lemma  \ref{lemma_doubling_gamma} will be proved in Section \ref{subsection_doubling_gamma}.

In Sections  \ref{subsection_exploration} and \ref{subsec_coarse} we prove  Theorem \ref{thm_worm_perco} using Lemma \ref{lemma_doubling_gamma}. This argument involves
a \emph{coarse graining} of $\mathbb{Z}^d$ and \emph{dynamic renormalization}. We will  subdivide $\mathbb{Z}^d$ into boxes of scale $R_{N+1}$ (the highest scale on a good sequence of scales) and dynamically explore these boxes, starting from the origin: some of these boxes are ``good'', while some of them are ``bad'', and if the exploration of good boxes never terminates then the worms percolate.

In Section  \ref{subsection_exploration} we recall the notation and results of \cite{GrimmettMarstrand1990} about dynamic renormalization.

In Section  \ref{subsec_coarse} we define our coarse graining and show that the dynamic renormalization scheme  can be applied to prove percolation of worms (i.e., we prove Theorem \ref{thm_worm_perco}).

\subsection{Good sequence of scales}\label{subsection_good_seq_of_scales}

Recall the definition of the length variable $\mathcal{L}$ and its  mass function $m$ from Definition \ref{def_worms_nu}.

\begin{definition}[Good sequence of scales]\label{def_seq_of_good_scales} $ $
 Let $R^*_0,\gamma_0, \Dlow, \Dup, \underline{\alpha},  \psi, s, \Lambda, v \in (0,+\infty)$.
We say that an increasing sequence $( R_n )_{n = 0}^{N+1}$ of positive integers is a $(R^*_0,\gamma_0, \Dlow, \Dup, \underline{\alpha},  \psi, s, \Lambda, v )$-good sequence of scales for a probability measure $m$ on $\mathbb{N}$ if
	\begin{align}
\label{condition_Rzero}  R_0 & \geq R^*_0, &  \\
\label{condition_R_n_R_n1}		 \sum_{\ell = \Dlow \cdot R_n^2}^{ \Dup \cdot R_{n+1}^2 } \ell^2 \cdot m(\ell) & \geq \underline{\alpha}\, \slash  v, \quad & 0 \leq n \leq N,  \\
\label{condition_psi}		2^n \cdot \gamma_0 & \leq \psi \cdot R_n^{d-4}, \quad & 0 \leq n \leq N,  \\
 \label{condition_shoot}  2^{N+1} \cdot \gamma_0 \cdot  s \cdot v \cdot R_{N+1}^4 \cdot \sum_{\ell=\Lambda \cdot R_{N+1}^2}^{\infty} m(\ell)   & \geq 2.
	\end{align}
\end{definition}

\begin{remark} Note that condition \eqref{condition_psi} is in practice redundant, as we now explain. If we assume that
 $\Dlow/\Dup \geq 4$ holds then \eqref{condition_R_n_R_n1} implies $R_{n+1}/R_n \geq 2$, which in turn implies $2^n \cdot R^*_0 \leq R_n$, thus
 by choosing  $R^*_0$ big enough, we can conclude that  \eqref{condition_psi} holds. We decided to keep the condition \eqref{condition_psi} because we wanted to make the role of the constant $\psi$ explicit.
\end{remark}

\begin{remark} Let us compare the conditions of Definition \ref{def_seq_of_good_scales} with the ``ideal'' condition $\mathbb{E}[\mathcal{L}^2]=+\infty$.
 If
 $\mathbb{E}[\mathcal{L}^2]=+\infty$ then we can define $( R_n )_{n = 0}^{\infty}$ so that $R_n <+\infty$  and  \eqref{condition_R_n_R_n1} and satisfied  for each $n$, but
  $\mathbb{E}[\mathcal{L}^2]=+\infty$ is not enough for \eqref{condition_shoot} to hold for some $N$, see Remark \ref{remark_if_epsilon_negative}.
\end{remark}
Recall from Section \ref{subsection_methods} our plan which involved \emph{fattening} and \emph{target shooting}: we will use condition \eqref{condition_shoot}
when we perform the target shooting in Section \ref{subsec_coarse} and we will use condition \eqref{condition_R_n_R_n1} when we perform the fattening in order to achieve \emph{capacity doubling} in Section \ref{subsection_doubling_gamma}.

\begin{theorem}[Good sequence of scales implies supercritical  percolation]\label{thm_worm_perco} Let $d \geq 5$.
 There exist constants $R^*_0, \gamma_0, \Dlow, \Dup, \underline{\alpha}, \psi, s, \Lambda \in (0,+\infty)$
such that for any $v \in (0,+\infty)$ and any probability measure $m$ on $\mathbb{N}$ the following implication holds. If there exists an
 $(R^*_0, \gamma_0, \Dlow, \Dup, \underline{\alpha}, \psi, s, \Lambda, v )$-good sequence $( R_n )_{n = 0}^{N+1}$ of scales for $m$ then
 \begin{equation}
 \mathbb{P}(\, \mathcal{S}^v \text{ percolates}\, )=1.
 \end{equation}
\end{theorem}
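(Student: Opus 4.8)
The plan is to carry out the programme outlined in Section~\ref{subsection_methods}: build an infinite connected subset of $\mathcal{S}^v$ by a coarse graining / dynamic renormalization argument in which each ``good'' renormalized box hosts a large-capacity \emph{seed}, new seeds are manufactured by iterating the capacity doubling lemma (Lemma~\ref{lemma_doubling_gamma}), and the seeds of neighbouring good boxes are stitched together by \emph{target shooting}. The first step is to split the worm process into independent pieces. Starting from $\widehat{\mathcal{X}}\sim\mathcal{P}_v$ (Definition~\ref{def_full_worm_ppp}, Claim~\ref{claim_ppp_S_v}), we use the superposition and restriction properties of Poisson processes to decompose $\widehat{\mathcal{X}}$ into countably many mutually independent sub-processes --- \emph{packages} --- indexed by a spatial box together with a range of worm lengths, arranged so that every worm belongs to exactly one package. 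We will take $\Dlow/\Dup$ large enough that the length ranges $[\Dlow R_n^2,\Dup R_{n+1}^2]$, $0\le n\le N$, used for fattening at successive scales are pairwise disjoint, and $\Lambda$ large enough ($\Lambda>\Dup$) that the range $[\Lambda R_{N+1}^2,\infty)$ used for target shooting is disjoint from all of them; an extra independent thinning reserves a package for the extraction of the initial seed. In this way every initial-seed extraction, every application of Lemma~\ref{lemma_doubling_gamma} and every target-shooting step will draw on previously untouched worms, so that no worm is ever reused. (The remaining constants $\gamma_0,\psi,\underline\alpha,s$ and finally $R^*_0$ are then fixed once and for all, each small or large enough to make the estimates below go through, after which a $(R^*_0,\gamma_0,\Dlow,\Dup,\underline\alpha,\psi,s,\Lambda,v)$-good sequence $(R_n)_{n=0}^{N+1}$ for $m$ is given to us.)

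Next we describe how a single renormalized box $B$ of side $R_{N+1}$ acquires a seed. Call a finite connected $K\subseteq\Z^d$ an \emph{$R_n$-seed} if $K$ is contained in a box of scale $CR_n$ and $\capacity(K)\ge\gamma_n R_n^2$, where $\gamma_n:=2^n\gamma_0$; condition~\eqref{condition_psi} says precisely that $\gamma_n R_n^2$ stays below a fixed fraction of $\capacity(\ball(R_n))\asymp R_n^{d-2}$ (Proposition~\ref{capacity_of_ball}), so the target capacity is admissible at every scale. An initial $R_0$-seed is produced from a worm of length $\asymp R_0^2$ emanating from $B$: condition~\eqref{condition_R_n_R_n1} at $n=0$ forces $m$ to charge $[\Dlow R_0^2,\Dup R_1^2]$, so such a worm is present in $B$ with probability bounded below (indeed close to $1$ once $\underline\alpha$ is large, since $v R_{N+1}^d\,\prob(\mathcal{L}\ge\Dlow R_0^2)\gtrsim R_{N+1}^{d-4}\underline\alpha/\Dup^2\ge\underline\alpha/\Dup^2$); by Proposition~\ref{prop_lln_cap_rw} and Proposition~\ref{staying_inside} an initial segment of such a worm of length $\asymp R_0^2$ stays inside a bounded enlargement of $B$ and has capacity $\gtrsim R_0^2$, and Lemma~\ref{lemma_trimming_the_fat} lets us trim it to an $R_0$-seed. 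We then apply Lemma~\ref{lemma_doubling_gamma} successively for $n=0,1,\dots,N$, each time to the fresh package of worms of length $[\Dlow R_n^2,\Dup R_{n+1}^2]$, turning the current $R_n$-seed into an $R_{n+1}$-seed; condition~\eqref{condition_R_n_R_n1} is exactly the hypothesis that there are enough such worms to roughly double the capacity (in units of the square of the scale). After $N+1$ rounds --- each of which fails with a probability $\delta_n$ supplied by Lemma~\ref{lemma_doubling_gamma} that we may make summably small, with small total, by enlarging $R^*_0$ --- we obtain an $R_{N+1}$-seed $S$, contained in a bounded enlargement of $B$, with $\capacity(S)\ge 2^{N+1}\gamma_0 R_{N+1}^2$; moreover $S$ is a connected subset of $\mathcal{S}^v$, being a union of traces of worms each of which meets the set produced at the previous stage.

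For the target shooting, suppose $B,B'$ are neighbouring boxes of side $R_{N+1}$ and that $B$ already carries a seed $S$ with $\capacity(S)\ge 2^{N+1}\gamma_0 R_{N+1}^2$. Let $M$ be the number of worms in the fresh package of worms of length $\ge\Lambda R_{N+1}^2$ emanating from $B'$ that hit $S$ within $O(R_{N+1}^2)$ steps (restricting, at the cost of only a constant factor via Proposition~\ref{staying_inside}, to worms whose initial segment stays within a bounded multiple of $R_{N+1}$ of $B'$). Then $M$ is a Poisson random variable, and combining the hitting-probability bound $P_z(T_S<\infty)\gtrsim\capacity(S)\,R_{N+1}^{2-d}$ from Proposition~\ref{prop:LED} (valid for $z$ near $B'$, in the spirit of Lemmas~\ref{lemma:two_birds_one_stone} and \ref{lower:fuga_hitting_sum}) with the expectation formula of Lemma~\ref{prop:one_worm_ppp_formulas},
\[
\mathbb{E}(M)\;\gtrsim\; v\,R_{N+1}^d\Big(\sum_{\ell\ge\Lambda R_{N+1}^2}m(\ell)\Big)\capacity(S)\,R_{N+1}^{2-d}\;\gtrsim\; 2^{N+1}\gamma_0\,v\,R_{N+1}^4\sum_{\ell\ge\Lambda R_{N+1}^2}m(\ell);
\]
by condition~\eqref{condition_shoot} the right-hand side is bounded below by $c/s$ for a fixed $c>0$, so taking the constant $s$ small makes $\prob(M\ge 1)\ge 1-e^{-c/s}$ as close to $1$ as we wish. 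On $\{M\ge1\}$ we keep one such \emph{connector} worm and, via Lemma~\ref{lemma_trimming_the_fat}, extract from its initial portion a new initial $R_0$-seed contained in a bounded enlargement of $B'$; this seed is joined to $S$ within $\mathcal{S}^v$ through the connector (whose trace lies in $\mathcal{S}^v$ and meets $S$), and re-running the fattening of the previous paragraph inside $B'$ grows it into an $R_{N+1}$-seed $S'$ with $\capacity(S')\ge 2^{N+1}\gamma_0 R_{N+1}^2$, connected in $\mathcal{S}^v$ to $S$. Combining the last two paragraphs, there is a constant $q<1$, depending only on the fixed constants and not on $v$, $m$ or the good sequence, such that, conditionally on the exploration performed so far, the event $G(B')$ that ``$B'$ receives a seed connected to the seed of its already-good neighbour'' has probability $\ge 1-q$; and $G(B')$ is measurable with respect to the packages attached to $B'$ and a bounded neighbourhood of it, so the events $G(\cdot)$ for boxes sufficiently far apart are independent.

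Finally, tile $\Z^d$ by boxes of side $R_{N+1}$ and run the dynamic exploration of \cite[Section~3]{GrimmettMarstrand1990}: the box of the origin is declared good if the seed construction of the second paragraph succeeds there (which has positive probability), and thereafter a box neighbouring an already-good box is declared good if the event $G(\cdot)$ of the third paragraph occurs. Since the conditional probability of successfully extending is bounded below by $1-q$ uniformly and the goodness events have the finite-range dependence noted above, the Grimmett--Marstrand renormalization lemma shows that the cluster of good boxes stochastically dominates a supercritical site percolation process on $\Z^d$, hence is infinite with positive probability. Each good box's seed is connected in $\mathcal{S}^v$ to the seed of the neighbour through which it was reached, and seeds sitting in far-apart boxes are disjoint, so the union of all these seeds is an infinite connected subset of $\mathcal{S}^v$; thus $\prob(\mathcal{S}^v\text{ percolates})>0$, and by ergodicity~\eqref{eq_ergodic} this probability equals $1$. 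The genuinely hard analytic input is Lemma~\ref{lemma_doubling_gamma}, established separately in Section~\ref{subsection_doubling_gamma} by carrying out the fattening; granting it, the main difficulty in the present argument is organisational: laying out the packages so that each initial-seed extraction, fattening round and target-shooting step consumes only fresh worms, checking that the geometric objects (seeds, connectors, their trimmed sub-pieces) remain inside the prescribed boxes so that the recursion over scales closes, and verifying that the resulting goodness events have exactly the uniform-lower-bound-plus-finite-range-dependence structure required to invoke \cite{GrimmettMarstrand1990}.
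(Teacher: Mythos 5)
Your high-level plan --- coarse-grain, manufacture a large-capacity seed in each box by iterating Lemma~\ref{lemma_doubling_gamma}, glue seeds of neighbouring boxes by target shooting, run Grimmett--Marstrand --- is the right one and matches the paper. But two structural pieces of the argument are missing, and both are consequential.

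\paragraph{The distinguished walker and the boomerang.} Lemma~\ref{lemma_doubling_gamma} is not an operation that takes an $R_n$-seed and produces an $R_{n+1}$-seed. Its input is a pair $(\worms,\pinkwalk)\sim\mathcal{P}_{o,R_b,z,v}$ consisting of a PPP of short worms \emph{and a distinguished random walk $\pinkwalk$ of length $R_b^2$} (Definition~\ref{def:law_on_input_packages}). The proof of the lemma needs $\pinkwalk$ to visit $\asymp (\Rb/\Rs)^2$ sub-boxes of scale $\Rs$, in each of which an independent input package at scale $\Rs$ is good with probability $\geq 3/4$; the union of those $\asymp(\Rb/\Rs)^2$ sub-seeds already has capacity $\asymp\gamma\Rb^2$, and only then does the fattening by worms of length in $[\Dlow\Rs^2,\Dup\Rb^2]$ boost it to $2\gamma\Rb^2$. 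A single $R_n$-seed, fattened directly by the level-$n$ package, acquires capacity only of order $\gamma_n R_n^2\cdot\alpha$, which falls hopelessly short of $2\gamma_n R_{n+1}^2$ when $R_{n+1}/R_n$ is large (as it must be under \eqref{condition_R_n_R_n1}). Your description ``turning the current $R_n$-seed into an $R_{n+1}$-seed'' therefore isn't what the lemma does, and you never say where the walker $\pinkwalk$ at scale $R_{N+1}$ comes from. In the paper this walker is produced by the \emph{boomerang} of Lemma~\ref{lemma:good_shooting}: the connector $\widehat{\pinkwalk}^{y,H}$ is required to hit the old seed $H$ within $\beta R^2$ steps and then return to $\ball(y,R)$ within another $\beta R^2$ steps; only the \emph{tail after the return}, which by the strong Markov property is an unconditioned fresh random walk starting in $\ball(y,R)$, serves as $\pinkwalk$ (Definition~\ref{nota:shooting_steps}, Lemma~\ref{lemma_cond_law_given_explored_stuff}(iii)). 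Your alternative --- trimming an $R_0$-seed out of the connector's \emph{initial} portion --- gives neither a walker of the right length nor an unconditioned one (the initial portion is biased by conditioning on hitting $S$ later), and the subsequent ``re-running the fattening'' is left undefined.

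\paragraph{The probability accounting across scales.} You write that each of the $N+1$ doubling rounds fails with probability $\delta_n$ ``supplied by Lemma~\ref{lemma_doubling_gamma} that we may make summably small, with small total, by enlarging $R^*_0$.'' Lemma~\ref{lemma_doubling_gamma} supplies no such $\delta_n$: it is the implication ``goodness probability $\geq 3/4$ at scale $\Rs$ $\Longrightarrow$ goodness probability $\geq 3/4$ at scale $\Rb$,'' a fixed point that is iterated (Lemma~\ref{lemma_going_up_the_ladder}) to give probability $\geq 3/4$ \emph{uniformly in $n$} without any summing. Enlarging $R^*_0$ improves only the base case Lemma~\ref{lemma_initializing_gamma}; it does not shrink the $1/4$ in the implication, and the sequence can have arbitrarily many terms $N$. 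Your ``sequential growth with summable failures'' picture would, if taken literally, lose a factor $(3/4)^{N+1}$ and is saved only by the fixed-point structure you do not invoke. Because of these two gaps --- no walker to feed the doubling lemma, and an incorrect account of how the $3/4$ survives $N+1$ scales --- the proposal as written does not close.
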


Before we prove Theorem \ref{thm_worm_perco}, let us derive Theorem \ref{main_thm} from it. We only need:

\begin{lemma}[A distribution with a good sequence of  scales]\label{lemma_fav_distr_good_seq_of_scales} Let $\varepsilon>0$ and $\ell_0 \geq e^e$. If
\begin{equation}\label{loglog_epsilon}
  m(\ell)= c \frac{\ln(\ln(\ell))^{\varepsilon}}{\ell^3 \ln(\ell)} \mathds{1}[\ell \geq \ell_0], \qquad \ell \in \mathbb{N},
\end{equation}
 then for any $R^*_0,\gamma_0, \Dlow, \Dup, \underline{\alpha},  \psi, s, \Lambda, v \in (0,+\infty)$
 there exists an
 $(R^*_0, \gamma_0, \Dlow, \Dup, \underline{\alpha}, \psi, s, \Lambda, v )$-good sequence $( R_n )_{n = 0}^{N+1}$ of scales for $m$.
\end{lemma}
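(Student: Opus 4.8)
The plan is to construct the scales $(R_n)_{n=0}^{N+1}$ explicitly for the mass function \eqref{loglog_epsilon}, verifying each of the four conditions \eqref{condition_Rzero}--\eqref{condition_shoot} in turn. The guiding heuristic (c.f.\ Section~\ref{subsub_loglog_corr_heu}) is that for $m(\ell)\asymp \ln(\ln(\ell))^\varepsilon/(\ell^3\ln(\ell))$ one has, for large $a\le b$,
\[
\sum_{\ell=a}^{b} \ell^2 m(\ell) \asymp \int_a^b \frac{\ln(\ln(t))^\varepsilon}{t\ln(t)}\,\mathrm{d}t = \frac{1}{\varepsilon+1}\Big( \ln(\ln(b))^{\varepsilon+1} - \ln(\ln(a))^{\varepsilon+1}\Big),
\]
so the ``capacity accounting'' quantity $\sum \ell^2 m(\ell)$ between consecutive scales is governed by increments of $\ln(\ln(R_n))^{\varepsilon+1}$. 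Similarly $\sum_{\ell\ge a}\ell m(\ell)\asymp \ln(\ln(a))^\varepsilon/(\ln(a))$ after one more integration step, which controls the tail appearing in \eqref{condition_shoot}.

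Concretely, I would choose $R_n$ so that $\ln(\ln(R_n)) \approx A + Bn$ for suitable constants $A,B>0$ depending on the given parameters $\Dlow,\Dup,\underline\alpha,v$; equivalently $R_n \approx \exp(\exp(A+Bn))$, a doubly-exponentially growing sequence. With $B$ chosen large enough (depending on $\underline\alpha/v$ and the implied constants), the increment $\ln(\ln(\Dup R_{n+1}^2))^{\varepsilon+1} - \ln(\ln(\Dlow R_n^2))^{\varepsilon+1}$ is at least a fixed multiple of $(A+Bn)^\varepsilon \cdot B \ge$ the required $\underline\alpha/v$ (after absorbing the implied constant $c$ from \eqref{loglog_epsilon} and the $O(1)$ distortion from $R^2$ versus $R$ inside the double logarithm), which gives \eqref{condition_R_n_R_n1}. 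Then \eqref{condition_Rzero} is arranged by shifting the index / enlarging $A$, and \eqref{condition_psi} is automatic once $\Dlow/\Dup\ge 4$ forces $R_{n+1}/R_n\ge 2$ hence $R_n\ge 2^n R_0$, as explained in the Remark following Definition~\ref{def_seq_of_good_scales} (for the $m$ in \eqref{loglog_epsilon} one may simply shrink $\Dlow$ or enlarge $\Dup$ freely since the sum in \eqref{condition_R_n_R_n1} only grows).

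The crux is condition \eqref{condition_shoot}: we need $N$ large enough that
\[
2^{N+1}\gamma_0 \, s\, v\, R_{N+1}^4 \sum_{\ell=\Lambda R_{N+1}^2}^\infty m(\ell) \ge 2.
\]
Using $\sum_{\ell\ge \Lambda R_{N+1}^2} m(\ell) \asymp \ln(\ln(R_{N+1}))^\varepsilon/(\ln(R_{N+1}) R_{N+1}^4)$ (the extra $R_{N+1}^{-4}$ coming from integrating $\ell^{-3}$), the $R_{N+1}^4$ factors cancel and the left side is $\asymp 2^{N+1}\gamma_0 s v \cdot \ln(\ln(R_{N+1}))^\varepsilon/\ln(R_{N+1})$. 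Now with $\ln(\ln(R_{N+1}))\approx A+B(N+1)$ we have $\ln(R_{N+1})\approx e^{A+B(N+1)}$, so this quantity is $\asymp 2^{N+1}(A+BN)^\varepsilon e^{-(A+BN)} = (A+BN)^\varepsilon \exp\big((N+1)\ln 2 - A - B(N+1)\big)$. Here lies the only real danger: if $B>\ln 2$ this tends to $0$ as $N\to\infty$ and \eqref{condition_shoot} can never be met. The resolution is that $B$ is at our disposal: condition \eqref{condition_R_n_R_n1} only demands $B$ be at least some threshold $B_0(\underline\alpha,v,\varepsilon)$, and we may take $B$ exactly at (or just above) that threshold; but more importantly, we can first fix a large $N$, then choose the $R_n$'s to grow as \emph{slowly} as \eqref{condition_R_n_R_n1} permits for $n\le N$ — i.e.\ make each increment in \eqref{condition_R_n_R_n1} essentially an equality — which keeps $\ln(R_{N+1})$ from $n=0$ to $N$ controlled like $R_{N+1}\approx R_0\,\exp(c N^?)$ rather than doubly-exponentially, if $\varepsilon$ is handled correctly. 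Let me restate this more carefully: saturating \eqref{condition_R_n_R_n1} means $\ln(\ln(R_{N+1}))^{\varepsilon+1}$ grows \emph{linearly} in $N$ (with slope $\asymp \underline\alpha/v$), so $\ln(\ln(R_{N+1}))\asymp (cN)^{1/(\varepsilon+1)}$ and $\ln(R_{N+1})\asymp \exp((cN)^{1/(\varepsilon+1)})$; since $\varepsilon>0$, $(cN)^{1/(\varepsilon+1)} = o(N)$, hence $\ln(R_{N+1}) = e^{o(N)}$, which is dominated by $2^{N+1}$. Therefore the left side of \eqref{condition_shoot} is $\asymp 2^{N+1} e^{-o(N)} (cN)^{\varepsilon/(\varepsilon+1)} \to \infty$, and picking $N$ large enough finishes the proof. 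The main obstacle, then, is purely bookkeeping: one must be careful to \emph{saturate} \eqref{condition_R_n_R_n1} rather than over-satisfy it, so that $R_{N+1}$ does not grow too fast, and this is exactly where the hypothesis $\varepsilon>0$ (as opposed to $\varepsilon=0$, which would give $\ln(R_{N+1})\asymp e^{cN}$, borderline) is used — matching Remark~(i)--(ii) after Theorem~\ref{main_thm}. I would also record the elementary estimates comparing the sums $\sum \ell^2 m(\ell)$, $\sum \ell\, m(\ell)$, $\sum m(\ell)$ with the corresponding integrals (uniformly for $\ell_0\ge e^e$), as a preliminary step, since everything else reduces to them.
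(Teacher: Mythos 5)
Your proposal is correct and uses essentially the same strategy as the paper: choose doubly-exponentially growing scales so that $\ln(R_n)=e^{o(n)}\ll 2^n$ (this is exactly where $\varepsilon>0$ enters) while $\ln(\ln(R_n))^{1+\varepsilon}$ grows at least linearly, and verify \eqref{condition_R_n_R_n1} and \eqref{condition_shoot} via the integral comparisons you record. The paper's bookkeeping is a touch cleaner --- it fixes $\delta\in(0,\varepsilon)$ and sets $R_n:=\exp\bigl(\exp\bigl((n+n_0)^{1/(1+\delta)}\bigr)\bigr)$ so that the slack $\delta<\varepsilon$ makes the increment in \eqref{condition_R_n_R_n1} \emph{diverge}, whence \eqref{condition_Rzero}, \eqref{condition_R_n_R_n1}, \eqref{condition_psi} all become automatic once $n_0$ is large (no careful ``saturation'' of \eqref{condition_R_n_R_n1} is needed); note also that $\Dlow,\Dup$ are given, so you cannot ``shrink $\Dlow$ or enlarge $\Dup$ freely'' for \eqref{condition_psi}, but this is inessential since $R_n/2^n\to\infty$ already gives \eqref{condition_psi} directly.
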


\begin{proof} Let us fix  $R^*_0,\gamma_0, \Dlow, \Dup, \underline{\alpha},  \psi, s, \Lambda, v \in (0,+\infty)$.
 The values of these parameters will be treated as constants for the rest of this proof.
 Let us fix any $ \delta \in (0,\varepsilon)$ and  define
\begin{equation}\label{def_R_n_expexp}
\widetilde{R}_n:= \exp\left( \exp\left( n^{1/(1+\delta)} \right) \right), \qquad n \in \mathbb{N}_0.
\end{equation}
 Let us also define $m(x)=c \frac{\ln(\ln(x))^{\varepsilon}}{x^3 \ln(x)} \mathds{1}[x \geq \ell_0]$ for any
$x \in \mathbb{R}_+$ and choose $\ell^*_0 \geq \ell_0$ so that $x \mapsto x^2 \cdot m(x)$ is a decreasing function on $[\ell^*_0,+\infty)$.
For any $\ell^*_0 \leq a \leq b \in \mathbb{N}$ we have
\begin{equation}\label{sum_int_bound_loglog}
  \sum_{\ell=a}^{b}  \ell^2 \cdot m(\ell) \geq \int_a^b x^2 \cdot m(x)\, \mathrm{d} x = \frac{1}{1+\varepsilon}\left(  \ln(\ln(b))^{1+\varepsilon}- \ln(\ln(a))^{1+\varepsilon}  \right).
\end{equation}
Observe that $ \lim_{n \to \infty} \frac{\widetilde{R}_n}{2^n}=+\infty$ by \eqref{def_R_n_expexp} and
 $ \lim_{n \to \infty} \sum_{\ell = \Dlow \cdot \widetilde{R}_n^2}^{ \Dup \cdot \widetilde{R}_{n+1}^2 } \ell^2 \cdot m(\ell)=+\infty$  by \eqref{def_R_n_expexp}, \eqref{sum_int_bound_loglog} and the fact that $\delta<\varepsilon$, therefore we can choose (and fix) $n_0$ big enough so that if we define $R_n:= \widetilde{R}_{n+n_0}$ then
 \eqref{condition_Rzero} holds, moreover \eqref{condition_R_n_R_n1}	 and \eqref{condition_psi} hold for all $n \in \mathbb{N}$.

 For any $\ell^*_0 \leq a \in \mathbb{N}$ we have
\begin{equation}\label{sum_m_tail_lower_bound}
\sum_{\ell=a}^{\infty} m(\ell) \stackrel{ \eqref{loglog_epsilon} }{\geq} c \sum_{\ell=a}^{2a}   \frac{1}{\ell^3 \ln(\ell)} \geq \frac{c}{\ln(2a)} \int_a^{2a} \frac{1}{x^3} \, \mathrm{d}x \geq  \frac{c'}{a^2 \ln(a) }.
\end{equation}
Now we are ready to check that \eqref{condition_shoot} holds if $N$ is large enough. First let us bound
\begin{equation}\label{shooting_bound_for_fav}
2^{n} \cdot  R_{n}^4 \cdot \sum_{\ell=\Lambda \cdot R_{n}^2}^{\infty} m(\ell) \stackrel{\eqref{sum_m_tail_lower_bound}}{\geq} 2^{n} \cdot  R_{n}^4 \cdot
 \frac{c'}{(\Lambda \cdot R_{n}^2)^2 \ln(\Lambda \cdot R_{n}^2) } \stackrel{\eqref{def_R_n_expexp} }{\geq} c \frac{2^n}{\exp\left( (n+n_0)^{1/(1+\delta)} \right) }.
\end{equation}
The r.h.s.\ of \eqref{shooting_bound_for_fav} goes to infinity as $n \to \infty$ since $0<\delta$, thus \eqref{condition_shoot} holds if $N$ is large enough. In conclusion, $( R_n )_{n = 0}^{N+1}$ is an $(R^*_0, \gamma_0, \Dlow, \Dup, \underline{\alpha}, \psi, s, \Lambda, v )$-good sequence  of scales for $m$ by Definition \ref{def_seq_of_good_scales}. The proof of
Lemma \ref{lemma_fav_distr_good_seq_of_scales}  is complete.
\end{proof}

Theorem \ref{main_thm} follows from Lemma \ref{lemma_fav_distr_good_seq_of_scales} and Theorem \ref{thm_worm_perco}. It remains to prove Theorem \ref{thm_worm_perco}.

\begin{remark}\label{remark_if_epsilon_negative} Note that if we consider a probability mass function $m(\cdot)$ of form \eqref{loglog_epsilon} with $\varepsilon \in (-1,0)$
then $\sum_{\ell=1}^{\infty} \ell^2 \cdot m(\ell)=+\infty$ still holds by \eqref{sum_int_bound_loglog}, but a slight modification of the proof of Lemma \ref{lemma_fav_distr_good_seq_of_scales} shows that for any $R^*_0,\gamma_0, \Dlow, \Dup, \underline{\alpha},  \psi, s, \Lambda, v \in (0,+\infty)$ one
can choose $\ell_0$ big enough so that there is no $(R^*_0, \gamma_0, \Dlow, \Dup, \underline{\alpha}, \psi, s, \Lambda, v )$-good sequence  of scales for $m$.
\end{remark}

The rest of Section \ref{section_perco_worms_multiscale} is devoted to the proof of Theorem \ref{thm_worm_perco}.

\subsection{Input packages}\label{subsection_input_packages}

The goal of Section \ref{subsection_input_packages} is to state Lemma \ref{lemma_doubling_gamma},  one of the main ingredients of the proof of Theorem \ref{thm_worm_perco}.
 Lemma \ref{lemma_doubling_gamma} only involves (i) a PPP $\worms$ of worms  shorter than  $R^2$ which emanate from a box of spatial scale $R$ and (ii) a distinguished worm $\pinkwalk$ from a larger length scale. Together, the pair $(\worms, \pinkwalk)$ is called an \emph{input package}.

Recall from Definition \ref{def_full_worm_ppp} the notion of the law $\mathcal{P}_v$ of the PPP $\widehat{ \mathcal{X} }$ on the space of worms.

\begin{definition}[Natural law on input packages]
	\label{def:law_on_input_packages}
	Let $y \in \mathbb{Z}^d$, $R \in \mathbb{N}$, $z \in \ball(y,R)$, $v \in \mathbb{R}_+$. We say that a random element $(\worms, \pinkwalk)$ of $\widetilde{\mathbf{W}} \times \widetilde{W}$ has law $\mathcal{P}_{y,R,z,v}$ if
	\begin{enumerate}[(i)]
		\item $\mathcal{X}$ has the law of $\widehat{\mathcal{X}} \ind[\, w(0) \in \ball(y, 2 R),\, L(w) \leq R^2\, ]$, where $\widehat{ \mathcal{X} } \sim \mathcal{P}_v$
(i.e., $\mathcal{X}$ is a PPP on $\widetilde{W}$ with intensity measure $v \cdot m( L(w) ) \cdot (2 d)^{ 1 - L(w) }\ind[\, w(0) \in \ball(y, 2 R),\, L(w) \leq R^2\, ]$).
		\item $\pinkwalk=(Z(t))_{0\leq t \leq T}$ is an independent $d$-dimensional simple symmetric random walk starting from $Z(0)=z$ that we run up to time  $T=\min\{ R^2, T_{\ball(y,2R)^c}(\pinkwalk) \}$, where $T_{\ball(y,2R)^c}(\pinkwalk)$ denotes the first time the path $(Z(t))_{t \geq 0}$ exits from the box $\ball(y,2R)$ (thus $L(\pinkwalk)=T+1$).
	\end{enumerate}
\end{definition}

The law $\mathcal{P}_{y,R,z,v}$ is invariant under the translations of $\mathbb{Z}^d$ in the following sense:
\begin{equation}\label{shift_invar_package}
\text{if $(\worms, \pinkwalk) \sim \mathcal{P}_{y,R,z,v}$ then $(\worms+x, \pinkwalk+x) \sim \mathcal{P}_{y+x,R,z+x,v}$ for any $x \in \mathbb{Z}^d$.}
\end{equation}

\begin{definition}[$(y,R,z,\gamma)$-good input package]
	\label{def:gamma_good_input_package}
	Let $y \in \mathbb{Z}^d$,  $R \in \mathbb{N}$, $z \in \ball(y,R)$ and $\gamma \in \mathbb{R_+}$. We say that $(\worms, \pinkwalk) \in \widetilde{\mathbf{W}}\times \widetilde{W}$ is $(y,R,z,\gamma)$-good if there is a set $H \subset \mathbb{Z}^d $ which satisfies the following properties:
	\begin{equation}
	\begin{array}{ll}  	 \text{(i) } H \subseteq \ball(y,3R) \cap (\mathrm{Tr}(\worms)\cup \mathrm{Tr}(\pinkwalk)), \qquad
		& \text{(ii) } H \text{  is connected,} \\
		 \text{(iii) }  z \in H, \qquad
		& \text{(iv) } R^2 \cdot \gamma \leq \capacity(H) \leq R^2 \cdot \gamma + 1.
\end{array}
	\end{equation}
We call such a set $H$ an $(y,R,z,\gamma)$-\emph{good set} for $(\worms, \pinkwalk)$.
We call the parameter $\gamma$ the \emph{capacity-to-length ratio} (since $R^2$ is the maximal length of $\pinkwalk$).
\end{definition}

Due to  translation invariance, we have
\begin{equation}
	\mathcal{P}_{y, R, z,v} ( (\worms, \pinkwalk) \text{ is } (y,R,z,\gamma) \text{-good})=\mathcal{P}_{o, R, z-y,v} ( (\worms, \pinkwalk) \text{ is } (o,R,z-y,\gamma) \text{-good}).
\end{equation}

Our next lemma initializes our recursive construction of good sets.
Let us stress that it works for any $v \geq 0$ (even in the case $v=0$), because we  only use $\pinkwalk$ (i.e., we do not use $\worms$).

\begin{lemma}[Initializing the $\gamma$ of good input packages]\label{lemma_initializing_gamma}
There exist dimension-dependent positive finite constants $R^*_0$ and $\gamma_0$ such that for any $R \geq R^*_0$ and any $v \geq 0$  we have
\begin{equation}
	\label{initializing_gamma}
	\min_{z \in  \ball(R)}
	\mathcal{P}_{o,R,z,v}\left(\, (\worms, \pinkwalk) \text{ is $(o,R,z,\gamma_0)$-good}\,  \right) \geq
	3 \slash 4.
	\end{equation}
\end{lemma}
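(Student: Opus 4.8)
The plan is to prove Lemma \ref{lemma_initializing_gamma} using only the distinguished walk $\pinkwalk$, ignoring the auxiliary PPP $\worms$ entirely; this is why the statement is valid even when $v=0$. Recall that under $\mathcal{P}_{o,R,z,v}$ the path $\pinkwalk=(Z(t))_{0\le t\le T}$ is a simple random walk started at $z\in\ball(R)$, run until time $T=\min\{R^2,\, T_{\ball(2R)^c}(\pinkwalk)\}$. The candidate good set $H$ will be a suitable connected subset of $\mathrm{Tr}(\pinkwalk)$ containing $z$, trimmed to have capacity in the window $[R^2\gamma_0,\, R^2\gamma_0+1]$ via Lemma \ref{lemma_trimming_the_fat}.

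The key steps, in order. First, by Proposition \ref{staying_inside} (the Azuma--Hoeffding estimate \eqref{eq:staying_inside}), the walk started at $z\in\ball(R)$ stays inside $\ball(2R)$ up to time $R^2$ with probability at least, say, $1-2d\,e^{-R^2/(2R^2)}=1-2d\,e^{-1/2}$; this is not quite good enough as stated, so I would instead run the walk for a shorter horizon $\varepsilon R^2$ for a small fixed $\varepsilon$, making the escape probability at most (say) $1/8$ uniformly in $R\ge 1$; on this event $T=\varepsilon R^2$ (for $R$ large) and $\mathrm{Tr}(\pinkwalk)\subseteq\ball(2R)\subseteq\ball(3R)$, so requirement (i) of Definition \ref{def:gamma_good_input_package} is automatic and the path actually performs $\varepsilon R^2$ steps. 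Second, I need a lower bound on $\capacity(\mathrm{Tr}(\pinkwalk))$ that holds with probability at least $7/8$ (uniformly in $R\ge R^*_0$ and in $z$): namely $\capacity(\cup_{t=0}^{\varepsilon R^2-1}\{Z(t)\})\ge \gamma_0 R^2+1$ for a suitable small constant $\gamma_0$. Third, given such a realization, apply Lemma \ref{lemma_trimming_the_fat} with $a=\gamma_0 R^2$ and the marked vertex $x=z$ to extract a connected $H\subseteq\mathrm{Tr}(\pinkwalk)$ with $z\in H$ and $\gamma_0 R^2\le\capacity(H)\le\gamma_0 R^2+1$; intersecting the two good events (escape-avoidance and capacity-lower-bound) gives probability $\ge 3/4$, and on that intersection $(\worms,\pinkwalk)$ is $(o,R,z,\gamma_0)$-good. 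Finally, translation invariance \eqref{shift_invar_package} lets me replace $z$ by an arbitrary point of $\ball(R)$ uniformly, yielding the $\min_{z\in\ball(R)}$ in \eqref{initializing_gamma}.

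The main obstacle is the second step: establishing the uniform lower bound $\capacity(\mathrm{Tr}(\pinkwalk))\gtrsim R^2$ with probability close to $1$, uniformly in the starting point $z$ and in $R$. The law-of-large-numbers statement Proposition \ref{prop_lln_cap_rw} gives $\frac1n\capacity(\cup_{t=0}^{n-1}\{X(t)\})\to e_\infty>0$ a.s., which yields convergence in probability and hence, for $n=\varepsilon R^2$ large, $\capacity(\mathrm{Tr})\ge \tfrac12 e_\infty\varepsilon R^2$ with probability $\ge 7/8$ once $R\ge R^*_0$ --- but only for a walk run for its \emph{full} (deterministic) length $n$, and only after noting the trace of the stopped walk $\pinkwalk$ contains $\cup_{t=0}^{\varepsilon R^2-1}\{Z(t)\}$ on the no-escape event, so one must combine the two events carefully rather than sequentially. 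One also has to handle the dependence on the starting point $z$: since capacity is translation invariant, $\capacity(\cup_{t=0}^{n-1}\{Z(t)\})$ under $P_z$ has the same law as under $P_o$, so Proposition \ref{prop_lln_cap_rw} applies verbatim and the bound is automatically uniform in $z$. The choice of $\gamma_0$ is then forced to be (a bit smaller than) $\tfrac12 e_\infty\varepsilon$, and $R^*_0$ is chosen large enough that both the escape estimate and the LLN-based estimate deliver their respective $1/8$-level bounds; routine bookkeeping with the union bound finishes the argument.
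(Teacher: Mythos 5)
Your proof follows essentially the same route as the paper's: control the escape probability via the Azuma--Hoeffding bound (Proposition \ref{staying_inside}), use Proposition \ref{prop_lln_cap_rw} to lower-bound $\capacity(\mathrm{Tr}(Z[0,\varepsilon R^2]))$ for $R$ large, combine via a union bound to get $3/4$, and then apply Lemma \ref{lemma_trimming_the_fat} to trim the trace down to the required capacity window. The only minor infelicity is your phrasing about ``running the walk for a shorter horizon $\varepsilon R^2$''---the definition of $T$ is fixed by $\mathcal{P}_{o,R,z,v}$, and what you actually want (and what the paper does) is to observe that on the no-escape event $T\ge\varepsilon R^2$, so $\mathrm{Tr}(Z[0,\varepsilon R^2])\subseteq\mathrm{Tr}(\pinkwalk)$ and the capacity bound transfers by monotonicity \eqref{cap_mon}.
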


\begin{proof} Let $(\worms, \pinkwalk) \sim \mathcal{P}_{o,R,z,v}$.
 Recall from Definition \ref{def:law_on_input_packages} that  $T=\min\{ R^2, T_{\ball(y,2R)^c}(\pinkwalk) \}$.
First note that $H:=\mathrm{Tr}(\pinkwalk)$ satisfies conditions (i), (ii) and (iii) of Definition \ref{def:gamma_good_input_package}.
 By  Proposition \ref{staying_inside} and our assumption $z \in \ball(R)$ we can  choose (and fix) an $\veps > 0$ such that $\mathbb{P}(\veps \cdot R^2 \leq T) \geq 7/8$  holds for any $R \in \mathbb{N}$.
 By Proposition \ref{prop_lln_cap_rw} there exists $R^*_0 \in \mathbb{N}$ such that
\begin{equation}
\mathbb{P}\left[ \,\capacity\left( \mathrm{Tr}(Z[0,\varepsilon \cdot R^2])\right) < \tfrac{1}{2} \cdot e_\infty \cdot \veps \cdot R^2\, \right] \leq 1/8, \qquad R \geq R^*_0.
\end{equation}
Let $\gamma_0:=\tfrac{1}{2} \cdot e_\infty \cdot \veps$.
Putting together the above  with the monotonicity  of capacity (cf.\ \eqref{cap_mon}),  we obtain that if $R \geq R^*_0$ then $H:=\mathrm{Tr}(\pinkwalk)$ satisfies $\mathbb{P}(\capacity(H)\geq R^2 \cdot \gamma_0) \geq 7/8-1/8=3/4$. In order to guarantee that $H$ also satisfies  condition
(iv) of Definition \ref{def:gamma_good_input_package}, we may use Lemma \ref{lemma_trimming_the_fat} to replace $H$ with an appropriate subset of $H$.
 The proof of \eqref{initializing_gamma} is complete.
\end{proof}

The next lemma is the main ingredient of our recursive multi-scale construction: given that we are able to construct good sets with a capacity-to-length ratio $\gamma$ at a lower scale $\Rs$ then
we can use fattening to construct good sets with a capacity-to-length ratio $2\gamma$ at a higher scale $\Rb$.

\begin{lemma}[Doubling the $\gamma$ of good input packages by fattening]
	\label{lemma_doubling_gamma}
	There exist dimension-dependent positive finite parameters
	\begin{equation}
		\label{parameter_assumptions}
		\psi < 1, \quad \Dup < 1, \quad \Dlow > 1, \quad \underline{\alpha} > 1
	\end{equation}
	such that for any $\Rs \leq \Rb \in \mathbb{N}$, any $\gamma \in \mathbb{R}_+$  satisfying
\begin{equation} \label{gamma_ineq_assumptions}
 \gamma_0 \leq \gamma \leq \psi \cdot \Rs^{d-4}
\end{equation}
(where $\gamma_0$ appears in Lemma \ref{lemma_initializing_gamma}), any choice of $v \in \mathbb{R}_+$ and any choice of the probability mass function $m$ of $\mathcal{L}$ satisfying the inequality
	\begin{equation}
		\label{def:alpha}
		v \cdot \mathbb{E} \left[\, \mathcal{L}^2 \, \ind \left[ \Dlow \cdot \Rs^2 \leq \mathcal{L} \leq \Dup \cdot \Rb^2 \right]\, \right] = v \cdot \sum_{\ell = \Dlow \cdot \Rs^2}^{ \Dup \cdot \Rb^2 } \ell^2 \cdot m(\ell) \geq \underline{\alpha},
	\end{equation}
	the following implication holds: if
	\begin{equation}
	\label{induction_hypotheses}
	\min_{z \in  \ball(\Rs)}
	\mathcal{P}_{o,\Rs,z,v}\left(\, (\worms, \pinkwalk) \text{ is $(o,\Rs,z,\gamma)$-good}\,  \right) \geq
	3 \slash 4,
	\end{equation}
	then
	\begin{equation}
	\label{2gamma_want}
	\min_{z \in  \ball(\Rb)}
	\mathcal{P}_{o,\Rb,z,v}\left(\, (\worms, \pinkwalk) \text{ is $(o,\Rb,z,2\gamma)$-good} \, \right) \geq
	3 \slash 4.
	\end{equation}
\end{lemma}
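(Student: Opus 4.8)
The plan is to realize the ``recursive fattening'' heuristic of Section~\ref{subsub_loglog_corr_heu} in a single capacity-doubling step. Fix $z\in\ball(\Rb)$ and work under $\mathcal{P}_{o,\Rb,z,v}$. The distinguished worm $\pinkwalk$ of length up to $\Rb^2$ will be the backbone: by Lemma~\ref{lower:no_of_visited_boxes} applied at the pair of scales $(\Rs,\Rb)$, with probability at least $0.997$ the path $\pinkwalk$ visits at least $\CLowerNOVisitedBoxesLOWER\cdot(\Rb/\Rs)^2$ of the sub-boxes $\ball(y',\Rs)$, $y'\in 10\Rs\Z^d$, before it exits $\ball(o,2\Rb)$ or runs out of its $\Rb^2-\Rs^2$ allotted steps; call the collection of such good sub-box centers $\mathcal D_{\mathrm{vis}}$. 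On each visited sub-box I would like to graft an independent copy of a $(o,\Rs,\cdot,\gamma)$-good set built from the worms in $\worms$ of length in $[\Dlow\Rs^2,\Dup\Rb^2]$ that emanate near that sub-box; the induction hypothesis \eqref{induction_hypotheses} guarantees that such a good set exists with probability $\ge 3/4$ for each sub-box. Crucially, the restrictions of the Poisson point process $\worms$ to worms emanating from disjoint spatial regions are independent, and the sub-boxes indexed by $10\Rs\Z^d$ are spatially separated, so these grafting events can be taken (essentially) independent across $y'\in\mathcal D_{\mathrm{vis}}$ — this is where the ``holes of scale $r$'' in the set $\mathcal G$ of Lemma~\ref{lower:fuga_hitting_sum} and the definition of $\mathcal{P}_{y,R,z,v}$ (worms of length $\le R^2$ starting in $\ball(y,2R)$) are designed to cooperate.

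The core of the argument is then a first-and-second-moment estimate on the capacity of the fattened set
\[
 H:=\mathrm{Tr}(\pinkwalk)\cup\bigcup_{y'\in\mathcal D_{\mathrm{vis}}}H_{y'},
\]
where $H_{y'}$ is the good set grafted on sub-box $y'$, each contributing capacity $\asymp \gamma\Rs^2$. Since capacity is subadditive (c.f.\ \eqref{cap_subadditive}) and monotone (c.f.\ \eqref{cap_mon}), the upper bound $\capacity(H)\le \capacity(\mathrm{Tr}(\pinkwalk))+\sum_{y'}\capacity(H_{y'})$ is immediate; the content is the matching \emph{lower} bound. Here I would use the variational characterization \eqref{capacity_with_energy}: put the natural probability measure on $H$ proportional to the local-time/equilibrium measures of the pieces, and bound its Dirichlet energy. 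The diagonal (within-piece) contributions are controlled by Proposition~\ref{upper:exp_energy_of_traj} (for $\pinkwalk$) and by the good-set property plus \eqref{capacity_with_energy} (for the $H_{y'}$). The off-diagonal contributions — energy between $H_{y'}$ and $H_{y''}$, and between $\mathrm{Tr}(\pinkwalk)$ and the $H_{y'}$'s — are exactly what Lemma~\ref{lemma_upper:two_cons_hitting_sum} and inequality \eqref{eq:upper:weighted_exp_visited_boxes} of Lemma~\ref{upper:numb_of_visited_boxes} are built to estimate: the pairwise interaction decays like $|y'-y''|^{2-d}$ and summing over the well-spread-out visited sub-boxes gives a total off-diagonal energy that is lower-order compared to $(\#\mathcal D_{\mathrm{vis}})\cdot(\gamma\Rs^2)^2\asymp \gamma^2\Rs^2\Rb^2$ times the normalization, \emph{provided} $\gamma\le\psi\Rs^{d-4}$ with $\psi$ small — this is precisely where \eqref{gamma_ineq_assumptions} enters, keeping the ``snowball'' from collapsing under self-interaction. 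Using \eqref{condition_R_n_R_n1}/\eqref{def:alpha} to ensure $v\cdot\sum\ell^2 m(\ell)\ge\underline\alpha>1$, one checks that the expected total capacity delivered by the grafted pieces alone is $\ge C\cdot\underline\alpha\cdot\gamma\cdot\Rb^2$, comfortably above $2\gamma\Rb^2$ when $\underline\alpha$ is chosen large; a Chebyshev/Paley–Zygmund bound using \eqref{eq:one_worm_ppp_variance_formula} and Lemma~\ref{prop:two_worms_joint_ppp_formula} then upgrades the expectation to a probability $\ge 0.9$, say, that $\capacity(H)\ge 3\gamma\Rb^2$.

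Assembling: on an event of probability $\ge 3/4$ (intersecting the Lemma~\ref{lower:no_of_visited_boxes} event, the concentration event for the grafted capacities, and the containment event $H\subseteq\ball(o,3\Rb)$, which holds because $\pinkwalk$ stays in $\ball(o,2\Rb)$ and the grafted sets live in $\ball(y',3\Rs)\subseteq\ball(o,3\Rb)$) the set $H$ is connected, contains $z$, lies in $\ball(o,3\Rb)\cap(\mathrm{Tr}(\worms)\cup\mathrm{Tr}(\pinkwalk))$, and has $\capacity(H)\ge 2\gamma\Rb^2$; a final application of Lemma~\ref{lemma_trimming_the_fat} (trimming the fat) cuts $H$ down to a connected subset still containing $z$ with $2\gamma\Rb^2\le\capacity(H')\le 2\gamma\Rb^2+1$, giving exactly the $(o,\Rb,z,2\gamma)$-good property \eqref{2gamma_want}. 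The main obstacle is the lower bound on $\capacity(H)$: one must choose the test measure so that the intra-piece energies are genuinely $\Theta(1/\capacity)$ (not just $O$), and simultaneously show the inter-piece energies are negligible — this requires carefully coupling the grafting so that each $H_{y'}$ is built from worms that actually hit $\pinkwalk$ near $y'$ (so that $H$ is connected) while remaining independent enough for the moment computations, and it is where the precise geometry of the scales $(\CLowerFugaHitDELTALOWER,\CLowerFugaHitDELTAUPPER)$ from Lemma~\ref{lower:fuga_hitting_sum} and the $\ell_\infty$-lattice $10\Rs\Z^d$ must be matched up. Everything else is bookkeeping with the estimates already proved in Sections~\ref{section_rw_capacity} and \ref{section_ppp_worms}.
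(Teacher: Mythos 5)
Your proposal has the right skeleton — backbone $\pinkwalk$, a family of grafted good sets on the sub-boxes it visits, a Dirichlet-energy / total-mass accounting via Lemma~\ref{lower:no_of_visited_boxes}, \eqref{eq:upper:weighted_exp_visited_boxes}, Lemma~\ref{prop:two_worms_joint_ppp_formula}, and a final trimming via Lemma~\ref{lemma_trimming_the_fat} — and you correctly identify the role of $\psi$ in \eqref{gamma_ineq_assumptions} for taming the inter-piece interactions. But the construction of $H$ itself is missing the component that actually produces the gain, and the arithmetic does not close.

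Two linked problems. First, you graft $(o,\Rs,\cdot,\gamma)$-good sets ``built from the worms of length in $[\Dlow\Rs^2,\Dup\Rb^2]$''. That is not what the induction hypothesis \eqref{induction_hypotheses} gives you: under $\mathcal{P}_{y,\Rs,z,v}$ the point process is $\widehat{\mathcal{X}}\ind[w(0)\in\ball(y,2\Rs),\,L(w)\le \Rs^2]$, i.e.\ \emph{short} worms of length at most $\Rs^2$. The intermediate-length worms in $[\Dlow\Rs^2,\Dup\Rb^2]$ are precisely the ones that condition \eqref{def:alpha} controls, and they play a wholly different role (see below); they cannot be fed to \eqref{induction_hypotheses}. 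Second, and more seriously, with $H=\mathrm{Tr}(\pinkwalk)\cup\bigcup_{y'\in\mathcal D_{\mathrm{vis}}}H_{y'}$ and each $H_{y'}$ of capacity $\asymp\gamma\Rs^2$ (you state this explicitly), subadditivity \eqref{cap_subadditive} caps $\capacity(H)$ at $C\cdot\gamma\Rs^2\cdot|\mathcal D_{\mathrm{vis}}|+\capacity(\mathrm{Tr}(\pinkwalk))\lesssim\gamma\Rb^2$, with implicit constants that do not depend on $\alpha$ or $\underline{\alpha}$ at all. There is simply no mechanism in your $H$ to manufacture the factor $\underline{\alpha}$ in the claim ``expected total capacity delivered by the grafted pieces alone is $\ge C\cdot\underline\alpha\cdot\gamma\cdot\Rb^2$'', and without it one cannot get above $2\gamma\Rb^2$ by increasing $\underline{\alpha}$: the constant in front of $\gamma\Rb^2$ is frozen by the random-walk and capacity estimates. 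The paper's $H$ (c.f.\ \eqref{def:fattened_set}) therefore also contains $\bigcup_{y\in\mathcal D}\mathrm{Tr}(\hitonly{\mathcal{X}}[y])$, the traces of the intermediate-length worms that hit a single seed $H^y$; the test measure $\hitonly{\mu}_\lozenge$ is supported on the middle thirds of \emph{those} worms, not on the seeds. Its total mass is $\asymp\Rb^2\gamma\alpha$ (Lemma~\ref{lemma:big_total_measure}) while its energy is $\asymp\Rb^2\gamma\alpha(1+\psi\alpha)$ (Lemma~\ref{lemma:small_energy}), so $\hitonly{\Sigma}_\lozenge^2/\energy(\hitonly{\mu}_\lozenge)\asymp\Rb^2\gamma\,\alpha/(1+\psi\alpha)$, and choosing $\psi$ small and $\underline{\alpha}$ large pushes this past $2\gamma\Rb^2$. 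In short: the seeds $H^y$ are targets, built from short worms via the induction hypothesis; the intermediate-length worms of \eqref{def:alpha} are the fattening material, and their traces must be included in $H$ — otherwise the $\alpha$-boost, which is the whole point of the lemma, never appears.
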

The proof of Lemma \ref{lemma_doubling_gamma} is postponed to Section \ref{subsection_doubling_gamma}.

\begin{remark} The choice of the parameters $\psi,  \Dup,  \Dlow , \underline{\alpha}$  will be explained in Section \ref{subsection_doubling_gamma}, but let us already clear some of the mystery surrounding the values of $\Dup$ and $\Dlow$: these two parameters are in fact determined by the requirements on the random walk's length from Section \ref{subsection_rw_estimates}
(see Lemmas \ref{upper:fuga_hitting_sum}, \ref{lemma_upper:two_cons_hitting_sum}, \ref{lower:fuga_hitting_sum})
except for one additional constraint
 $\underline{\Delta} \geq \CLowerNOVisitedBoxesRATIOLOWER \vee (300/ \CLowerNOVisitedBoxesLOWER ) $ (where $ \CLowerNOVisitedBoxesLOWER $ and $ \CLowerNOVisitedBoxesRATIOLOWER $ appeared in the statement of Lemma \ref{lower:no_of_visited_boxes})
 that will be required in  Lemma \ref{lower:no_of_visited_good_boxes}.
\end{remark}

In the remainder of Section \ref{section_perco_worms_multiscale} we prove  Theorem \ref{thm_worm_perco} using Lemma \ref{lemma_doubling_gamma} to go one scale higher
in a good sequence of scales (i.e., we will choose
$\Rs=R_n$ and $\Rb=R_{n+1}$). However, the proof of Theorem \ref{thm_worm_perco} also requires further ingredients that we discuss in Section \ref{subsection_exploration}.

\subsection{Dynamic renormalization}\label{subsection_exploration}

Let us recall some notation from  \cite[Section 3]{GrimmettMarstrand1990}. Note that the $\mathbb{Z}^d$ that appears in Section \ref{subsection_exploration}
should be imagined as a coarse grained lattice (where the coarse graining will only be defined in Section \ref{subsec_coarse}):  each good/bad vertex that we define in
Section \ref{subsection_exploration} will correspond to a good/bad box in Section \ref{subsec_coarse}, the scale of boxes being $R_{N+1}$, where
$( R_n )_{n = 0}^{N+1}$ is a good sequence of scales.

Let us  fix an ordering $e_1 \prec e_2 \prec \ldots$ on the set of nearest neighbour edges of the lattice $\Z^d$. We are going to define an exploration process which produces a random connected subset of  \emph{good} vertices that contains the origin. Some vertices will turn out to be \emph{bad}: our exploration process is not allowed to use such vertices as a stepping-stone for further exploration. The goal of the construction (see Proposition \ref{lemma:Grimmett_Marstrand} below) is to give a sufficient condition which guarantees that the good connected cluster of the origin is infinite with positive probability.

We will  define a collection of $\{ 0, 1 \}$-valued random variables $\{ \sigma(t) \, : \, t \in \mathbb{N}_0 \}$, where $\sigma(t)$ is the indicator of the event that the vertex that we explore at the $t$'th step is good.

Let us define $(S_t)_{t \in \mathbb{N}_0}$, where $S_t=(G_t,B_t)$ and $G_t \subseteq \mathbb{Z}^d$ denotes the \emph{set of good vertices explored by time} $t$, while
$B_t \subseteq \mathbb{Z}^d$ denotes the \emph{set of bad vertices explored by time} $t$.

For the initial step, let us examine the origin and define $S_0 := (G_0, B_0)$ accordingly, as
\begin{align*}
	S_0 :=
	\begin{cases}
		( \{ o \}, \emptyset ), & \text{if} \quad \sigma(0)  = 1, \\
		(\emptyset, \{ o \}), & \text{if} \quad \sigma(0) = 0.
	\end{cases}
\end{align*}
If $S_0, S_1, \ldots, S_t$ are already given up to some $t \in \mathbb{N}_0$,  let us define $S_{t+1}$ as follows. Consider the edges of $\Z^d$ that have one endpoint in $G_t$ and the other in $\Z^d \setminus (G_t \cup B_t)$. If there are no such edges then $\partial^{\text{ext}} G_t \subseteq B_t$ and we define $S_{t+1} := S_t$. If there are such edges, let us pick the minimal such edge $e_{t+1}$ with respect to the ordering $\prec$.  Denote by $x_{t+1}$ the endpoint of $e_{t+1}$ in $\Z^d \setminus (G_t \cup B_t)$ and denote by $x'_{t+1}$  the endpoint of $e_{t+1}$ in $G_t$. Let
\begin{align}\label{S_t_plus_1_def}
	S_{t+1} :=
	\begin{cases}
		( G_t \cup \{ x_{t+1} \}, B_t ), & \text{if} \quad \sigma(t+1)  = 1, \\
		( G_t, B_t \cup \{ x_{t+1} \}), & \text{if} \quad \sigma(t+1) = 0.
	\end{cases}
\end{align}
 Note that $S_t$, $e_{t+1}$, $x_{t+1}$ and  $x'_{t+1}$ are determined by $\sigma(0),\dots,\sigma(t)$.

 The sequences  $\{ G_t\}_{t \geq 0}$ and $\{ B_t \}_{t \geq 0}$ of sets are non-decreasing w.r.t.\ set inclusion, hence we can define the  limits
\begin{equation*}
	G_{\infty} := \lim_{t \to \infty} G_t,
	\qquad \qquad
	B_{\infty} := \lim_{t \to \infty} B_t.
\end{equation*}
Note that $|G_{\infty}|<+\infty$ if and only if either $G_0=\emptyset$ or there exists $t \in \mathbb{N}_0$ such that $\partial^{\text{ext}} G_t \subseteq B_t$.

Recall that we denote by $ p_c^{\text{site}}(\Z^d)$ the critical threshold of Bernoulli site percolation on the nearest neighbour lattice $\mathbb{Z}^d$.

The following proposition is \cite[Lemma 1]{GrimmettMarstrand1990}.

\begin{proposition}[Sufficient condition for eternal exploration]
	\label{lemma:Grimmett_Marstrand}
	If there exists a constant $0 < \CGrimmettMarstrand < 1$ such that $\CGrimmettMarstrand > p_c^{\text{site}}(\Z^d)$, moreover
$\mathbb{P} \left( \sigma(0) = 1  \right) \geq \CGrimmettMarstrand$ and
	\begin{equation}\label{condition_bernoulli_dom}
		\mathbb{P} \left( \sigma(t+1) = 1 \, | \, \sigma(0), \sigma(1), \ldots, \sigma(t) \right) \geq \CGrimmettMarstrand
	\end{equation}
	hold for all $t \in \mathbb{N}_0$,  then we have $\mathbb{P}( | G_{\infty}| = \infty ) > 0$.
\end{proposition}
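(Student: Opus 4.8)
The plan is to derive this from stochastic domination by a supercritical Bernoulli site percolation, which is exactly how \cite[Lemma 1]{GrimmettMarstrand1990} is proved. Concretely, I would construct an i.i.d.\ family $(\omega_x)_{x\in\Z^d}$ of Bernoulli($\CGrimmettMarstrand$) random variables, coupled to the exploration $(S_t)_{t\ge0}$ in such a way that whenever the exploration examines a vertex $x$ with $\omega_x=1$ it declares that vertex \emph{good}. Granting such a coupling, an induction shows that the good cluster $G_\infty$ contains the $\omega$-open cluster of the origin, and since the assumption $\CGrimmettMarstrand>p_c^{\text{site}}(\Z^d)$ makes $\omega$ supercritical, that cluster is infinite with positive probability (see \cite{Gr99}); hence $\mathbb{P}(|G_\infty|=\infty)>0$.

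To build the coupling I would reveal $\omega$ lazily along the exploration. Note first that at each step the rule \eqref{S_t_plus_1_def} examines a \emph{fresh} vertex $x_{t+1}\in\Z^d\setminus(G_t\cup B_t)$, so $\omega_{x_{t+1}}$ has not yet been consulted and is independent of $\sigma(0),\dots,\sigma(t)$. Writing $p_{t+1}$ for the conditional probability that $\sigma(t+1)=1$ given the past (which is $\ge\CGrimmettMarstrand$ by \eqref{condition_bernoulli_dom}), I would, using an independent auxiliary uniform $\theta_{x_{t+1}}$, set $\sigma(t+1)=1$ whenever $\omega_{x_{t+1}}=1$, and, when $\omega_{x_{t+1}}=0$, set $\sigma(t+1)=1$ with the residual probability $(p_{t+1}-\CGrimmettMarstrand)/(1-\CGrimmettMarstrand)$; the step $t=0$ is handled identically with $o$ in place of $x_1$ and $p_0:=\mathbb{P}(\sigma(0)=1)\ge\CGrimmettMarstrand$. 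This keeps the conditional law of $\sigma(t+1)$ correct, keeps $(\omega_x)$ i.i.d.\ Bernoulli($\CGrimmettMarstrand$), and forces $\sigma(t+1)\ge\omega_{x_{t+1}}$ deterministically.

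The last step is the inclusion of the $\omega$-open cluster of $o$ in $G_\infty$: if $\omega_o=0$ there is nothing to prove, and otherwise $o\in G_0$, and one argues by induction on the $\omega$-open-path distance from $o$. Here I expect the one point needing care — the main (small) obstacle — is that the exploration order is driven by $\sigma$, not by $\omega$, so the exploration need not trace the $\omega$-cluster directly; instead, if an $\omega$-open vertex $x$ has a neighbour $x'$ already in $G_\infty$, one observes that $x$ cannot have been examined with outcome $0$ (since $\sigma\ge\omega_x=1$ at the moment of examination), so the edge $\{x',x\}$ stays \emph{available} from the time $x'$ enters the good set onwards; because $\prec$ well-orders the edges and each edge is selected at most once, only finitely many edges precede $\{x',x\}$, so it is eventually selected and $x$ is examined and (being $\omega$-open) added to $G_\infty$. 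The same observation rules out the exploration terminating via $\partial^{\text{ext}}G_t\subseteq B_t$ while an $\omega$-open neighbour of $G_t$ is still unexamined, which closes the argument.
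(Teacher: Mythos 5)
Your proof is correct, and it supplies the argument that the paper itself does not give: the paper simply imports this statement as \cite[Lemma 1]{GrimmettMarstrand1990} without reproving it. Your reconstruction is the standard domination argument behind that lemma, and both halves of it hold up. The lazy coupling with residual probability $(p_{t+1}-\CGrimmettMarstrand)/(1-\CGrimmettMarstrand)$ does produce a process with the same joint law as $(\sigma(t))_{t\geq 0}$ while keeping $(\omega_x)$ i.i.d.\ Bernoulli$(\CGrimmettMarstrand)$, precisely because each step examines a fresh vertex so $\omega_{x_{t+1}}$ stays independent of the history revealed so far. And your eventual-selection argument is exactly right: if $x$ is $\omega$-open, it can only ever be placed in $G$ when examined, so the edge $\{x',x\}$ (with $x'\in G_{\infty}$) remains available from the time $x'$ enters $G$ until $x$ is examined; since the enumeration $\prec$ has order type $\omega$, only finitely many edges precede $\{x',x\}$ and each is consumed at most once, so the exploration must eventually pick $\{x',x\}$ rather than stall — and it cannot halt via $\partial^{\text{ext}}G_t\subseteq B_t$ either, as $x$ is an external boundary vertex that is never in $B$.

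Two small bookkeeping points you used implicitly but might state: the event $\{|G_\infty|=\infty\}$ is a functional of the law of $(\sigma(t))_{t\geq 0}$ alone, which is what licenses passing to the coupled copy; and when $\partial^{\text{ext}}G_t\subseteq B_t$ the paper sets $\sigma(t+1):=1$ by fiat and no fresh vertex is examined, so \eqref{condition_bernoulli_dom} holds trivially and the coupling consults no $\omega$ at that step. Neither affects correctness.
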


\subsection{Good sequence of scales implies supercritical  percolation}\label{subsec_coarse}

For the rest of Section \ref{section_perco_worms_multiscale}  we assume that we are given $\widehat{\worms} \sim \mathcal{P}_v$ (cf. Definition \ref{def_full_worm_ppp}), for some $v > 0$ and $m$.
Recall from Claim \ref{claim_ppp_S_v} that $\mathcal{S}^v=\mathrm{Tr}( \widehat{\mathcal{X}} )$. We want to apply Proposition \ref{lemma:Grimmett_Marstrand} in order to prove Theorem \ref{thm_worm_perco}. Loosely speaking, we will subdivide $\mathbb{Z}^d$ into boxes of scale $R$ for some $R \in \mathbb{N}$ and we will also subdivide the PPP $\widehat{\mathcal{X}}$ of worms into independent packages, where a particular package contains the worms emanating from a particular box of scale $R$.

We will build a  cluster of good boxes where each good box contains a \emph{seed}  (i.e., a connected subset of $\mathcal{S}^v$  with big capacity), moreover these seeds will all be subsets of the same connected component of $\mathcal{S}^v$.
In our construction the indicator $\sigma(t+1)$ that appears in Proposition \ref{lemma:Grimmett_Marstrand} will denote the indicator of the event that the new box  explored in step $t+1$ contains an appropriate seed which is also connected to the seed of an already explored neighbouring good box by worms emanating from the new box.

In order to make this plan more precise, we need to introduce some notation.

Let us introduce the scale $R \in \N$ and a length truncation parameter $\beta \in \mathbb{R}_+$, the values of which will be fixed later on.
 Given a vertex $y \in 10 \cdot R \cdot \Z^d$, let us introduce the subset $B[y] \subseteq \widetilde{W}$ of worms
\begin{equation}\label{def_B_y_package_worms}
B[y] := \left\{ \, w(0) \in \ball(y, 2 R), \, L(w) > (2 \beta + 1) R^2 \, \right\}.
\end{equation}
Given some $y \in 10 \cdot R \cdot \Z^d$ and a set $H \subseteq \mathbb{Z}^d$ (where in later applications $H$ will be a realization of the seed set in a good box indexed by
some $y' \in 10 \cdot R \cdot \Z^d $ satisfying $|y-y'|=10 \cdot R$), let us define the subset $B[y,H] \subseteq \widetilde{W}$ of worms by
\begin{equation}\label{back_and_forth}
B[y,H] :=  B[y] \cap \left\{  \,
T_{H}(w) \leq \beta R^2, \,
T_{\ball(y, R)}( w[ T_{H}(w), L(w)-1 ] ) \leq \beta R^2 \,
\right\}.
\end{equation}
In words: a worm $w \in  B[y]$ belongs to $B[y,H]$ if it hits $H$ in at most $\beta R^2$ steps and after that it returns to $\ball(y, R)$ in at most   $\beta R^2$
steps, and therefore it still has at least $R^2$ more steps to spare after hitting $\ball(y, R)$, since by \eqref{def_B_y_package_worms} the length of $w$ is at least $\beta R^2+\beta R^2+R^2$.
 This final segment of $w$ of length $R^2$ will serve as a nucleus that will later be fattened with the aim of producing a  seed with big enough capacity in the box indexed by $y$. Also note that $B[y,H]$ contains worms that start in the vicinity of $y$,  hit the target $H$ and then return to the vicinity of $y$ again (just like a boomerang).

Our next lemma provides us with conditions under which $\widehat{\mathcal{X}}\mathds{1}[ B[y,H]]$ is non-empty with high enough probability.

\begin{lemma}[Target shooting with a boomerang]
	\label{lemma:good_shooting}
	There exist $s > 0$ and $\beta \in \N$ such that for every $R \in \N$, $y \in \mathbb{Z}^d$ and every $H \subseteq \ball(y, 13 R)$ satisfying
	\begin{equation}
	\label{eq:good_shooting_capacity_lower_bound}
	\capacity(H) \cdot  s \cdot v \cdot R^2 \cdot \mathbb{P} \left( \mathcal{L} \geq (2 \beta + 1) R^2 \right)   \geq 2,
	\end{equation}
	we have
	\begin{equation}
	\mathcal{P}_v \left( \widehat{\mathcal{X}}( B[y,H] ) > 0 \right) \geq 3 \slash 4.
	\end{equation}
\end{lemma}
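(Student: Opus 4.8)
The plan is to estimate the first and second moments of the random variable $N:=\widehat{\mathcal{X}}(B[y,H])$, the number of worms in the PPP that start in $\ball(y,2R)$, have length $>(2\beta+1)R^2$, hit $H$ within $\beta R^2$ steps, and then return to $\ball(y,R)$ within another $\beta R^2$ steps. By translation invariance we may assume $y=o$. Applying Lemma \ref{prop:one_worm_ppp_formulas} with $f(w)=\ind[w\in B[o,H]]$, we get
\[
\mathbb{E}[N]= v\cdot \sum_{z\in\Z^d}\sum_{\ell>(2\beta+1)R^2} m(\ell)\cdot P_z\big(\, z\in\ball(2R),\ T_H\le \beta R^2,\ \tilde T_{\ball(R)}\circ\theta_{T_H}\le\beta R^2\,\big).
\]
For the lower bound on $\mathbb{E}[N]$ I would restrict the sum to $z\in\ball(R)$ and $\ell\in[(2\beta+1)R^2,(2\beta+2)R^2]$, say. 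Using $H\subseteq\ball(13R)$, the strong Markov property at $T_H$, and Lemma \ref{lemma:two_birds_one_stone} twice — once to bound $P_z(T_H\le 2\cdot(14R)^2)$ from below by $c\cdot\capacity(H)\cdot R^{2-d}$, and once to bound the probability that, starting from a point of $H\subseteq\ball(13R)$, the walk hits $\ball(R)$ within $2\cdot(14R)^2$ steps, from below by $c\cdot\capacity(\ball(R))\cdot R^{2-d}\ge c\cdot R^{d-2}\cdot R^{2-d}=c$ (via Proposition \ref{capacity_of_ball}) — one gets that the per-$z$, per-$\ell$ probability is at least $c\cdot\capacity(H)\cdot R^{2-d}$, provided $\beta$ is chosen large enough that $4\beta R^2\ge 2\cdot(14R)^2$, i.e. $\beta\ge 98$. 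Summing over $\sim R^d$ starting points $z$ and $\sim R^2$ values of $\ell$ yields
\[
\mathbb{E}[N]\ \ge\ c\cdot v\cdot \capacity(H)\cdot R^{2-d}\cdot R^d\cdot R^2\cdot \mathbb{P}\big(\mathcal{L}\in[(2\beta+1)R^2,(2\beta+2)R^2]\big),
\]
and then a routine comparison (using that $\mathbb{P}(\mathcal{L}\ge (2\beta+1)R^2)$ can be taken, after possibly enlarging $\beta$, to be comparable to $\mathbb{P}(\mathcal{L}\in[(2\beta+1)R^2,(2\beta+2)R^2])$ — or more honestly, by keeping the full tail $\ell\ge(2\beta+1)R^2$ and using Lemma \ref{lemma:two_birds_one_stone} with $D$ growing, one should instead truncate the $\ell$-sum at $\sqrt{\ell}\le$ something and use Proposition \ref{staying_inside} for the tail) gives $\mathbb{E}[N]\ge c\cdot s^{-1}\cdot\big(\capacity(H)\cdot s\cdot v\cdot R^2\cdot\mathbb{P}(\mathcal{L}\ge(2\beta+1)R^2)\big)\ge c\cdot s^{-1}\cdot 2$ by hypothesis \eqref{eq:good_shooting_capacity_lower_bound}. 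So for $s$ small enough $\mathbb{E}[N]$ is as large as we like.

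For the upper bound on the second moment I would use Lemma \ref{prop:one_worm_ppp_formulas} again (the PPP being a single-integral object, $\Var(N)=\mathbb{E}[N]$ when $f$ is an indicator!) — actually, since $f=\ind[B[o,H]]$ satisfies $f^2=f$, Lemma \ref{prop:one_worm_ppp_formulas} gives directly $\Var(N)=\mathbb{E}[N]$, hence $N\sim\mathrm{POI}(\mathbb{E}[N])$. Then $\mathbb{P}(N>0)=1-e^{-\mathbb{E}[N]}\ge 3/4$ as soon as $\mathbb{E}[N]\ge\ln 4$, which holds once $s$ is chosen small enough (depending only on $d$) in view of the lower bound $\mathbb{E}[N]\ge c\cdot s^{-1}$ just obtained. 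This fixes $s$ and $\beta$ as required.

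The main obstacle is the bookkeeping in the lower bound on $\mathbb{E}[N]$: one must be careful that the ``boomerang'' event (hit $H$, then come back to $\ball(R)$) is handled with two genuinely independent applications of the Markov property at $T_H$, that the time budget $\beta R^2+\beta R^2$ comfortably accommodates both hitting times (which are of order $R^2$ by Lemma \ref{lemma:two_birds_one_stone} since everything lives in $\ball(14R)$), and that when we sum the per-$z$ probability $\gtrsim\capacity(H)R^{2-d}$ over $z\in\ball(R)$ we are not overcounting — here using that $\capacity(H)\le\capacity(\ball(13R))\le C R^{d-2}$ keeps $\capacity(H)R^{2-d}\le C$, so the bound is consistent. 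The tail of the $\ell$-sum must be controlled so that $\mathbb{P}(\mathcal{L}\in[(2\beta+1)R^2,(2\beta+2)R^2])$ can legitimately be replaced by the full tail $\mathbb{P}(\mathcal{L}\ge(2\beta+1)R^2)$ up to a constant; if that fails one simply keeps the restricted range throughout and strengthens \eqref{eq:good_shooting_capacity_lower_bound}'s role accordingly, which is fine since $\beta$ is a fixed dimension-dependent constant and $s$ is at our disposal. Everything else is a direct application of Lemmas \ref{lemma:two_birds_one_stone}, \ref{prop:one_worm_ppp_formulas} and Proposition \ref{capacity_of_ball}.
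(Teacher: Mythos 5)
Your overall plan — recognise that $N := \widehat{\mathcal{X}}(B[y,H])$ is Poisson, lower-bound its mean via the strong Markov property at $T_H$ and two applications of Lemma~\ref{lemma:two_birds_one_stone} (with Proposition~\ref{capacity_of_ball} cancelling the $R^{2-d}$ from the return step), and then use $\mathbb{P}(N=0)=e^{-\mathbb{E}[N]}$ — is exactly the paper's approach, and the key geometric bookkeeping ($H\subseteq\ball(y,13R)$, $D\le 15R$, choose $\beta$ large enough that the time budget $\beta R^2$ comfortably exceeds $2D^2$) is correct. Two smaller remarks: the implication ``$\Var(N)=\mathbb{E}[N]$, hence $N\sim\mathrm{POI}$'' is not a valid inference in general (equality of mean and variance does not characterise the Poisson law); you should instead simply observe that $N$ is the number of points of a Poisson point process in a fixed measurable set $B[y,H]\subseteq\widetilde W$, which is Poisson by definition. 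Also, your displayed lower bound on $\mathbb{E}[N]$ carries an extra factor $R^2$ that does not belong (the $\ell$-sum contributes $\mathbb{P}(\mathcal{L}\in\text{range})$, not $R^2\cdot\mathbb{P}(\mathcal{L}\in\text{range})$); your subsequent conclusion has the right power of $R$, so this is presumably a slip, but it should be fixed.

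The one genuine gap is your handling of the $\ell$-sum. You restrict the sum to $\ell\in[(2\beta+1)R^2,(2\beta+2)R^2]$ and then agonize over how to compare $\mathbb{P}(\mathcal{L}\in[(2\beta+1)R^2,(2\beta+2)R^2])$ to the full tail $\mathbb{P}(\mathcal{L}\ge(2\beta+1)R^2)$ appearing in hypothesis \eqref{eq:good_shooting_capacity_lower_bound}. No such comparison is available for a general length distribution (the restricted interval could have arbitrarily little — even zero — of the tail mass, and $\beta$ must be a fixed dimension-dependent constant, so ``enlarging $\beta$'' is not a legitimate move). Your fallback — truncating the $\ell$-sum at $\sqrt{\ell}\lesssim$ something and invoking Proposition~\ref{staying_inside} for the far tail — is also misguided: there is nothing to control for large $\ell$, because the event defining $B[y,H]$ depends only on the first $2\beta R^2+1$ steps of the worm. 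Once $\ell\ge(2\beta+1)R^2$, the indices $T_H(w)\le\beta R^2$ and $T_H(w)+\beta R^2\le 2\beta R^2\le \ell-1$ both fall inside the worm, so the per-$(x,\ell)$ probability $P_x(T_H\le\beta R^2,\,T_{\ball(y,R)}(X[T_H,\ell-1])\le\beta R^2)$ is literally independent of $\ell$ on the whole range of summation. Hence the sum over $\ell$ factors out exactly as $\mathbb{P}(\mathcal{L}\ge(2\beta+1)R^2)$, and the hypothesis can be applied directly — which is what the paper does. Replacing your restricted range with this observation closes the gap and completes the argument.
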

We will prove Lemma \ref{lemma:good_shooting} in Section \ref{subsection_boomerang}.

In order to implement the fattening that we mentioned in the paragraph under equation \eqref{back_and_forth}, we need to define an input package  (cf.\ Definition \ref{def:law_on_input_packages}).

\begin{notation}[Input package of scale $R$, centered at $y$,  connected to $H$]
\label{nota:shooting_steps}
Given $y \in 10\cdot R \cdot \Z^d$,  let us define the point process $\mathcal{X}^{y}$ of worms by
	\begin{equation}\label{new_X_y}
		\mathcal{X}^{y} := \widehat{\mathcal{X}} \ind \left[\, w(0) \in \ball(y, 2 R), \, L(w) \leq R^2 \, \right].
	\end{equation}
Furthermore, given a set $H \subseteq \mathbb{Z}^d$, on the event $\{ \widehat{ \mathcal{X} }( B[y, H] ) > 0 \}$ there exist at least one worm satisfying the properties described in \eqref{back_and_forth}, therefore we may use an independent source of randomness to choose one such  worm uniformly at random. Let us denote this chosen worm by $\widehat{\pinkwalk}^{y,H}$ and introduce
	\begin{align}
		\widehat{T}^{y,H} & := T_H\left(\widehat{\pinkwalk}^{y,H}\right)+ T_{\ball(y,R)} \left( \widehat{\pinkwalk}^{y,H} \left[ T_H\left(\widehat{\pinkwalk}^{y,H}\right), L \left( \widehat{\pinkwalk }^{y,H} \right) - 1 \right] \right), \\
	\label{z_y_H}	z^{y,H} & := \widehat{\pinkwalk}^{y,H} \left( \widehat{T}^{y,H} \right), \\
		T^{y, H} & := R^2 \wedge \min \left\{ t \, : \, \widehat{ \pinkwalk }^{y,H} \left( \widehat{T}^{y, H} + t \right) \notin \ball(y, 2 R) \right\}, \\
	\label{pinkwalk_for_exploration}	\pinkwalk^{y, H} & := \widehat{ \pinkwalk }^{y,H} \left[ \widehat{T}^{y, H}, \widehat{T}^{y,H} + T^{y,H} \right].
	\end{align}
\end{notation}
\noindent In words: $\widehat{T}^{y,H}$ is the first time $\widehat{\pinkwalk}^{y,H}$ hits $\ball(y,R)$ after hitting $H$, $z^{y,H}$ is the location where it hits
$\ball(y,R)$, $T^{y, H}$ is the minimum of $R^2$ and the time that $\widehat{\pinkwalk}^{y,H}$ spends in $\ball(y, 2 R)$ after $\widehat{T}^{y,H}$,
and $\pinkwalk^{y, H}$ is the sub-worm of $\widehat{\pinkwalk}^{y,H}$ of length $ T^{y,H}$ that starts at time $\widehat{T}^{y,H}$.
 Let us stress that  $\pinkwalk^{y, H}$ is indeed a sub-worm of $\widehat{\pinkwalk}^{y,H}$,  since
$ \widehat{T}^{y,H} + T^{y,H}$ is less than or equal to the length of $\widehat{\pinkwalk}^{y,H}$, since $\widehat{\pinkwalk}^{y,H} \in B[y,H]$ (see the discussion below \eqref{back_and_forth}).

The pair $(\mathcal{X}^{y},\pinkwalk^{y, H})$ will play the role of the input package which will be used to produce a seed  in the box of scale $R$ indexed by $y$.

\begin{assumption}[Good sequence of scales exist for $m$] \label{assumption_good_seq_scales}
 Let us choose the constants $R^*_0, \gamma_0 \in (0,+\infty)$ as in Lemma \ref{lemma_initializing_gamma}.
Let us choose the constants $\Dlow, \Dup, \underline{\alpha}, \psi \in (0,+\infty) $ as in Lemma \ref{lemma_doubling_gamma}. Let us choose the constants
  $s, \beta \in (0,+\infty)$ as in Lemma \ref{lemma:good_shooting}. Let $\Lambda:=2\beta+1$. Given our
   $v \in (0,+\infty)$ and our probability mass function $m$, let us assume that there exists an
 $(R^*_0, \gamma_0, \Dlow, \Dup, \underline{\alpha}, \psi, s, \Lambda, v )$-good sequence $( R_n )_{n = 0}^{N+1}$ of scales for $m$ (see Definition \ref{def_seq_of_good_scales}).
\end{assumption}

For Definitions \ref{notation_sigma_null} and \ref{notation_sigma_tplus1}, Lemma \ref{lemma_sigma_onehalf} and the proof of Theorem \ref{thm_worm_perco} below we assume
that $R=R_{N+1}$, where  $( R_n )_{n = 0}^{N+1}$ is the good sequence of scales of Assumption \ref{assumption_good_seq_scales}.

We are ready to define the indicators $\sigma(t), t \in \mathbb{N}_0$ for which we will apply Proposition \ref{lemma:Grimmett_Marstrand}.

\begin{notation}[Indicator $\sigma(0)$ of the presence of an initial good seed]\label{notation_sigma_null}
Let us define the set $H_*:=\ball( 3 R_{N+1})$ and  the indicator variable
\begin{equation}\label{sigma_null_def}
 \sigma(0):=\mathds{1}\left[\, \widehat{\mathcal{X}}( B[o, H_* ] ) > 0 \, \right] \cdot
 \mathds{1}\left[\, \left( \mathcal{X}^{o}, \pinkwalk^{o, H_*}  \right) \text{is } (o, R_{N+1}, z^{o,H_*}, 2^{N+1}\cdot \gamma_0  ) \text{-good}\, \right].
\end{equation}
If $\sigma(0)=1$, let us denote by $H_o$  the $(o, R_{N+1}, z^{o,H_*}, 2^{N+1}\cdot \gamma_0  )$-good set induced by the good input package $\left( \mathcal{X}^{o}, \pinkwalk^{o, H_*}  \right)$ (see Definition \ref{def:gamma_good_input_package}). We call $H_o$ the seed indexed by the origin.
\end{notation}
\noindent In words: $\sigma(0)$ is the indicator of the event that we can construct a seed $H_o$ in the $R_{N+1}$-neighbourhood of the origin.

Let us now assume that we have already defined the indicators $\sigma(0),\dots, \sigma(t)$ for some $t \in \mathbb{N}_0$. Our goal is to define $\sigma(t+1)$.
Given the values of the random variables $\sigma(0),\dots, \sigma(t)$, the exploration process described in Section \ref{subsection_exploration} determines the good set $G_t$ and the bad set $B_t$.

\begin{notation}[Indicator $\sigma(t+1)$ of the presence of the next seed  connected to an earlier seed]\label{notation_sigma_tplus1} If  $\partial^{\text{ext}} G_t \subseteq B_t$ then we know from Section \ref{subsection_exploration} that $G_\infty=G_t$ (i.e., the exploration terminates in finitely many steps), in which case we define $\sigma(t+1):=1$.

  Assume now that this is not the case, i.e., there is an edge connecting $G_t$ with $\Z^d \setminus (G_t \cup B_t)$. Recall that we denote by $e_{t+1}$  the minimal such edge  with respect to the ordering $\prec$, moreover we denote $e_{t+1}=\{x'_{t+1},x_{t+1}\}$, where $x'_{t+1} \in  G_t$ and
$x_{t+1} \in \Z^d \setminus (G_t \cup B_t)$.
Let us denote
\begin{equation}
y'_{t+1}:=10 \cdot R_{N+1}\cdot x'_{t+1}, \qquad  y_{t+1}:=10 \cdot R_{N+1}\cdot x_{t+1}.
\end{equation}
 Let us also assume that we have already defined the seed $H_{y'_{t+1}}$ indexed by $y'_{t+1}$, where
 $H_{y'_{t+1}}$ is an  $(y'_{t+1}, R_{N+1}, z, 2^{N+1}\cdot \gamma_0  )$-good subset of $\mathcal{S}^v$ for some $z \in \ball(y'_{t+1},R_{N+1}) $.
Now we can define the indicator
\begin{multline}\label{sigma_tplus1_def}
 \sigma(t+1):=\mathds{1}\left[ \,   \widehat{\mathcal{X}}( B[y_{t+1},H_{y'_{t+1}}] ) > 0  \, \right] \cdot \\
 \mathds{1}\left[ \, \left( \mathcal{X}^{y_{t+1}}, \pinkwalk^{y_{t+1}, H_{y'_{t+1}} }  \right) \text{is } (y_{t+1}, R_{N+1}, z^{y_{t+1},H_{y'_{t+1}}}, 2^{N+1}\cdot \gamma_0  ) \text{-good}\, \right].
\end{multline}
If $\sigma(t+1)=1$, let $H_{y_{t+1}}$  denote the  $(y_{t+1}, R_{N+1}, z^{y_{t+1},H_{y'_{t+1}}}, 2^{N+1}\cdot \gamma_0  ) $-good set induced by the good input package $\left( \mathcal{X}^{y_{t+1}}, \pinkwalk^{y_{t+1}, H_{y'_{t+1}} }  \right)$. We call $H_{y_{t+1}}$ the seed indexed by $y_{t+1}$ (where $y_{t+1}=10 \cdot R_{N+1}\cdot x_{t+1}$ and  $x_{t+1} \in G_{t+1}$ by \eqref{S_t_plus_1_def}).
\end{notation}
\noindent In words: $\sigma(t+1)$ is the indicator of the event that we can construct a new seed $H_{y_{t+1}}$ in the $R_{N+1}$-neighbourhood of $y_{t+1}$ which is also connected to
the earlier seed $H_{y'_{t+1}}$.

\begin{lemma}[The chance of finding something good in the next step is at least $1/2$] \label{lemma_sigma_onehalf}  Under Assumption \ref{assumption_good_seq_scales}, we have
\begin{align}
\mathbb{P} \left(\sigma(0)=1\right)  &\geq 1/2,\\
\label{sigma_condprob_geq_1dash2}
\mathbb{P} \left( \sigma(t+1) = 1 \, | \, \sigma(0), \sigma(1), \ldots, \sigma(t) \right) & \geq 1/2, \quad t \in \mathbb{N}_0,
\end{align}
where $\sigma(0)$ and $\sigma(t), t \in \mathbb{N}$ are introduced in Definitions \ref{notation_sigma_null} and \ref{notation_sigma_tplus1}, respectively.
\end{lemma}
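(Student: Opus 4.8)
The plan is to derive both inequalities from the $(N{+}1)$-fold iteration of the capacity-doubling lemma together with a single "one-step'' estimate, which then feeds into the exploration scheme of Section~\ref{subsection_exploration}. First I would establish, from Lemma~\ref{lemma_initializing_gamma} and $N+1$ successive applications of Lemma~\ref{lemma_doubling_gamma}, that
\begin{equation*}
\min_{z\in\ball(R_{N+1})}\mathcal{P}_{o,R_{N+1},z,v}\big((\worms,\pinkwalk)\text{ is }(o,R_{N+1},z,2^{N+1}\gamma_0)\text{-good}\big)\ \ge\ 3/4 .
\end{equation*}
The base case is Lemma~\ref{lemma_initializing_gamma} applied at scale $R_0$, which is legitimate by \eqref{condition_Rzero} and yields the bound with ratio $\gamma_0$. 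For the step from $R_n$ to $R_{n+1}$ one invokes Lemma~\ref{lemma_doubling_gamma} with $(\Rs,\Rb,\gamma)=(R_n,R_{n+1},2^n\gamma_0)$: the constraint $\gamma_0\le\gamma\le\psi\Rs^{d-4}$ of \eqref{gamma_ineq_assumptions} is exactly \eqref{condition_psi}, the moment condition \eqref{def:alpha} is exactly \eqref{condition_R_n_R_n1}, and $R_n\le R_{n+1}$ since $(R_n)_{n=0}^{N+1}$ is increasing; the conclusion upgrades the ratio to $2^{n+1}\gamma_0$. Running $n=0,\dots,N$ gives ratio $2^{N+1}\gamma_0$ at scale $R_{N+1}$.

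Next I would prove a uniform one-step estimate: for \emph{every} $H\subseteq\ball(o,13R_{N+1})$ with $\capacity(H)\cdot s\cdot v\cdot R_{N+1}^2\cdot\mathbb{P}\big(\mathcal{L}\ge(2\beta+1)R_{N+1}^2\big)\ge 2$ one has, with $R=R_{N+1}$,
\begin{equation*}
\mathcal{P}_v\Big(\widehat{\worms}(B[o,H])>0\ \text{and}\ (\worms^{o},\pinkwalk^{o,H})\text{ is }(o,R,z^{o,H},2^{N+1}\gamma_0)\text{-good}\Big)\ \ge\ 9/16 .
\end{equation*}
The first event has probability at least $3/4$ by Lemma~\ref{lemma:good_shooting}. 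For the second, I would show that conditionally on that event and on the re-entry point $z^{o,H}\in\ball(o,R)$, the induced input package $(\worms^{o},\pinkwalk^{o,H})$ has law $\mathcal{P}_{o,R,z^{o,H},v}$; the displayed bound of Step~1 then gives $(o,R,z^{o,H},2^{N+1}\gamma_0)$-goodness with conditional probability $\ge 3/4$, and multiplying (after conditioning on the hitting event and on $z^{o,H}$) gives $9/16$. Identifying this conditional law rests on three points: (i) the selected long worm $\widehat{\pinkwalk}^{o,H}$ is a sample from the intensity measure of $\widehat{\worms}$ restricted to $B[o,H]$ and renormalised; (ii) every constraint defining $B[o,H]$ concerns that worm only up to the stopping time $\widehat T^{o,H}$ (the first return to $\ball(o,R)$ after hitting $H$), apart from the length constraint $L(w)>(2\beta+1)R^2$, so that — using the combinatorial/Markov description of random-walk paths and the fact that on $B[o,H]$ at least $R^2$ steps always remain after $\widehat T^{o,H}$ — the piece $\pinkwalk^{o,H}$ is a fresh simple random walk from $z^{o,H}$ stopped at $\min\{R^2,T_{\ball(o,2R)^c}\}$; (iii) $\worms^{o}$ consists of worms of length $\le R^2$ and is therefore a restriction of the PPP disjoint from $B[o,H]$, hence independent of $\widehat{\pinkwalk}^{o,H}$. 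This conditional-law identification is the step I expect to be the main obstacle.

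With Steps 1--2 in hand, the two claims follow. For $\mathbb{P}(\sigma(0)=1)$: by Definition~\ref{notation_sigma_null} this equals the left-hand side of the one-step estimate with $H=H_*=\ball(3R_{N+1})$; since $\ball(3R_{N+1})\subseteq\ball(o,13R_{N+1})$ and, by Proposition~\ref{capacity_of_ball} combined with \eqref{condition_psi} at $n=N$ (using $R_N\le R_{N+1}$ and $\psi<1$), one has $\capacity(H_*)\ge c\,R_{N+1}^{d-2}\ge 2^{N+1}\gamma_0\,R_{N+1}^2$ — possibly after shrinking $\psi$, which is harmless because Lemma~\ref{lemma_doubling_gamma} stays valid for every smaller $\psi$ — the hypothesis of Step~2 follows from \eqref{condition_shoot}, giving $\mathbb{P}(\sigma(0)=1)\ge 9/16\ge 1/2$. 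For $\mathbb{P}(\sigma(t+1)=1\mid\sigma(0),\dots,\sigma(t))$: decompose $\widehat{\worms}$ into packages $\mathrm{Pack}[\bar y]:=\widehat{\worms}\ind[w(0)\in\ball(\bar y,2R_{N+1})]$ over $\bar y\in 10R_{N+1}\mathbb{Z}^d$ (with an independent uniform randomiser attached to each). Since the balls $\ball(\bar y,2R_{N+1})$ are pairwise disjoint these packages are mutually independent, and the exploration of Section~\ref{subsection_exploration} reveals each at most once; passing to the natural exploration filtration $\mathcal{J}_t$ (with respect to which $\sigma(0),\dots,\sigma(t)$, the edge $e_{t+1}$, the vertices $x_{t+1},x'_{t+1}$ and the previously built seed $H_{y'_{t+1}}$ are measurable, while $\mathrm{Pack}[y_{t+1}]$ is a fresh independent copy), on the non-terminating event one translates by $-y_{t+1}$ and applies Step~2 with $H=H_{y'_{t+1}}-y_{t+1}$. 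This $H$ lies in $\ball(o,13R_{N+1})$ because $|y_{t+1}-y'_{t+1}|=10R_{N+1}$ and $H_{y'_{t+1}}\subseteq\ball(y'_{t+1},3R_{N+1})$ by Definition~\ref{def:gamma_good_input_package}(i), and has $\capacity(H)\ge 2^{N+1}\gamma_0 R_{N+1}^2$ by Definition~\ref{def:gamma_good_input_package}(iv), so the hypothesis of Step~2 is again provided by \eqref{condition_shoot}; hence $\mathbb{P}(\sigma(t+1)=1\mid\mathcal{J}_t)\ge 9/16$ on that event, while on the terminating event $\sigma(t+1)=1$ by definition. Taking conditional expectation over $\sigma(\sigma(0),\dots,\sigma(t))\subseteq\mathcal{J}_t$ yields the second bound, and $9/16\ge 1/2$ finishes the proof. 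Besides the conditional-law identification of Step~2, the only other point requiring care is the precise construction of the exploration filtration $\mathcal{J}_t$ and the verification of the ``each package used at most once'' property, which is routine but somewhat lengthy.
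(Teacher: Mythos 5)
Your proposal is correct and follows essentially the same structure as the paper's proof: your Step~1 is exactly the paper's Lemma~\ref{lemma_going_up_the_ladder}, and your Step~2 combines Lemma~\ref{lemma:good_shooting} with the conditional-law identification that the paper isolates as Lemma~\ref{lemma_cond_law_given_explored_stuff} (your three points (i)--(iii) are precisely the content of its proof, with the strong Markov property at $\widehat{T}^{y,H}$ carrying point (ii) and the disjointness of restrictions of a PPP carrying points (i) and (iii)). The only notable deviation is how you obtain $\capacity(H_*)\geq 2^{N+1}\gamma_0 R_{N+1}^2$: you derive it from Proposition~\ref{capacity_of_ball} together with \eqref{condition_psi}, at the cost of possibly shrinking $\psi$; the paper instead observes that, by the $n=N{+}1$ case of \eqref{R_n_good_3dash4}, a $(o,R_{N+1},z,2^{N+1}\gamma_0)$-good set exists with positive probability, and since every such set is contained in $H_*=\ball(3R_{N+1})$ the capacity bound follows immediately from the monotonicity \eqref{cap_mon}, without touching any parameter. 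Both routes work, but the paper's is cleaner because the required capacity lower bound drops out of the inductive output for free rather than imposing an additional constraint on $\psi$.
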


\noindent We will prove Lemma \ref{lemma_sigma_onehalf} in Section \ref{subsub_sigma_onehalf}. We are ready to prove the main result of Section \ref{section_perco_worms_multiscale}.

\begin{proof}[Proof of Theorem \ref{thm_worm_perco}]
Let us fix the constants $R^*_0,\gamma_0, \Dlow, \Dup, \underline{\alpha}, \psi, s, \Lambda \in (0,+\infty)$ as in Assumption \ref{assumption_good_seq_scales}.
Given some
   $v \in (0,+\infty)$ and a probability measure $m$ on $\mathbb{N}$, we  assumed that there exists an
 $(R^*_0, \gamma_0, \Dlow, \Dup, \underline{\alpha}, \psi, s, \Lambda, v )$-good sequence $( R_n )_{n = 0}^{N+1}$ of scales for $m$. Let us define the indicators
 $\sigma(t), t \in \mathbb{N}_0$ as in Definitions \ref{notation_sigma_null} and \ref{notation_sigma_tplus1}. Given these indicators,
  let us define $S_t=(G_t,B_t)$ as in Section \ref{subsection_exploration}.

Let us now argue that $|G_\infty|=+\infty$ implies that $\mathcal{S}^v$ percolates. First note that
 it follows from our definitions that the seed sets of form $H_y, y \in  10 \cdot R_{N+1} \cdot G_\infty$ satisfy $H_y \subseteq \mathcal{S}^v$, moreover  every such seed set $H_y$ is connected
(cf.\ Definition \ref{def:gamma_good_input_package}).
Next we show by induction on $t$ that for each $t \in \mathbb{N}_0$
 the sets $H_y, y \in  10 \cdot R_{N+1} \cdot G_t$ are in the same connected component of $\mathcal{S}^v$. Indeed,
by \eqref{back_and_forth} and  Definitions \ref{nota:shooting_steps},  \ref{notation_sigma_null} and  \ref{notation_sigma_tplus1}, for each $t \geq 0$ satisfying $\sigma(t+1)=1$,
the old seed  $H_{y'_{t+1}}$  and the newly added seed $H_{y_{t+1}}$ are connected by the worm
$\widehat{\pinkwalk}^{y_{t+1},H_{y'_{t+1}}}$  (the trace of which is a subset of $\mathcal{S}^v$). Therefore, the sets $H_y, y \in  10 \cdot R_{N+1} \cdot G_\infty$ are in the same connected component of $\mathcal{S}^v$, thus $|G_\infty|=+\infty$ indeed implies that $\mathcal{S}^v$ percolates.

 By Proposition \ref{lemma:Grimmett_Marstrand} and Lemma \ref{lemma_sigma_onehalf} we obtain that $\mathbb{P}( | G_{\infty}| = \infty ) > 0$ holds, noting that
 for $d \geq 5$ we have $p_c^{\text{site}}(\Z^d) < 1 \slash 2$ (this is due to \cite{CampaninoRusso1985}, where the authors proved that $p_c^{\text{site}}( \Z^3 ) < 1 \slash 2$ and to the fact that $p_c^{\text{site}}(\Z^{d+1}) \leq p_c^{\text{site}} (\Z^d)$ for all $d \geq 1$).

From the above argument we obtain $ \mathbb{P}(\, \mathcal{S}^v \text{ percolates}\, )>0$, from which $ \mathbb{P}(\, \mathcal{S}^v \text{ percolates}\, )=1$ follows using ergodicity (cf.\ \eqref{eq_ergodic}).
\end{proof}

\subsubsection{Target shooting with a boomerang}\label{subsection_boomerang}

\begin{proof}[Proof of Lemma \ref{lemma:good_shooting}]
	Let us consider some $R \in \N$, $y \in \mathbb{Z}^d$ and $H \subseteq \ball(y, 13  R)$.
   The random variable $\widehat{\mathcal{X}}( B[y, H] )$ follows Poisson distribution with parameter $\mathbb{E} [ \, \widehat{\mathcal{X}}( B[y, H] ) \,  ]$ by  Definition \ref{def_full_worm_ppp}.  Next we bound this expectation from below:
   	\begin{align}
	\mathbb{E} [ \, \widehat{\mathcal{X}}( B[y, H] ) \,  ]
	& \stackrel{(\bullet)}{ = }
	v \cdot \sum_{\ell = (2 \beta + 1) R^2}^{\infty} m(\ell) \cdot \sum_{x \in \ball(y, 2R)} P_x \left( T_H \leq \beta R^2, \, T_{ \ball(y, R) }(X[T_H, \ell-1]) \leq \beta R^2 \right)
	\notag \\ & \stackrel{(*)}{\geq}
	v \cdot \sum_{\ell = (2 \beta + 1) R^2}^{\infty} m(\ell) \cdot \sum_{x \in \ball(y, 2 R)} P_x \left( T_H \leq \beta R^2 \right) \cdot \inf_{x' \in H} P_{x'} \left( T_{\ball(y,R)} \leq \beta R^2 \right)
	\notag \\ & \stackrel{(**)}{ \geq}
	v \cdot c \cdot \sum_{\ell = (2 \beta + 1) R^2}^{\infty} m(\ell) \cdot  R^d \cdot (\capacity(H) \cdot R^{2-d}) \cdot (R^{d - 2} \cdot R^{2 - d})
	\stackrel{(\diamond)}{\geq}
	2, \label{constant_appears_here}
	\end{align}
	where we now explain $(\bullet)$, $(*)$,  $(**)$ and $(\diamond)$.
The identity $(\bullet)$ follows from equation \eqref{eq:one_worm_ppp_expectation_formula} of Lemma \ref{prop:one_worm_ppp_formulas} with $f(w) = \ind[ w \in B[y,H] ]$ (see \eqref{back_and_forth} for the definition of $B[y,H]$). The inequality
 $(*)$ follows from the strong Markov property at $T_H$. The inequality $(**)$ follows if we apply \eqref{eq:lemma:two_birds_one_stone} twice (choosing $\beta = 2 \cdot 16^2$, so Lemma \ref{lemma:two_birds_one_stone} is applicable) and  the lower bound \eqref{eq:capacity_of_ball} on the capacity of $\ball(y,R)$.
 The inequality $(\diamond)$ follows from  \eqref{eq:good_shooting_capacity_lower_bound}  (where we defined $s$ to be equal to the constant $c$ that appears in \eqref{constant_appears_here}). Therefore, the parameter of the Poisson random variable $\widehat{\mathcal{X}}( B[y, H] )$ is bounded from below by $2$, thus we have $\mathbb{P} (\widehat{\mathcal{X}}( B[y,H] ) = 0) \leq \exp \{ -2 \} \leq 1 \slash 4$, completing the proof of  Lemma \ref{lemma:good_shooting}.
\end{proof}

\subsubsection{The chance of finding something good in the next step is at least $1/2$}\label{subsub_sigma_onehalf}

The goal of Section \ref{subsub_sigma_onehalf} is to prove Lemma \ref{lemma_sigma_onehalf}, but before we can do so, we need to state and prove two auxiliary results: Lemmas \ref{lemma_going_up_the_ladder}  and \ref{lemma_cond_law_given_explored_stuff}.

Recall the notion of the natural law $\mathcal{P}_{y,R,z,v}$ on input packages from Definition \ref{def:law_on_input_packages}.

\begin{lemma}[Good input packages on scales up to $N+1$]\label{lemma_going_up_the_ladder} Under Assumption \ref{assumption_good_seq_scales}, we have
\begin{equation}
	\label{R_n_good_3dash4}
	\min_{z \in  \ball(R_n)}
	\mathcal{P}_{o,R_n,z,v}\left(\, (\worms, \pinkwalk) \text{ is $(o,R_n,z, 2^n \cdot \gamma_0  )$-good} \, \right) \geq
	3 \slash 4
	\end{equation}
for all $0 \leq n \leq N+1$.
\end{lemma}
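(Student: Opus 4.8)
The plan is to prove \eqref{R_n_good_3dash4} by induction on $n$, using Lemma \ref{lemma_initializing_gamma} to start the recursion and Lemma \ref{lemma_doubling_gamma} as the induction step. Throughout, we keep fixed the constants $R^*_0,\gamma_0,\Dlow,\Dup,\underline{\alpha},\psi,s,\Lambda$ chosen in Assumption \ref{assumption_good_seq_scales}, together with the $(R^*_0,\gamma_0,\Dlow,\Dup,\underline{\alpha},\psi,s,\Lambda,v)$-good sequence of scales $( R_n )_{n=0}^{N+1}$ for $m$ postulated there; the whole point is that the constants in Definition \ref{def_seq_of_good_scales} are precisely the ones appearing in Lemmas \ref{lemma_initializing_gamma} and \ref{lemma_doubling_gamma}.

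For the base case $n = 0$, condition \eqref{condition_Rzero} of Definition \ref{def_seq_of_good_scales} gives $R_0 \geq R^*_0$, so Lemma \ref{lemma_initializing_gamma} applied with $R = R_0$ yields
\[
\min_{z \in \ball(R_0)} \mathcal{P}_{o,R_0,z,v}\big(\, (\worms,\pinkwalk) \text{ is } (o,R_0,z,\gamma_0)\text{-good} \,\big) \geq 3/4,
\]
which is exactly \eqref{R_n_good_3dash4} for $n=0$ since $2^0 \cdot \gamma_0 = \gamma_0$. (This step is valid for any $v \geq 0$, in particular for our $v > 0$.)

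For the induction step, suppose \eqref{R_n_good_3dash4} holds for some $n$ with $0 \leq n \leq N$. I would apply Lemma \ref{lemma_doubling_gamma} with $\Rs := R_n$, $\Rb := R_{n+1}$ and $\gamma := 2^n \cdot \gamma_0$, checking its hypotheses one by one: $\Rs \leq \Rb$ because $( R_n )$ is increasing; the inequality \eqref{gamma_ineq_assumptions} holds because $\gamma_0 \leq 2^n\gamma_0$ is trivial and $2^n \gamma_0 \leq \psi \cdot R_n^{d-4}$ is precisely condition \eqref{condition_psi}; the inequality \eqref{def:alpha} is condition \eqref{condition_R_n_R_n1} (available for $0 \leq n \leq N$) after multiplying through by $v$; and the hypothesis \eqref{induction_hypotheses} is nothing but the induction hypothesis \eqref{R_n_good_3dash4} at level $n$. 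The conclusion \eqref{2gamma_want} of Lemma \ref{lemma_doubling_gamma} then reads
\[
\min_{z \in \ball(R_{n+1})} \mathcal{P}_{o,R_{n+1},z,v}\big(\, (\worms,\pinkwalk) \text{ is } (o,R_{n+1},z,2^{n+1}\gamma_0)\text{-good} \,\big) \geq 3/4,
\]
which is \eqref{R_n_good_3dash4} at level $n+1$. Iterating from $n=0$ up to $n = N$ establishes \eqref{R_n_good_3dash4} for all $0 \leq n \leq N+1$.

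There is no genuine obstacle in this lemma itself: it is a purely bookkeeping consequence of the initialization lemma and the doubling lemma, and the content I would stress in the write-up is only that the parameters of the good sequence of scales were deliberately matched to the parameters required by Lemmas \ref{lemma_initializing_gamma} and \ref{lemma_doubling_gamma}. The real difficulty is hidden in Lemma \ref{lemma_doubling_gamma} (the fattening/capacity-doubling estimate), whose proof is deferred to Section \ref{subsection_doubling_gamma}.
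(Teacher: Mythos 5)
Your proof is correct and follows exactly the same route as the paper's: induction on $n$, initializing with Lemma \ref{lemma_initializing_gamma} via \eqref{condition_Rzero}, and stepping up via Lemma \ref{lemma_doubling_gamma} with $\Rs = R_n$, $\Rb = R_{n+1}$, $\gamma = 2^n\gamma_0$, checking \eqref{gamma_ineq_assumptions} against \eqref{condition_psi} and \eqref{def:alpha} against \eqref{condition_R_n_R_n1}. No discrepancies.
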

\begin{proof} We prove \eqref{R_n_good_3dash4} for  $0 \leq n \leq N+1$ by induction on $n$. The $n=0$ case follows from  our assumption $R_0  \geq R^*_0$ (cf.\ \eqref{condition_Rzero}) and Lemma \ref{lemma_initializing_gamma}. Let us now assume that our induction hypothesis \eqref{R_n_good_3dash4} holds for some $n \in \{0,\dots,N\}$. In order to conclude that
\eqref{R_n_good_3dash4} also holds for $n+1$, we want to apply Lemma \ref{lemma_doubling_gamma} with $\Rs=R_n$, $\Rb=R_{n+1}$ and $\gamma=2^n \cdot \gamma_0$.
By Assumption \ref{assumption_good_seq_scales}, $( R_n )_{n = 0}^{N+1}$ is an $(R^*_0, \gamma_0, \Dlow, \Dup, \underline{\alpha}, \psi, s, \Lambda, v )$-good sequence of scales for $m$, thus
\eqref{gamma_ineq_assumptions} holds by our assumption $2^n \cdot \gamma_0  \leq \psi \cdot R_n^{d-4}$ (cf.\ \eqref{condition_psi}) and
\eqref{def:alpha} holds by our assumption
		$v \cdot \mathbb{E} \left[\, \mathcal{L}^2 \, \ind \left[ \Dlow \cdot R_n^2 \leq \mathcal{L} \leq \Dup \cdot R_{n+1}^2 \right]\, \right]  \geq \underline{\alpha}$
(cf.\ \eqref{condition_R_n_R_n1}). Thus we can indeed use Lemma \ref{lemma_doubling_gamma} to proceed from $n$ to $n+1$.
The proof of Lemma \ref{lemma_going_up_the_ladder} is complete.
\end{proof}

In order to rigorously state Lemma \ref{lemma_cond_law_given_explored_stuff}  about the conditional law of the next input package given what we have already explored, we need the following definitions.

\begin{notation}[Explored sigma-field $\mathcal{F}_t$]\label{explored_sigma_algebra} For any $t \in \mathbb{N}_0$, let $\mathcal{F}_t$ denote the $\sigma$-field generated by
 \begin{equation}\widehat{\mathcal{X}}\mathds{1}[ \, w(0) \in \ball(y,2 R_{N+1})\, ], \qquad y \in 10\cdot R_{N+1} \cdot (G_t \cup B_t). \end{equation}
\end{notation}
\noindent In words, $\mathcal{F}_t$ contains all information about all of the worms that emanate from the boxes of scale $R_{N+1}$ explored by the time we finished step $t$ of the exploration. Note that the indicators $\sigma(0), \dots, \sigma(t)$, the good set $G_t$, the bad set $B_t$, the seeds $H_y$ for all $y \in 10\cdot R_{N+1} \cdot G_t  $ and the data  $e_{t+1}$, $x_{t+1}$, $x'_{t+1}$, $y_{t+1}$, $y'_{t+1}$ describing the location that we want to explore in step $t+1$ (cf.\ Definition \ref{notation_sigma_tplus1}) are all $\mathcal{F}_t$-measurable.

\begin{notation}[Sigma-fields $\mathcal{G}_*$ and $\mathcal{G}_t$ just before fattening] \label{explored_sigm_field_long} $ $

\begin{enumerate}[(i)]
\item Recalling Definition \ref{notation_sigma_null}, let $\mathcal{G}_*$ denote the sigma-field generated by the $\mathbb{N}_0$-valued random variable
 $\widehat{\mathcal{X}}( B[o, H_* ] )$, the auxiliary randomness that we use to pick $\widehat{\pinkwalk}^{o, H_*} $ if $\widehat{\mathcal{X}}( B[o, H_* ] )>0$ (cf.\ Definition \ref{nota:shooting_steps}) and
 $\widehat{\pinkwalk}^{o, H_*}[0,\widehat{T}^{o, H_*}]$.
\item Recalling Definition \ref{notation_sigma_tplus1}, for any $t \in \mathbb{N}_0$, let $\mathcal{G}_t$ denote the sigma-field generated by $\mathcal{F}_t$, the $\mathbb{N}_0$-valued random variable $ \widehat{\mathcal{X}}( B[y_{t+1},H_{y'_{t+1}}] )$, the auxiliary randomness that we use to pick
    $\widehat{\pinkwalk}^{y_{t+1}, H_{y'_{t+1}} }$ if $\widehat{\mathcal{X}}( B[y_{t+1},H_{y'_{t+1}}] )>0$ and
     $\widehat{\pinkwalk}^{y_{t+1}, H_{y'_{t+1}} }[0, \widehat{T}^{y_{t+1}, H_{y'_{t+1}} }]$.
\end{enumerate}
\end{notation}
\noindent Recalling \eqref{z_y_H},  note that $z^{o,H_*}$ is $\mathcal{G}_*$-measurable and $z^{y_{t+1}, H_{y'_{t+1}} }$ is $\mathcal{G}_t$-measurable.

\begin{lemma}[Conditional distribution of input packages]\label{lemma_cond_law_given_explored_stuff} Let $t \in \mathbb{N}_0$.

\begin{enumerate}[(i)]
\item \label{cond_distrr_i} Recalling \eqref{def_B_y_package_worms},  conditionally on $\mathcal{F}_t$, the point measure  $\widehat{\mathcal{X}}\mathds{1}[ B[y_{t+1}] ]$ of worms is a PPP
on $\widetilde{W}$ with intensity measure $v \cdot m( L(w) ) \cdot (2 d)^{ 1 - L(w) }\mathds{1}[ B[y_{t+1}] ]$.
\item \label{cond_distrr_ii} The conditional distribution of $\left( \mathcal{X}^{o}, \pinkwalk^{o, H_*}  \right)$ given $\mathcal{G}_*$ is  $\mathcal{P}_{o,R_{N+1},z^{o,H_*},v}$
(cf.\ Definition \ref{def:law_on_input_packages}).
	\item \label{cond_distrr_iii} The conditional distribution of $\left( \mathcal{X}^{y_{t+1}}, \pinkwalk^{y_{t+1}, H_{y'_{t+1}} }  \right)$ given $\mathcal{G}_t$ is
$\mathcal{P}_{y_{t+1},R_{N+1},z^{y_{t+1}, H_{y'_{t+1}} } ,v}$.
\end{enumerate}
\end{lemma}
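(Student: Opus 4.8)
The plan is to derive all three statements from two facts: the restriction/superposition property of the Poisson point process $\widehat{\mathcal{X}}\sim\mathcal{P}_v$, and the strong Markov property of simple random walk. First I would set up the combinatorial skeleton. For $y\in 10\cdot R_{N+1}\cdot\Z^d$ put $A_y:=\{w\in\widetilde{W}\,:\,w(0)\in\ball(y,2R_{N+1})\}$; since the centres of the boxes $\ball(y,2R_{N+1})$ are at sup-distance $10R_{N+1}$ while each box has sup-radius $2R_{N+1}$, the sets $\{A_y\}_y$ are pairwise disjoint subsets of $\widetilde W$. Splitting further by length, set $A_y^{\mathrm{s}}:=A_y\cap\{L(w)\le R_{N+1}^2\}$ and $A_y^{\ell}:=A_y\cap\{L(w)>(2\beta+1)R_{N+1}^2\}=B[y]$ (recall \eqref{def_B_y_package_worms}); these are disjoint, and disjoint from the remaining worms. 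By the restriction theorem for Poisson processes the point measures $\{\widehat{\mathcal{X}}\ind[A_y^{\mathrm{s}}],\ \widehat{\mathcal{X}}\ind[A_y^{\ell}]\,:\,y\in 10R_{N+1}\Z^d\}$ are mutually independent, each $\widehat{\mathcal{X}}\ind[A]$ being a PPP with intensity $v\cdot m(L(w))(2d)^{1-L(w)}\ind[A]$; and the auxiliary randomness used to select the worms $\widehat{\pinkwalk}^{y,H}$ is independent of $\widehat{\mathcal{X}}$.

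For part (i) I would condition on the (countably many) values $(g,b)$ of $(G_t,B_t)$. On the $\mathcal{F}_t$-measurable event $\{(G_t,B_t)=(g,b)\}$ the vertex $y_{t+1}$ equals a fixed $\eta\in 10R_{N+1}\Z^d$ with $\eta\notin 10R_{N+1}(g\cup b)$ (because $x_{t+1}\notin g\cup b$), so $A_\eta$ is disjoint from $\bigcup_{y\in 10R_{N+1}(g\cup b)}A_y$; and, by the inductive construction of the exploration, on this event all of $\sigma(0),\dots,\sigma(t)$ — hence $\mathcal{F}_t$ — is measurable with respect to $\{\widehat{\mathcal{X}}\ind[A_y]\,:\,y\in 10R_{N+1}(g\cup b)\}$ together with the auxiliary randomness consumed in the first $t$ steps, all of which is independent of $\widehat{\mathcal{X}}\ind[A_\eta]$ and hence of $\widehat{\mathcal{X}}\ind[B[\eta]]=\widehat{\mathcal{X}}\ind[A_\eta^{\ell}]$. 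Thus conditionally on $\mathcal{F}_t\cap\{(G_t,B_t)=(g,b)\}$ the measure $\widehat{\mathcal{X}}\ind[B[y_{t+1}]]$ is a PPP with intensity $v\,m(L(w))(2d)^{1-L(w)}\ind[B[\eta]]$; since $y_{t+1}=\eta$ is constant on each atom, summing over $(g,b)$ gives part (i).

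For part (iii) (and part (ii), which is the analogous initial stage with $y_{t+1}=o$, $H_{y'_{t+1}}=H_*=\ball(3R_{N+1})$ deterministic, $\mathcal{G}_t$ replaced by $\mathcal{G}_*$, and no $\mathcal{F}_t$ to condition on), I would additionally use that $\mathcal{X}^{y_{t+1}}=\widehat{\mathcal{X}}\ind[A_{y_{t+1}}^{\mathrm{s}}]$ is independent of $\widehat{\mathcal{X}}\ind[A_{y_{t+1}}^{\ell}]=\widehat{\mathcal{X}}\ind[B[y_{t+1}]]$, of $\mathcal{F}_t$, and of the auxiliary randomness. Since $H_{y'_{t+1}}$ is $\mathcal{F}_t$-measurable, the count $\widehat{\mathcal{X}}(B[y_{t+1},H_{y'_{t+1}}])$, the selected worm $\widehat{\pinkwalk}^{y_{t+1},H_{y'_{t+1}}}$, its initial segment up to $\widehat{T}^{y_{t+1},H_{y'_{t+1}}}$, and hence $z^{y_{t+1},H_{y'_{t+1}}}$ and $\pinkwalk^{y_{t+1},H_{y'_{t+1}}}$, are all measurable functions of $\widehat{\mathcal{X}}\ind[B[y_{t+1}]]$, $H_{y'_{t+1}}$ and the auxiliary randomness; therefore $\mathcal{G}_t$ is independent of $\mathcal{X}^{y_{t+1}}$, which gives both that conditionally on $\mathcal{G}_t$ the measure $\mathcal{X}^{y_{t+1}}$ is the PPP with intensity $v\,m(L(w))(2d)^{1-L(w)}\ind[\,w(0)\in\ball(y_{t+1},2R_{N+1}),\,L(w)\le R_{N+1}^2\,]$ — the first component of $\mathcal{P}_{y_{t+1},R_{N+1},z^{y_{t+1},H_{y'_{t+1}}},v}$ — and the required conditional independence of the two components.

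It remains to identify the conditional law of $\pinkwalk^{y_{t+1},H_{y'_{t+1}}}$ given $\mathcal{G}_t$, and here is where the real work lies. I would argue that, conditionally on $\widehat{\mathcal{X}}(B[y_{t+1},H_{y'_{t+1}}])=n\ge 1$, the chosen worm $\widehat{\pinkwalk}^{y_{t+1},H_{y'_{t+1}}}$ is distributed as the normalised restriction to $B[y_{t+1},H_{y'_{t+1}}]$ of the long-worm intensity, so that given its length $\ell$ and starting point it is a uniform nearest-neighbour path of length $\ell$ conditioned to lie in $B[y_{t+1},H_{y'_{t+1}}]$; that $\widehat{T}^{y_{t+1},H_{y'_{t+1}}}$ is a stopping time of the worm and that every defining constraint of $B[y_{t+1},H_{y'_{t+1}}]$ other than $L(w)>(2\beta+1)R_{N+1}^2$ is decided by the worm up to time $\widehat{T}^{y_{t+1},H_{y'_{t+1}}}$, while that remaining length constraint forces $\widehat{T}^{y_{t+1},H_{y'_{t+1}}}+R_{N+1}^2\le L(\widehat{\pinkwalk}^{y_{t+1},H_{y'_{t+1}}})-1$ (the discussion below \eqref{back_and_forth}); consequently, conditionally on the initial segment and on $\ell$, the next $R_{N+1}^2$ steps of the worm after $\widehat{T}^{y_{t+1},H_{y'_{t+1}}}$ form the first $R_{N+1}^2$ steps of a simple random walk from $z^{y_{t+1},H_{y'_{t+1}}}$, not depending on $\ell$. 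Averaging over the conditional law of $\ell$ given $\mathcal{G}_t$, and recalling that $\pinkwalk^{y_{t+1},H_{y'_{t+1}}}$ is this continuation stopped at $R_{N+1}^2\wedge T_{\ball(y_{t+1},2R_{N+1})^c}$ and that $z^{y_{t+1},H_{y'_{t+1}}}\in\ball(y_{t+1},R_{N+1})$ by construction, this matches Definition \ref{def:law_on_input_packages}(ii) with $y=y_{t+1}$, $R=R_{N+1}$, $z=z^{y_{t+1},H_{y'_{t+1}}}$. The main obstacle is exactly this last step: one must check cleanly that conditioning on the event $\{\widehat{\pinkwalk}^{y_{t+1},H_{y'_{t+1}}}\in B[y_{t+1},H_{y'_{t+1}}]\}$ and on the path up to the stopping time $\widehat{T}^{y_{t+1},H_{y'_{t+1}}}$ does not bias the subsequent increments of the selected worm — i.e.\ that the only ``global'' constraint imposed by $B[y_{t+1},H_{y'_{t+1}}]$ is that the worm be long enough, which is automatic. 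Everything else (the restriction theorem, the atom decomposition handling the $\mathcal{F}_t$-measurability of the random box $y_{t+1}$, and the short/long independence) is routine Poisson-process bookkeeping.
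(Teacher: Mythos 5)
Your proposal is correct and follows the same route as the paper: disjointness of the worm classes plus the restriction/independence property of Poisson processes for the independence statements, and the strong Markov property at the stopping time $\widehat{T}^{y_{t+1},H_{y'_{t+1}}}$ for the conditional law of $\pinkwalk^{y_{t+1},H_{y'_{t+1}}}$. The paper states the two key facts tersely; your version adds the atom-wise conditioning on $(G_t,B_t)=(g,b)$ to handle the $\mathcal{F}_t$-measurability of the random box $y_{t+1}$, and spells out that every constraint in $B[y_{t+1},H_{y'_{t+1}}]$ except the length is decided by the path up to $\widehat{T}^{y_{t+1},H_{y'_{t+1}}}$ while the length constraint guarantees at least $R_{N+1}^2$ spare steps — both of which are correct unpackings of what the paper leaves implicit.
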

\begin{proof} $ $

\begin{enumerate}[(i)]
  \item \label{condindep_i} The ball $\ball(y_{t+1}, 2 R_{N+1})$ is disjoint from  $ \cup_{x \in G_t \cup B_t} \ball( 10\cdot R_{N+1} \cdot x , 2 R_{N+1}) $, therefore
  the set $ B[y_{t+1}]$ of worms is disjoint from the set $ \cup_{x \in G_t \cup B_t}  \{ \, w(0) \in \ball( 10\cdot R_{N+1} \cdot x ,2 R_{N+1})\, \}$ of worms. Now the desired statement follows from the fact that the restrictions of a PPP on $\widetilde{W}$ to disjoint subsets of $\widetilde{W}$ give rise to independent Poisson point processes.
    \item The proof is  analogous to the proof of the next statement.
    \item The set of worms $\{  w(0) \in \ball(y_{t+1}, 2 R_{N+1}), \, L(w) \leq R_{N+1}^2 \}$ that we use in the definition of $ \mathcal{X}^{y_{t+1}}$
    (cf.\ \eqref{new_X_y}) is disjoint from the set $ \cup_{x \in G_t \cup B_t}  \{ \, w(0) \in \ball( 10\cdot R_{N+1} \cdot x ,2 R_{N+1})\, \}$ of worms by the above reasoning, but it is also
    disjoint from  $B[y_{t+1}]$, which only contains worms $w$ with length $L(w)$ greater than $R_{N+1}^2$ (cf.\  \eqref{def_B_y_package_worms}).   Thus, conditionally on $\mathcal{G}_t$, the point measure
    $ \mathcal{X}^{y_{t+1}}$  is a PPP on $\widetilde{W}$ with intensity measure $v \cdot m( L(w) ) \cdot (2 d)^{ 1 - L(w) }\ind[\, w(0) \in \ball(y_{t+1}, 2 R_{N+1}),\, L(w) \leq R_{N+1}^2\, ]$, as required by Definition \ref{def:law_on_input_packages}.

    By an application of the strong Markov property at time $\widehat{T}^{y_{t+1}, H_{y'_{t+1}} }$, we see that conditionally on  $\mathcal{G}_t$, the worm
     $\pinkwalk^{y_{t+1}, H_{y'_{t+1}} } $ defined by  \eqref{pinkwalk_for_exploration} is
    a simple random walk started from $z^{y_{t+1}, H_{y'_{t+1}} }$ that we run until it either performs $R^2_{N+1}$ steps or it exits the ball $\ball(y_{t+1}, 2 R_{N+1})$, as required by Definition \ref{def:law_on_input_packages}.
\end{enumerate}

\end{proof}

\begin{proof}[Proof of Lemma \ref{lemma_sigma_onehalf}]
We prove $\mathbb{P} \left(\sigma(0)=1\right)  \geq 1/2$, where $\sigma(0)$ is defined by \eqref{sigma_null_def}, by showing
\begin{align}
\label{initial_long}
\mathbb{P} \left( \widehat{\mathcal{X}}( B[o, H_* ] ) > 0 \right) & \geq 3/4, \\
\label{initial_fattening}
\mathbb{P} \left(  \left( \mathcal{X}^{o}, \pinkwalk^{o, H_*}  \right) \text{is } (o, R_{N+1}, z^{o,H_*}, 2^{N+1}\cdot \gamma_0  ) \text{-good}\, \big\vert \, \mathcal{G}_* \, \right) & \geq 3/4.
\end{align}
Indeed, as soon as we prove  \eqref{initial_long} and \eqref{initial_fattening}, the inequality $\mathbb{P} \left(\sigma(0)=1\right)  \geq (3/4)^2 \geq 1/2$ follows from \eqref{sigma_null_def} by the tower property and the fact that $ \widehat{\mathcal{X}}( B[o, H_* ] )$ is $ \mathcal{G}_*$-measurable.

We want to prove \eqref{initial_long} using Lemma \ref{lemma:good_shooting}, thus we only need to check that \eqref{eq:good_shooting_capacity_lower_bound} holds, i.e., that we have $\capacity(H_*) \cdot  s \cdot v \cdot R_{N+1}^2 \cdot \mathbb{P} \left( \mathcal{L} \geq \Lambda \cdot R_{N+1}^2 \right)   \geq 2$ (recalling that we set $\Lambda=2\beta+1$ in Assumption \ref{assumption_good_seq_scales}). Since \eqref{condition_shoot} holds by Assumption \ref{assumption_good_seq_scales}, we only need to check that the inequality
$\capacity(H_*)\geq 2^{N+1} \cdot \gamma_0 \cdot R^2_{N+1}$ holds. However, this inequality follows from the $n=N+1$ case of Lemma \ref{lemma_going_up_the_ladder}, as we now explain.

 It certainly follows from \eqref{R_n_good_3dash4} that a $(o, R_{N+1}, z, 2^{N+1} \cdot \gamma_0 )$-good set $H$ \emph{exists}. On the one hand, we have $H \subseteq \ball(3R_{N+1})=H_* $, on the other hand, we have $2^{N+1} \cdot \gamma_0 \cdot R^2_{N+1} \leq \capacity(H) $ (cf.\ Definition \ref{def:gamma_good_input_package}). Thus
$\capacity(H_*)\geq 2^{N+1} \cdot \gamma_0 \cdot R^2_{N+1}$ follows using the monotonicity \eqref{cap_mon} of capacity. The proof of \eqref{initial_long} is complete.

The inequality  \eqref{initial_fattening} follows from Lemma \ref{lemma_cond_law_given_explored_stuff}\eqref{cond_distrr_ii} and the $n=N+1$ case of Lemma \ref{lemma_going_up_the_ladder}.

\medskip

Given some $t \in \mathbb{N}_0$, we also want to prove  $\mathbb{P} \left( \sigma(t+1) = 1 \, | \, \sigma(0),  \ldots, \sigma(t) \right)  \geq 1/2$ (i.e., \eqref{sigma_condprob_geq_1dash2}).

First recall that if $\partial^{\text{ext}} G_t \subseteq B_t$ then we defined $\sigma(t+1)=1$ in Definition \ref{notation_sigma_tplus1}, thus in this case
$\mathbb{P} \left( \sigma(t+1) = 1 \, | \, \sigma(0),  \ldots, \sigma(t) \right)=1$ follows from the fact that $G_t$ and $B_t$ are determined by $ \sigma(0),  \ldots, \sigma(t)$.

In the $\partial^{\text{ext}} G_t \not\subseteq B_t$ case the indicator $\sigma(t+1)$ is defined by \eqref{sigma_tplus1_def},  hence  \eqref{sigma_condprob_geq_1dash2} will follow as soon as we prove the inequalities
\begin{align}
\label{tplus1_shoot_3dash4}
\mathbb{P} \left(  \,   \widehat{\mathcal{X}}\left( B\left[y_{t+1},H_{y'_{t+1}}\right] \right) > 0  \, \big\vert \, \mathcal{F}_t  \right)\geq 3/4, \\
\label{tplus1_fatten_3dash4}
\mathbb{P}\left( \, \left( \mathcal{X}^{y_{t+1}}, \pinkwalk^{y_{t+1}, H_{y'_{t+1}} }  \right) \text{is } (y_{t+1}, R_{N+1}, z^{y_{t+1},H_{y'_{t+1}}}, 2^{N+1}\cdot \gamma_0  ) \text{-good}\, \big\vert \, \mathcal{G}_t \right) \geq 3/4.
\end{align}
Indeed, if we prove \eqref{tplus1_shoot_3dash4} and \eqref{tplus1_fatten_3dash4}, the inequality $\mathbb{P} \left( \sigma(t+1) = 1 \, | \, \sigma(0),  \ldots, \sigma(t) \right)  \geq (3/4)^2 \geq 1/2$ follows from  \eqref{sigma_tplus1_def} by a repeated application of the tower property together with the facts that $\sigma(0),\dots,\sigma(t)$ are $\mathcal{F}_t$-measurable, $\mathcal{F}_t \subseteq \mathcal{G}_t$, and
$ \widehat{\mathcal{X}}( B[y_{t+1},H_{y'_{t+1}}] )$ is $\mathcal{G}_t$-measurable.

We want to prove \eqref{tplus1_shoot_3dash4} using  Lemma \ref{lemma:good_shooting}. First note that it follows from $y'_{t+1} \in 10\cdot R_{N+1} \cdot G_t $  that $H_{y'_{t+1}}$ is an $\mathcal{F}_t$-measurable $(y'_{t+1}, R_{N+1}, z, 2^{N+1} \cdot \gamma_0 )$-good set for some $z \in \ball(y'_{t+1},R_{N+1}) $,
thus $\capacity(H_{y'_{t+1}})\geq 2^{N+1} \cdot \gamma_0 \cdot R^2_{N+1}$ holds by Definition \ref{def:gamma_good_input_package}. If we put this bound together with the assumption that $( R_n )_{n = 0}^{N+1}$ is an $(R^*_0, \gamma_0, \Dlow, \Dup, \underline{\alpha}, \psi, s, \Lambda, v )$-good sequence  of scales for $m$ (cf.\ Assumption \ref{assumption_good_seq_scales}), we obtain that condition \eqref{eq:good_shooting_capacity_lower_bound} of Lemma \ref{lemma:good_shooting} holds:
\[
\capacity(H_{y'_{t+1}}) \cdot  s \cdot v \cdot R_{N+1}^2 \cdot \mathbb{P} \left( \mathcal{L} \geq \Lambda \cdot R_{N+1}^2 \right) \geq
 2^{N+1} \cdot \gamma_0 \cdot  s \cdot v \cdot R_{N+1}^4 \cdot \sum_{\ell=\Lambda \cdot R_{N+1}^2}^{\infty} m(\ell)    \stackrel{ \eqref{condition_shoot} }{\geq} 2.
\]
Putting this together with Lemma \ref{lemma_cond_law_given_explored_stuff}\eqref{cond_distrr_i}, we see that  Lemma \ref{lemma:good_shooting} indeed gives \eqref{tplus1_shoot_3dash4}.

The inequality  \eqref{tplus1_fatten_3dash4} follows from Lemma \ref{lemma_cond_law_given_explored_stuff}\eqref{cond_distrr_iii} and the $n=N+1$ case of Lemma \ref{lemma_going_up_the_ladder} combined with the translation invariance property \eqref{shift_invar_package}.

\end{proof}

\section{Doubling the $\gamma$ of good input packages by fattening}\label{subsection_doubling_gamma}

The goal of Section \ref{subsection_doubling_gamma} is to prove Lemma \ref{lemma_doubling_gamma}: given an input package $(\worms, \pinkwalk)$ we want to define a set $H$ and we want to show that it is good with high enough probability.

In Section \ref{subsection_consr_H} we introduce some notation that will allow us to define $H$.  We want to bound $\capacity(H)$ from below using Dirichlet energy, therefore we will also define a measure $\hitonly{\mu}_{\lozenge}$ supported on $H$. However, the measure $\hitonly{\mu}_{\lozenge}$ is not a probability measure, so we will state two lemmas:
 Lemma \ref{lemma:small_energy} states that the Dirichlet energy $\energy(\hitonly{\mu}_{\lozenge})$ is not too big, while Lemma \ref{lemma:big_total_measure} states that the total mass of $\hitonly{\mu}_{\lozenge}$ is not too small.
  We will fix the parameters  $\psi, \Dup,  \Dlow , \underline{\alpha} $ (cf.\ \eqref{parameter_assumptions}) and deduce the proof of Lemma \ref{lemma_doubling_gamma} from  Lemmas \ref{lemma:small_energy} and  \ref{lemma:big_total_measure} at the end of Section \ref{subsection_consr_H}. We will prove Lemmas \ref{lemma:small_energy} and  \ref{lemma:big_total_measure} in Sections \ref{subsection_small_energy} and \ref{subsection_big_local_time}, respectively.

\subsection{Construction of the good set $H$}\label{subsection_consr_H}

Let us  assume that we are given some parameters $\psi, \Dup,  \Dlow , \underline{\alpha} $ that satisfy the inequalities \eqref{parameter_assumptions}, moreover we are given $\Rs \leq \Rb$ and  $\gamma$ satisfying the inequality \eqref{gamma_ineq_assumptions}. We further assume that we are given $v \in \mathbb{R}_+$
and a probability distribution $m$ on $\mathbb{N}$ satisfying the inequality \eqref{def:alpha}. We assume  that the hypothesis \eqref{induction_hypotheses} holds.

The goal of  Section \ref{subsection_doubling_gamma} is to conclude that \eqref{2gamma_want} holds, so let us assume that
 we are given an input package $(\worms, \pinkwalk) \sim \mathcal{P}_{o,\Rb,z,v}$ for some $z \in  \ball(\Rb)$. We want to show that
$(\worms, \pinkwalk)$  is $(o,\Rb,z,2\gamma)$-good with probability at least $3/4$, i.e., we want to construct a set $H$ which is $(o,\Rb,z,2\gamma)$-good for $(\worms, \pinkwalk)$ (cf.\ Definition \ref{def:gamma_good_input_package}) with probability at least $3/4$. Before we construct this  set $H$ in \eqref{def:fattened_set} below, let us introduce some notation.

 Recall that by our assumptions \eqref{parameter_assumptions} and \eqref{def:alpha}, the ratio of  $\Rb$ and $\Rs$ can be bounded from below as
\begin{equation}\label{big_Dlow_big_ratio}
\Rb \slash \Rs = \sqrt{ \Rb^2 \slash \Rs^2 } \geq \sqrt{ \Dlow \slash \Dup } \geq \sqrt{ \Dlow}.
\end{equation}

In order to make good use of the assumption \eqref{induction_hypotheses} of Lemma \ref{lemma_doubling_gamma}, we want to fit many boxes of scale $\Rs$ in a box of scale $\Rb$:

\begin{notation}[Indicator $\chi_y$ of a proper visit to $\ball(y, \Rs )$ by $\pinkwalk$] Let us  define the set $\mathcal{D} \subset \subset \mathbb{Z}^d$ and the indicator variables $\chi_y,  y \in \mathcal{D} $ by
	\begin{align}
		\label{def:D_index_set}
		\mathcal{D}  & := \big\{ \, y \in 10  \Rs  \Z^d \; : \; \ball(y, \Rs) \cap \ball(2\Rb) \neq \emptyset \, \big\}, \\
		\label{nota:chi_y}
		\chi_y  & := \mathds{1}\left[\, T_{\ball(y, \Rs )}(\pinkwalk) <  T_{\ball(2\Rb)^c}(\pinkwalk) \wedge (\Rb^2 - \Rs^2)\,  \right], \qquad y \in \mathcal{D}.
	\end{align}
\end{notation}
\noindent In words, $\chi_y$ denotes the indicator of the event that $\pinkwalk$ hits the box $\ball(y, \Rs )$ before it either exits the box $\ball(2\Rb)$ or it performs $\Rb^2 - \Rs^2$ steps. The reason why we have $\Rb^2 - \Rs^2$ (rather than $\Rb^2$) in \eqref{nota:chi_y} is that this  guarantees that $\pinkwalk$ (which performs $\Rb^2$ steps unless it exits $\ball(2\Rb)$) still has $\Rs^2$ steps to spare after hitting $\ball(y, \Rs )$: this segment of $\pinkwalk$ will be used to produce an input package of scale $\Rs$
in Definition \ref{notation:pink_packages}.

Recall from Definitions \ref{def_full_worm_ppp} and \ref{def:law_on_input_packages} that $\widehat{\mathcal{X}}$ denotes a PPP of worms with law $\mathcal{P}_v$, while $\mathcal{X}$ is a PPP of worms with the same law as $\widehat{\mathcal{X}} \ind[\, w(0) \in \ball(y, 2 R),\, L(w) \leq R^2\, ]$.

\begin{notation}[Input package of scale $\Rs$ centered at  $y$]\label{notation:pink_packages}
	If $\chi_y=1$ for some $y \in \mathcal{D}$, let
	\begin{align}\label{def_eq_z_y}
		z^y & := Z( T_{\ball(y, \Rs )}(\pinkwalk) ), \\
		T^y & := \Rs^2 \wedge \min\{ \, t \, : \, Z(T_{\ball(y, \Rs )}(\pinkwalk) + t) \notin \ball(y, 2\Rs) \,  \}, \\
 		\pinkwalk^y & :=  Z[ \, T_{\ball(y, \Rs )}(\pinkwalk) , \, T_{\ball(y, \Rs )}(\pinkwalk) +T^y \, ], \\
 	\label{def_eq_worms_y}	\worms^y & := \widehat{\worms} \ind \big[ \, w(0) \in \ball(y, 2\Rs), \, L(w) \leq \Rs^2 \, \big].
	\end{align}
\end{notation}
\noindent In words: $z^y$ is the location where $\pinkwalk$ hits $\ball(y, \Rs )$, $T^y$ is the minimum of $ \Rs^2$ and  the length of the time interval that $Z$ spends
in $\ball(y, 2\Rs)$ after hitting $\ball(y, \Rs )$, $\pinkwalk^y$ is the segment of length $T^y$ of $\pinkwalk$ that starts when $\pinkwalk$ hits $\ball(y, \Rs )$, and
$\worms^y $ is the point process of worms that consists of those worms of $ \widehat{\worms}$ that start from $\ball(y, 2\Rs)$ and have length at most $\Rs^2$. Let us stress that  $\pinkwalk^y$ is indeed a sub-worm of $\pinkwalk$ since
$ T_{\ball(y, \Rs )}(\pinkwalk) +T^y$ is less than or equal to the length of $\pinkwalk$  by \eqref{nota:chi_y} and the definition of $(\worms, \pinkwalk) \sim \mathcal{P}_{o,\Rb,z,v}$ (cf.\ Definition \ref{def:law_on_input_packages}).

\begin{notation}[Indicator $\xi_y$ of a good set $H_y$ inside $\ball(y, 3\Rs )$]\label{notation_H_y}
	Let us define the indicators
	\begin{equation}
		\label{nota:xi_y}
		\xi_y  := \chi_y \cdot  \ind \big[\,  (\worms^y, \pinkwalk^y) \text{ is } (y,\Rs,z^y,\gamma) \text{-good} \, \big], \quad  y \in \mathcal{D}
	\end{equation}
	and for each $y \in \mathcal{D}$ with $\xi_y = 1$, let $H^y$ denote the $(y,\Rs,z^y,\gamma)$-good set for $(\worms^y, \pinkwalk^y)$ (cf.\ Definition \ref{def:gamma_good_input_package}). If there are more than one candidates for such a set $H^y$,
 let us pick one of them according to an arbitrary (but deterministic) rule (e.g., we can fix a well-ordering  of  the set of finite subsets of $\mathbb{Z}^d$ and
 pick the least one among the candidates).
   If $y \in \mathcal{D}$ with $\xi_y = 0$,  let $H^y = \emptyset$.
\end{notation}

\begin{notation}
	\label{nota:Falg}
	Let $\Falg$ denote the $\sigma$-field generated by  $\worms \ind [L(w) \leq \Rs^2 ]$ and   $\pinkwalk$.
\end{notation}

\noindent Note that the indicators $\chi_y$, $\xi_y$, $y \in \mathcal{D}$ and the sets $H^y, y \in \mathcal{D}$ are all $\Falg$-measurable.

In order to achieve the goal of Lemma \ref{lemma_doubling_gamma} (i.e., doubling the capacity-to-length ratio $\gamma$ by going from a lower scale to a higher scale)
we will  ``fatten'' the set $\cup_{y \in \mathcal{D}}H^y$.
Loosely speaking, we will consider the union of the traces of worms of length between  $\Dlow \cdot \Rs^2$ and $\Dup \cdot \Rb^2$ that hit
$\cup_{y \in \mathcal{D}}H^y$ and show that the capacity-to-length ratio of this set is at least $2\gamma$ with high enough probability.
The precise details of this loose plan are given in the next few definitions.

Let us define the set $A_{\bullet} \subseteq \widetilde{W}$ and the point process $\mathcal{X}_{\bullet} \in \widetilde{\mathbf{W}}$ by
\begin{equation}
	\label{worms:base}
	\mathcal{X}_{\bullet}:= \mathcal{X} \ind [A_{\bullet}], \quad
	A_{\bullet} := \left\{ \,
	\Dlow \cdot \Rs^2 \leq L(w) \leq \Dup \cdot \Rb^2, \;
	w(0) \in \mathcal{G}, \;
 \mathrm{Tr}(w) \subseteq \ball(3\Rb) \,
	\right\},
\end{equation}
where the set $\mathcal{G} \subseteq \mathbb{Z}^d$ is defined by
\begin{equation}
	\label{nota:ground_for_worms}
	\mathcal{G} := \ball(2\Rb) \setminus \bigcup_{y \in \mathcal{D}} \ball(y, 4\Rs)
 \stackrel{ \eqref{def:D_index_set} }{\supseteq} \ball(2\Rb) \setminus \bigcup_{y \in  10  \Rs  \Z^d} \ball(y, 4\Rs).
\end{equation}

\noindent
Recall that in \eqref{parameter_assumptions} we assumed  $\Dlow>1$, hence
\begin{equation}\label{X_bull_indep_of_Falg}
 \text{ $\mathcal{X}_{\bullet}$ is independent of $\Falg$.}
 \end{equation}
  We will  use the worms from $\mathcal{X}_{\bullet}$ for the purpose of fattening $\cup_{y \in \mathcal{D}} H^y $.

Given some $y \in \mathcal{D}$, let us define the set $\hitonly{A}[y] \subseteq \widetilde{W}$, and  the point process $\hitonly{\mathcal{X}}[y]\in \widetilde{\mathbf{W}}$ by
\begin{equation}
	\label{worms:hit_Hy_only}
	\hitonly{\mathcal{X}}[y]  := \mathcal{X}_{\bullet} \ind [\hitonly{A}[y] ], \;
	\hitonly{A}[y] := \left\{\, T_{H^y}(w) < \left \lfloor \tfrac{L(w)}{3} \right \rfloor, \; \forall\; \widetilde{y} \in \mathcal{D}\setminus \{y\} \, : \,
   T_{H^{ \widetilde{y} }}(w)\geq L(w)\, \right\}.
\end{equation}
In words: $\hitonly{\mathcal{X}}[y]$ consists of those worms from $ \mathcal{X}_{\bullet}$ that only hit $H^y$ (i.e., they do not hit $H^{ \widetilde{y} }$ if
$\widetilde{y} \in \mathcal{D}\setminus \{y\}$), moreover they hit $H^y$ with their first third segment.

Note that $\hitonly{A}[y] \cap \hitonly{A}[y'] = \emptyset$ if $y\neq y' \in \mathcal{D}$, therefore
\begin{equation}\label{hitonly_X_y_indep}
  \text{the Poisson point processes $\hitonly{\mathcal{X}}[y], \, y \in \mathcal{D}$ are conditionally independent given $\Falg$. }
\end{equation}

Given some $y \in \mathcal{D}$, we define  $\varphi^y  :  \hitonly{A}[y] \rightarrow \widetilde{W}$ and  the point process $\hitonly{\mathcal{X}}_{\lozenge}[y]  \in \widetilde{\mathbf{W}}$ by
\begin{equation}
	\label{last_one_third_pp}
	\hitonly{\mathcal{X}}_{\lozenge}[y]  \stackrel{ \eqref{nota:map_of_point_process} }{:=} \varphi^{y}( \hitonly{\mathcal{X}}[y] ), \quad
	\varphi^{y}(w):= w\left[\, T_{H^y}(w)+\lfloor L(w)/3\rfloor, T_{H^y}(w)+2\lfloor L(w)/3\rfloor\, \right].
\end{equation}
In words: $\hitonly{\mathcal{X}}_{\lozenge}[y]$ consists of the segments of the worms $w$ of $\hitonly{\mathcal{X}}[y]$  that start $\lfloor L(w)/3\rfloor$ steps
after hitting $H^y$ and have length $\lfloor L(w)/3\rfloor$. Note that $\varphi^{y}(w)$ is indeed a sub-worm of $w$, since $ T_{H^y}(w) <  \lfloor L(w)/3  \rfloor$ if $w$ is a worm from $\hitonly{\mathcal{X}}[y]$.

 We are ready to define our candidate $H$ for the role of the $(0,\Rb,z,2\gamma)$-good set for $(\worms, \pinkwalk)$ (cf.\ Definition \ref{def:gamma_good_input_package})
 with the aim of proving  \eqref{2gamma_want}. Let
  \begin{equation}
	\label{def:fattened_set}
	H  := \mathrm{Tr}(\pinkwalk) \cup \bigcup_{y \in \mathcal{D}} H^y  \cup \bigcup_{y \in \mathcal{D}} \mathrm{Tr}(\hitonly{\mathcal{X}}[y]).
\end{equation}
\noindent
 We want to lower bound the capacity of $H$ using Dirichlet energy (cf.\ \eqref{capacity_with_energy}), thus we will define a measure $\hitonly{\mu}_{\lozenge}$  supported on $H$.

Recalling the notion of the local time of point process of worms from \eqref{def:local_time},
 we define
\begin{equation}
	\label{def:mu_of_the_fattened_set}
	\hitonly{\mu}^y_{\lozenge}  := \mu^{\hitonly{\mathcal{X}}_{\lozenge}[y]}, \qquad
	\hitonly{\mu}_{\lozenge} := \sum_{y \in \mathcal{D}} \hitonly{\mu}^y_{\lozenge}, \qquad
	\hitonly{\Sigma}_{\lozenge} := \hitonly{\mu}_{\lozenge}(\mathbb{Z}^d).
\end{equation}
In words: $\hitonly{\mu}^y_{\lozenge}$ is the collective local time measure of the worms from $\hitonly{\mathcal{X}}_\lozenge[y]$, $\hitonly{\mu}_{\lozenge}$ is the
sum of the measures $\hitonly{\mu}^y_{\lozenge}, y \in \mathcal{D}$ and $\hitonly{\Sigma}_{\lozenge} $ is the total mass of  $\hitonly{\mu}_{\lozenge}$.

Let us define
\begin{equation}
	\label{assumption:alpha_EQ}
\alpha:=  v \cdot \sum_{\ell = \Dlow \cdot \Rs^2}^{ \Dup \cdot \Rb^2 } \ell^2 \cdot m(\ell).
\end{equation}
Note that we have $\alpha \geq \underline{\alpha}$ by \eqref{def:alpha}.

Heuristically, our goal will be  to show that $\capacity(H)$ is comparable to  $|H|$. More precisely, we want to show
that the Dirichlet energy $\energy(\hitonly{\mu}_{\lozenge})$ of  $\hitonly{\mu}_{\lozenge}$ is comparable to its total mass $\hitonly{\Sigma}_{\lozenge} $
(and then the lower bound on $\capacity(H)$ that we obtain using \eqref{dir_bil} and \eqref{capacity_with_energy} will be comparable to $\hitonly{\Sigma}_{\lozenge}$). The next two lemmas
give an upper bound on the Dirichlet energy and lower bound on the total mass of $\hitonly{\mu}_{\lozenge}$, respectively. The reader might find the following formulas easier to parse if we already mention that we will find that $\hitonly{\Sigma}_{\lozenge}$ is comparable to $\Rb^2 \cdot \gamma \cdot \alpha$.

Recall the notion of the parameter $\psi$ introduced in the statement of Lemma \ref{lemma_doubling_gamma}.

\begin{lemma}[Small energy]
	\label{lemma:small_energy}
	There exists $\CSmallEnergyUPPER < \infty$ such that if $\Dlow \geq 16$, then for any $K > 0$
	\begin{equation}
		\label{eq:lemma:small_energy}
		\mathbb{P} \big( \energy(\hitonly{\mu}_{\lozenge}) \geq K \big) \leq \frac{\CSmallEnergyUPPER \cdot \big( 1 +  \psi \cdot \alpha \big) \cdot (\Rb^2 \cdot \gamma \cdot \alpha) }{K}.
	\end{equation}
\end{lemma}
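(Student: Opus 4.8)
The estimate \eqref{eq:lemma:small_energy} is a Markov-inequality bound, so the plan is to estimate $\mathbb{E}[\energy(\hitonly{\mu}_{\lozenge})]$ and then invoke $\mathbb{P}(\energy(\hitonly{\mu}_{\lozenge})\geq K)\leq \mathbb{E}[\energy(\hitonly{\mu}_{\lozenge})]/K$. The key reduction is that $\hitonly{\mu}_{\lozenge}=\sum_{y\in\mathcal{D}}\hitonly{\mu}^y_{\lozenge}$, so by bilinearity of $\energy(\cdot,\cdot)$ we get $\energy(\hitonly{\mu}_{\lozenge})=\sum_{y,y'\in\mathcal{D}}\energy(\hitonly{\mu}^y_{\lozenge},\hitonly{\mu}^{y'}_{\lozenge})$, and it suffices to bound the expectation of each term. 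I would condition on $\Falg$ first (so that the good sets $H^y$, the indicators $\xi_y$, and everything derived from $\pinkwalk$ and the short worms are frozen), and use that $\mathcal{X}_\bullet$ is independent of $\Falg$ by \eqref{X_bull_indep_of_Falg}, together with the conditional independence of the $\hitonly{\mathcal{X}}[y]$ given $\Falg$ from \eqref{hitonly_X_y_indep}.

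**Diagonal terms.** For $y=y'$, a worm $w\in\hitonly{\mathcal{X}}[y]$ hits $H^y$ within its first third; the contribution to $\hitonly{\mu}^y_\lozenge$ is the local time of the middle-third segment $\varphi^y(w)$, which starts at a point of $H^y\subseteq\ball(y,3\Rs)$ and has length $\lfloor L(w)/3\rfloor$ with $L(w)\in[\Dlow\Rs^2,\Dup\Rb^2]$. Using \eqref{eq:one_worm_ppp_expectation_formula} to unfold the Poisson expectation (the intensity of worms of length $\ell$ starting at a given vertex, weighted by hitting $H^y$, is controlled by $v\,m(\ell)\cdot P_x(T_{H^y}\leq \ell)$ via Proposition \ref{prop:LED}), together with Proposition \ref{upper:exp_energy_of_traj} to bound $E\big[\sum_{t,t'}g(X(t),X'(t'))\big]\leq C\ell$ for the middle segment, one gets, after summing over the starting box and over the scales of $\ell$, a bound of order $v\sum_\ell \ell^2 m(\ell)\cdot\capacity(H^y)\cdot(\text{geometric factor})$. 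The hitting-probability sum over starting points in $\mathcal{G}$ is exactly Lemma \ref{upper:fuga_hitting_sum} applied with $r=\Rs$, $K=H^y$, giving $\sum_z P_z(T_{H^y}\leq\ell)\leq C\ell\capacity(H^y)$; combined with the length weight $m(\ell)$ and the factor $\ell$ from the middle-segment energy, the diagonal sum is $\lesssim v\sum_\ell \ell^2 m(\ell)\,\capacity(H^y)=\alpha\,\capacity(H^y)\lesssim \alpha\gamma\Rs^2$ (using $\capacity(H^y)\leq\gamma\Rs^2+1\lesssim\gamma\Rs^2$ from Definition \ref{def:gamma_good_input_package}, with $\gamma\geq\gamma_0$). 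Summing over $y\in\mathcal{D}$ and using $|\mathcal{D}|\lesssim(\Rb/\Rs)^2$, and that $\sum_{y}\xi_y\lesssim(\Rb/\Rs)^2$, gives $\lesssim (\Rb/\Rs)^2\cdot\alpha\gamma\Rs^2=\alpha\gamma\Rb^2$, which is the second factor on the RHS of \eqref{eq:lemma:small_energy}.

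**Off-diagonal terms.** For $y\neq y'$, by the conditional independence \eqref{hitonly_X_y_indep} the expectation factorizes into a product of two single-worm expectations, one involving a worm whose middle segment carries mass near $H^y$ and one near $H^{y'}$, coupled through the Green function $g$ between a point of $\ball(y,3\Rs)$ and a point of $\ball(y',3\Rs)$; since $y,y'\in 10\Rs\Z^d$ are distinct these are at distance $\geq c|y-y'|$, so $g\lesssim |y-y'|^{2-d}$. Each factor is controlled as in the diagonal case (using Lemma \ref{upper:fuga_hitting_sum} for the hitting sum and the trivial bound $|\varphi^y(w)|\leq L(w)$ for the worm length, together with $\mu^{\varphi^y(w)}(\Z^d)=\lfloor L(w)/3\rfloor$), producing a bound of the shape $\sum_{y\neq y'}\big(\alpha\gamma\Rs^2\big)\cdot\big(\alpha\gamma\Rs^2\big)\cdot\Rs^{-?}\cdot|y-y'|^{2-d}$ times an appropriate power of $\Rs$ coming from the geometric normalizations. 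The sum $\sum_{y'\in\mathcal D\setminus\{y\}}|y-y'|^{2-d}\lesssim \Rs^{2-d}(\Rb/\Rs)^2$ is exactly the estimate \eqref{eq:summing_y-y0_by_the_surfaces} used in Lemma \ref{upper:numb_of_visited_boxes}; assembling the factors one arrives at a bound of order $\psi\,\alpha\cdot\big(\alpha\gamma\Rb^2\big)$, where the extra $\psi$ appears precisely because of the constraint $\gamma\leq\psi\Rs^{d-4}$ from \eqref{gamma_ineq_assumptions}: this converts a leftover power $\gamma\Rs^{4-d}$ into $\psi$. Adding diagonal and off-diagonal contributions gives $\mathbb{E}[\energy(\hitonly\mu_\lozenge)]\leq C(1+\psi\alpha)(\Rb^2\gamma\alpha)$, and Markov's inequality finishes the proof.

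**Main obstacle.** The delicate point is bookkeeping of the geometric/scaling powers so that the off-diagonal sum produces exactly the factor $\psi\alpha$ and not something larger: one must use $\gamma\leq\psi\Rs^{d-4}$ at the right moment, and one must be careful that the hitting probability of $H^y$ by a worm starting far away carries a Green-function factor $|z-y|^{2-d}$ (not merely $\capacity(H^y)$) so that, when summed over starting points in $\mathcal{G}$ via Lemma \ref{upper:fuga_hitting_sum}, the constraint $L(w)\geq\Dlow\Rs^2\geq 16\Rs^2$ (needed to invoke that lemma, hence the hypothesis $\Dlow\geq 16$) is available. The conditioning on $\Falg$ must also be handled carefully: the sets $H^y$ are $\Falg$-measurable and $\mathcal{X}_\bullet\perp\Falg$, so the Poisson-process expectation formulas \eqref{eq:one_worm_ppp_expectation_formula}–\eqref{eq:two_worms_joint_ppp_expectation_formula} apply conditionally with the $H^y$ treated as fixed sets, and the only $\Falg$-dependence surviving to the final bound is through $\capacity(H^y)\leq\gamma\Rs^2+1$ and $\sum_y\xi_y\leq|\mathcal D|$, both deterministic bounds — so the conditional expectation is bounded by a constant and the bound passes to the unconditional expectation.
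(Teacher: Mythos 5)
Your overall strategy coincides with the paper's: apply Markov's inequality to $\mathbb{E}[\energy(\hitonly{\mu}_{\lozenge})]$, expand $\energy(\hitonly{\mu}_{\lozenge})=\sum_{y,y'}\energy(\hitonly{\mu}^y_{\lozenge},\hitonly{\mu}^{y'}_{\lozenge})$ by bilinearity, condition on $\Falg$, and feed in the hitting-sum lemma and the constraint $\gamma\leq\psi\Rs^{d-4}$. The paper packages the term-by-term conditional bounds as Lemma \ref{upper:cond_exp_energy}, then sums over $y,y'$ using Lemma \ref{upper:numb_of_visited_boxes}. However, two of your steps, as sketched, contain genuine gaps.

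First, in the diagonal term: $\energy(\hitonly{\mu}^y_{\lozenge},\hitonly{\mu}^y_{\lozenge})$ is a \emph{quadratic} functional of the point process $\hitonly{\mathcal{X}}_{\lozenge}[y]$, i.e.\ a double sum over pairs $(w_i,w_j)$ of worms. You invoke \eqref{eq:one_worm_ppp_expectation_formula} and Proposition \ref{upper:exp_energy_of_traj}, which only handle the $i=j$ (single-worm self-energy) part and give $\lesssim\alpha\cdot\capacity(H^y)$ per box, summing to $\lesssim\Rb^2\gamma\alpha$. The $i\neq j$ part — two distinct worms both hitting the same $H^y$ — requires the two-worm formula \eqref{eq:two_worms_joint_ppp_expectation_formula} and Lemma \ref{upper:exp_mutual_energy_of_trajs}; it contributes of order $\alpha^2\capacity(H^y)^2\Rs^{2-d}\lesssim\psi\alpha\cdot\Rs^2\gamma\alpha$ per box (this is where the $\capacity(H^y)\Rs^{2-d}\lesssim\gamma\Rs^{4-d}\leq\psi$ conversion enters \emph{within} the diagonal case), summing to $\lesssim\psi\alpha\cdot\Rb^2\gamma\alpha$. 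You miss this term, and your attribution of the entire $\psi\alpha$ to off-diagonal contributions is incorrect.

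Second, in the off-diagonal sum over $y\neq y'$: you bound $\sum_{y'\in\mathcal{D}\setminus\{y\}}|y-y'|^{2-d}\lesssim\Rs^{2-d}(\Rb/\Rs)^2$ via \eqref{eq:summing_y-y0_by_the_surfaces}, but that estimate is \emph{uniform in $y$ and unweighted by $\xi_{y'}$}. Pairing it with the outer sum over $y$ restricted by $\xi_y$ only gives a bound of order $\mathbb{E}[\sum_y\xi_y]\cdot\Rs^{2-d}(\Rb/\Rs)^2\lesssim(\Rb/\Rs)^4\Rs^{2-d}$, which overshoots by a factor of $(\Rb/\Rs)^2$ and destroys the final estimate. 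What is actually needed is the \emph{joint} bound \eqref{eq:upper:weighted_exp_visited_boxes} from Lemma \ref{upper:numb_of_visited_boxes},
\[
\mathbb{E}\Big[\sum_{y\in\mathcal{D}}\sum_{y'\in\mathcal{D}\setminus\{y\}}\zeta_y\zeta_{y'}|y-y'|^{2-d}\Big]\lesssim(\Rb/\Rs)^2\Rs^{2-d},
\]
which exploits that the visited boxes cluster near the trajectory of $\pinkwalk$ — a second-moment estimate, not a first-moment-times-uniform-sum bound. Without it the off-diagonal contribution is not controlled.
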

We will prove Lemma \ref{lemma:small_energy} in Section \ref{subsection_small_energy}.

\begin{lemma}[Big total local time]
	\label{lemma:big_total_measure}
	There exist $ \CBigTotalEnergyDENOMINATOR, \CBigTotalEnergyPARAMETERUPPER > 0$ and $\CBigTotalEnergyUPPER, \CBigTotalEnergyPARAMETERLOWER < \infty$ such that if $\Dup \leq \CBigTotalEnergyPARAMETERUPPER$ and $\Dlow \geq \CBigTotalEnergyPARAMETERLOWER$, then for any $K$ satisfying $ \CBigTotalEnergyDENOMINATOR \cdot (\Rb^2 \cdot \gamma \cdot \alpha) - 3K \slash 2 >0$ we have
	\begin{equation}
		\label{eq:lemma:big_total_measure}
		\mathbb{P} \big( \hitonly{\Sigma}_{\lozenge} \leq K \big) \leq 0.02 + \frac{ \CBigTotalEnergyUPPER \cdot  ( \Rb^2 \cdot \gamma \cdot \alpha ) \cdot \psi}{K} +
		\frac{ \CBigTotalEnergyUPPER \cdot \Rb^2 \cdot (\Rb^2  \cdot \gamma \cdot \alpha) }{ \big( \CBigTotalEnergyDENOMINATOR \cdot (\Rb^2 \cdot \gamma \cdot \alpha)  - 3 K \slash 2 \big)^2}
	\end{equation}
\end{lemma}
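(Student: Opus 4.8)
The plan is to prove Lemma \ref{lemma:big_total_measure} by combining a first-moment lower bound on the \emph{expected} total local time $\mathbb{E}[\hitonly{\Sigma}_{\lozenge}]$ with a second-moment (variance) bound, and then applying the Paley--Zygmund inequality (or a direct Chebyshev argument on the conditional law given $\Falg$). The key point is that, conditionally on $\Falg$, the point process $\bigcup_{y\in\mathcal{D}}\hitonly{\mathcal{X}}[y]$ is built from $\mathcal{X}_{\bullet}$ by keeping only worms that hit $\bigcup_{y\in\mathcal{D}}H^y$ in their first third, and $\mathcal{X}_{\bullet}$ is independent of $\Falg$ (c.f.\ \eqref{X_bull_indep_of_Falg}). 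So I would first condition on $\Falg$, treat the sets $H^y$ and the walk $\pinkwalk$ as fixed, and compute $\mathbb{E}[\hitonly{\Sigma}_{\lozenge}\mid\Falg]$ and $\Var(\hitonly{\Sigma}_{\lozenge}\mid\Falg)$ using Lemmas \ref{prop:one_worm_ppp_formulas} and \ref{prop:two_worms_joint_ppp_formula} applied to the functional $f(w)=\tfrac13 L(w)\cdot\ind[w\in\hitonly{A}[y]]$ summed over $y$ (the factor $\tfrac13$ coming from the length of $\varphi^y(w)$).

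The first step is the expectation lower bound. By \eqref{eq:one_worm_ppp_expectation_formula}, $\mathbb{E}[\hitonly{\Sigma}_{\lozenge}\mid\Falg]$ is, up to the factor $\tfrac13$, equal to $v\sum_{x\in\mathcal{G}}\sum_{\Dlow \Rs^2\le \ell\le \Dup\Rb^2} m(\ell)\, \ell\, P_x(\text{worm of length }\ell\text{ hits }\bigcup_y H^y\text{ in first third, stays in }\ball(3\Rb),\text{ hits only one }H^y)$. To lower bound this I would use Lemma \ref{lower:fuga_hitting_sum} (the lower bound on the sum of hitting probabilities, which already builds in the ``start in $\mathcal{G}$'', ``hit in the first $\ell/3$ steps'' and ``stay in $\ball(3R)$'' restrictions) applied with $r=\Rs$, $R=\Rb$, around each good box $y$ with $\chi_y=1$ and $\xi_y=1$: this gives that each such box contributes $\gtrsim \ell\cdot\capacity(H^y)\gtrsim \ell\cdot\Rs^2\gamma$ to the sum over starting points, provided $\ell$ satisfies the sandwich $\Dlow\Rs^2\le\ell\le\Dup\Rb^2$ (which forces the choice of $\Dlow,\Dup$ through the constants $\CLowerFugaHitDELTALOWER,\CLowerFugaHitDELTAUPPER$ of Lemma \ref{lower:fuga_hitting_sum}). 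Summing over $\ell$ produces a factor $\sum_\ell \ell^2 m(\ell)=\alpha/v$, and summing over the boxes $y$ with $\xi_y=1$ produces a factor equal to the number of good boxes. By Lemma \ref{lower:no_of_visited_boxes} this number is $\ge \CLowerNOVisitedBoxesLOWER (\Rb/\Rs)^2$ with probability $\ge 0.997$ — but I actually need the number of $y$ with $\xi_y=1$ (good \emph{set} inside, not just visited), which by the induction hypothesis \eqref{induction_hypotheses} / \eqref{R_n_good_3dash4} holds conditionally with probability $3/4$ for each visited box; so I need a separate concentration statement (presumably the content of an intermediate lemma like \texttt{Lemma \ref{lower:no_of_visited_good_boxes}} referenced in the remark after Lemma \ref{lemma_doubling_gamma}) giving $\sum_y\xi_y\gtrsim (\Rb/\Rs)^2$ with probability $\ge 0.98$, and this is where the constraint $\Dlow\ge \CLowerNOVisitedBoxesRATIOLOWER\vee(300/\CLowerNOVisitedBoxesLOWER)$ enters. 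On that event, $\mathbb{E}[\hitonly{\Sigma}_{\lozenge}\mid\Falg]\gtrsim (\Rb/\Rs)^2\cdot \Rs^2\gamma\cdot(\alpha/v)\cdot v = c\,\Rb^2\gamma\alpha$, which is the source of the constant $\CBigTotalEnergyDENOMINATOR$.

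The second step is the variance bound. By \eqref{eq:one_worm_ppp_variance_formula}, $\Var(\hitonly{\Sigma}_{\lozenge}\mid\Falg) = v\sum_{x}\sum_\ell m(\ell)\,E_x[f^2]$ with $f\asymp \ell$, so the variance is $\lesssim v\sum_x\sum_\ell m(\ell)\,\ell^2\,P_x(\cdots)$, i.e.\ an extra factor of $\ell$ per worm compared to the expectation. Redoing the hitting-probability sum with Lemma \ref{upper:fuga_hitting_sum} (upper bound, with the extra $\ell$) and then summing $\sum_\ell \ell^3 m(\ell)$: here I would bound $\ell\le \Dup\Rb^2$ inside the sum, pulling out a factor $\Rb^2$, so $\sum_\ell \ell^3 m(\ell)\le \Dup\Rb^2\sum_\ell\ell^2 m(\ell)=\Dup\Rb^2\alpha/v$. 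Combined with the upper bound $\sum_y\xi_y\cdot \capacity(H^y)\lesssim \sum_y \Rs^2\gamma \cdot (\#\text{boxes})\lesssim (\Rb/\Rs)^2\Rs^2\gamma=\Rb^2\gamma$ — where the upper bound on the number of visited boxes is Lemma \ref{upper:numb_of_visited_boxes}, needing the factor $\psi$ since $\gamma\le\psi\Rs^{d-4}$ controls cross terms in \eqref{eq:upper:weighted_exp_visited_boxes} — this yields $\Var(\hitonly{\Sigma}_{\lozenge}\mid\Falg)\lesssim \Rb^2\cdot(\Rb^2\gamma\alpha)$ plus a term of order $(\Rb^2\gamma\alpha)\psi$ from the diagonal/within-box contributions. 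Then Chebyshev gives, on the good-$\Falg$ event, $\mathbb{P}(\hitonly{\Sigma}_{\lozenge}\le K\mid\Falg)\le \Var/(\mathbb{E}-K)^2 \le \CBigTotalEnergyUPPER\,\Rb^2(\Rb^2\gamma\alpha)/(\CBigTotalEnergyDENOMINATOR\Rb^2\gamma\alpha - 3K/2)^2$ once $K$ is small enough; absorbing the $\psi$-term into the middle summand of \eqref{eq:lemma:big_total_measure}, and the $0.998$-probability failure events into the $0.02$, finishes the bound. The main obstacle I anticipate is the bookkeeping in the variance estimate: the off-diagonal term $\sum_{i\ne j}f(w_i)f(w_j)$ requires Lemma \ref{prop:two_worms_joint_ppp_formula} / Lemma \ref{upper:exp_mutual_energy_of_trajs} and care in separating ``two worms hitting the same $H^y$'' from ``two worms hitting different $H^y,H^{y'}$'' (the latter is excluded by the definition of $\hitonly{A}[y]$, which is exactly why the $\hitonly{A}[y]$ are disjoint and the $\hitonly{\mathcal{X}}[y]$ are conditionally independent, c.f.\ \eqref{hitonly_X_y_indep}) — and in making sure every constant depends only on $d$, not on $\Rs,\Rb,\gamma,v,m$.
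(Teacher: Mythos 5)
Your plan is structurally parallel to the paper's (Chebyshev conditionally on $\Falg$, plus the box-count lemma), but it has a genuine gap in the expectation lower bound, and that gap is precisely the central difficulty the paper had to design a new device to overcome.

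You propose to lower bound $\mathbb{E}[\hitonly{\Sigma}_{\lozenge}\mid\Falg]$ by applying Lemma \ref{lower:fuga_hitting_sum} to each $H^y$. But Lemma \ref{lower:fuga_hitting_sum} lower bounds $\sum_{z\in\mathcal{G}}P_z(T_{H^y}\le\ell/3,\ T_{\ball(3R)^c}>\ell)$, whereas the quantity appearing in $\mathbb{E}[\hitonly{\Sigma}_{\lozenge}\mid\Falg]$ carries the additional $\hitonly{A}[y]$-restriction that the worm hit $H^y$ and \emph{no other} $H^{\widetilde{y}}$, $\widetilde{y}\ne y$. Dropping that restriction is valid for an \emph{upper} bound, not a \emph{lower} bound, so the application of Lemma \ref{lower:fuga_hitting_sum} gives you a lower bound on the wrong (larger) quantity. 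After conditioning on $\Falg$ the configuration of the sets $H^{\widetilde{y}}$ is frozen, and there is no a priori reason the ``crowd'' of neighbouring $H^{\widetilde{y}}$ cannot swallow most of the mass you are trying to capture around $H^y$ --- this is exactly item (iii) in the paper's list of technical difficulties at the start of Section \ref{subsection_big_local_time}, and it is why a direct lower bound on $\mathbb{E}[\hitonly{\Sigma}_{\lozenge}\mid\Falg]$ does not go through.

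The paper's fix is the ``dummy worm'' construction \eqref{kieg_vava}--\eqref{nota:copy_Sigmay}: introduce $\overline{\Sigma}_c[y]$, a conditionally independent copy of the double-hitting contribution, set $\Sigma_c[y]:=\hitonly{\Sigma}[y]+\overline{\Sigma}_c[y]$, and write $\hitonly{\Sigma}=\sum_y\Sigma_c[y]\xi_y-\sum_y\overline{\Sigma}_c[y]\xi_y$ (equation \eqref{eq:hitonly_Sigma_approach}). The first sum now consists of conditionally independent terms each distributed as the \emph{unrestricted} $\Sigma[y]$, to which Lemmas \ref{lower:fuga_hitting_sum} and \ref{upper:fuga_hitting_sum} do apply (that is the content of Lemma \ref{prop:cond_restrict_measure_total}), so your Chebyshev argument works there; and the second sum is shown to be small on average in Lemma \ref{lemma_expect_bound_interactions} using exactly the ``well-spread'' estimate \eqref{eq:upper:weighted_exp_visited_boxes} you already identified, with a Markov bound producing the middle $\psi$-term on the r.h.s.\ of \eqref{eq:lemma:big_total_measure}. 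Your variance computation, the factor $\ell^3\le\Dup\Rb^2\ell^2$, and the use of Lemma \ref{lower:no_of_visited_good_boxes} all match the paper once they are applied to $\Sigma_c[y]$ rather than to $\hitonly{\Sigma}_{\lozenge}[y]$; the missing idea is the decoupling via the independent dummy copies.
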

We will prove Lemma \ref{lemma:big_total_measure} in Section \ref{subsection_big_local_time}.

Now we are ready to prove Lemma \ref{lemma_doubling_gamma} using the results of Lemmas \ref{lemma:small_energy} and \ref{lemma:big_total_measure}.

\begin{proof}[Proof of Lemma \ref{lemma_doubling_gamma}] Recall our setup from the beginning of Section \ref{subsection_consr_H}, including that $\Rb \geq \sqrt{\Dlow} \Rs$ holds
by \eqref{big_Dlow_big_ratio}.
Our goal is to prove that \eqref{2gamma_want} holds.

 Note that the measure $\hitonly{\mu}_{\lozenge}$ (cf.\ \eqref{def:mu_of_the_fattened_set}) is supported on the set $H$ (cf.\ \eqref{def:fattened_set}) and $H$ satisfies criteria (i), (ii) and (iii) of a $(0,\Rb,z,2\gamma)$-good set for $(\worms, \pinkwalk)$ (cf.\ Definition \ref{def:gamma_good_input_package}). Indeed:

 (i) $H \subseteq \ball(y,3\Rb) \cap (\mathrm{Tr}(\worms)\cup \mathrm{Tr}(\pinkwalk))$  holds if $\Dlow \geq 9$, since $H$ is defined by \eqref{def:fattened_set} and
 $H^y \subseteq \ball(y,3\Rb) \cap \mathrm{Tr}(\worms)$ for all $y  \in \mathcal{D}$ by \eqref{def:D_index_set} and Definition \ref{notation_H_y},
 $\mathrm{Tr}(\hitonly{\mathcal{X}}[y]) \subseteq \ball(y,3\Rb) \cap \mathrm{Tr}(\worms)$ by \eqref{worms:base}, \eqref{worms:hit_Hy_only} and \eqref{last_one_third_pp}, moreover
 $\mathrm{Tr}(\pinkwalk) \subseteq \ball(y,2\Rb) $ by Definition \ref{def:law_on_input_packages} and  $(\worms, \pinkwalk) \sim \mathcal{P}_{o,\Rb,z,v}$.

  (ii) $H$ is connected since $\mathrm{Tr}(\pinkwalk)$ is connected and  $\pinkwalk$ hits all of the sets $H^y$ for which $\xi_y=1$, moreover $H^y$ is connected if $\xi_y=1$ and  $\mathrm{Tr}(\hitonly{\mathcal{X}}[y])$ consists of worms that hit $H^y$.
  	
(iii)  $z \in H$  since $z \in \mathrm{Tr}(\pinkwalk)$ and $\mathrm{Tr}(\pinkwalk) \subseteq H$.

In order to show that $H$ also satisfies criterion (iv)  of a $(0,\Rb,z,2\gamma)$-good set for $(\worms, \pinkwalk)$, we only need to show that
$2 \cdot \gamma \cdot \Rb^2 \leq \capacity(H)$, because $  \capacity(H) \leq  2 \cdot \gamma \cdot \Rb^2 +1 $ can be achieved by throwing away some
of the points of $H$ using Lemma \ref{lemma_trimming_the_fat} (and noting that the resulting thinned version of $H$ still satisfies (i), (ii) and (iii)).

Therefore the statement \eqref{2gamma_want} of Lemma \ref{lemma_doubling_gamma} will follow if we show that
	\begin{equation}
		\label{eq:small_capacity_GOAL}
		\mathbb{P} \left( \capacity(H)  < 2 \cdot \gamma \cdot \Rb^2 \right) < 1 \slash 4
	\end{equation}
	holds, where $H$ is the set introduced in \eqref{def:fattened_set}.
Note that  $\hitonly{\mu}_{\lozenge}/ \hitonly{\Sigma}_{\lozenge}$ (cf.\ \eqref{def:mu_of_the_fattened_set}) is a probability measure supported on $H$, thus
	\begin{equation*}
		\mathbb{P} \left( \capacity(H) < 2 \cdot \gamma \cdot \Rb^2 \right)
		\stackrel{\eqref{capacity_with_energy}}{\leq} \mathbb{P} \left( \energy( \hitonly{\mu}_{\lozenge} / \hitonly{\Sigma}_{\lozenge} )^{-1} < 2 \cdot \gamma \cdot \Rb^2 \right)
\stackrel{\eqref{dir_bil}}{=}		\mathbb{P} \left( \tfrac{ \hitonly{\Sigma}_{\lozenge}^2 }{ \energy(\hitonly{\mu}_{\lozenge}) } < 2\cdot \gamma \cdot \Rb^2 \right).
	\end{equation*}
	For any $K' > 0$ the r.h.s.\ of the previous line can be bounded from above as
	\begin{equation}
		\label{eq:small_capacity_two_terms}
		\mathbb{P} \left( \tfrac{\hitonly{\Sigma}_{\lozenge}^2}{\energy(\hitonly{\mu}_{\lozenge})} < 2\cdot \gamma \cdot \Rb^2 \right)
		\leq
		\mathbb{P} \left( \energy(\hitonly{\mu}_{\lozenge}) \geq \tfrac{K'}{2 \cdot \gamma \cdot \Rb^2} \right) + \mathbb{P} \left( \hitonly{\Sigma}_{\lozenge} \leq \sqrt{K'} \right),
	\end{equation}
	hence to obtain \eqref{eq:small_capacity_GOAL}, it is enough to consider of the two terms separately.
	
	First of all, let us assume that with the constants introduced in Lemmas \ref{lemma:small_energy} and \ref{lemma:big_total_measure}, $\Dlow \geq \max \{ 16, \CBigTotalEnergyPARAMETERLOWER \}$ and $\Dup \leq \CBigTotalEnergyPARAMETERUPPER$ hold. Let us choose
	\begin{equation}\label{K_prime_choice}
		K' := \CPrfDoublingGammaAUX \cdot (\Rb^2 \cdot \gamma \cdot \alpha)^2,
	\end{equation}
where $\CPrfDoublingGammaAUX = ( \CBigTotalEnergyDENOMINATOR /3)^2$ and $ \CBigTotalEnergyDENOMINATOR $ is the constant that appears in Lemma \ref{lemma:big_total_measure}.
Next we bound the second term  on the r.h.s.\ of \eqref{eq:small_capacity_two_terms} using Lemma \ref{lemma:big_total_measure} with $K=\sqrt{K'}$, noting that with our choice \eqref{K_prime_choice} the inequality $ \CBigTotalEnergyDENOMINATOR  \cdot \Rb^2 \cdot \gamma \cdot \alpha  - 3 K \slash 2= 3 \sqrt{K'} \slash 2 >0$ holds:
	\begin{equation}
		\label{eq:small_capacity_SECONDTERM}
		\mathbb{P} \left( \hitonly{\Sigma}_{\lozenge} \leq \sqrt{K'} \right)=	
		\mathbb{P} \left( \hitonly{\Sigma}_{\lozenge} \leq \sqrt{ \CPrfDoublingGammaAUX } \cdot (\Rb^2 \cdot \gamma \cdot \alpha) \right)
		\stackrel{\eqref{eq:lemma:big_total_measure}}{\leq}
		0.02 + \frac{\CBigTotalEnergyUPPER \cdot \psi}{\sqrt{ \CPrfDoublingGammaAUX }} +
\frac{4 \CBigTotalEnergyUPPER}{ 9  \CPrfDoublingGammaAUX  \cdot \gamma \cdot \alpha},
	\end{equation}
where the constant $\CBigTotalEnergyUPPER$ appears in  Lemma \ref{lemma:big_total_measure}.

The first term on the r.h.s.\ of \eqref{eq:small_capacity_two_terms} can be bounded as follows:
	\begin{equation}
		\label{eq:small_capacity_FIRSTTERM}
		\mathbb{P} \left( \energy(\hitonly{\mu}_{\lozenge}) \geq \tfrac{K'}{2 \cdot \gamma \cdot \Rb^2} \right)
		\stackrel{\eqref{eq:lemma:small_energy}, \eqref{K_prime_choice}  }{\leq} \frac{2 \CSmallEnergyUPPER }{ \CPrfDoublingGammaAUX } \left(
		\alpha^{-1} +  \psi \right).
	\end{equation}

	Putting  \eqref{eq:small_capacity_SECONDTERM} and \eqref{eq:small_capacity_FIRSTTERM} back into \eqref{eq:small_capacity_two_terms} we obtain
	\begin{equation}\label{we_need_this_to_be_less_than_quarter}
		\mathbb{P} \left( \capacity(H) < 2 \cdot \gamma \cdot \Rb^2 \right)
		\leq
		0.02 + \frac{\CBigTotalEnergyUPPER  \cdot \psi}{\sqrt{ \CPrfDoublingGammaAUX }} +
 \frac{4 \CBigTotalEnergyUPPER}{ 9  \CPrfDoublingGammaAUX  \cdot \gamma \cdot \alpha}
 +
\frac{2 \CSmallEnergyUPPER }{ \CPrfDoublingGammaAUX } \left( \alpha^{-1} +  \psi  \right).
	\end{equation}
	
	Finally, in order to obtain \eqref{eq:small_capacity_GOAL}, we fix $\Dup$ and $\Dlow$ as above,
   fix  $\psi$ so that $\frac{\CBigTotalEnergyUPPER  \cdot \psi}{\sqrt{ \CPrfDoublingGammaAUX }}  +  \frac{2 \CSmallEnergyUPPER }{ \CPrfDoublingGammaAUX } \cdot \psi \leq 0.01$ and recall that $\gamma \geq \gamma_0$ by the assumption \eqref{gamma_ineq_assumptions}, thus
 we can choose $\underline{\alpha}$ so big that $\frac{4 \CBigTotalEnergyUPPER}{ 9  \CPrfDoublingGammaAUX  \cdot \gamma_0 \cdot \underline{\alpha}} \leq 0.01$ and $ \frac{2 \CSmallEnergyUPPER }{ \CPrfDoublingGammaAUX } \cdot \underline{\alpha}^{-1} \leq 0.01 $ both hold. By \eqref{def:alpha} and \eqref{assumption:alpha_EQ} we have $\alpha \geq \underline{\alpha}$, therefore the r.h.s.\ of \eqref{we_need_this_to_be_less_than_quarter} is less than $1/4$.
The proof of Lemma \ref{lemma_doubling_gamma} is complete.
\end{proof}

\subsection{Small energy}\label{subsection_small_energy}

The goal of Section \ref{subsection_small_energy} is to prove Lemma \ref{lemma:small_energy}.

Since $\mathcal{X}_{\bullet}$ is independent from $\Falg$ (cf.\ \eqref{X_bull_indep_of_Falg}), conditioning on this sigma-algebra only means that everything which is $\Falg$-measurable (e.g. the sets $H^y, y \in \mathcal{D}$) can be treated as if they were deterministic, yet the law of $\mathcal{X}_{\bullet}$ remains unchanged by the conditioning on  $\Falg$. Let us thus denote by $P_{x}(. \, | \, \Falg)$
     the law of  a  nearest neighbour random walk $X$ (starting from $x$) that is independent of $\Falg$, and by  $E_{x}[. \, | \, \Falg]$  the corresponding expectation.
  Similarly,  let
   $P_{x,x'}(. \, | \, \Falg)$ denote the joint law of two independent  nearest neighbour random walks $X$ and $X'$ (starting from $x$ and $x'$, respectively) that are independent of $\Falg$ and let  $E_{x,x'}[. \, | \, \Falg]$ denote the corresponding expectation.

Before we state the first lemma of Section \ref{subsection_small_energy}, let us  note that for any $y \in \mathcal{D}$ we have
\begin{equation}\label{capa_H_y_uuper_c_star}
\capacity(H^y) \stackrel{(\bullet)}{\leq} \gamma \cdot \Rs^2 + 1 \stackrel{(\bullet \bullet)}{\leq} \CCapaHyUUPER \cdot \gamma \cdot \Rs^2,
\end{equation}
where $(\bullet)$ follows
  from Definitions \ref{notation_H_y} and \ref{def:gamma_good_input_package}, moreover $(\bullet \bullet)$ holds with $ \CCapaHyUUPER =\tfrac{1}{\gamma_0}+1$, since
    $\gamma \geq \gamma_0$ holds by \eqref{gamma_ineq_assumptions}.

We will prove Lemma \ref{lemma:small_energy}  by giving an upper bound on $\mathbb{E} \left[ \, \energy  \left( \hitonly{\mu}_{\lozenge} \right)\, \right]$.
Recalling from \eqref{def:mu_of_the_fattened_set} that 	$\hitonly{\mu}_{\lozenge} := \sum_{y \in \mathcal{D}} \hitonly{\mu}^y_{\lozenge}$, the combination
of the next lemma and the bilinearity of the Dirichlet energy (cf.\ \eqref{eq:cond_exp_small_energy} below) will help us in achieving this goal.

\begin{lemma}[Mutual Dirichlet energy bounds]
	\label{upper:cond_exp_energy}
	There exists $C < \infty$ such that if $\Dlow \geq 16$ then for any $y, y' \in \mathcal{D}$ we have
	\begin{equation}
	\label{eq:upper:cond_exp_energy}
	\mathbb{E} \left[ \energy \left( \hitonly{\mu}^y_{\lozenge}, \hitonly{\mu}^{y'}_{\lozenge} \right) \, \Big| \, \Falg \right] \leq
	\begin{cases}
	C \cdot   (\Rs^2 \cdot \gamma \cdot \alpha) \cdot \big( 1 + \psi  \cdot \alpha \big) \cdot \xi_y, & \text{if } y = y', \\
	C \cdot \big(   \Rs^2 \cdot \gamma  \cdot \alpha \big)^2 \cdot  |y - y'|^{2-d} \cdot \xi_y \cdot \xi_{y'}, & \text{if } y \neq y'.
	\end{cases}
	\end{equation}
\end{lemma}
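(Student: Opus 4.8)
The plan is to expand the mutual Dirichlet energy as a double sum over the worms composing $\hitonly{\mathcal{X}}[y]$ and $\hitonly{\mathcal{X}}[y']$, evaluate its $\Falg$-conditional expectation by a Campbell--Mecke computation for Poisson point processes, reduce each summand to a random walk quantity by conditioning at the first hitting time of $H^y$, and finally feed in the estimates of Sections~\ref{subsection_pt_rw} and \ref{subsection_rw_estimates}.

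\emph{Reductions.} First I would note that if $\xi_y=0$ then $H^y=\emptyset$ (Definition~\ref{notation_H_y}), so no worm of $\mathcal{X}_{\bullet}$ can hit $H^y$ and $\hitonly{\mu}^y_{\lozenge}=0$; likewise for $y'$. Hence one may assume $\xi_y=\xi_{y'}=1$. Writing $\hitonly{\mathcal{X}}[y]=\sum_i\delta_{w_i}$ and $\hitonly{\mathcal{X}}[y']=\sum_j\delta_{w_j'}$, one has $\hitonly{\mu}^y_{\lozenge}=\sum_i\mu^{\varphi^y(w_i)}$, so by bilinearity $\energy(\hitonly{\mu}^y_{\lozenge},\hitonly{\mu}^{y'}_{\lozenge})=\sum_{i,j}\energy(\mu^{\varphi^y(w_i)},\mu^{\varphi^{y'}(w_j')})$. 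By \eqref{X_bull_indep_of_Falg} the law of $\mathcal{X}_{\bullet}$ is unchanged by conditioning on $\Falg$, and $\hitonly{\mathcal{X}}[y]=\mathcal{X}_{\bullet}\ind[\hitonly{A}[y]]$ is, given $\Falg$, a deterministic restriction of $\mathcal{X}_{\bullet}$; hence conditionally on $\Falg$ the measure $\hitonly{\mathcal{X}}[y]$ is a Poisson point process with intensity $v\cdot m(L(w))\,(2d)^{1-L(w)}\ind[w\in A_{\bullet}\cap\hitonly{A}[y]]$. Applying the conditional analogues of Lemmas~\ref{prop:one_worm_ppp_formulas} and \ref{prop:two_worms_joint_ppp_formula}: the case $y=y'$ produces a ``diagonal'' term $v\sum_x\sum_\ell m(\ell)E_x[\ind[\cdots]\,\energy(\mu^{\varphi^y(X[0,\ell-1])})]$ plus an off-diagonal double-sum term, while the case $y\neq y'$ produces only the double-sum term, which moreover has the product form of two independent walks because $\hitonly{A}[y]\cap\hitonly{A}[y']=\emptyset$ makes $\hitonly{\mathcal{X}}[y]$ and $\hitonly{\mathcal{X}}[y']$ conditionally independent (cf.\ \eqref{hitonly_X_y_indep}).

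\emph{Random walk estimates.} The key point is a bound on the (mutual) energy of the middle-third windows after conditioning at the hitting time of $H^y$. Fix a worm $w$ of length $\ell$ with $\Dlow\cdot\Rs^2\le\ell\le\Dup\cdot\Rb^2$ starting at $x\in\mathcal{G}$ and subject to $T_{H^y}(w)<\lfloor\ell/3\rfloor$. Since $T_{H^y}$ is a stopping time and $T_{H^y}(w)+2\lfloor\ell/3\rfloor\le\ell-1$, the strong Markov property shows that $\varphi^y(w)$ is the time window $[\lfloor\ell/3\rfloor,2\lfloor\ell/3\rfloor]$ of a simple random walk started at $w(T_{H^y}(w))\in H^y\subseteq\ball(y,3\Rs)$. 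Therefore Proposition~\ref{upper:exp_energy_of_traj} gives $E[\energy(\mu^{\varphi^y(w)})\mid\text{past before }T_{H^y}]\le C\ell$; for two worms of lengths $\ell,\ell'$ both hitting $H^y$, Lemma~\ref{upper:exp_mutual_energy_of_trajs} (using the $(n+n')^{1-d/2}$ option, with $n\asymp\ell$, $n'\asymp\ell'$ and $\ell+\ell'\ge\Dlow\cdot\Rs^2\ge 16\Rs^2$) gives $\le C\ell\ell'\Rs^{2-d}$; for two worms hitting $H^y$ and $H^{y'}$ with $y\neq y'$, since the hitting points lie in $\ball(y,3\Rs)$ and $\ball(y',3\Rs)$ while $|y-y'|\ge 10\Rs$ forces them $\ge\tfrac{4}{10}|y-y'|$ apart, Lemma~\ref{upper:exp_mutual_energy_of_trajs} (using the $|x-x'|^{2-d}$ option) gives $\le C\ell\ell'|y-y'|^{2-d}$. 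Moreover $x\in\mathcal{G}$ forces $|x-y|\ge 4\Rs$ and $\ell\ge\Dlow\cdot\Rs^2\ge 16\Rs^2$, so Lemma~\ref{upper:fuga_hitting_sum} (recentred at $y$, with $r=\Rs$ and $K=H^y$) yields $\sum_{x\in\mathcal{G}}P_x(T_{H^y}\le\ell)\le C\ell\,\capacity(H^y)$. Summing the above over starting points and then over $\ell$, and using $v\sum_{\ell=\Dlow\cdot\Rs^2}^{\Dup\cdot\Rb^2}\ell^2 m(\ell)=\alpha$ (cf.\ \eqref{assumption:alpha_EQ}), one obtains: the diagonal term is $\le C\,\capacity(H^y)\,\alpha$; the same-$y$ off-diagonal term is $\le C\,\capacity(H^y)^2\,\Rs^{2-d}\,\alpha^2$; and the cross term is $\le C\,\capacity(H^y)\,\capacity(H^{y'})\,|y-y'|^{2-d}\,\alpha^2$.

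\emph{Conclusion and main obstacle.} Finally I would substitute $\capacity(H^y),\capacity(H^{y'})\le \CCapaHyUUPER\cdot\gamma\cdot\Rs^2$ (this is \eqref{capa_H_y_uuper_c_star}). For $y\neq y'$ this at once gives the claimed $C(\Rs^2\cdot\gamma\cdot\alpha)^2|y-y'|^{2-d}$. For $y=y'$ the diagonal part is $\le C\,\Rs^2\cdot\gamma\cdot\alpha$, and for the off-diagonal part I would use the upper bound $\gamma\le\psi\cdot\Rs^{d-4}$ from \eqref{gamma_ineq_assumptions}: $\capacity(H^y)^2\Rs^{2-d}\alpha^2\le C\gamma^2\Rs^{6-d}\alpha^2=C(\Rs^2\cdot\gamma\cdot\alpha)\,(\gamma\cdot\Rs^{4-d})\,\alpha\le C(\Rs^2\cdot\gamma\cdot\alpha)\,(\psi\cdot\alpha)$, so the sum of the two parts is $\le C(\Rs^2\cdot\gamma\cdot\alpha)(1+\psi\cdot\alpha)$. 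Reinstating the factors $\xi_y$ (resp.\ $\xi_y\xi_{y'}$) dropped at the very beginning completes the proof. I expect the principal difficulties to be bookkeeping rather than conceptual: carefully justifying the Campbell--Mecke reduction after conditioning on $\Falg$, verifying that the middle-third window genuinely fits inside a worm of length $\ell$ (which is exactly what $T_{H^y}(w)<\lfloor\ell/3\rfloor$ buys us), and correctly applying the strong Markov property at $T_{H^y}$ to two independent walks simultaneously. The one structural observation is that the hypothesis $\gamma\le\psi\cdot\Rs^{d-4}$ is precisely calibrated so that the same-$y$ off-diagonal contribution does not exceed $C\psi\cdot\alpha\cdot(\Rs^2\cdot\gamma\cdot\alpha)$ — this is the only place in the proof where the upper restriction on $\gamma$ in \eqref{gamma_ineq_assumptions} is used.
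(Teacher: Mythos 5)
Your proposal is correct and follows essentially the same route as the paper's proof. All the key steps --- reducing to $\xi_y = \xi_{y'} = 1$, the Campbell--Mecke expansion via Lemma~\ref{prop:two_worms_joint_ppp_formula} producing the diagonal plus off-diagonal decomposition, applying the strong Markov property at $T_{H^y}$ followed by Proposition~\ref{upper:exp_energy_of_traj}, Lemma~\ref{upper:exp_mutual_energy_of_trajs} (with the same case distinction between the $(n+n')^{1-d/2}$ and $|x-x'|^{2-d}$ bounds) and Lemma~\ref{upper:fuga_hitting_sum}, and the final substitutions using \eqref{capa_H_y_uuper_c_star} and $\gamma \leq \psi \Rs^{d-4}$ from \eqref{gamma_ineq_assumptions} --- match the paper's argument.
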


\begin{proof} Recall from Definition \ref{notation_H_y} that if $\xi_y = 0$ then $H_y = \emptyset$ and thus $\hitonly{\mu}^y_{\lozenge} \equiv 0$, so
the statement of Lemma \ref{upper:cond_exp_energy} trivially holds if either $\xi_y=0$ or  $\xi_{y'}=0$.

	For any $y,y' \in \mathcal{D}$ let us introduce the function $f_{y,y'} \, : \, \widetilde{W} \times \widetilde{W} \to \R^+$
	\begin{equation}
		f_{y,y'}(w,w') := \ind \left[\, w, w' \in A_{\bullet}, \, w \in \hitonly{A}[y], \, w' \in \hitonly{A}[y']\, \right] \cdot
	\energy \left( \mu^{ \varphi^y(w) }, \mu^{ \varphi^{y'}(w') } \right),
	\end{equation}
	where  $A_{\bullet}$ and $\hitonly{A}[y]$  were introduced in \eqref{worms:base} and \eqref{worms:hit_Hy_only}, respectively,
$\varphi^y(w)$ was introduced in \eqref{last_one_third_pp} and $\mu^w$ was defined in \eqref{def:local_time}.
 Note that
\begin{equation}\label{when_f_is_zero}
\text{if $y \neq y'$, then $f_{y,y'}(w,w) = 0$ for any $w \in \widetilde{W}$, since $\hitonly{A}[y] \cap \hitonly{A}[y'] = \emptyset$.}
\end{equation}
	
	Recalling the setup of Lemma \ref{prop:two_worms_joint_ppp_formula} and Definition \ref{def:law_on_input_packages}, let us also note that the conditional law of $ \energy ( \hitonly{\mu}^y_{\lozenge}, \hitonly{\mu}^{y'}_{\lozenge} ) $ given $\Falg$ is the same as the conditional law of $\sum_{i, j \in I}  f_{y,y'}( w_i, w_j )$ given $\Falg$, where $\sum_{i \in I} \delta_{w_i} \sim \mathcal{P}_v$ is independent of $\Falg$, thus we can use \eqref{eq:two_worms_joint_ppp_expectation_formula} to obtain
	\begin{align}
	\label{eq:main_exp_energy}
	\mathbb{E} \left[ \energy \left( \hitonly{\mu}^y_{\lozenge}, \hitonly{\mu}^{y'}_{\lozenge} \right) \, \Big| \, \Falg  \right]
	=
	v \cdot \sum_{\ell = \Dlow \Rs^2}^{ \Dup \Rb^2 } m(\ell) \cdot \mathcal{E}(\ell) +
	v^2 \cdot \sum_{\ell, \, \ell' = \Dlow \Rs^2}^{ \Dup \Rb^2 } m(\ell) \cdot m(\ell') \cdot \mathcal{E}(\ell,\ell'),		
	\end{align}
	where
\begin{align}
 \mathcal{E}(\ell) & =		 \sum_{x \in \mathcal{G}}   E_x \left[ f_{y,y'} \left( X[0, \ell-1], X[0, \ell-1] \right) \, | \, \Falg \right], \\
\mathcal{E}(\ell,\ell') & =
 \sum_{x,x' \in \mathcal{G}}
		E_{x,x'} \left[ f_{y,y'} \left( X[0, \ell-1], X'[0, \ell'-1] \right)\, | \, \Falg \right].
	\end{align}
Note that if $y\neq y'$ then $\mathcal{E}(\ell)=0$ by \eqref{when_f_is_zero}.
 If $y=y'$ then
	\begin{align}
	\notag	\mathcal{E}(\ell) & \stackrel{(\circ)}{\leq}
	\sum_{x \in \mathcal{G}}
	E_x \left[
	\ind \left[ T_{H^y} \leq \lfloor \ell \slash 3 \rfloor \right] \cdot
	\sum_{t, \, t' = T_{H^y} + \lfloor \ell \slash 3 \rfloor }^{ T_{H^y} + 2 \lfloor \ell \slash 3 \rfloor }
	g \left( X(t), X(t') \right)
	\, \Bigg| \, \Falg
	\right]
	   \\ \notag
		& \stackrel{ (*) }{=}
		\sum_{x \in \mathcal{G}} P_x \left( T_{H^y} \leq \lfloor \ell \slash 3 \rfloor \, | \, \Falg \right) \cdot
		E_o \left[ \sum_{t, t' = \lfloor \ell \slash 3 \rfloor}^{ 2 \lfloor \ell \slash 3 \rfloor } g \left( X(t), X'(t')\right) \right]
		 \\ & \stackrel{\eqref{eq:upper:exp_energy_of_traj}, \eqref{eq:upper:fuga_hitting_sum}}{ \leq }
		C \cdot \ell \cdot \ell \cdot \capacity(H^y) \cdot \xi_y, \label{Eell_bound}
	\end{align}
where the inequality $(\circ)$ holds because we ignored the restriction $\mathrm{Tr}(w) \subseteq \ball(3\Rb)$ from $A_{\bullet}$ and we replaced $\hitonly{A}[y]$ by the less restrictive $\{  T_{H^{y}}(w) \leq \lfloor L(w) \slash 3 \rfloor \}$, moreover $(*)$ follows from the strong Markov property at $T_{H^y}$ and the translation invariance of the simple random walk.
	\begin{align}
		\mathcal{E}(\ell,\ell') & \stackrel{(\circ \circ)}{\leq}
\sum_{x, x' \in \mathcal{G}}
	E_{x,x'} \Bigg[\,
	\ind \left[ T_{H^y}(X) \leq \lfloor \ell \slash 3 \rfloor \right] \cdot
	\ind \left[ T_{H^{y'}}(X') \leq \lfloor \ell' \slash 3 \rfloor \right]
	\cdot  \notag \\ \notag & \qquad \qquad \qquad \cdot
	\sum_{t = T_{H^y}(X) + \lfloor \ell \slash 3 \rfloor}^{ T_{H^y}(X) + 2 \lfloor \ell \slash 3 \rfloor }
	\sum_{ t' = T_{H^{y'}}(X') + \lfloor \ell' \slash 3 \rfloor }^{ T_{H^{y'}}(X') + 2 \lfloor \ell' \slash 3 \rfloor } g \left( X(t), X'(t') \right)
	\, \Bigg| \, \Falg \,
	\Bigg].\\
		& \stackrel{(**)}{\leq}
		\sum_{x, x' \in \mathcal{G}} P_x \left( T_{H^y} \leq \lfloor \ell \slash 3 \rfloor \, | \, \Falg \right) \cdot
		P_{x'} \left( T_{H^{y'}} \leq \lfloor \ell' \slash 3 \rfloor \, | \, \Falg \right)
		\cdot \notag \\ & \qquad \qquad \qquad \cdot
		\max_{z \in H^y} \max_{ z' \in H^{y'}} E_{z,z'} \left[
		\sum_{t = \lfloor \ell \slash 3 \rfloor}^{ 2 \lfloor \ell \slash 3 \rfloor}
		\sum_{ t' = \lfloor \ell' \slash 3 \rfloor }^{ 2 \lfloor \ell' \slash 3 \rfloor } g \left( X(t), X'(t') \right) \right]
		\notag \\ & \stackrel{(\diamond)}{\leq}
		C \cdot \ell \cdot \capacity(H^y) \cdot \ell' \cdot \capacity(H^{y'}) \cdot \xi_y \cdot \xi_{y'} \cdot
		\notag \\ & \qquad \qquad \qquad \cdot
		\label{eq:diff_term}
		\ell \cdot \ell' \cdot \max_{z \in H^y} \max_{z' \in H^{y'}}   \min \left\{ |z - z'|^{2-d}, \,  ( \ell + \ell' )^{1-d/2}  \right\}
	\end{align}
	where $(\circ \circ)$ follows from similar considerations as $(\circ)$ above,  in $(**)$ we used the strong Markov property at $T_{H^y}$ and at $T_{H^{y'}}$ and the conditional independence of the simple walks $X$ and $X'$ under $P_{x,x'}(. \, | \, \Falg)$ and $(\diamond)$ follows from \eqref{eq:upper:fuga_hitting_sum} using $\Dlow \geq 16$ and \eqref{eq:upper:exp_mutual_energy_of_trajs}.

	Let us examine the last term of \eqref{eq:diff_term}. Recalling from Definition \ref{notation_H_y} that $H^y \subseteq \ball(y, 3 \Rs)$ holds for any $y \in \mathcal{D}$ we can infer that if $y \neq y'$ then for any $z \in H^{y}$ and $z' \in H^{y'}$ we have $|z - z'| \geq (4 \slash 10) \cdot |y - y'|$. So by $\ell+\ell' \geq 2 \Dlow \Rs^2 \geq \Rs^2 $ we obtain the  upper bound
	\begin{equation}\label{maxmax}
		\max_{z \in H^y} \max_{z' \in H^{y'}}   \min \left\{ |z - z'|^{2-d}, \,  ( \ell + \ell' )^{1-d/2}  \right\}   \leq
					 \min \left\{ (\tfrac{4}{10} \cdot |y-y'|)^{2-d} , \,  \Rs^{2-d} \right\},	
	\end{equation}
which holds in the case $y=y'$ as well as the case $y \neq y'$.	

	Putting \eqref{Eell_bound}, \eqref{eq:diff_term} and \eqref{maxmax} back into \eqref{eq:main_exp_energy}, summing in $\ell$ and $\ell'$, using \eqref{assumption:alpha_EQ} and grouping the $\mathds{1}[y = y']$ terms together, we get
	\begin{equation*}
	 \mathbb{E} \left[ \energy \left( \hitonly{\mu}^y_{\lozenge}, \hitonly{\mu}^{y'}_{\lozenge} \right) \, \Big| \, \Falg  \right]
	\leq \begin{cases}
	C \cdot \alpha \cdot \capacity(H^y) \cdot (1 + \alpha \cdot \capacity(H^y)  \cdot \Rs^{2- d} ) \cdot \xi_y & \text{ if }  y = y', \\
	 	C \cdot \alpha^2 \cdot \capacity(H^y) \cdot \capacity(H^{y'}) \cdot
	 |y-y'|^{2-d} \cdot \xi_y \cdot \xi_{y'} & \text{ if } y \neq y',
\end{cases}
	\end{equation*}
	which implies the desired upper bounds \eqref{eq:upper:cond_exp_energy}, since for any $y \in \mathcal{D}$ we have $\capacity(H^y) \leq \CCapaHyUUPER \cdot \gamma \cdot \Rs^2$ by \eqref{capa_H_y_uuper_c_star}.
 The inequality \eqref{capa_H_y_uuper_c_star}  also implies
 $\capacity(H^y)  \cdot \Rs^{2- d} \leq \CCapaHyUUPER \cdot \gamma \cdot \Rs^{4-d} \leq \CCapaHyUUPER \cdot \psi$, using  assumption \eqref{gamma_ineq_assumptions} of Lemma \ref{lemma_doubling_gamma} in the last inequality, giving \eqref{eq:upper:cond_exp_energy}.
\end{proof}

In order to achieve \eqref{eq:lemma:small_energy}, we will give an upper bound on $\mathbb{E} \left[ \, \energy  \left( \hitonly{\mu}_{\lozenge} \right)\, \right]$
using Lemma \ref{upper:cond_exp_energy}:
\begin{multline} \label{eq:cond_exp_small_energy}
	\mathbb{E} \left[ \, \energy \left( \hitonly{\mu}_{\lozenge} \right)  \, \big| \, \Falg \right]
	 \stackrel{\eqref{def:mu_of_the_fattened_set} }{=}
	\sum_{y \in \mathcal{D}} \mathbb{E} \left[ \energy \left( \hitonly{\mu}_{\lozenge}^y \right) \, \big| \, \Falg \right] +
	\sum_{y \in \mathcal{D}} \sum_{y' \in \mathcal{D} \setminus \{ y \} } \mathbb{E} \left[ \energy \left( \hitonly{\mu}^y_{\lozenge}, \hitonly{\mu}^{y'}_{\lozenge} \right) \, \Big| \, \Falg  \right] \stackrel{\eqref{eq:upper:cond_exp_energy} }{\leq}
	 \\
	C \cdot  ( \Rs^2 \cdot \gamma \cdot \alpha)  \cdot
	\left( \big( 1 + \psi  \cdot \alpha \big) \cdot \sum_{y \in \mathcal{D}} \xi_y  +
	 (\Rs^2 \cdot \gamma \cdot \alpha) \cdot \sum_{y \in \mathcal{D}} \sum_{ y' \in \mathcal{D} \setminus \{ y \} } \xi_y \cdot \xi_{y'} \cdot  | y -y' |^{ 2 - d } \right).
\end{multline}
We bound $\mathbb{E} \left[ \, \energy  \left( \hitonly{\mu}_{\lozenge} \right)\, \right]$ by taking the expectation of the r.h.s.\ of \eqref{eq:cond_exp_small_energy}, thus
   it only remains to bound $\mathbb{E} \left[ \sum_{y \in \mathcal{D}} \xi_y \right]$ and $\mathbb{E} \left[ \sum_{y \in \mathcal{D}} \sum_{ y' \in \mathcal{D} \setminus \{ y \} } \xi_y \cdot \xi_{y'} \cdot  | y -y' |^{ 2 - d } \right]$.
       By \eqref{nota:xi_y} and \eqref{nota:chi_y} we have $\xi_y \leq \ind \left[ T_{\ball(y, \Rs)}(\pinkwalk) < \infty \right]$ for each $y \in \mathcal{D}$, so we can use Lemma \ref{upper:numb_of_visited_boxes} with parameters $r = \Rs$ and $R = \Rb$  to obtain
\begin{equation*}
	\mathbb{E} \left[\, \energy  \left( \hitonly{\mu}_{\lozenge} \right)\, \right] \leq
	C  \cdot (\Rb^2 \cdot \gamma \cdot \alpha) \cdot
	\left(
		1 + \psi  \cdot \alpha +
		 (\Rs^2 \cdot \gamma \cdot \alpha)  \cdot  \Rs^{2-d}
	\right)	
	 \stackrel{ \eqref{gamma_ineq_assumptions} }{\leq }
	\CSmallEnergyUPPER  \cdot (\Rb^2 \cdot \gamma \cdot \alpha) \cdot \Big( 1  + \psi \cdot \alpha \Big),
\end{equation*}
from which the statement of Lemma \ref{lemma:small_energy}  follows by Markov's inequality.

\subsection{Big total local time}\label{subsection_big_local_time}
The goal of Section \ref{subsection_big_local_time} is to prove Lemma \ref{lemma:big_total_measure}. After introducing some notation, we will state three key lemmas (Lemmas \ref{prop:cond_restrict_measure_total}, \ref{lower:no_of_visited_good_boxes} and \ref{lemma_expect_bound_interactions}), then we deduce the proof of Lemma \ref{lemma:big_total_measure} from them, and finally we prove the three lemmas in Sections \ref{subsub_exp_var},\ref{subsub_xi_low_good_box} and \ref{subsub_double_visits}, respectively.

Recalling the notation of $\hitonly{\mathcal{X}}[y]$ and $\hitonly{\mathcal{X}}_{\lozenge}[y]$ from \eqref{worms:hit_Hy_only} and \eqref{last_one_third_pp}, let us define
\begin{equation}\label{some_notation_needed_for_big_total_local_time}
	\hitonly{\Sigma}_\lozenge[y] := \hitonly{\mu}_{\lozenge}^y(\mathbb{Z}^d) = \Sigma^{\hitonly{\mathcal{X}}_{\lozenge}[y]}, \qquad
	\hitonly{\Sigma}[y] := \Sigma^{\hitonly{\mathcal{X}}[y]}, \qquad y \in \mathcal{D}.
\end{equation}
In words: $\hitonly{\Sigma}_\lozenge[y]$ and $\hitonly{\Sigma}[y]$ are the total local times of the point measures $\hitonly{\mathcal{X}}_{\lozenge}[y]$ and $\hitonly{\mathcal{X}}[y]$ of worms, respectively.

By \eqref{def:mu_of_the_fattened_set} we have $\hitonly{\Sigma}_{\lozenge} = \sum_{y \in \mathcal{D}} \hitonly{\Sigma}_{\lozenge}[y]$.
Let us also note that $\hitonly{\Sigma}_\lozenge[y] \geq \frac{1}{3} \hitonly{\Sigma}[ y]$, since we have kept at least one third of each worm in $\hitonly{\mathcal{X}}[y]$ when we created $\hitonly{\mathcal{X}}_{\lozenge}[y]$ in \eqref{last_one_third_pp}. As a consequence,
\begin{equation}
	\label{nota:hitonly_Sigma}
 \text{ it is enough to prove Lemma \ref{lemma:big_total_measure} with }	\hitonly{\Sigma} :=\sum_{y \in \mathcal{D}}\hitonly{\Sigma}[y]	
 \text{ in place of $\hitonly{\Sigma}_\lozenge$.}
\end{equation}
Our goal is to show that $\mathbb{P}(\hitonly{\Sigma} \leq K)$ is small, i.e., we want $\hitonly{\Sigma}$ to be big enough with high enough probability. There are multiple technical difficulties that we have to overcome:
\begin{enumerate}[(i)]
\item \label{problem_number_chi} If we want $\hitonly{\Sigma}[y] >0$ then we need $H_y\neq \emptyset$ (cf.\ \eqref{worms:hit_Hy_only}), thus we want $\xi_y=1$ (cf.\ Definition \ref{notation_H_y}), hence we want $\chi_y=1$ (cf.\ \eqref{nota:xi_y}), therefore we certainly want to show that $\sum_{y \in \mathcal{D}} \chi_y$ is big enough with high enough probability. We will do this using Lemma \ref{lower:no_of_visited_boxes}.
\item \label{problem_crowd} $\hitonly{\Sigma}[y]$ is the total length of worms in $\hitonly{\mathcal{X}}[y]$, and the worms in $\hitonly{\mathcal{X}}[y]$ cannot hit any $H_{\widetilde{y}}$ where $\widetilde{y} \in \mathcal{D} \setminus \{ y \}$ (cf.\ \eqref{worms:hit_Hy_only}), thus we want to guarantee that the set $\{ y \in \mathcal{D} \, : \, \chi_y=1 \}$ is well spread-out, so that the total length of worms that hit multiple sets of form $H_{y'}, y' \in \mathcal{D}$ is not too big. We will do this using \eqref{eq:upper:weighted_exp_visited_boxes}.
\item \label{problem_dummy} The summands $\hitonly{\Sigma}[y], y \in \mathcal{D}$ are independent given $\Falg$ (cf.\ \eqref{hitonly_X_y_indep}), thus, after conditioning on $\Falg$, we would like to use Chebyshev's inequality  to show that their sum is big with high probability. This approach would involve an upper bound on
    $\Var \left( \hitonly{\Sigma}[y] \, \big| \, \Falg \right)$ and a lower bound on $\mathbb{E} \left[ \hitonly{\Sigma}[y] \, \big| \, \Falg \right]$. However, the latter bound is problematic, since the conditioning on $\Falg$ might put us in a situation where $H_y$ is surrounded with other sets of form $H_{\widetilde{y}}$, and the effect of such a ``crowd'' makes it hard for us to give a lower bound on the total length of worms that only hit $H_y$. We will overcome this  problem with a trick: first we introduce ``dummy'' worms (or rather, worm lengths) in \eqref{kieg_vava} to make up for  the deficit of $\mathbb{E} \left[ \hitonly{\Sigma}[y] \, \big| \, \Falg \right]$ caused by the crowd, and since each dummy worm hits multiple sets of form $H_{y'}, y' \in \mathcal{D}$, we will control their effect using the strategy outlined in \eqref{problem_crowd}.
\end{enumerate}

Now let us make the above ideas rigorous.
Recalling \eqref{worms:base}, for any $y \in \mathcal{D}$ let us define
\begin{align}
	\label{worms:hit_one}
	\mathcal{X}[y] & := \mathcal{X}_{\bullet} \ind \big[ \, T_{H^y}(w) < \lfloor L(w)/3 \rfloor  \, \big], & \Sigma[y] := \Sigma^{\mathcal{X}[y]}, \\
	\label{worms:hit_more_than_one}
	\overline{\mathcal{X}}[y] & :=  \mathcal{X}_{\bullet} \ind \big[ \, T_{H^y}(w) < \lfloor L(w)/3 \rfloor, \; \exists \, y' \in \mathcal{D}\setminus \{y\} \,: \,   T_{H^{y'}}(w)< L(w) \, \big],
	& \overline{\Sigma}[y]  :=\Sigma^{\overline{\mathcal{X}}[y]}.
\end{align}
In words: the point measure $\mathcal{X}[y]$ consists of those worms of $\mathcal{X}_{\bullet}$ whose first third segment hits $H^y$, while $\overline{\mathcal{X}}[y]$
consists of those worms of $\mathcal{X}[y]$ that also hit some other $H^{y'}$ as well. $\Sigma[y]$ and  $\overline{\Sigma}[y]$ denote the total local times of the point measures
$\mathcal{X}[y]$ and $\overline{\mathcal{X}}[y]$ of worms, respectively.

Note that $\mathcal{X}[y] = \hitonly{\mathcal{X}}[y]+\overline{\mathcal{X}}[y]$, and thus
\begin{equation} \label{sigma_y_sum_identity}
\Sigma[y]=\hitonly{\Sigma}[y]+\overline{\Sigma}[y].
\end{equation}

Recall the definition of the sigma-algebra $\Falg$ from Definition \ref{nota:Falg}.
 For any $y \in \mathcal{D}$,
\begin{equation}\label{kieg_vava}
\begin{array}{c}
\text{ let $\overline{\Sigma}_c[y]$ denote a random variable which has the same conditional distribution  } \\
\text{ (given $\Falg$) as $\overline{\Sigma}[y]$, but conditionally independent of everything else given $\Falg$.}
\end{array}
\end{equation}

Let us also define
\begin{equation}
	\label{nota:copy_Sigmay}
	\Sigma_c[y] := \hitonly{\Sigma}[y] + \overline{\Sigma}_c[y]
\end{equation}
and observe that it follows from \eqref{hitonly_X_y_indep}, \eqref{some_notation_needed_for_big_total_local_time}, \eqref{sigma_y_sum_identity} and \eqref{kieg_vava} that
\begin{equation}\label{fuggetlenito_trukk}
\begin{array}{l}  \text{  $\Sigma_c[y], \, y \in \mathcal{D}$  are conditionally independent of each other given $\Falg$ and} \\
\text{ $\Sigma_c[y]$ has the same conditional distribution as $\Sigma[ y]$ given $\Falg$ for each $y \in \mathcal{D}$.}
\end{array}
\end{equation}

\noindent With the notation introduced above, we can write
\begin{equation}
	\label{eq:hitonly_Sigma_approach}
	\hitonly{\Sigma} = \sum_{y \in \mathcal{D}} \Sigma_c[y] \cdot \xi_y - \sum_{y \in \mathcal{D}} \overline{\Sigma}_c[y] \cdot \xi_y.
\end{equation}

In order to prove Lemma \ref{lemma:big_total_measure},
 we will give a lower bound on the first sum on the r.h.s.\ of \eqref{eq:hitonly_Sigma_approach} and an upper bound on the second sum on the r.h.s.\ of \eqref{eq:hitonly_Sigma_approach}. We will now state the three key lemmas mentioned above, then deduce the proof of Lemma \ref{lemma:big_total_measure} from them.  We will prove the three lemmas in Sections \ref{subsub_exp_var},\ref{subsub_xi_low_good_box} and \ref{subsub_double_visits},  respectively.

The first  lemma will help us to give a lower bound on the first sum on the r.h.s.\ of \eqref{eq:hitonly_Sigma_approach}.

\begin{lemma}[Bounds on the expectation/variance of the total length of worms hitting $H^y$]
	\label{prop:cond_restrict_measure_total}
	There exist constants $\CCondExpRestrictMeasureTotalLOWER, \CLowerFugaHitDELTALOWER > 0$ and $\CCondVarRestrictMeasureTotalUPPER , \CLowerFugaHitDELTAUPPER < \infty$ such that the following bounds hold.
	\begin{enumerate}
		\item If $\Dlow \geq \CLowerFugaHitDELTALOWER$ and $\Dup \leq \CLowerFugaHitDELTAUPPER$ then for any $y \in \mathcal{D}$, we have
		\begin{equation}
		\label{eq:lower:cond_exp_restrict_measure_total}
		\mathbb{E} \left[ \Sigma[y] \, \big| \, \Falg \right] \geq \CCondExpRestrictMeasureTotalLOWER \cdot (\Rs^2 \cdot \gamma \cdot \alpha) \cdot \xi_y.
		\end{equation}
		\item If $\Dlow \geq 16$, then for any $y \in \mathcal{D}$ we have
		\begin{equation}
		\label{eq:upper:cond_var_restrict_measure_total}
		\Var \left( \Sigma[y] \, \big| \, \Falg \right) \leq
		\CCondVarRestrictMeasureTotalUPPER \cdot \Rb^2  \cdot (\Rs^2 \cdot \gamma \cdot \alpha) \cdot \xi_y
		\end{equation}
	\end{enumerate}
\end{lemma}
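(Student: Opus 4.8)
The plan is to compute the conditional moments $\mathbb{E}[\Sigma[y]\mid\Falg]$ and $\Var(\Sigma[y]\mid\Falg)$ with the first- and second-moment identities \eqref{eq:one_worm_ppp_expectation_formula} and \eqref{eq:one_worm_ppp_variance_formula} of Lemma \ref{prop:one_worm_ppp_formulas}, and then to insert the two-sided control of sums of random walk hitting probabilities supplied by Lemmas \ref{lower:fuga_hitting_sum} and \ref{upper:fuga_hitting_sum}. First note that $\xi_y$ and the set $H^y$ are $\Falg$-measurable, while $\mathcal{X}_{\bullet}$ is independent of $\Falg$ by \eqref{X_bull_indep_of_Falg}; hence conditioning on $\Falg$ only freezes $H^y$ and $\xi_y$ and leaves $\mathcal{X}_{\bullet}$ a Poisson point process with its original intensity, so that Lemma \ref{prop:one_worm_ppp_formulas} applies after conditioning. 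If $\xi_y=0$ then $H^y=\emptyset$, no worm hits $H^y$, $\Sigma[y]=0$, and both \eqref{eq:lower:cond_exp_restrict_measure_total} and \eqref{eq:upper:cond_var_restrict_measure_total} hold trivially, so one may assume $\xi_y=1$; then $H^y\subseteq\ball(y,3\Rs)$ and, by \eqref{capa_H_y_uuper_c_star}, $\gamma\Rs^2\le\capacity(H^y)\le\CCapaHyUUPER\cdot\gamma\Rs^2$.

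Writing $\mathcal{X}_{\bullet}=\widehat{\mathcal{X}}\,\ind[A_{\bullet}]$ (the length cap $L(w)\le\Rb^2$ inherited from $\worms$ being redundant since $\Dup<1$) and recalling \eqref{worms:base} and \eqref{worms:hit_one}, an application of Lemma \ref{prop:one_worm_ppp_formulas} conditionally on $\Falg$ with $f(w)=L(w)\cdot\ind[\,w\in A_{\bullet},\;T_{H^y}(w)<\lfloor L(w)/3\rfloor\,]$ gives
\begin{align*}
\mathbb{E}[\Sigma[y]\mid\Falg] &= v\sum_{\ell=\Dlow\Rs^2}^{\Dup\Rb^2} m(\ell)\,\ell\sum_{x\in\mathcal{G}}P_x\!\left(T_{H^y}<\lfloor\ell/3\rfloor,\;\mathrm{Tr}(X[0,\ell-1])\subseteq\ball(3\Rb)\right),\\
\Var(\Sigma[y]\mid\Falg) &= v\sum_{\ell=\Dlow\Rs^2}^{\Dup\Rb^2} m(\ell)\,\ell^2\sum_{x\in\mathcal{G}}P_x\!\left(T_{H^y}<\lfloor\ell/3\rfloor,\;\mathrm{Tr}(X[0,\ell-1])\subseteq\ball(3\Rb)\right).
\end{align*}
For the lower bound I would keep the confinement event: since $\{T_{\ball(3\Rb)^c}>\ell\}\subseteq\{\mathrm{Tr}(X[0,\ell-1])\subseteq\ball(3\Rb)\}$, since $\mathcal{G}\supseteq\ball(2\Rb)\setminus\bigcup_{x\in10\Rs\Z^d}\ball(x,4\Rs)$ by \eqref{nota:ground_for_worms}, and since $y\in10\Rs\Z^d$ with $\ball(y,\Rs)\cap\ball(2\Rb)\ne\emptyset$ and $\Dlow\Rs^2\le\ell\le\Dup\Rb^2$, Lemma \ref{lower:fuga_hitting_sum} applied with $r=\Rs$, $R=\Rb$, $K=H^y$ (legitimate once $\Dlow\ge\CLowerFugaHitDELTALOWER$, $\Dup\le\CLowerFugaHitDELTAUPPER$) yields $\sum_{x\in\mathcal{G}}P_x(\cdot)\ge\CLowerFugaHitLOWER\cdot\ell\cdot\capacity(H^y)$. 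The mismatch between the strict $T_{H^y}(w)<\lfloor L(w)/3\rfloor$ in \eqref{worms:hit_one} and the non-strict $T_K\le\ell/3$ in Lemma \ref{lower:fuga_hitting_sum} is resolved by re-running the short argument behind that lemma (via Lemma \ref{lemma:two_birds_one_stone}) with $\lfloor\ell/3\rfloor$ in place of $\ell/3$, which costs nothing since $\ell\ge\Dlow\Rs^2$ is large. Summing against $v\ell m(\ell)$, using $\capacity(H^y)\ge\gamma\Rs^2$ and \eqref{assumption:alpha_EQ} then gives $\mathbb{E}[\Sigma[y]\mid\Falg]\ge\CCondExpRestrictMeasureTotalLOWER\cdot\gamma\Rs^2\cdot v\sum_\ell\ell^2m(\ell)=\CCondExpRestrictMeasureTotalLOWER\cdot(\Rs^2\gamma\alpha)$, which is \eqref{eq:lower:cond_exp_restrict_measure_total}.

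For the variance bound I would instead discard the confinement event and weaken $\{T_{H^y}<\lfloor\ell/3\rfloor\}$ to $\{T_{H^y}\le\ell\}$; since $\mathcal{G}$ is disjoint from $\ball(y,4\Rs)$ (because $y\in\mathcal{D}$) and $H^y\subseteq\ball(y,3\Rs)$, Lemma \ref{upper:fuga_hitting_sum} with $r=\Rs$ — applicable since $\ell\ge\Dlow\Rs^2\ge16\Rs^2$ — gives $\sum_{x\in\mathcal{G}}P_x(T_{H^y}\le\ell)\le C\cdot\ell\cdot\capacity(H^y)$, hence $\Var(\Sigma[y]\mid\Falg)\le C\cdot\capacity(H^y)\cdot v\sum_\ell\ell^3m(\ell)$. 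Using $\ell^3\le\Dup\Rb^2\cdot\ell^2$, $\capacity(H^y)\le\CCapaHyUUPER\gamma\Rs^2$, $\Dup<1$ and \eqref{assumption:alpha_EQ} then yields $\Var(\Sigma[y]\mid\Falg)\le\CCondVarRestrictMeasureTotalUPPER\cdot\Rb^2\cdot(\Rs^2\gamma\alpha)$, which is \eqref{eq:upper:cond_var_restrict_measure_total}.

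The one genuinely nontrivial ingredient is the lower bound on $\mathbb{E}[\Sigma[y]\mid\Falg]$, whose whole point is the use of Lemma \ref{lower:fuga_hitting_sum} — the statement that a lower bound matching the easy upper bound of Lemma \ref{upper:fuga_hitting_sum} persists even after restricting to walks confined to $\ball(3\Rb)$; everything else is a bookkeeping exercise with the Poisson moment formulas. The points to watch are: that conditioning on $\Falg$ really does freeze $H^y$ and $\xi_y$ while preserving the Poisson law of $\mathcal{X}_{\bullet}$; that one must retain $\{\mathrm{Tr}(w)\subseteq\ball(3\Rb)\}$ in the expectation bound but may drop it in the variance bound; and that the floor function $\lfloor L(w)/3\rfloor$ forces only a harmless inflation of the threshold $\CLowerFugaHitDELTALOWER$, with no consequence once the global constants are fixed (cf.\ the constant-matching at the start of the proof of Lemma \ref{lemma_doubling_gamma}).
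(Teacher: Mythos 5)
Your proof follows essentially the same route as the paper's: condition on $\Falg$ (which is legitimate by \eqref{X_bull_indep_of_Falg}), apply the Poisson moment identities of Lemma \ref{prop:one_worm_ppp_formulas} to the indicator function defining $\mathcal{X}[y]$, then feed in Lemma \ref{lower:fuga_hitting_sum} for the lower bound on the expectation (retaining the confinement event, as required by that lemma) and Lemma \ref{upper:fuga_hitting_sum} for the upper bound on the variance (dropping the confinement event and relaxing to $T_{H^y}\le\ell$), finishing in both cases with $\capacity(H^y)\asymp\gamma\Rs^2$ from Definition \ref{def:gamma_good_input_package} and \eqref{capa_H_y_uuper_c_star}, the bound $\ell^3\le\Dup\Rb^2\ell^2$, and \eqref{assumption:alpha_EQ}. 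Your remark about the strict-versus-nonstrict threshold and $\lfloor\ell/3\rfloor$ matches what the paper silently absorbs into the constants, so the proposal is correct and structurally identical to the paper's argument.
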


We will prove Lemma \ref{prop:cond_restrict_measure_total} in Section \ref{subsub_exp_var}.

 Let us denote by $\xi$ the number of good boxes visited by $\pinkwalk$:
  \begin{equation}\label{xi_def_eq}
  \xi:=\sum_{y \in \mathcal{D}} \xi_y.
  \end{equation}

The second lemma will also help us to  lower bound  the first sum on the r.h.s.\ of \eqref{eq:hitonly_Sigma_approach}.

\begin{lemma}[Lower bound on the number of good boxes visited by $\pinkwalk$]
	\label{lower:no_of_visited_good_boxes}
If $\underline{\Delta} \geq \CLowerNOVisitedBoxesRATIOLOWER \vee (300/ \CLowerNOVisitedBoxesLOWER ) $ (where $ \CLowerNOVisitedBoxesLOWER $ and $ \CLowerNOVisitedBoxesRATIOLOWER $ appear in the statement of Lemma \ref{lower:no_of_visited_boxes})
 then we have
	\begin{equation}
	\label{eq:lower:no_of_visited_good_boxes}
	\mathbb{P} \left( \xi \geq  \CLowerNOVisitedBoxesLOWER /2 \cdot \Rb^2 \slash \Rs^2 \right) \geq 0.98.
	\end{equation}
\end{lemma}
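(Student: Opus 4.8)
The plan is to prove \eqref{eq:lower:no_of_visited_good_boxes} by combining two inputs: (a) Lemma \ref{lower:no_of_visited_boxes}, which says that $\pinkwalk$ makes at least $\CLowerNOVisitedBoxesLOWER\cdot\Rb^2/\Rs^2$ \emph{proper} visits to sub-boxes of scale $\Rs$; and (b) the induction hypothesis \eqref{induction_hypotheses}, which (after translation) says that each properly visited sub-box is ``good'' with conditional probability at least $3/4$. These are glued together by a sequential coupling of the goodness indicators $\xi_{y^{(k)}}$ with i.i.d.\ Bernoulli$(3/4)$ variables, followed by Chebyshev's inequality. Throughout write $N:=\sum_{y\in\mathcal{D}}\chi_y$ and $m_0:=\CLowerNOVisitedBoxesLOWER\cdot\Rb^2/\Rs^2$.

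\noindent\textbf{Step 1: many proper visits.} The set $\mathcal{D}$ and the indicators $\chi_y$ in \eqref{def:D_index_set}--\eqref{nota:chi_y} are precisely those appearing in Lemma \ref{lower:no_of_visited_boxes} with $r=\Rs$ and $R=\Rb$; the walk underlying $\pinkwalk$ starts in $\ball(\Rb)$ since $(\worms,\pinkwalk)\sim\mathcal{P}_{o,\Rb,z,v}$ with $z\in\ball(\Rb)$; and each $\chi_y$ depends on $\pinkwalk$ only up to time $T_{\ball(2\Rb)^c}(\pinkwalk)\wedge(\Rb^2-\Rs^2)$, which is no later than the truncation time of $\pinkwalk$, so we may freely couple $\pinkwalk$ with the infinite walk of Lemma \ref{lower:no_of_visited_boxes} without altering the $\chi_y$. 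The hypothesis $\Dlow\geq\CLowerNOVisitedBoxesRATIOLOWER\vee(300/\CLowerNOVisitedBoxesLOWER)$ together with \eqref{big_Dlow_big_ratio} gives $\Rb/\Rs\geq\CLowerNOVisitedBoxesRATIOLOWER$, so Lemma \ref{lower:no_of_visited_boxes} applies and yields $\mathbb{P}(N\geq m_0)\geq 0.997$; the same hypothesis also gives $m_0\geq\CLowerNOVisitedBoxesLOWER\cdot\Dlow\geq 300$, which we use in Step 3.

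\noindent\textbf{Step 2: each proper visit is good with conditional probability $\geq 3/4$.} Enumerate the properly visited sub-boxes in order of first proper visit as $y^{(1)},y^{(2)},\dots$ with corresponding hitting times $\tau_1<\tau_2<\dots$, so $\{N\geq k\}=\{\tau_k<T_{\ball(2\Rb)^c}(\pinkwalk)\wedge(\Rb^2-\Rs^2)\}$ and $z^{y^{(k)}}=Z(\tau_k)\in\ball(y^{(k)},\Rs)$. Let $\mathcal{H}_k$ be the $\sigma$-field generated by $\pinkwalk$ up to time $\tau_k$ and by the point processes $\worms^{y^{(j)}}=\widehat{\worms}\ind[\,w(0)\in\ball(y^{(j)},2\Rs),\,L(w)\leq\Rs^2\,]$ for $j<k$. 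The key claim is that, on $\{N\geq k\}$, the conditional law of $(\worms^{y^{(k)}},\pinkwalk^{y^{(k)}})$ given $\mathcal{H}_k$ is $\mathcal{P}_{y^{(k)},\Rs,z^{y^{(k)}},v}$. Indeed, conditionally on $\pinkwalk|_{[0,\tau_k]}$ the labels $y^{(1)},\dots,y^{(k)}$ are fixed and the balls $\ball(y,2\Rs)$, $y\in 10\Rs\Z^d$, are pairwise disjoint, so $\worms^{y^{(k)}}$ is a restriction of the PPP $\widehat{\worms}$ to a set of worms disjoint from those generating $\mathcal{H}_k$, hence a PPP with the intensity of Definition \ref{def:law_on_input_packages}(i), independent of $\mathcal{H}_k$; and by the strong Markov property of $\pinkwalk$ at $\tau_k$, the future of $\pinkwalk$ is a fresh simple random walk from $z^{y^{(k)}}$, independent of $\mathcal{H}_k$ and of $\worms^{y^{(k)}}$, from which $\pinkwalk^{y^{(k)}}$ is obtained exactly as in Definition \ref{def:law_on_input_packages}(ii). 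Since $z^{y^{(k)}}-y^{(k)}\in\ball(\Rs)$, the induction hypothesis \eqref{induction_hypotheses} and the translation invariance \eqref{shift_invar_package} then give, on $\{N\geq k\}$,
\[
\mathbb{P}\big(\xi_{y^{(k)}}=1\,\big|\,\mathcal{H}_k\big)=\mathcal{P}_{y^{(k)},\Rs,z^{y^{(k)}},v}\big((\worms,\pinkwalk)\text{ is }(y^{(k)},\Rs,z^{y^{(k)}},\gamma)\text{-good}\big)\geq\tfrac34 .
\]

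\noindent\textbf{Step 3: coupling and Chebyshev; the main obstacle.} Enlarging the probability space, construct i.i.d.\ $B_1,B_2,\dots\sim\mathrm{Bernoulli}(3/4)$ with $B_k\leq\xi_{y^{(k)}}$ on $\{N\geq k\}$ for all $k$; this is possible thanks to Step 2. On $E:=\{N\geq m_0\}$ (which is $\mathcal{H}_{\lceil m_0\rceil}$-measurable) we get $\xi=\sum_{k=1}^{N}\xi_{y^{(k)}}\geq\sum_{k=1}^{\lfloor m_0\rfloor}B_k=:B\sim\mathrm{Binomial}(\lfloor m_0\rfloor,3/4)$. Writing $n:=\lfloor m_0\rfloor\geq 300$ (Step 1) and using $\tfrac12 m_0\leq\tfrac12(n+1)$, Chebyshev's inequality gives
\[
\mathbb{P}\big(B<\tfrac12 m_0\big)\leq\frac{\mathrm{Var}(B)}{\big(\tfrac34 n-\tfrac12(n+1)\big)^2}=\frac{3n/16}{\big((n-2)/4\big)^2}=\frac{3n}{(n-2)^2}\leq\frac{900}{298^2}<0.011 ,
\]
whence $\mathbb{P}(\xi<\tfrac12 m_0)\leq\mathbb{P}(E^c)+\mathbb{P}(E,\ \xi<\tfrac12 m_0)\leq 0.003+0.011<0.02$; since $\tfrac12 m_0=\CLowerNOVisitedBoxesLOWER/2\cdot\Rb^2/\Rs^2$, this is \eqref{eq:lower:no_of_visited_good_boxes}. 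The one delicate point is Step 2: conditioning on the whole path $\pinkwalk$ would make $\pinkwalk^{y^{(k)}}$ deterministic and the per-box success probability need not be $\geq 3/4$, so one must condition on the stopped filtration $\mathcal{H}_k$ and carefully verify that the conditional law of the input package is exactly $\mathcal{P}_{y^{(k)},\Rs,z^{y^{(k)}},v}$ (disjointness of the balls $\ball(y,2\Rs)$ for the PPP part, strong Markov at the successive proper-visit times for the walk part). Everything else is a routine application of Lemma \ref{lower:no_of_visited_boxes} and a second-moment bound.
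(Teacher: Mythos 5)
Your proposal is correct and follows essentially the same strategy as the paper's proof: apply Lemma \ref{lower:no_of_visited_boxes} to get many properly visited sub-boxes, use the strong Markov property of $\pinkwalk$ at the successive proper-visit times together with the disjointness of the balls $\ball(y,2\Rs)$ to show that each input package of scale $\Rs$ has conditional law $\mathcal{P}_{y,\Rs,z^y,v}$ given the stopped filtration (this is precisely the paper's Claim \ref{claim_coupling_statement}, which you correctly flag as the one delicate point), then stochastically dominate by an i.i.d.\ Bernoulli$(3/4)$ sequence and conclude via Chebyshev. The only cosmetic difference is the way you split off the event $\{N<m_0\}$ (giving $0.003+0.011<0.02$) versus the paper's decomposition $\mathbb{P}(\sum_{\ell=1}^{\mathcal{K}}\pi_{y_\ell}\geq K/2)\geq\mathbb{P}(\sum_{\ell=1}^{K}\pi_{y_\ell}\geq K/2)-\mathbb{P}(\mathcal{K}<K)\geq 0.99-0.003$; both yield the required $0.98$.
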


We will prove Lemma \ref{lower:no_of_visited_good_boxes} in Section \ref{subsub_xi_low_good_box}.

The third lemma upper bounds the expectation of the second sum on the r.h.s.\ of \eqref{eq:hitonly_Sigma_approach}.

\begin{lemma}[Bounding the total length of worms that visit multiple good sets]
\label{lemma_expect_bound_interactions} There exists $ \CExpectBoundInteractionsUPPER <+\infty$ such that
\begin{equation}\label{eq_expect_bound_interactions}
\mathbb{E} \left[ \sum_{y \in \mathcal{D}} \overline{\Sigma}_c[y] \cdot \xi_y \right]
 \leq \CExpectBoundInteractionsUPPER \cdot (\Rb^2 \cdot \gamma \cdot \alpha  )\cdot \psi.
 \end{equation}
\end{lemma}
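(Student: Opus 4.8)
The plan is to first get rid of the ``dummy'' worms. Since $\overline{\Sigma}_c[y]$ has, conditionally on $\Falg$, the same law as $\overline{\Sigma}[y]$ (see \eqref{kieg_vava}) and each $\xi_y$ is $\Falg$-measurable (see the remark after Notation \ref{nota:Falg}), the tower property gives
\begin{equation*}
\mathbb{E}\!\left[\sum_{y\in\mathcal{D}}\overline{\Sigma}_c[y]\cdot\xi_y\right]
=\mathbb{E}\!\left[\sum_{y\in\mathcal{D}}\xi_y\cdot\mathbb{E}\!\left[\overline{\Sigma}[y]\,\big|\,\Falg\right]\right]
=\mathbb{E}\!\left[\sum_{y\in\mathcal{D}}\overline{\Sigma}[y]\cdot\xi_y\right],
\end{equation*}
so it is enough to bound the last quantity. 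Next comes a purely combinatorial step: writing $\overline{\Sigma}[y]=\sum_{w\in\mathcal{X}_{\bullet}}L(w)\cdot\ind[w\in\overline{A}[y]]$ and recalling from \eqref{worms:hit_more_than_one} that a worm contributing to $\overline{A}[y]$ with $\xi_y=1$ must hit $H^y$ \emph{and} some other good set $H^{y'}$ with $y'\neq y$, one checks (over-counting is harmless for an upper bound) that for every $w\in\mathcal{X}_{\bullet}$,
\begin{equation*}
\sum_{y\in\mathcal{D}}\xi_y\cdot\ind[w\in\overline{A}[y]]
\leq\sum_{y\neq y'\in\mathcal{D}}\ind\big[\mathrm{Tr}(w)\cap H^y\neq\emptyset\big]\cdot\ind\big[\mathrm{Tr}(w)\cap H^{y'}\neq\emptyset\big]\cdot\xi_y\cdot\xi_{y'},
\end{equation*}
since the left-hand side equals $|S(w)|\ind[|S(w)|\geq2]$ while the right-hand side equals $|S(w)|(|S(w)|-1)$, where $S(w)$ is the set of $y\in\mathcal{D}$ with $\xi_y=1$ and $\mathrm{Tr}(w)\cap H^y\neq\emptyset$. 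Hence $\sum_{y}\overline{\Sigma}[y]\xi_y\leq\sum_{y\neq y'}\xi_y\xi_{y'}\sum_{w\in\mathcal{X}_{\bullet}}L(w)\ind[\mathrm{Tr}(w)\cap H^y\neq\emptyset]\ind[\mathrm{Tr}(w)\cap H^{y'}\neq\emptyset]$.

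Then I would condition on $\Falg$. Since $\mathcal{X}_{\bullet}$ is independent of $\Falg$ by \eqref{X_bull_indep_of_Falg}, conditionally on $\Falg$ the sets $H^y$ may be treated as deterministic and $\mathcal{X}_{\bullet}$ is still a Poisson point process with intensity $v\cdot m(L(w))(2d)^{1-L(w)}\ind[A_{\bullet}(w)]$; applying the expectation formula \eqref{eq:one_worm_ppp_expectation_formula} of Lemma \ref{prop:one_worm_ppp_formulas} (with the constraint $A_{\bullet}$ absorbed into the test function, and the restriction $\mathrm{Tr}(w)\subseteq\ball(3\Rb)$ dropped for an upper bound) yields, for fixed $y\neq y'\in\mathcal{D}$,
\begin{equation*}
\mathbb{E}\!\left[\sum_{w\in\mathcal{X}_{\bullet}}L(w)\ind[\mathrm{Tr}(w)\cap H^y\neq\emptyset]\ind[\mathrm{Tr}(w)\cap H^{y'}\neq\emptyset]\,\Big|\,\Falg\right]
\leq v\sum_{\ell=\Dlow\Rs^2}^{\Dup\Rb^2}\ell\,m(\ell)\sum_{x\in\mathcal{G}}P_x\big(\max\{T_{H^y},T_{H^{y'}}\}\leq\ell\big).
\end{equation*}
As $y,y'$ are distinct points of $10\Rs\Z^d$ we have $|y-y'|\geq10\Rs$; translating by $-y$ we get $H^y\subseteq\ball(3\Rs)$, $H^{y'}\subseteq\ball(y'-y,3\Rs)$, $\mathcal{G}\subseteq\{z:|z|\geq4\Rs,\,|z-(y'-y)|\geq4\Rs\}$ by \eqref{nota:ground_for_worms}, and $\ell\geq\Dlow\Rs^2\geq16\Rs^2$ (using $\Dlow\geq16$), so Lemma \ref{lemma_upper:two_cons_hitting_sum} bounds the inner sum by $C\cdot\ell\cdot\capacity(H^y)\capacity(H^{y'})|y-y'|^{2-d}$. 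Recognizing $v\sum_{\ell}\ell^2m(\ell)=\alpha$ via \eqref{assumption:alpha_EQ} and using $\capacity(H^y)\leq\CCapaHyUUPER\gamma\Rs^2$ from \eqref{capa_H_y_uuper_c_star}, the conditional expectation above is at most $C\cdot\alpha\cdot(\gamma\Rs^2)^2\cdot|y-y'|^{2-d}$.

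Finally, taking expectation over $\Falg$ and summing over $y\neq y'$, it remains to control $\mathbb{E}\big[\sum_{y\neq y'}\xi_y\xi_{y'}|y-y'|^{2-d}\big]$. Since $\xi_y\leq\ind[T_{\ball(y,\Rs)}(\pinkwalk)<\infty]$ and $\pinkwalk$ is an initial segment of a simple random walk started at $z\in\ball(\Rb)$, this is at most $C(\Rb/\Rs)^2\Rs^{2-d}$ by the energy-weighted visit bound \eqref{eq:upper:weighted_exp_visited_boxes} of Lemma \ref{upper:numb_of_visited_boxes} (with $r=\Rs$, $R=\Rb$). Combining everything,
\begin{equation*}
\mathbb{E}\!\left[\sum_{y\in\mathcal{D}}\overline{\Sigma}_c[y]\cdot\xi_y\right]
\leq C\cdot\alpha\cdot\gamma^2\Rs^4\cdot(\Rb/\Rs)^2\Rs^{2-d}
=C\cdot(\Rb^2\gamma\alpha)\cdot\gamma\Rs^{4-d}
\leq C\cdot(\Rb^2\gamma\alpha)\cdot\psi,
\end{equation*}
the last step being the hypothesis $\gamma\leq\psi\Rs^{d-4}$ from \eqref{gamma_ineq_assumptions}; this is \eqref{eq_expect_bound_interactions} with $\CExpectBoundInteractionsUPPER:=C$. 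I do not expect a deep difficulty here: the work is in the bookkeeping — getting the combinatorial over-counting bound right, checking that the geometric hypotheses of Lemma \ref{lemma_upper:two_cons_hitting_sum} (scale $r=\Rs$, separation $|y-y'|\geq10\Rs$, containment $\mathcal{G}\subseteq\mathcal{C}$, length $\ell\geq16\Rs^2$) are all in force, and tracking the exponents so that the final cancellation $\gamma\Rs^{4-d}\leq\psi$ can be invoked.
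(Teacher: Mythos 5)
Your proposal is correct and follows essentially the same route as the paper: you reduce to bounding $\mathbb{E}\bigl[\sum_{y\neq y'}\overline{\Sigma}[y,y']\,\xi_y\xi_{y'}\bigr]$ by over-counting worms that hit at least two good sets, you then bound the conditional expectation $\mathbb{E}[\overline{\Sigma}[y,y']\mid\Falg]$ by Lemma~\ref{lemma_upper:two_cons_hitting_sum}, and you close the argument with \eqref{eq:upper:weighted_exp_visited_boxes} and $\gamma\Rs^{4-d}\leq\psi$; the paper does the same thing but factors the second step into the separate Lemma~\ref{upper:cond_exp_multi_restrict_measure_total}. The one small inaccuracy is the phrase ``the left-hand side \emph{equals} $|S(w)|\ind[|S(w)|\geq 2]$'': a $y$ contributing to the LHS must satisfy the stronger $T_{H^y}(w)<\lfloor L(w)/3\rfloor$, not merely $\mathrm{Tr}(w)\cap H^y\neq\emptyset$, so the LHS is \emph{at most} $|S(w)|\ind[|S(w)|\geq 2]$; this does not affect the upper bound you need.
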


We will prove Lemma \ref{lemma_expect_bound_interactions} in Section \ref{subsub_double_visits}.

\begin{proof}[ Proof of Lemma \ref{lemma:big_total_measure}]
 Recall from \eqref{nota:hitonly_Sigma} the notation $\hitonly{\Sigma}$  and the observation that it is enough to prove \eqref{eq:lemma:big_total_measure} using $\hitonly{\Sigma}$ instead of $\hitonly{\Sigma}_\lozenge$.
 For any  $K > 0$, by \eqref{eq:hitonly_Sigma_approach} we have
\begin{equation}
	\label{eq:prob_decomp_bound_on_star_Sigma}
	\mathbb{P} \left( \hitonly{\Sigma} \leq K \right)
	\leq
	\mathbb{P} \left( \sum_{y \in \mathcal{D}} \Sigma_c[y] \cdot \xi_y \leq 3 K \slash 2  \right) +
	\mathbb{P} \left( \sum_{y \in \mathcal{D}} \overline{\Sigma}_c[y] \cdot \xi_y \geq K \slash 2 \right),
\end{equation}
where $\overline{\Sigma}_c[y]$ was introduced in \eqref{kieg_vava} and $\Sigma_c[y]$ was introduced in \eqref{nota:copy_Sigmay}.

We bound the second term of the r.h.s.\ of \eqref{eq:prob_decomp_bound_on_star_Sigma} using Markov's inequality:
\begin{equation}
	\label{eq:prob_decomp_bound_on_star_Sigma_SECONDTERM}
	\mathbb{P} \left( \sum_{y \in \mathcal{D}} \overline{\Sigma}_c[y] \cdot \xi_y \geq K \slash 2 \right)
	\leq
	\frac{2}{K} \cdot \mathbb{E} \left[ \sum_{y \in \mathcal{D}} \overline{\Sigma}_c[y] \cdot \xi_y \right]
	\stackrel{ \eqref{eq_expect_bound_interactions} }{\leq}
	\frac{2 \cdot \CExpectBoundInteractionsUPPER \cdot (\Rb^2 \cdot \gamma \cdot \alpha) \cdot \psi}{K}.
\end{equation}
Next we will bound the first term on the r.h.s.\ of \eqref{eq:prob_decomp_bound_on_star_Sigma}. Recalling the notion of $\Falg$ from Definition \ref{nota:Falg}, we observe that $\xi$ (defined in \eqref{xi_def_eq}) is $\Falg$-measurable.
 Using \eqref{fuggetlenito_trukk} and  Lemma \ref{prop:cond_restrict_measure_total} we obtain that if $\Dlow \geq \CLowerFugaHitDELTALOWER \vee 16 $ and $\Dup \leq \CLowerFugaHitDELTAUPPER$ then
 \begin{equation}\label{E_var_xi_bounds}
 \mathbb{E}\left( \sum_{y \in \mathcal{D}} \Sigma_c[y] \cdot \xi_y   \Biggm|  \Falg \right) \geq \CCondExpRestrictMeasureTotalLOWER \cdot (\Rs^2 \cdot \gamma \cdot \alpha) \cdot \xi, \;
 \Var \left( \sum_{y \in \mathcal{D}} \Sigma_c[y] \cdot \xi_y   \Biggm|  \Falg \right) \leq
 \CCondVarRestrictMeasureTotalUPPER  \cdot \Rb^2  \cdot (\Rs^2 \cdot \gamma \cdot \alpha) \cdot \xi,
 \end{equation}

 Now  Chebysev's inequality and \eqref{E_var_xi_bounds} together give
 \begin{equation}\label{cheb_gives}
   \mathbb{P} \left( \sum_{y \in \mathcal{D}} \Sigma_c[y] \cdot \xi_y \leq \tfrac{3}{2} K \Biggm| \Falg  \right)\leq
   \frac{\CCondVarRestrictMeasureTotalUPPER  \cdot \Rb^2  \cdot (\Rs^2 \cdot \gamma \cdot \alpha) \cdot \xi}
   {\left( \left( \CCondExpRestrictMeasureTotalLOWER \cdot (\Rs^2 \cdot \gamma \cdot \alpha) \cdot \xi- 3 K \slash 2  \right) \vee 0 \right)^2}.
 \end{equation}
 Recall from Lemma \ref{lower:no_of_visited_good_boxes} that if  $\Dlow \geq \CLowerNOVisitedBoxesRATIOLOWER \vee (300/ \CLowerNOVisitedBoxesLOWER )$ then $\mathbb{P}\left( \xi <  \CLowerNOVisitedBoxesLOWER /2 \cdot \Rb^2 \slash \Rs^2 \right) \leq 0.02 $. Also note that the r.h.s.\ of \eqref{cheb_gives} is a non-increasing function of $\xi$. Putting these together with the total law of expectation and assuming that
 $ 3 K \slash 2 <  \CCondExpRestrictMeasureTotalLOWER \cdot (\Rs^2 \cdot \gamma \cdot \alpha) \cdot ( \CLowerNOVisitedBoxesLOWER /2 \cdot \Rb^2 \slash \Rs^2)  $, we obtain
\begin{equation}
	\label{eq:prob_decomp_bound_on_star_Sigma_FIRSTTERM}
	\mathbb{P} \left( \sum_{y \in \mathcal{D}} \Sigma_c[y] \cdot \xi_y \leq \tfrac{3}{2} K   \right)
	\stackrel{\eqref{eq:lower:no_of_visited_good_boxes}}{ \leq }
	0.02 +  \frac{\CCondVarRestrictMeasureTotalUPPER \cdot \Rb^2  \cdot (\Rs^2 \cdot \gamma \cdot \alpha) \cdot ( \CLowerNOVisitedBoxesLOWER /2 \cdot \Rb^2 \slash \Rs^2) }
{\left( \CCondExpRestrictMeasureTotalLOWER \cdot (\Rs^2 \cdot \gamma \cdot \alpha) \cdot ( \CLowerNOVisitedBoxesLOWER /2 \cdot \Rb^2 \slash \Rs^2) - 3 K \slash 2   \right)^2}.
	\end{equation}
Substituting \eqref{eq:prob_decomp_bound_on_star_Sigma_SECONDTERM} and \eqref{eq:prob_decomp_bound_on_star_Sigma_FIRSTTERM}  back into \eqref{eq:prob_decomp_bound_on_star_Sigma} we finish the proof of Lemma \ref{lemma:big_total_measure}.
\end{proof}

\subsubsection{On the expectation/variance of the total length of worms hitting $H^y$}\label{subsub_exp_var}

\begin{proof}[Proof of Lemma \ref{prop:cond_restrict_measure_total}] Recall \eqref{X_bull_indep_of_Falg} and the notation $P_{x}( \cdot \, | \, \Falg)$ and $E_{x}[ \cdot \, | \, \Falg]$ introduced at the beginning of Section \ref{subsection_small_energy}.
	Let us fix $y \in \mathcal{D}$ and define the function $f_y \, : \, \widetilde{W} \rightarrow \R^+$ by
	\begin{equation}
	\label{nota:fy_for_Sigmay}
	f_y (w) := \ind \left[\, w \in A_{\bullet}, \, T_{H^y}(w) < \lfloor L(w) \slash 3 \rfloor \, \right] \cdot L(w).
	\end{equation}
	By Definition \ref{def:law_on_input_packages}, the conditional law of $\Sigma[y]$ given $\Falg$ is the same as the conditional law of $\sum_{i \in I} f_y(w_i)$ given $\Falg$, where $\sum_{i \in I} \delta_{w_i} \sim \mathcal{P}_v$ is independent of $\Falg$. As a result, we can use Lemma \ref{prop:one_worm_ppp_formulas} to prove \eqref{eq:lower:cond_exp_restrict_measure_total} and \eqref{eq:upper:cond_var_restrict_measure_total}.
		Using \eqref{eq:one_worm_ppp_expectation_formula} we get
	\begin{align*}
	\mathbb{E} \left[ \Sigma[y] \, \big| \, \Falg \right]
	& =
	v \cdot \sum_{\ell = \Dlow \cdot \Rs^2}^{ \Dup \cdot \Rb^2 } \ell \cdot m(\ell) \cdot \sum_{x \in \mathcal{G}} P_x \left( T_{H^y} \leq \lfloor \ell \slash 3 \rfloor, \, T_{\ball( 3 \Rb )^c} > \ell \, \big| \, \Falg \right)
	\\ & \stackrel{(*)}{\geq}
	v \cdot \sum_{\ell = \Dlow \cdot \Rs^2 }^{ \Dup \cdot \Rb^2 } \ell \cdot m(\ell) \cdot \CLowerFugaHitLOWER \cdot \ell \cdot \capacity(H^y)
	\stackrel{ \eqref{assumption:alpha_EQ} }{=}
	\CLowerFugaHitLOWER \cdot \alpha \cdot \capacity(H^y)
	\end{align*}
	where in $(*)$ we used \eqref{nota:ground_for_worms} and Lemma \ref{lower:fuga_hitting_sum} with the values of $\CLowerFugaHitDELTALOWER$, $ \CLowerFugaHitDELTAUPPER $ and  $\CLowerFugaHitLOWER$ specified there. From this lower bound the desired inequality \eqref{eq:lower:cond_exp_restrict_measure_total} follows since $\capacity(H^y) \geq \Rs^2 \cdot \gamma \cdot \xi_y $ holds by Definitions \ref{def:gamma_good_input_package} and \ref{notation_H_y}.
		Similarly, by \eqref{eq:one_worm_ppp_variance_formula} we have
	\begin{multline*}
	\Var \left( \Sigma[y] \, \big| \, \Falg \right)  =
	v \cdot \sum_{\ell = \Dlow \Rs^2}^{ \Dup \Rb^2 } \ell^2 \cdot m(\ell) \cdot
	\sum_{x \in \mathcal{G}} P_x \left( T_{H^y} \leq \lfloor \ell \slash 3 \rfloor, \, T_{\ball(3 \Rb)^c} > \ell \, \big| \, \Falg \right)
	 \stackrel{ \eqref{eq:upper:fuga_hitting_sum} }{\leq} \\
	C \cdot v \cdot \sum_{\ell = \Dlow \Rs^2}^{ \Dup \Rb^2 } \ell^2 \cdot m(\ell) \cdot \ell \cdot \capacity(H^y)
	\stackrel{(*)}{\leq}
	C \cdot \alpha \cdot \capacity(H^y)  \cdot \Rb^2 \stackrel{ \eqref{capa_H_y_uuper_c_star} }{\leq} \CCondVarRestrictMeasureTotalUPPER \cdot \Rb^2  \cdot (\Rs^2 \cdot \gamma \cdot \alpha) ,
	\end{multline*}
	where $(*)$ follows from the trivial upper bound $\ell^3 \leq \Dup \cdot \Rb^2 \cdot \ell^2 \leq \Rb^2 \cdot \ell^2$ and \eqref{assumption:alpha_EQ}. The upper bound \eqref{eq:upper:cond_var_restrict_measure_total} follows once we observe that $\xi_y=0$ implies $H^y=\emptyset$ (cf.\ Definition \ref{notation_H_y}), which in turn implies
$\Sigma[y]=0$ (cf.\ \eqref{worms:hit_one}).
\end{proof}

\subsubsection{Lower bound on the number of good boxes visited by $\pinkwalk$}\label{subsub_xi_low_good_box}

The goal of Section \ref{subsub_xi_low_good_box} is to prove Lemma \ref{lower:no_of_visited_good_boxes}. The plan is simple:
in order to give a lower bound on the number $\sum_{y \in \mathcal{D}} \xi_y$ of good boxes visited by $\pinkwalk$,
we will (i)  give a lower bound the number $\sum_{y \in \mathcal{D}} \chi_y$ of boxes visited by $\pinkwalk$ using  Lemma \ref{lower:no_of_visited_boxes} and (ii)
show that with high probability at least half of these boxes are good using the hypothesis \eqref{induction_hypotheses} of Lemma \ref{lemma_doubling_gamma}. In order to make these ideas
rigorous, we first introduce some notation that allows us to rewrite the quantities $\sum_{y \in \mathcal{D}} \xi_y$ and $\sum_{y \in \mathcal{D}} \chi_y$ in a more manageable form (cf.\ \eqref{old_new_form_of_sum_xi} below).

\medskip

 Slightly abusing the notation of Definition \ref{def:law_on_input_packages}, let us naturally extend the worm $\pinkwalk=(Z(t))_{0\leq t \leq T}$ to $t=T+1,T+2,\dots$. More precisely, in Section \ref{subsub_xi_low_good_box} let $(Z(t))_{t \geq 0}$ denote a simple random walk starting from $z$. Given this extended version of $\pinkwalk$, let us also extend Definition \ref{notation:pink_packages} accordingly:  we may define $z^y$, $T^y$, $\pinkwalk^y$ and $\worms^y$ using the formulas \eqref{def_eq_z_y}--\eqref{def_eq_worms_y} for any $y \in 10 \Rs \mathbb{Z}^d$ satisfying
$T_{\ball(y, \Rs )}(\pinkwalk) < \infty$. We can thus define the indicators
\begin{equation}
\label{nota:pi_y}
		\pi_y  :=  \ind \big[\, T_{\ball(y, \Rs )}(\pinkwalk) < \infty, \;  (\worms^y, \pinkwalk^y) \text{ is } (y,\Rs,z^y,\gamma) \text{-good} \, \big], \quad  y \in 10  \Rs  \Z^d.
\end{equation}

	Let $\widetilde{\mathcal{B}} := \bigcup_{y \in 10\Rs \mathbb{Z}^d} \ball(y, \Rs)$ and let us recursively define a sequence of stopping times $\tau_k$ and a sequence of indices $y_k \in 10\Rs \mathbb{Z}^d$ ($k \in \N$) as follows. Let $\tau_0:=0$ and for any $k \geq 1$ let
	\begin{equation}
		\label{def_eq_tau_k_recursive}
		\tau_k  := \min \left\{ t \geq \tau_{k-1} \, : \, Z(t) \in \widetilde{\mathcal{B}} \setminus \cup_{\ell=1}^{k-1} \ball(y_{\ell}, \Rs) \right\},
		\quad
		y_k := \arg \min_{ y \in 10\Rs \mathbb{Z}^d  } \left| y - Z( \tau_k ) \right|.
	\end{equation}
\noindent In words: $\tau_k$ is the first time when $\pinkwalk$ hits a ball of form $\ball(y, \Rs), y \in 10\Rs \mathbb{Z}^d$ which is different from the ones visited at times $\tau_1,\dots,\tau_{k-1}$,
 moreover $y_k$ is the center of this ball.

Now that $(Z(t))_{t \geq 0}$ denotes a random walk, we have $\mathbb{P}(\tau_k<+\infty)=1$ for all $k \in \mathbb{N}$.

 Note that  we have $\{y_k, k \in \mathbb{N}\}=\{ y \in 10\Rs \mathbb{Z}^d \, : \, T_{\ball(y, \Rs )}(\pinkwalk) < \infty \}$, moreover
 $k\neq k' \in \mathbb{N}$ implies $y_k\neq y_{k'}$. Also note that  $T_{\ball(y_k, \Rs )}(\pinkwalk)=\tau_k$ for $k \in \mathbb{N}$.

	Let us define
	\begin{equation}
		\mathcal{K} := \max \{\, k \in \N \, : \, \tau_k \leq (\Rb^2 - \Rs^2) \wedge T_{ \ball(2 \Rb)^c }(\pinkwalk)\, \}.
	\end{equation}
	Recalling the definition of $\mathcal{D}$ from \eqref{def:D_index_set} and of $\chi_y, y \in \mathcal{D}$ from \eqref{nota:chi_y}, observe that for any $y \in \mathcal{D}$ we have $\chi_y=1$ if and only if $y=y_\ell$ for some $\ell \in \{1,\dots,\mathcal{K}\}$, thus we have
	\begin{equation}
		\label{old_new_form_of_sum_xi}
		\sum_{y \in \mathcal{D}} \chi_y = \mathcal{K}, \qquad
		\xi \stackrel{ \eqref{xi_def_eq} }{=}  \sum_{y \in \mathcal{D}} \xi_y  \stackrel{ \eqref{nota:chi_y}, \eqref{nota:xi_y}, \eqref{nota:pi_y}  }{=}  \sum_{y \in \mathcal{D}} \chi_y \cdot \pi_y = \sum_{\ell=1}^{\mathcal{K}} \pi_{y_\ell}.
	\end{equation}

 \begin{claim}\label{claim_coupling_statement}
 The sequence $\pi_{y_k}, k \in \mathbb{N}$ can be coupled to an i.i.d.\ Bernoulli  sequence
			  $\pi^*_k, k \in \N$ with  parameter $3 \slash 4$ in such a way that $\pi_{y_k} \geq \pi^*_k,  k \in \mathbb{N}$.
	\end{claim}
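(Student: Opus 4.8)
The plan is to exhibit a successful coupling step by step along the sequence of stopping times $\tau_k$, using the strong Markov property to reveal one box at a time and the induction hypothesis \eqref{induction_hypotheses} to control each conditional success probability. First I would set up a suitable filtration: for $k\in\mathbb{N}$ let $\mathcal{H}_k$ denote the $\sigma$-field generated by the walk $(Z(t))_{t\le \tau_k}$ together with all of the worm point measures $\widehat{\worms}\ind[\,w(0)\in\ball(y_\ell,2\Rs)\,]$ for $\ell=1,\dots,k-1$ (i.e.\ the worms emanating from the boxes visited strictly before $\tau_k$), plus the auxiliary randomness used to define $H^{y_\ell}$ for those $\ell$. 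The key point is that on $\mathcal{H}_k$ the index $y_k$ and the hitting point $z^{y_k}=Z(\tau_k)$ are determined, whereas the pair $(\worms^{y_k},\pinkwalk^{y_k})$ is, conditionally on $\mathcal{H}_k$, still genuinely ``fresh'': the post-$\tau_k$ increments of $Z$ are conditionally a simple random walk started at $z^{y_k}$ by the strong Markov property, and the worms with $w(0)\in\ball(y_k,2\Rs)$, $L(w)\le\Rs^2$ form a Poisson point process independent of $\mathcal{H}_k$ because $\ball(y_k,2\Rs)$ is disjoint from $\ball(y_\ell,2\Rs)$ for $\ell<k$ (here I use that the $y_\ell$ are spaced at distance $\ge 10\Rs$ and lie on $10\Rs\Z^d$, so distinct balls of radius $2\Rs$ are disjoint — and that is why \eqref{def_eq_tau_k_recursive} excludes boxes already visited).

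Next I would identify this conditional law precisely. Truncating $\pinkwalk^{y_k}$ at time $T^{y_k}=\Rs^2\wedge(\text{exit time of }\ball(y_k,2\Rs))$ exactly matches clause (ii) of Definition \ref{def:law_on_input_packages}, and restricting $\worms^{y_k}$ to $\{w(0)\in\ball(y_k,2\Rs),\,L(w)\le\Rs^2\}$ matches clause (i); hence conditionally on $\mathcal{H}_k$ (and on $y_k,z^{y_k}$) the pair $(\worms^{y_k},\pinkwalk^{y_k})$ has law $\mathcal{P}_{y_k,\Rs,z^{y_k},v}$. By translation invariance \eqref{shift_invar_package} and the hypothesis \eqref{induction_hypotheses}, and since $z^{y_k}\in\ball(y_k,\Rs)$ always holds (because $Z(\tau_k)\in\ball(y_k,\Rs)$ by the definition of $y_k$ as the nearest center), we get
\begin{equation*}
\mathbb{P}\bigl(\,(\worms^{y_k},\pinkwalk^{y_k})\text{ is }(y_k,\Rs,z^{y_k},\gamma)\text{-good}\;\big|\;\mathcal{H}_k\,\bigr)\ \ge\ 3/4 .
\end{equation*}
Recalling \eqref{nota:pi_y}, and that on $\{\tau_k<\infty\}$ (which has probability one here, since $Z$ is now an honest random walk) the indicator $\pi_{y_k}$ equals $\ind[(\worms^{y_k},\pinkwalk^{y_k})\text{ is }(y_k,\Rs,z^{y_k},\gamma)\text{-good}]$, this says $\mathbb{P}(\pi_{y_k}=1\mid\mathcal{H}_k)\ge 3/4$.

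Finally I would build the coupling by the standard sequential device: on an enlarged probability space introduce i.i.d.\ uniform $(0,1)$ variables $U_k$ independent of everything, set $\pi^*_k:=\ind[U_k\le 3/4]$, and re-realize $\pi_{y_k}$ so that, conditionally on $\mathcal{H}_k$ and $U_k$, we have $\pi_{y_k}=1$ whenever $U_k\le 3/4$ (possible precisely because $\mathbb{P}(\pi_{y_k}=1\mid\mathcal{H}_k)\ge 3/4$) and $\pi_{y_k}$ follows its correct conditional law. Then $\pi^*_k$ is an i.i.d.\ Bernoulli$(3/4)$ sequence and $\pi_{y_k}\ge\pi^*_k$ for all $k$, proving Claim \ref{claim_coupling_statement}. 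The only mildly delicate point — the ``main obstacle'' — is bookkeeping the conditional independence cleanly: one must make sure the auxiliary randomness used to \emph{pick} $H^{y_\ell}$ out of several candidates (Definition \ref{notation_H_y}) is included in $\mathcal{H}_k$ for $\ell<k$ but that the randomness relevant to box $y_k$ is not prematurely revealed, and that the exclusion rule in \eqref{def_eq_tau_k_recursive} genuinely guarantees $y_k\notin\{y_1,\dots,y_{k-1}\}$ so that no worm Poisson process is reused. Both are immediate from the definitions, so once the filtration is set up the argument is essentially a one-line application of the strong Markov property plus \eqref{induction_hypotheses}.
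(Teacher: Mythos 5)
Your proof is correct and follows essentially the same route as the paper: the same filtration (up to the minor and harmless over-inclusion of all worms from $\ball(y_\ell,2\Rs)$ rather than just the truncated packages, and the unnecessary mention of auxiliary randomness for $H^{y_\ell}$, which Definition \ref{notation_H_y} actually fixes deterministically), the same use of the strong Markov property together with disjointness of the balls $\ball(y_\ell,2\Rs)$ to identify the conditional law of $(\worms^{y_k},\pinkwalk^{y_k})$ as $\mathcal{P}_{y_k,\Rs,z^{y_k},v}$, and the same appeal to \eqref{induction_hypotheses} and \eqref{shift_invar_package} to get $\mathbb{P}(\pi_{y_k}=1\mid\mathcal{H}_k)\ge 3/4$. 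The only difference is that you spell out the standard sequential uniform-variable coupling explicitly, whereas the paper compresses this into the final sentence; you also usefully highlight the fact $z^{y_k}\in\ball(y_k,\Rs)$, which the paper leaves implicit.
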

Before we prove Claim \ref{claim_coupling_statement}, let us derive the proof of Lemma \ref{lower:no_of_visited_good_boxes} using it.
	
\begin{proof}[Proof of Lemma \ref{lower:no_of_visited_good_boxes}]
	Using the constant $\CLowerNOVisitedBoxesLOWER > 0$ from Lemma \ref{lower:no_of_visited_boxes}, let us introduce the shorthand $K:=  \CLowerNOVisitedBoxesLOWER \cdot \Rb^2 \slash \Rs^2$.
	We will  argue that if $\underline{\Delta} \geq 300/ \CLowerNOVisitedBoxesLOWER $ (and thus $K \geq 300$ by \eqref{big_Dlow_big_ratio}) then
	\begin{equation}\label{binom_bound}
		\mathbb{P}\left( \sum_{\ell=1}^{K}  \pi_{y_\ell}  \geq \tfrac{1}{2} K \right)  \geq 0.99.
	\end{equation}
	Indeed, by Claim \ref{claim_coupling_statement}, it is enough to prove this bound with $\sum_{\ell=1}^{K}  \pi^*_\ell$ in place of $\sum_{\ell=1}^{K}  \pi_{y_\ell}$, but
	$\sum_{\ell=1}^{K}  \pi^*_\ell \sim \mathrm{BIN}(K,\tfrac{3}{4})$, thus by Chebyshev's inequality  \eqref{binom_bound}  holds if $K\geq 300$.
	
	We are now ready to prove that \eqref{eq:lower:no_of_visited_good_boxes} holds if $\underline{\Delta} \geq \CLowerNOVisitedBoxesRATIOLOWER \vee (300/ \CLowerNOVisitedBoxesLOWER ) $ (where $ \CLowerNOVisitedBoxesRATIOLOWER $ appears in the statement of Lemma \ref{lower:no_of_visited_boxes}):
	\begin{multline}
		\mathbb{P} \left( \xi \geq  \CLowerNOVisitedBoxesLOWER /2 \cdot \Rb^2 \slash \Rs^2 \right) \stackrel{ \eqref{old_new_form_of_sum_xi} }{=}
		\mathbb{P} \left( \sum_{\ell=1}^{\mathcal{K}} \pi_{y_\ell} \geq \tfrac{1}{2} K \right) \geq
		\mathbb{P} \left( \sum_{\ell=1}^{K} \pi_{y_\ell} \geq \tfrac{1}{2} K \right) - \mathbb{P}\left( \mathcal{K}<K \right) \\
		\stackrel{  \eqref{binom_bound}, \eqref{old_new_form_of_sum_xi} }{\geq}  0.99- \mathbb{P}\left( \sum_{y \in \mathcal{D}} \chi_y <K \right)
	    \stackrel{ \eqref{eq:lower:no_of_visited_boxes}  }{\geq} 0.99-0.003 \geq 0.98.
	\end{multline}
	The proof of Lemma \ref{lower:no_of_visited_good_boxes} is complete (using the statement of Claim \ref{claim_coupling_statement}).
\end{proof}

\begin{proof}[Proof of Claim \ref{claim_coupling_statement}]
 For any $k \in \mathbb{N}$ let us denote by $\mathcal{G}_k$ the sigma-field generated by $\pinkwalk[0, \tau_k]$ and the input packages $(\worms^{y_\ell}, \pinkwalk^{y_\ell}), \ell=1,\dots,k-1$. Observe that  the random variables $\pi_{y_1}, \dots, \pi_{y_{k-1}}$ are all $\mathcal{G}_k$-measurable. Moreover, by \eqref{def_eq_z_y} and \eqref{def_eq_tau_k_recursive} we have $z^{y_k}=Z(\tau_k)$, thus $z^{y_k}$ is also $\mathcal{G}_k$-measurable for any $k \geq 1$.

	Recalling Definition \ref{def:law_on_input_packages}, next we want to  deduce that for any $k  \in \mathbb{N}$,
\begin{equation}\label{condindep_package}
\text{given $\mathcal{G}_k$, the conditional law of $(\worms^{y_k}, \pinkwalk^{y_k})$ is $\mathcal{P}_{y_k,\Rs,z^{y_k},v}$.}
\end{equation}
 In order to see this, we use the strong Markov property of the simple random walk $\pinkwalk$ at time $\tau_k$ to deduce that $\pinkwalk^{y_k}$ has the appropriate conditional law given  $\mathcal{G}_k$. We also use the facts that $\worms^{y_k}=\widehat{\worms} \ind \big[ w(0) \in \ball(y_k, 2\Rs), \, L(w) \leq \Rs^2 \big]$, and that $\ball(y_k, 2\Rs)$ is disjoint from $\cup_{\ell=1}^{k-1}\ball(y_\ell, 2\Rs)$. Thus, conditionally on $\mathcal{G}_k$, $\worms^{y_k}$ is a PPP of worms with intensity measure $v \cdot m( L(w) ) \cdot (2 d)^{ 1 - L(w) }\ind[\, w(0) \in \ball(y_k, 2 \Rs),\, L(w) \leq \Rs^2\, ]$, and \eqref{condindep_package} follows.

Using \eqref{condindep_package} the inequality $\mathbb{P}( \pi_{y_k}=1 \, | \, \mathcal{G}_k )\geq 3/4$ follows using the hypothesis of Lemma \ref{lemma_doubling_gamma} (cf.\ \eqref{induction_hypotheses}) and translation invariance (cf.\ \eqref{shift_invar_package}). From this $\mathbb{P}( \pi_{y_k}=1 \, | \, \pi_{y_1},\dots,\pi_{y_{k-1}} )\geq 3/4$ also follows, which is enough to conclude the proof of Claim \ref{claim_coupling_statement}.
\end{proof}

\subsubsection{Bounding the total length of worms that visit multiple good sets}\label{subsub_double_visits}

Section \ref{subsub_double_visits} is devoted to the proof of Lemma \ref{lemma_expect_bound_interactions}.
Given $y \neq y' \in \mathcal{D}$, let us define
\begin{equation}
	\label{worms:hit_Hy_and_Hy_prime}
	\overline{\mathcal{X}}[y,y']  := \mathcal{X}_{\bullet} \ind \big[ \, T_{H^y}(w) < L(w),\, T_{H^{y'}}(w) < L(w)  \, \big], \quad
	\overline{\Sigma}[y,y']:=\Sigma^{\overline{\mathcal{X}}[y,y']}.
\end{equation}
In words,  $\overline{\Sigma}[y,y']$ denotes the total length of worms from $\mathcal{X}_{\bullet}$ that hit both $H^y$ and $H^{y'}$.

\begin{lemma}[Upper bound on total length of worms hitting both $H^y$ and $H^{y'}$]
	\label{upper:cond_exp_multi_restrict_measure_total}
	There exists $C < \infty$ such that if $\Dlow \geq 16$, then for any $y \neq y' \in \mathcal{D}$, we have
	\begin{equation}
		\label{eq:upper:cond_exp_multi_restrict_measure_total}
		\mathbb{E} \big[ \, \overline{\Sigma}[ y, y' ] \, \big| \, \Falg \, \big] \leq
		C \cdot \alpha \cdot (\Rs^2 \cdot \gamma)^2 \cdot | y - y' |^{2 - d} \cdot \xi_y \cdot \xi_{y'}.
	\end{equation}
\end{lemma}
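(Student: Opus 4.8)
The plan is to mimic closely the proof of Lemma \ref{prop:cond_restrict_measure_total}, replacing the single-set hitting estimate \eqref{eq:upper:fuga_hitting_sum} by the two-set estimate of Lemma \ref{lemma_upper:two_cons_hitting_sum}. First I would dispose of the trivial case: if $\xi_y=0$ (resp.\ $\xi_{y'}=0$) then $H^y=\emptyset$ (resp.\ $H^{y'}=\emptyset$) by Definition \ref{notation_H_y}, hence $\overline{\mathcal{X}}[y,y']\equiv 0$ and the inequality holds; so from now on assume $\xi_y=\xi_{y'}=1$. Recall from \eqref{X_bull_indep_of_Falg} that $\mathcal{X}_{\bullet}$ is independent of $\Falg$, while the sets $H^y,H^{y'}$ are $\Falg$-measurable, so conditioning on $\Falg$ simply freezes $H^y,H^{y'}$ without changing the law of $\mathcal{X}_{\bullet}$. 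Define $f_{y,y'}:\widetilde{W}\to\R^+$ by
\[
 f_{y,y'}(w):=\ind\big[\,w\in A_{\bullet},\ T_{H^y}(w)<L(w),\ T_{H^{y'}}(w)<L(w)\,\big]\cdot L(w).
\]

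As in the proof of Lemma \ref{prop:cond_restrict_measure_total}, the conditional law of $\overline{\Sigma}[y,y']$ given $\Falg$ is that of $\sum_{i\in I}f_{y,y'}(w_i)$ given $\Falg$, with $\sum_{i\in I}\delta_{w_i}\sim\mathcal{P}_v$ independent of $\Falg$ (the truncations $w(0)\in\ball(2\Rb)$ and $L(w)\leq\Rb^2$ built into $\mathcal{X}$ are automatic on $A_{\bullet}$, since $\mathcal{G}\subseteq\ball(2\Rb)$, $\Dup<1$, $\Rs\leq\Rb$). Applying \eqref{eq:one_worm_ppp_expectation_formula} and dropping the constraint $\mathrm{Tr}(w)\subseteq\ball(3\Rb)$ from $A_{\bullet}$ for an upper bound gives
\[
 \mathbb{E}\big[\,\overline{\Sigma}[y,y']\,\big|\,\Falg\,\big]\ \leq\ v\sum_{\ell=\Dlow\cdot\Rs^2}^{\Dup\cdot\Rb^2}\ell\cdot m(\ell)\sum_{x\in\mathcal{G}}P_x\big(\,T_{H^y}\leq\ell,\ T_{H^{y'}}\leq\ell\,\big\vert\,\Falg\,\big),
\]
where I used that $T_{H^y}(w)<L(w)=\ell$ forces the walk to enter $H^y$ by time $\ell$.

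Next I would feed the inner sum into Lemma \ref{lemma_upper:two_cons_hitting_sum} with $r=\Rs$: translating by $-y$, put $K_1:=H^y-y$, $K_2:=H^{y'}-y$, $x:=y'-y$. The hypotheses hold because $H^y\subseteq\ball(y,3\Rs)$ and $H^{y'}\subseteq\ball(y',3\Rs)$ (Definitions \ref{notation_H_y}, \ref{def:gamma_good_input_package}), $|y-y'|\geq 10\Rs$ (as $y\neq y'\in 10\Rs\Z^d$), and $\ell\geq\Dlow\cdot\Rs^2\geq 16\Rs^2$ (using $\Dlow\geq 16$). Moreover $\mathcal{G}=\ball(2\Rb)\setminus\bigcup_{\widetilde{y}\in\mathcal{D}}\ball(\widetilde{y},4\Rs)$ and $y,y'\in\mathcal{D}$, so $\mathcal{G}\subseteq\{z:|z-y|\geq 4\Rs,\ |z-y'|\geq 4\Rs\}$, which is exactly the set $\mathcal{C}$ of Lemma \ref{lemma_upper:two_cons_hitting_sum} after translation; hence, by translation invariance of the walk,
\[
 \sum_{x\in\mathcal{G}}P_x\big(\,T_{H^y}\leq\ell,\ T_{H^{y'}}\leq\ell\,\big)\ \leq\ C\cdot\ell\cdot\capacity(H^y)\cdot\capacity(H^{y'})\cdot|y-y'|^{2-d}.
\]

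Finally I would substitute this back, pull the $\ell$-independent factors out of the sum, and use $v\sum_{\ell=\Dlow\Rs^2}^{\Dup\Rb^2}\ell^2 m(\ell)=\alpha$ from \eqref{assumption:alpha_EQ} to obtain $\mathbb{E}[\overline{\Sigma}[y,y']\mid\Falg]\leq C\cdot\alpha\cdot\capacity(H^y)\cdot\capacity(H^{y'})\cdot|y-y'|^{2-d}$; then $\capacity(H^y)\leq\CCapaHyUUPER\cdot\gamma\cdot\Rs^2$ from \eqref{capa_H_y_uuper_c_star} (and likewise for $H^{y'}$) yields \eqref{eq:upper:cond_exp_multi_restrict_measure_total} with an enlarged constant, after reinstating the factor $\xi_y\cdot\xi_{y'}$ (legitimate since we reduced to $\xi_y=\xi_{y'}=1$). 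I do not expect a real obstacle: this is a direct reprise of the proof of Lemma \ref{prop:cond_restrict_measure_total} with the two-set hitting estimate in place of the one-set estimate, and the only delicate points are the translation/rescaling bookkeeping needed to match the hypotheses $|x|\geq 10r$ and $\ell\geq 16r^2$ of Lemma \ref{lemma_upper:two_cons_hitting_sum}, together with the trivial reduction to the case where both good-box indicators equal $1$.
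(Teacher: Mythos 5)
Your proposal is correct and follows essentially the same route as the paper's proof: express $\mathbb{E}[\overline{\Sigma}[y,y']\mid\Falg]$ via Lemma \ref{prop:one_worm_ppp_formulas}, drop the $\ball(3\Rb)$ containment constraint, bound the resulting sum of double-hitting probabilities with Lemma \ref{lemma_upper:two_cons_hitting_sum}, and finish using \eqref{assumption:alpha_EQ} and \eqref{capa_H_y_uuper_c_star}. The only difference is that you spell out the translation and hypothesis-checking for Lemma \ref{lemma_upper:two_cons_hitting_sum} a bit more explicitly, which the paper leaves implicit.
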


Before we prove Lemma \ref{upper:cond_exp_multi_restrict_measure_total}, let us first deduce the proof of Lemma \ref{lemma_expect_bound_interactions} from it.

\begin{proof}[Proof of Lemma \ref{lemma_expect_bound_interactions}]
Recalling the definition of $\overline{\Sigma}[y]$ from \eqref{worms:hit_more_than_one}, let us observe that we have
\begin{align}
	\label{eq:upper:overline_Sigma_y_yprime_sum}
	\overline{\Sigma}[y] \leq \sum_{y' \in \mathcal{D} \setminus \{ y \}} \overline{\Sigma}[y, y'] \cdot \xi_{y'}, \qquad y \in \mathcal{D}.
\end{align}
Using this, we can start to bound  the l.h.s.\ of \eqref{eq_expect_bound_interactions}:
\begin{equation}
	\label{eq:hitonly_Sigma_approach_SECONDTERM}
	\mathbb{E} \left[ \sum_{y \in \mathcal{D}} \overline{\Sigma}_c[y] \cdot \xi_y \right]\stackrel{ \eqref{kieg_vava} }{=}
\mathbb{E} \left[ \sum_{y \in \mathcal{D}} \overline{\Sigma}[y] \cdot \xi_y \right] \stackrel{ \eqref{eq:upper:overline_Sigma_y_yprime_sum} }{\leq}
\mathbb{E} \left[  \sum_{y \in \mathcal{D}} \sum_{y' \in \mathcal{D} \setminus \{ y \}} \overline{\Sigma}[y, y'] \cdot \xi_y \cdot \xi_{y'} \right].
\end{equation}

\noindent Lemma \ref{upper:cond_exp_multi_restrict_measure_total} and the law of total expectation gives
\begin{align}
	& \mathbb{E} \left[ \sum_{y \in \mathcal{D}} \sum_{y' \in \mathcal{D} \setminus \{ y \}} \overline{\Sigma}[y, y'] \cdot \xi_y \cdot \xi_{y'} \right]	
	\stackrel{\eqref{eq:upper:cond_exp_multi_restrict_measure_total}}{\leq}
	C \cdot \alpha \cdot \left( \Rs^2 \cdot \gamma \right)^2 \cdot \mathbb{E} \left[ \sum_{y \in \mathcal{D}} \sum_{ y' \in \mathcal{D} \setminus \{ y \} } \xi_y \cdot \xi_{y'} \cdot |y - y'|^{2 - d} \right]
	\notag \\ \label{bbbound} & \qquad \qquad \stackrel{(*)}{\leq}
	\CExpectBoundInteractionsUPPER \cdot \alpha \cdot \left( \Rs^2 \cdot \gamma \right)^2 \cdot \left( \Rb \slash \Rs  \right)^2 \cdot \Rs^{2 - d}
	\stackrel{(**)}{\leq} \CExpectBoundInteractionsUPPER \cdot (\Rb^2 \cdot \gamma \cdot  \alpha)  \cdot \psi,
\end{align}
where $(*)$ follows from \eqref{eq:upper:weighted_exp_visited_boxes} with $r = \Rs$ and $R = \Rb$,  and in $(**)$ we used  $\gamma \cdot \Rs^{4-d} \leq \psi$ (cf.\ \eqref{gamma_ineq_assumptions}). Putting together \eqref{eq:hitonly_Sigma_approach_SECONDTERM} and \eqref{bbbound}, the proof of Lemma \ref{lemma_expect_bound_interactions} follows.
\end{proof}

\begin{proof}[Proof of Lemma \ref{upper:cond_exp_multi_restrict_measure_total}] Recall \eqref{X_bull_indep_of_Falg} and the notation $P_{x}(. \, | \, \Falg)$ and $E_{x}[. \, | \, \Falg]$ introduced at the beginning of Section \ref{subsection_small_energy}.
	Let us fix $y \neq y' \in \mathcal{D}$ and define the function $f_{y,y'} \, : \, \widetilde{W} \rightarrow \R^+$ by
	\begin{equation}
		f_{y,y'}(w) := \ind \left[ w \in A_{\bullet}, \, T_{H^y}(w) < L(w),\, T_{H^{y'}}(w) < L(w) \right] \cdot L(w).
	\end{equation} 	
	The conditional law of $\overline{ \Sigma }[y,y']$ given $\Falg$ is the same as the conditional law of $\sum_{i \in I} f_{y,y'}(w_i)$ given $\Falg$, where $\sum_{i \in I} \delta_{w_i} \sim \mathcal{P}_v$ is independent of $\Falg$, thus we can use \eqref{eq:one_worm_ppp_expectation_formula} to obtain
	\begin{align}
		\mathbb{E} \left[ \overline{\Sigma}[y, y'] \, \big| \, \Falg \right] & =
		v \cdot \sum_{\ell = \Dlow \Rs^2}^{ \Dup \Rb^2 } \ell \cdot m(\ell) \cdot
		\sum_{x \in \mathcal{G}} P_x \left( T_{H^y} < \ell, \, T_{H^{y'}} < \ell, T_{\ball( 3 \Rb )^c} > \ell  \, \big| \, \Falg \right)
		\\ & \stackrel{ \eqref{eq:upper:two_cons_hitting_sum} }{\leq}
		C \cdot v \cdot \sum_{\ell = \Dlow \Rs^2}^{ \Dup \Rb^2 } \ell \cdot m(\ell) \cdot \ell \cdot \capacity(H^y) \cdot \capacity( H^{y'} ) \cdot |y - y'|^{ 2 - d } \cdot \xi_y \cdot \xi_{y'}
		\\ & \stackrel{ \eqref{assumption:alpha_EQ} }{=}
		C \cdot \alpha \cdot \capacity(H_y) \cdot \capacity(H_{y'}) \cdot | y - y' |^{ 2 - d } \cdot \xi_y \cdot \xi_{y'}.
	\end{align}
	From this, the upper bound \eqref{eq:upper:cond_exp_multi_restrict_measure_total} follows by \eqref{capa_H_y_uuper_c_star}.
\end{proof}

\bigskip

{\bf Acknowledgements:} We thank Art\"em Sapozhnikov for drawing our attention to the papers \cite{Chang2021,Gouere2008}  and for useful discussions about the similarities and differences between loops and worms. We also thank Aernout van Enter, Olof Elias and Caio Alves for useful discussions.
 The work of B.~R\'ath was partially supported by grant NKFI-FK-123962 of NKFI (National Research, Development and Innovation Office), the Bolyai Research Scholarship of the Hungarian Academy of Sciences, the \'UNKP-20-5-BME-5 New National Excellence Program of the Ministry for Innovation and Technology, and the ERC Synergy under Grant No. 810115 - DYNASNET. The work of S.~Rokob was partially supported by the ERC Consolidator Grant 772466 ``NOISE''.

\end{document}